\newtheorem{thm}{Theorem}[section]
\newtheorem*{thm*}{Theorem}
\newtheorem*{cor*}{Corollary}
\newtheorem{prop}[thm]{Proposition}
\newtheorem{lem}[thm]{Lemma}
\newtheorem{cor}[thm]{Corollary}
\theoremstyle{definition}
\newtheorem{defn}[thm]{Definition}
\newtheorem{question}{Question}
\newtheorem{rem}[thm]{Remark}
\newtheorem{expl}[thm]{Example}
\newcommand{\cC}{\mathcal{C}}
\newcommand{\cF}{\mathcal{F}}
\newcommand{\cK}{\mathcal{K}}
\newcommand{\ww}{\mathbf{w}}
\newcommand{\bm}{\mathbf{m}}
\newcommand{\maps}{\rightarrow}
\DeclareMathOperator{\Z}{Z}
\DeclareMathOperator{\ncl}{ncl}
\DeclareMathOperator{\lk}{lk}
\DeclareMathOperator{\st}{star}
\DeclareMathOperator{\Min}{min}
\newcommand{\mC}{\mathfrak{C}}
\newcommand{\mbf}[1]{\mathbf{#1}}
\newcommand{\BN}{\mathbb{N}}
\newcommand{\BZ}{\mathbb{Z}}
\newcommand{\BQ}{\mathbb{Q}}
\newcommand{\factor}[2]{{\raise0.7ex\hbox{$#1$} \!\mathord{\left/ {\vphantom {#1 {#2}}}\right.\kern-\nulldelimiterspace}\!\lower0.7ex\hbox{${#2}$}}}
\newcommand{\KK}{\ensuremath{\mathbb{K}}}
\newcommand{\GG}{\ensuremath{\mathbb{G}}}
\newcommand{\HH}{\ensuremath{\mathbb{H}}}
\newcommand{\la}{\langle}
\newcommand{\ra}{\rangle}
\newcommand{\Ts}{\mathfrak{T}}
\newcommand{\Hc}{\mathcal{H}}
\renewcommand{\b}{\mathfrak{b}}
\newcommand{\G}{\Gamma}
\newcommand{\pd}{\partial}
\newcommand{\biz}{\begin{itemize}}
\newcommand{\eiz}{\end{itemize}}
\begin{document}
\title{Limit groups over coherent right-angled Artin groups}
\author[]{Montserrat Casals-Ruiz}
\address{Ikerbasque - Basque Foundation for Science and Matematika Saila, UPV/EHU, Sarriena s/n, 48940, Leioa - Bizkaia, Spain}
\email{montsecasals@gmail.com}
\author[]{Andrew Duncan}
\address{School of Mathematics, Statistics and Physics, Newcastle University, Newcastle upon Tyne, NE1 7RU, United Kingdom}
\email{andrew.duncan@newcastle.ac.uk}
\author[]{Ilya Kazachkov}
\address{Ikerbasque - Basque Foundation for Science and Matematika Saila, UPV/EHU, Sarriena s/n, 48940, Leioa - Bizkaia, Spain}
\email{ilya.kazachkov@gmail.com}
%\maketitle
\begin{abstract}
  A new class of groups $\cC$, containing all coherent RAAGs and all toral relatively hyperbolic groups, is defined.
  It is shown that, for a group $G$ in the class $\cC$, the $\BZ[t]$-exponential group $G^{\BZ[t]}$ may be constructed
  as an iterated centraliser extension. Using this fact, it is proved that $G^{\BZ[t]}$ is fully residually $G$ (i.e. it has the same universal theory as $G$) and so its finitely generated subgroups are limit groups over $G$. If $\GG$ is a coherent RAAG, then the converse also holds - any limit group over $\GG$ embeds
  into $\GG^{\BZ[t]}$. Moreover, it is proved that limit groups  over $\GG$ are finitely presented, coherent and CAT$(0)$, so in particular have solvable word and conjugacy problems. 
\end{abstract}
\maketitle
{\let\thefootnote\relax\footnotetext{{\bf Keywords.} Partially commutative group, right-angled Artin group, limit group,
    hyperbolic group.}}
{\let\thefootnote\relax\footnotetext{
{\bf Mathematics Subject Classification (2010)}: 20F65, 20F05; 20F36, 20F67, 20E06
}}

\section{Introduction}
When studying problems in $R$-modules or $R$-algebras it is often convenient to extend the ring of coefficients for instance from an integral domain to a field of fractions. The multiplicative notation employed in the study of groups naturally defines an action of the ring of integers on the group via exponentiation and, as in the case of modules and algebras, it is sometimes useful to extend this exponential action of $\mathbb Z$ to more general rings, bringing us to the notion of exponential group over a ring $A$. Exponential groups were initially considered by P.~Hall in \cite{Hall69} where he studied nilpotent group with exponents in a binomial ring. G.~Baumslag, see \cite{Baumslag60a}, considered groups with unique $p$th roots and their embedding into divisible groups, which are exponential groups over the ring $\mathbb Q$. Lyndon was the first to take an axiomatic point of view, introducing axioms for exponential groups over an associative ring $A$, or $A$-groups, while studying equations and the first order theory of the free group, and describing the set of solutions of an equation in one variable over a free group $F$ in terms of a free object $F^A$ in the category of $A$-groups, where $A=\BZ[t_1,\ldots ,t_d]$, that is, groups with exponents in the ring of polynomials $\mathbb Z[t]$, see \cite{Lyndon60a,Lyndon60b}.

When extending the exponents of a group $G$ to a ring $A$, there is a natural ``free" $A$-group, called the $A$-completion $G^A$ of $G$, defined via the universal property that any group homomorphism from $G$ to an $A$-group $H$, factors through $G^A$, via a canonical embedding from $G$ into $G^A$
and an $A$-homomorphism from $G^A$ to $H$.

Although the existence of an $A$-completion or a free exponential group may follow from general results on varieties of algebras, their structure may be very complex and difficult to describe and in particular a group may not embed into its $A$-completion. G.~Baumslag described the $\mathbb Q$-completion of a group with unique roots  as
a direct limit of a directed system of $A$-groups, for $A$ a subring of $\BQ$. More precisely, Baumslag's directed system involves the idea of an ``iterated centraliser extension'' (or ICE), that is a group built by repeated formation of free products with amalgamation, over centralisers of elements (see Definition \ref{defn:ice}). Indeed in this setting, $G^A$, built as an ICE, is a Fra\"{i}ss\'{e} limit of extensions of centralisers, see \cite{KMS20}. It is proved that, for $A$ a subring of $\BQ$,  the $A$-completion of the
ordinary free group $F$ is the free $A$-group $F^A$ in the variety of $A$-groups
(see \cite[Sections V--VIII]{Baumslag60a}). 

Lyndon gave an explicit description of the structure of the free $\mathbb Z[t]$-group $F^{\mathbb Z[t]}$ as the limit of an ascending chain of subgroups and proved that $F^{\mathbb Z[t]}$ is fully residually free, or put another way, is universally equivalent to the free group $F$. In \cite{MR94,MR95}, Miasnikov and Remeslennikov made a systematic study of $A$-groups and $A$-completions and described the structure of the $A$-completion of CSA groups, that is groups whose maximal abelian subgroups are malnormal. It is shown that if $A$ is an associative ring, with torsion-free additive subgroup, and $G$ is a torsion-free CSA group, then the $A$-completion $G^A$ of $G$ may be constructed as an ICE, whence many results on its structure may be deduced; for example that $G$ embeds into $G^A$, which is itself a torsion-free CSA group. In this case too, a free $A$-group is the $A$-completion of a free group.

In the introduction to \cite{Lyndon60b} Lyndon comments that ``... although the connection is perhaps remote,
my interest in the present problem derives from a question of A. Tarski, whether the `elementary theory' of
free groups is decidable''. It is now clear that the connection is anything but remote. Indeed, 
crucial properties of Lyndon's free $\BZ[t]$-group $F^{\BZ[t]}$
are that it serves as a universe for the class of finitely generated models of
the universal theory of the free group and that it has a very robust algebraic structure. More precisely,
in their work \cite{KM98}  (see also \cite{CG05}), the authors show that any finitely generated fully residually free group (equivalently, any finitely generated model of the universal
theory of the free group, also known as a limit group, see Section \ref{sec:limit}) embeds into $F^{\BZ[t]}$.

An immediate consequence of this result is the fact that limit groups split either as a free product or
over an infinite cyclic subgroup and, since finitely generated subgroups of limit groups are limit groups, this endows them with a hierarchical structure. This fact allows one to use induction to prove a variety of powerful results on limit groups. For example, it readily follows that limit groups are finitely presented; in \cite{ChZa16}, the authors use the hierarchical structure of limit groups to prove that they are conjugacy separable, and in \cite{Wilton08} the author establishes that limit groups are subgroup separable.

In a different direction, the  knowledge that limit groups embed into $F^{\BZ[t]}$ was key to
establishing the fact that limit groups act freely on $\BZ^n$-trees and allowed to use the techniques of non-Archimedian words to establish the Howson property, analogues of Hall's and Greenberg-Stallings's theorems for free groups, see \cite{NS11, Wilton08}, and to
address all the major algorithmic questions for limit groups, see
\cite{KMRS08,BKM07} and references there.

This type of result, showing limit groups over a group $G$ embed in the $\BZ[t]$-completion of $G$,
has been established for other groups with hyperbolic features. The proofs 
divide into two main steps. The first one is to show that the $\BZ[t]$-completion is a model of the universal theory of the base group $G$, i.e. is fully residually $G$. In their paper \cite{BMR02}, Baumslag, Miasnikov and Remeslennikov carried out this step in the case of a torsion-free hyperbolic group $G$ and distilled the key ideas  of  the proof. As in \cite{MR95} a class of groups that contains $G$ and is closed under extensions of centralisers is required.  In this case the group $G$ is torsion-free hyperbolic and the class chosen is the class of CSA groups satisfying the  Big Powers (BP)
property. The BP property is an algebraic condition that asserts that big powers of non-commuting elements generate a free group (see Definition \ref{defn:bp} below). It turns out that an extension of centralisers of a group $G$ which is CSA and has the BP property   is fully residually $G$, and so a model of its universal theory.  The strategy is then to show that an extension of a centraliser of a group in the class of CSA and BP groups is again in this class,  to use transfinite induction show that an ICE of such a group remains in the class, and to conclude that $G^{\BZ[t]}$ is fully residually $G$. These ideas were taken further in \cite{KM12} in the study of the $\BZ[t]$-completions of   
 torsion-free toral relatively hyperbolic groups.

The second main step of the proof is to show that any limit group over $G$ embeds into $G^{\BZ[t]}$. In order to prove this statement, one needs to understand the structure of limit groups over $G$. In the case of free groups, this was achieved by Kharlampovich and Miasnikov in \cite{MR95}, using the Makanin-Razborov process and by Sela in \cite{Sela1} using actions on real trees.

\medskip

The aim of this paper is to describe $\BZ[t]$-completions for (a class of) right-angled Artin groups, or RAAGs for short, see Definition \ref{defn:raags}. RAAGs form  a prominent class of groups which contains both free and free abelian groups and is widely studied in different branches of mathematics and computer science. We refer the reader to \cite{Charney07,DKR4} for further details.

In general, the subgroup structure of RAAGs is very complex.
  For example, Bestvina and Brady \cite{BestvinaBrady}  showed how to construct a subgroup of a RAAG to give  the first example of a group which is homologically
  finite (of type $FP$) but
  not geometrically finite (in fact not of type $F_2$). Again, Mihailova's example \cite{Mihailova1968} of a group with unsolvable subgroup membership problem is constructed from
  the RAAG $F_2\times F_2$.  More recently results of Wise and others (see e.g. \cite{W}) led to Agol's proof of the virtual Haken conjecture: that every hyperbolic Haken $3$-manifold
is virtually fibred.  An essential  step in the argument uses the result that the fundamental groups of  so-called special cube complexes 
embed into RAAGs.
 If one is to use the $\BZ[t]$-completion as a universe 
for limit groups over a given RAAG and as a tool to study limit groups algorithmically,
it is natural to restrict to a class of RAAGs with tame subgroup structure and good algorithmic behaviour --
in our case, to the class of coherent RAAGs. A RAAG is coherent  if and only if its underlying graph contains no full subgraph isomorphic to an  $n$-cycle, for $n\ge 4$, see \cite{droms87a}. Furthermore, a RAAG is coherent if and only if it satisfies the BP property, see \cite{Blatherwick}.

In this paper, we define a new class of groups $\mathcal{C}$ which contains all toral relatively hyperbolic groups and all coherent RAAGs and prove that it is closed under extensions of centralisers and direct limits. For groups $G$ from this class,
we show that the $\BZ[t]$-completion may be built as an ICE, enabling us to
prove it is fully residually $G$. Thus, we give a general framework for the results of Baumslag, Kharlampovich, Miasnikov and Remeslennikov \cite{MR95,BMR02,KM12} and establish analogous results for coherent RAAGs. More precisely we prove the following theorem.

\begin{thm*}[see Theorem \ref{thm:tensor}]
  Let $G$ be a group from class $\mathcal{C}$ satisfying Condition \ref{cond:R} of Section \ref{sec:nabtensor}. The $\BZ[t]$-completion $G^{\mathbb Z[t]}$ of $G$
  can be built as an ICE.  It is fully residually $G$ and so it has the same universal and existential theory as $G$. 
\end{thm*}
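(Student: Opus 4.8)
The plan is to follow the three-step strategy recalled in the introduction, but now carried out relative to the class $\mathcal{C}$ together with Condition~\ref{cond:R}, rather than to the class of CSA groups with the big powers property.

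\emph{Building the ICE.} First I would adapt the Myasnikov--Remeslennikov description of the $\BZ[t]$-completion. For $G$ as in the hypothesis one builds a chain $G=G_0\le G_1\le\dots\le G_n\le\dots$ in which $G_{n+1}$ is obtained from $G_n$ by extensions of centralisers: a centraliser $C=C_{G_n}(u)$ is replaced by $C\otimes_{\BZ}\BZ[t]$ (or, when $C$ is not abelian, by the refinement of this construction dictated by Condition~\ref{cond:R}), and since $\BZ[t]$ is free abelian of countable rank this replacement is itself a direct limit of the finite-rank centraliser extensions $G_n\ast_{C}(C\times\BZ^{k})$. Hence $\Gamma:=\bigcup_n G_n$ is an ICE in the sense of Definition~\ref{defn:ice}, once transfinite stages and direct limits are allowed. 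I would then verify, by the standard arguments, that $\Gamma$ carries a $\BZ[t]$-exponential structure, that $G$ embeds in it as the base of the tower, and that $\Gamma$ has the universal property of the $\BZ[t]$-completion, so that $\Gamma\cong G^{\BZ[t]}$. For the chain to be well defined and controllable one needs precisely the structural information encoded by Condition~\ref{cond:R}, so the real preliminary work is the closure statements: if $H\in\mathcal{C}$ satisfies Condition~\ref{cond:R}, then every extension of a centraliser of $H$ again lies in $\mathcal{C}$ and satisfies Condition~\ref{cond:R}; and $\mathcal{C}$ (with Condition~\ref{cond:R}) is closed under direct limits. Granting these, a transfinite induction --- base case $G_0=G\in\mathcal{C}$, successor step the centraliser-extension closure, limit step the direct-limit closure --- gives $G_n\in\mathcal{C}$ for all $n$, hence $G^{\BZ[t]}=\Gamma\in\mathcal{C}$.

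\emph{One centraliser extension is fully residually the base.} The crux is the statement: if $H\in\mathcal{C}$ satisfies Condition~\ref{cond:R}, $C=C_H(u)$ and $\widetilde H=H\ast_{C}(C\otimes_{\BZ}\BZ[t])$ (equivalently, the direct limit of the finite-rank extensions $H\ast_{C}(C\times\BZ^{k})$), then $\widetilde H$ is fully residually $H$. I would prove this by the big-powers retraction argument: write each of finitely many elements $w_1,\dots,w_m\in\widetilde H\setminus\{1\}$ in normal form for the amalgamated product $H\ast_{C}(C\otimes_\BZ\BZ[t])$, alternating between $H$-syllables and syllables from $(C\otimes_\BZ\BZ[t])\setminus C$; define a retraction $\widetilde H\to H$ fixing $H$ by sending the new generators to suitably large powers of appropriate elements of $C$; and invoke the big-powers property built into $\mathcal{C}$, together with the control over centralisers and their intersections that the axioms of $\mathcal{C}$ supply, to guarantee that for all sufficiently large exponents the images of the $w_i$ remain reduced, hence nontrivial, with a single choice of exponents serving all $m$ of them simultaneously. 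This is the step I expect to be the main obstacle: it must be carried out using only the defining axioms of $\mathcal{C}$ and Condition~\ref{cond:R} --- not the hyperbolicity or CSA hypotheses used in the earlier literature --- and it requires understanding exactly how the $\otimes\BZ[t]$ construction interacts with the peripheral and centraliser data that the definition of $\mathcal{C}$ keeps track of; together with the closure properties of the previous step, this is where essentially all of the labour lies.

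\emph{Passing to the completion.} Applying the previous step along the tower (a simultaneous extension of several centralisers is handled by the same argument, or by iterating it), I would conclude that $G_{n+1}$ is fully residually $G_n$ for every $n$; composing discriminating homomorphisms then shows $G_n$ is fully residually $G$ for every $n$. Since any finite subset of $G^{\BZ[t]}=\bigcup_n G_n$ lies in some $G_n$, the group $G^{\BZ[t]}$ is fully residually $G$, and in particular the canonical map $G\to G^{\BZ[t]}$ is the injective base inclusion of the tower. Finally, the routine model-theoretic argument yields $\Th_\forall(G)=\Th_\forall(G^{\BZ[t]})$: every universal sentence true in $G^{\BZ[t]}$ is inherited by the substructure $G$; conversely, if a universal sentence fails in $G^{\BZ[t]}$ it fails on some finite tuple, and the finitely many words required to be nontrivial on that tuple can be discriminated into $G$ by a homomorphism $G^{\BZ[t]}\to G$, so the sentence already fails in $G$. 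Taking negations gives $\Th_\exists(G)=\Th_\exists(G^{\BZ[t]})$, which completes the argument.
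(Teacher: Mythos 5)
Your outline matches the paper's strategy essentially step for step: build $G^{\BZ[t]}$ as an ICE (the paper does this via tree extensions of centralisers $G^{(n+1)}=G^{(n)}(\mathfrak C_n,\mathcal H_{A,n},\varPhi_{A,n})$ and their union, relying on Theorem~\ref{thm:lift}, Proposition~\ref{prop:treeext} and Proposition~\ref{prop:dirlim} for the closure of $\cC$ and Condition~\ref{cond:R} under a single extension and under direct limits, and on Lemmas~\ref{lem:Aactiong}--\ref{lem:lem7} for the universal property), and establish discrimination by $G$ via the big-powers retraction argument of Theorem~\ref{thm:lift}\,\ref{it:lift2}, composed along the chain, exactly as you describe. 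The only minor deviation is that the paper never extends non-abelian centralisers at all (Condition~\ref{cond:R}\,\ref{it:R1} guarantees their centres are already full $A$-subgroups), so the clause "when $C$ is not abelian, by the refinement dictated by Condition~\ref{cond:R}" is not needed; otherwise your sketch, including the closing model-theoretic argument, is correct and faithful to the paper.
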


In the case that $G$ is a coherent RAAG, we use the fact that any limit group over a RAAG is a subgroup of a graph tower, see \cite{CRK15}, to conclude that $G^{\mathbb Z[t]}$ is a universe for the class of limit groups over $G$.

\begin{cor*}[see Theorem \ref{thm:charlim}]\
Let $G$ be a coherent RAAG. Then $G^{\mathbb Z[t]}$ can be built as an ICE.  It is fully residually $G$ and so it has the same universal and existential theory as $G$. 

Furthermore, it is a universe for the class of limit groups over $G$, i.e. any finitely generated model of the universal theory of $G$ embeds into  $G^{\mathbb Z[t]}$.
\end{cor*}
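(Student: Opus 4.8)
The first assertion is immediate from Theorem~\ref{thm:tensor}. A coherent RAAG $G$ lies in the class $\cC$ (this is one of the motivating examples for which $\cC$ is introduced, and is checked when $\cC$ is defined), and, as is shown in Section~\ref{sec:nabtensor}, every coherent RAAG satisfies Condition~\ref{cond:R}. Hence Theorem~\ref{thm:tensor} applies verbatim: $G^{\BZ[t]}$ can be built as an ICE, it is fully residually $G$, and therefore has the same universal and existential theory as $G$. What remains is the last sentence, namely that every finitely generated model $L$ of the universal theory of $G$ --- equivalently, every limit group over $G$ --- embeds into $G^{\BZ[t]}$.

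My plan is to reduce this to a statement about graph towers and then induct on height. By \cite{CRK15}, every limit group over a RAAG embeds into a \emph{graph tower} over that RAAG, and conversely a graph tower over $G$ is itself fully residually $G$, so it suffices to prove that every graph tower $\mathcal{T}$ over a coherent RAAG $G$ embeds into $G^{\BZ[t]}$. I would argue by induction on the height of $\mathcal{T}$. When the height is $0$ this is the canonical embedding $G\hookrightarrow G^{\BZ[t]}$ produced by the ICE construction of the first assertion. For the inductive step, present $\mathcal{T}$ as obtained from a graph tower $\mathcal{T}'$ of smaller height by attaching a single new top floor, and fix by induction an embedding $\iota\colon\mathcal{T}'\hookrightarrow G^{\BZ[t]}$. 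The new floor is assembled, over the image of $\mathcal{T}'$, from finitely many elementary blocks: \emph{abelian} blocks (an extension of a centraliser of $\mathcal{T}'$ by a finitely generated free abelian group), \emph{quadratic} blocks (a surface-type group amalgamated along its boundary into $\mathcal{T}'$), and \emph{rigid}/coordinate blocks (free products with free or partially commutative groups). It is thus enough to realise each such elementary extension inside $G^{\BZ[t]}$, extending $\iota$, and then glue.

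Each block type is handled by its own construction inside $G^{\BZ[t]}$. An abelian block is absorbed using the $\BZ[t]$-structure: for $g\in\mathcal{T}'$ the centraliser of $\iota(g)$ in $G^{\BZ[t]}$ contains the $\BZ[t]$-submodule generated by $\iota(g)$, so $\iota(g)^{t},\iota(g)^{t^{2}},\dots$ are $\BZ$-independent and the new $\BZ^{k}$ can be mapped into their span; that the resulting amalgam over the centraliser embeds follows from $G^{\BZ[t]}$ being fully residually $G$ (Theorem~\ref{thm:tensor}) together with the structural hypotheses defining $\cC$. A rigid/free block is realised by exhibiting, inside $G^{\BZ[t]}$, free or partially commutative subgroups in sufficiently general position relative to $\iota(\mathcal{T}')$, which is possible because $G$ is coherent and so has the BP property, a property that passes to $G^{\BZ[t]}$ and lets the relevant free products over the appropriate edge subgroups embed. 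A quadratic block is realised by producing a \emph{discriminating} (generic) solution in $G^{\BZ[t]}$ of the quadratic equation defining the block whose boundary values agree with the prescribed conjugates of elements of $\iota(\mathcal{T}')$; such a solution exists because $G^{\BZ[t]}$ is fully residually $G$ and the boundary data can be placed in general position by replacing boundary subwords with suitably large powers, following the torsion-free hyperbolic and toral relatively hyperbolic cases in \cite{BMR02,KM12}, themselves modelled on \cite{MR95}.

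I expect the quadratic blocks to be the main obstacle. Concretely, one must show that the NTQ-type group obtained by attaching a quadratic block to an already-embedded floor embeds back into $G^{\BZ[t]}$, i.e.\ that non-degenerate quadratic equations over $G^{\BZ[t]}$ have discriminating solutions within $G^{\BZ[t]}$. Over a free group this is the theorem of Kharlampovich and Miasnikov; here it has to be re-proved in the presence of partial commutation, using the explicit ICE description of $G^{\BZ[t]}$ from Theorem~\ref{thm:tensor} and the coherence of $G$ to keep the auxiliary free subgroups and centralisers in general position. A secondary difficulty is the bookkeeping of the gluing: one must check that the maps identifying a block with the corresponding subgroup of the previous floor stay injective once pushed into $G^{\BZ[t]}$ --- this is where the conditions defining $\cC$, controlling centralisers and the Bass--Serre structure of the blocks, are used --- and that the subgroup of $G^{\BZ[t]}$ generated by all the blocks is finitely generated and isomorphic to $\mathcal{T}$, which closes the induction and yields the embedding of the original limit group $L$.
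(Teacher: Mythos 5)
Your outline matches the paper's skeleton — Theorem~\ref{thm:tensor} for the first half, reduction to graph towers via \cite{CRK15} and induction on height for the second — but there is a genuine gap at the quadratic step, and it is worth being precise about where your route diverges from what actually works.

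First, the case analysis is off. You describe the inductive step as decomposing the new floor into \emph{abelian}, \emph{quadratic}, and \emph{rigid/coordinate} blocks, the last consisting of free products with free or partially commutative groups. The paper instead invokes Lemma~\ref{lem:pctowerC} to \emph{normalise} the tower: the non-abelian-centraliser floors (types a1) and a2) with non-abelian centraliser, b2)) are either absorbed into the base by replacing $\GG$ with a larger coherent RAAG $\GG'$, or commuted downward, so that every remaining floor is of type a2) with abelian centraliser, of type b1), or of type c). There are no residual ``rigid'' blocks in general position to handle, and the abelian cases a2)/b1) are literally amalgamations of $G_{l-1}$ with a finitely generated free abelian group over a finitely generated free abelian edge group, which is a centraliser extension --- one does not need any ``general position'' argument, only the fact (Lemma~\ref{lem:Aactiong}) that centralisers in $G^{\BZ[t]}$ of elements in an already-embedded floor are $\BZ[t]$-modules and in particular contain enough independent elements.

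Second, and more seriously, your treatment of the quadratic (type c)) floor would, by your own admission, require re-proving a Kharlampovich--Miasnikov-style result about discriminating solutions of quadratic equations over $G^{\BZ[t]}$ ``in the presence of partial commutation.'' The paper never proves, and never needs, such a result. It uses two facts you do not mention: (a) a topological reduction — attach an $(m{+}1)$-punctured sphere to the surface underlying $W$ so that the surface piece $\Sigma$ has a \emph{single} boundary component, which turns the edge group of the decomposition into the genuine centraliser $C_{G_{l-1}}(t)$ of a single element $t=u_{2g+1}\cdots u_m$ (this is \eqref{eq:typec}); and (b) the purely algebraic embedding of \cite[Lemma~3.2]{KM12}: since $C_{G_{l-1}}(t)$ is a maximal abelian subgroup, the amalgam $G_{l-1}\ast_{C}(\Sigma\times O_{G_{l-1}}(t))$ embeds into the abelian centraliser extension $T^{*}=G_{l-1}\ast_{C}(C\times\la y\ra)$ via $s\mapsto(\rho(s))^{y}$, where $\rho$ is the retraction onto $G_{l-1}$. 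This converts the entire type c) floor into a floor of type a2) with abelian centraliser and closes the induction. Your proposed route — exhibiting a generic solution of the quadratic equation inside $G^{\BZ[t]}$ with prescribed boundary data — is a substantially harder open-ended task, and the fact that $G^{\BZ[t]}$ is fully residually $G$ does not by itself supply such a solution; that is exactly the step you cannot complete. So the proposal is not a proof as it stands: the key idea it is missing is the reduction to one boundary component together with \cite[Lemma~3.2]{KM12}.
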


To prove our main result, we follow the strategy that we have sketched for hyperbolic groups. In that case, the authors consider the class of BP and CSA groups. The CSA property has strong structural consequences, namely in such groups commutation is transitive and so the centralisers (of non-trivial elements) are abelian. The fact that centralisers are abelian (in fact, in the cases considered, they are f.g. free abelian) is essential to showing that a countable sequence of extensions of a centraliser admits a $\mathbb Z[t]$-action, then, using the malnormality of  abelian subgroups, one extends the $\mathbb Z[t]$-action, in a consistent way, from the abelian subgroups to the whole group;
thereby establishing that indeed the resulting limit is the $\BZ[t]$-completion of $G$.
The BP property is needed to show that extensions of centralisers are fully residually $G$.

In our case, we define the class $\mathcal C$ to contain groups that are torsion-free and  BP,
and such that the algebraic structure of centralisers of elements, and their intersections,
has a clear description, see Definition \ref{defn:C}. 
The main technical work is to show that the class $\cC$ is indeed closed under extension of centralisers,
see Theorem \ref{thm:lift}. We then prove that a group obtained as an ICE admits a
$\mathbb Z[t]$-action and so is the group $G^{\BZ[t]}$, see Theorem \ref{thm:tensor}.

In order to show that the group $G^{\BZ[t]}$ is a universe for the class of limit groups over $G$, we use the structure
theorem for limit groups proved in \cite{CRK15}, where
the authors show that any limit group over a RAAG $G$ is a subgroup of a \emph{graph tower} over $G$. In the case of coherent RAAGs, we provide a neat description of graph towers which is key to show that they embed into the completion $G^{\BZ[t]}$. More precisely, we prove that any graph tower can be obtained as a free product with amalgamation over a free abelian group, where one of the vertices is a tower of lower height and the other vertex group is either free abelian, or the direct product of a free abelian group and a fundamental group of a non-exceptional surface, and where the free abelian factor is contained in the edge group. The base group is a coherent RAAG $G'$ which is obtained from $G$ by extending centralisers of canonical generators of $G$,
see Lemma \ref{lem:prtower}.

In Section \ref{sec:tower}, we show that graph towers over coherent RAAGs belong to the class $\mathcal C$ and use this fact to prove the following structural results:

\begin{thm*}[see Theorem \ref{thm:abelianedge}] Let $\GG$ be a coherent RAAG and let $\Ts=(G,\HH,\pi)$ be a graph tower associated to a limit group $L$ over $\GG$.
  Then $G$ has a graph of groups decomposition {\rm(}in the same generating set as $G$ {\rm)} where:
\begin{itemize}
\item the graph of the decomposition is a tree;
\item edge groups are finitely generated free abelian;
\item vertex groups are either graph towers of lower height, or a finitely generated free abelian group or the direct product of a  finitely generated free abelian group and a non-exceptional surface group.
\end{itemize}
\end{thm*}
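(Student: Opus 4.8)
The plan is to read the decomposition off the structural description of graph towers recorded in Lemma \ref{lem:prtower} and then to upgrade it to the precise form of the statement by using that graph towers over coherent RAAGs lie in the class $\cC$ (which controls centralisers and their intersections, hence the edge groups) and by tracking the canonical generating set of the tower through the construction. Recall that a graph tower $\Ts=(G,\HH,\pi)$ comes equipped with a preferred generating set $X$ of $G$, namely the canonical generators of the base coherent RAAG together with the finitely many new generators introduced at each floor. By Lemma \ref{lem:prtower}, $G$ splits as $G=A*_C B$, where $C$ is free abelian, $B$ is either free abelian or the direct product of a free abelian group and the fundamental group of a non-exceptional surface, the free abelian factor of $B$ lies in $C$, and $A$ is a graph tower of strictly smaller height, associated to a limit group over the coherent RAAG $G'$ obtained from the base of $\Ts$ by extending centralisers of its canonical generators. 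This already exhibits $G$ as the fundamental group of a graph of groups whose underlying graph is a single edge — hence a tree — with abelian edge group $C$, one vertex group $A$ that is a graph tower of lower height, and one vertex group $B$ of the permitted type. When the tower carries several independent floors at its top level, I would iterate: each such floor contributes one leaf $B_i$ glued along a single edge $C_i$ to the remaining lower tower, so that peeling all of them yields a star whose centre is a graph tower of lower height and whose leaves are the $B_i$; the underlying graph remains a tree and the edge and vertex groups retain the required shape.

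Next I would pin down finite generation and compatibility with the generating set. Finite generation of the abelian vertex groups and of the edge groups $C_i$ is built into the tower construction (abelian vertex groups are finitely generated free abelian by definition of a floor, surface groups are finitely generated, and each $C_i$ is a finitely generated subgroup of the abelian vertex group containing it); here the membership of graph towers in $\cC$, established in Section \ref{sec:tower}, is what guarantees that these centraliser-type subgroups and their intersections really are finitely generated free abelian and not something wilder. For the clause ``in the same generating set as $G$'', one uses the explicit form of Lemma \ref{lem:prtower}: $B$ is generated by (a subset of) the new generators of the corresponding top floor together with $C$, $A$ is generated by the remaining elements of $X$, and $C$ is generated by the overlap of these two generating sets (after a harmless Tietze adjustment inside the abelian edge group, by a finite subset thereof). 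In the iterated version the same bookkeeping is carried out floor by floor, the floors being built over disjoint sets of new generators.

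The main obstacle I anticipate is precisely the gluing in the iterated step: one must verify that each edge group $C_i$ is elliptic — indeed conjugate into a single vertex group — in the decomposition of the lower tower supplied by the inductive hypothesis, so that attaching the leaf $B_i$ genuinely produces a graph of groups over a \emph{tree} rather than merely over a graph with loops, and one must check that this refinement forces in no generators outside $X$. Both points follow from the neat description of the floors of a graph tower over a coherent RAAG (Lemma \ref{lem:prtower} together with \cite{CRK15}): each $C_i$ is a centraliser-type subgroup and is conjugate into the vertex group carrying the matching canonical generators, and it is here — rather than in the general theory of graph towers — that the coherence of $\GG$ is genuinely used, via the description of centralisers guaranteed by membership in $\cC$. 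Once ellipticity and the generating-set bookkeeping are in place, the three asserted properties (tree, finitely generated free abelian edge groups, vertex groups being lower towers or finitely generated free abelian or finitely generated free abelian times a non-exceptional surface group) are immediate.
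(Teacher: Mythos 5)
Your first paragraph captures the paper's argument essentially correctly: after invoking the fact that graph towers over coherent RAAGs lie in $\cC$ and (via Lemma~\ref{lem:pctowerC}) can be arranged so that every floor above a coherent-RAAG base is of type a2) with abelian amalgamated centraliser, b1), or c), the one-edge splitting $G=G_{l-1}*_{C}B$ from Lemma~\ref{lem:prtower} (and, for type c), equation~\eqref{eq:typec}) is exactly the required decomposition, with $G_{l-1}$ a graph tower of lower height. One small logical wrinkle in your write-up: Lemma~\ref{lem:prtower} alone does not give you that $C$ is free abelian (in cases a2) and b1) the amalgamated centraliser could a priori be non-abelian); that abelianness comes from the reduction in Lemma~\ref{lem:pctowerC} combined with Lemma~\ref{lem:abedg}, and it would be worth making that chain explicit rather than attributing it to Lemma~\ref{lem:prtower} directly.

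The second and third paragraphs, however, address a problem that does not arise. By Definition~\ref{def:tower} a graph tower is a \emph{linear} sequence $\Ts_0,\ldots,\Ts_l$: at each step exactly one floor is added, so there is no situation with ``several independent floors at its top level.'' Moreover, the conclusion of the theorem explicitly allows ``graph towers of lower height'' as vertex groups, so there is no need to iterate the peeling at all — the single-edge splitting \emph{is} the final decomposition in the inductive step, and no tree-refinement or ellipticity check is required. The paper's proof reflects this: it is a direct induction on height where the inductive step is just Lemma~\ref{lem:prtower}/\eqref{eq:typec}. What your proposal does omit, and the paper handles, is the base case $l=0$: for $G=\GG$ a coherent RAAG there is no lower tower to fall back on, and one instead uses Droms' result \cite{droms87a} that a coherent RAAG admits a tree-of-groups decomposition with finitely generated free abelian vertex and edge groups. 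Apart from that missing base case and the phantom iteration step, your route agrees with the paper's.
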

This result is independent of the  first Theorem stated above (Theorem \ref{thm:tensor})
and it is of interest in its own right. For instance we can deduce that 

\begin{cor*}[see Corollary \ref{cor:limit groups are fp}]
 Limit groups over coherent RAAGs are coherent, and so, in particular, finitely presented.
\end{cor*}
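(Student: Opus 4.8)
The plan is to prove coherence by induction on the height of a graph tower, using the graph-of-groups decomposition furnished by Theorem~\ref{thm:abelianedge}. Recall that a limit group $L$ over a coherent RAAG $\GG$ embeds into a graph tower $\Ts=(G,\HH,\pi)$, so it suffices to prove that every graph tower over a coherent RAAG is coherent; coherence passes to finitely generated subgroups, so $L$ itself is then coherent. Since finitely presented groups that are coherent are, tautologically, finitely presented once one knows they are finitely generated, and graph towers are finitely generated by construction, the ``in particular'' clause follows from coherence together with finite generation (coherence says every finitely generated subgroup is finitely presented, and $L$ is a finitely generated subgroup of itself).

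First I would set up the induction: the base case is a coherent RAAG $G'$ obtained from $\GG$ by extending centralisers of canonical generators, which one must observe is again a coherent RAAG (this is the content of Lemma~\ref{lem:prtower}, or follows from the description of centralisers in coherent RAAGs as direct factors of the form $\BZ^k \times (\text{parabolic})$, so that an extension of such a centraliser is again a graph product over a chordal graph), and coherent RAAGs are coherent by Droms' theorem \cite{droms87a}. For the inductive step, by Theorem~\ref{thm:abelianedge} the tower $G$ of height $n$ decomposes as a finite tree of groups in which every edge group is finitely generated free abelian and every vertex group is either a tower of height $<n$ (coherent by induction), or a finitely generated free abelian group (coherent), or the direct product of a finitely generated free abelian group with a non-exceptional surface group (coherent, since surface groups are coherent, free abelian groups are coherent, and a direct product of coherent groups, at least when the factors are of this shape, is coherent; in fact $\BZ^k\times S$ for $S$ a surface group is itself the fundamental group of a graph of groups with surface-group and free-abelian pieces and one can feed it into the same machine, or simply note it is a $3$-manifold-like group known to be coherent). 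So all vertex and edge groups are coherent and the edge groups are free abelian, hence in particular the edge groups are Noetherian (every subgroup is finitely generated).

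The heart of the argument is then a combination theorem: a group that splits as a \emph{finite} graph of groups with coherent vertex groups and \emph{polycyclic} (here: finitely generated free abelian) edge groups is coherent. This is a known result — it is essentially due to the fact that free abelian edge groups are slender/Noetherian, so that the fundamental group of the graph of groups acts on the Bass--Serre tree with finitely generated edge stabilisers, and one can run the standard argument (as in, e.g., work on coherence of graphs of groups with polycyclic edge groups, going back to ideas around the Karrass--Solitar and Scott arguments): given a finitely generated subgroup $H \le G$, intersect it with the Bass--Serre tree action, pass to the induced graph-of-groups decomposition of $H$, whose edge groups are subgroups of free abelian groups (hence finitely generated) and whose vertex groups are finitely generated subgroups of the coherent $G_v$ (hence finitely presented), and conclude that $H$ is the fundamental group of a finite graph of finitely presented groups with finitely presented edge groups, hence finitely presented. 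The one subtlety requiring care is that the induced decomposition of $H$ has \emph{finitely many} vertices and edges: this uses that the tree of the decomposition of $G$ is finite and that $H$ is finitely generated, so $H\backslash T$ has finitely many cells after one checks the edge groups are finitely generated (Noetherianity of free abelian groups closes the usual accessibility loop here).

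The main obstacle I anticipate is precisely this last accessibility/finiteness point: one must ensure that the induced splitting of a finitely generated subgroup $H$ is itself finite, i.e.\ that $H$ does not ``see'' the Bass--Serre tree in an infinitely-complicated way. For general graphs of groups this can fail, but with finitely generated free abelian (in particular Noetherian, slender) edge groups it goes through by the standard argument — the key lemma being that if $H$ is finitely generated and acts on a tree with finitely generated edge stabilisers and the quotient graph of $G$ is finite, then $H\backslash T$ is finite. I would state this as a lemma, cite the appropriate source (e.g.\ the treatment of coherence of fundamental groups of graphs of groups with slender edge groups), and apply it. Everything else — coherence of surface groups, of free abelian groups, of $\BZ^k\times(\text{surface group})$, and the passage of coherence to finitely generated subgroups — is routine and can be dispatched with citations. $\qed$
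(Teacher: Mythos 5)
Your proposal is correct and follows essentially the same route as the paper: reduce to graph towers via Remark~\ref{rem:prop_towers}, induct on the height using the tree-of-groups decomposition of Theorem~\ref{thm:abelianedge} with free-abelian edge groups, handle the new vertex group (an abelian-by-coherent direct product) via Lemma~\ref{lem:coh}, and close with the combination theorem for amalgams of coherent groups over finitely generated abelian subgroups. The only stylistic difference is that you sketch a proof of that combination theorem (slenderness of the edge groups giving finiteness of the induced splitting), whereas the paper simply cites \cite[Lemma~4.8]{Wilton08}.
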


In Section \ref{sec:characterisation subgroups ICE}, we use the structure of the graph tower over a coherent RAAG $\GG$ to show that they embed into $\GG^{\mathbb Z[t]}$ and obtain a new characterisation of limit groups over coherent RAAGs - they are precisely finitely generated subgroup of $\GG^{\mathbb Z[t]}$.

Finally, in Section \ref{sec:AB}
we follow the approach of Alibegovi\v{c}-Bestvina, see \cite{AB06}, to prove the following corollary.

\begin{cor*}[see Corollary \ref{cor:cat0}]
  Limit groups over coherent RAAGs are CAT$(0)$ groups. 
\end{cor*}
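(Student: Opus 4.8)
The plan is to prove the stronger statement that every graph tower over a coherent RAAG is a $\CAT(0)$ group, by induction on its height, and then transfer the conclusion to limit groups; the argument follows the strategy of Alibegovi\v{c}--Bestvina \cite{AB06}. The base of the induction is a coherent RAAG $\GG$ itself, which acts geometrically on the $\CAT(0)$ cube complex obtained as the universal cover of its Salvetti complex. For the inductive step, let $\Ts=(G,\HH,\pi)$ be a graph tower associated to a limit group over $\GG$. By Theorem \ref{thm:abelianedge}, $G$ is the fundamental group of a graph of groups over a tree, with finitely generated free abelian edge groups and vertex groups that are either graph towers of strictly smaller height --- hence $\CAT(0)$ by the inductive hypothesis --- or finitely generated free abelian, or a direct product of a finitely generated free abelian group with a non-exceptional surface group; the last two are $\CAT(0)$ because $\BZ^n$ acts geometrically on $\BR^n$, a non-exceptional surface group acts geometrically on a convex subset of $\HH^2$ (a $\CAT(-1)$, and hence $\CAT(0)$, space) realised as a hyperbolic surface with geodesic boundary, and finite products of $\CAT(0)$ groups are $\CAT(0)$.

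To produce a $\CAT(0)$ space on which $G$ acts geometrically, I would build a tree of spaces over the Bass--Serre tree of the above decomposition: over each vertex place a $\CAT(0)$ model of the corresponding vertex group, over each edge a Euclidean flat of the appropriate dimension, and glue equivariantly. The relevant geometric input is the flat torus theorem: since $G$ acts geometrically --- hence by semisimple isometries --- on each vertex space $X$, a finitely generated free abelian subgroup $C\cong\BZ^k$ has nonempty, closed, convex minimal set $\mathrm{Min}(C)$, which splits isometrically as $Y\times\BR^k$ with $C$ acting by translations on the $\BR^k$ factor; hence $C$ acts cocompactly on an isometrically embedded convex $k$-flat in $X$. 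The gluing theorems for $\CAT(0)$ spaces (see \cite{AB06} and the references therein) then show that gluing $\CAT(0)$ vertex spaces along $C$-equivariantly isometric closed convex flats yields a $\CAT(0)$ space, and that the resulting tree of spaces is $\CAT(0)$ and carries a proper cocompact isometric action of $G$. Since a limit group $L$ over $\GG$ embeds into such a graph tower by \cite{CRK15}, applying Bass--Serre theory to this embedding equips $L$ with a graph of groups decomposition of the same type --- finitely generated free abelian edge groups and vertex groups that are, up to the inductive hypothesis, $\CAT(0)$ --- so the same construction proves that $L$ is $\CAT(0)$.

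The main obstacle is the compatibility of the flats across each edge. If the edge group $C\cong\BZ^k$ acts on its two incident convex $k$-flats by lattices of translations $\Lambda_1,\Lambda_2\subset\BR^k$, then a $C$-equivariant isometry between them exists precisely when $\Lambda_1$ and $\Lambda_2$ are isometric Euclidean lattices, whereas a priori they are only abstractly isomorphic, related by some element of $\mathrm{GL}_k(\BR)$. Overcoming this mismatch is the technical heart of the Alibegovi\v{c}--Bestvina method, and it requires using the flexibility available in the $\CAT(0)$ structures to prescribe the shapes of these flats: one may rescale the Euclidean factors attached to free abelian vertex groups, assign arbitrary positive lengths to the generators of the $\CAT(0)$ cube complexes attached to the RAAG pieces (the nonpositive curvature condition being a combinatorial property of links, hence insensitive to the choice of lengths), and vary the lengths of the boundary geodesics in the hyperbolic structures on the surface pieces. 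Propagating this through the construction forces one to strengthen the inductive hypothesis accordingly --- a graph tower over a coherent RAAG acts geometrically on a $\CAT(0)$ space in which each of the finitely many relevant free abelian subgroups acts cocompactly on a convex flat of controllable shape --- and the bookkeeping needed to maintain this condition along the tree of groups and up the height of the tower is where the real work lies.
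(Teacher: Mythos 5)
Your plan of showing that graph towers (or finite ICEs) are $\CAT(0)$ and then transferring to limit groups is close in spirit to what the paper does, and your flat-compatibility discussion is a real technical issue that Alibegovi\v{c}--Bestvina do address. However, there is a genuine gap in the transfer step, and it is precisely the gap that the notion the paper actually imports from \cite{AB06} is designed to close.

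The crux is that $\CAT(0)$-ness is not inherited by finitely generated subgroups, so proving that the graph tower $G$ is $\CAT(0)$ does not on its own say anything about a finitely generated $L\le G$. You attempt to bridge this by restricting the graph-of-groups decomposition of $G$ to $L$ via Bass--Serre theory and asserting that $L$ acquires ``a decomposition of the same type, so the same construction applies.'' That assertion does not hold as stated. First, the vertex groups of the induced decomposition are intersections $L\cap G_v^g$; it is not automatic that these are finitely generated (the paper needs to first establish coherence of towers, Corollary \ref{cor:towers are coherent}, and even then one would have to track which vertex types one lands in). Second, and more seriously, even a finite graph of groups with $\CAT(0)$ vertex groups and finitely generated free abelian edge groups need not have $\CAT(0)$ fundamental group: one needs the edge groups to act cocompactly on \emph{convex} flats in the vertex spaces, and this convexity is a property of the ambient space $G$, not something you can read off the abstract decomposition of the subgroup $L$. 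So the ``same construction'' does not obviously run for $L$.

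The paper avoids this entirely by invoking the central device of \cite{AB06}, which your write-up names but does not actually use: \emph{geometric coherence} (Definition \ref{def:gc}). A locally $\CAT(0)$ space $Y$ is geometrically coherent if every connected cover with finitely generated fundamental group contains a compact, locally $\CAT(0)$ core; the payoff is precisely that every finitely generated subgroup of $\pi_1(Y)$ is then $\CAT(0)$ (and finitely presented). Theorem \ref{thm:BA}, quoted from \cite{AB06}, says that gluing geometrically coherent spaces along flat tori (as you do in your tree-of-spaces picture) preserves geometric coherence, and this theorem also absorbs the flat-shape bookkeeping you identify as ``the real work.'' The paper then shows (Lemma \ref{lem:salvraag}, Proposition \ref{prop:icecoherent}) that Salvetti complexes of coherent RAAGs and finite ICEs thereof are geometrically coherent, and concludes Corollary \ref{cor:cat0} by embedding $L$ in a finite ICE and appealing to geometric coherence for the subgroup descent. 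In short: you need to strengthen your inductive claim from ``the tower is $\CAT(0)$'' to ``the tower is the fundamental group of a geometrically coherent locally $\CAT(0)$ space,'' and it is this stronger statement, not a Bass--Serre restriction, that lets the conclusion pass to the limit group.
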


One may consider the relation with other generalisations of hyperbolicity such as acylindrically hyperbolicity or hierarchical hyperbolicity, see \cite{Osin16} and \cite{behrstock2017}. A RAAG is acylindrically hyperbolic if and only if it is directly indecomposable and not cyclic,
see \cite{CapraceSageev11,Osin16}.
From the structure of limit groups over coherent RAAGs, see Lemma \ref{lem:prtower}, and results from \cite{MinasyanOsin15},
it follows that limit groups over directly indecomposable,  non-cyclic coherent RAAGs are acylindrically hyperbolic. On the other hand, all RAAGs are hierarchically hyperbolic groups (HHG for short)
and one would expect this also to be true of towers over RAAGs.
In general, finitely generated subgroups of RAAGs need not be HHGs
but we expect subgroups of coherent RAAGs, and limit groups over them, to be HHGs. More precisely we ask the following question. 

\begin{question}
Are limit groups over coherent RAAGs hierarchically hyperbolic?
\end{question}

In fact, D.~Wise \cite{W} showed that limit groups have a quasiconvex hierarchy,  and thence deduced
that they are virtually compact special. Similarly, we expect a positive answer to the following. 

\begin{question}
Are limit groups over coherent RAAGs virtually (compact) special?
\end{question}

Notice that virtually compact special groups are hierarchically hyperbolic \cite{behrstock2017}. 

\smallskip

Since RAAGs are linear so are limit groups over them, see \cite[Proposition 8]{BMR} and \cite{Mal40}.
In particular the word problem is decidable for limit groups over RAAGs \cite{Rabin}.
Furthermore, from the fact that limit groups over coherent RAAGs are $CAT(0)$, we deduce the following.

\begin{cor*}[see Corollary \ref{cor:cat0}]
Limit groups over coherent RAAGs have decidable word and conjugacy problems.
\end{cor*}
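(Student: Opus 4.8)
The plan is to derive both assertions directly from Corollary \ref{cor:cat0} together with standard facts about groups acting geometrically on CAT$(0)$ spaces; indeed, as already remarked above, decidability of the word problem also follows from linearity, so the substantive new content is really the conjugacy problem. Fix a limit group $L$ over a coherent RAAG. By Corollary \ref{cor:cat0}, $L$ acts properly and cocompactly by isometries on a CAT$(0)$ space $X$; choose a basepoint $x_0\in X$ once and for all.

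For the word problem, one notes that, by the Milnor--\v{S}varc lemma, $L$ is finitely presented and the orbit map $g\mapsto gx_0$ is a quasi-isometry $L\to X$; convexity of the CAT$(0)$ metric then yields, in the usual way (cf.\ Bridson--Haefliger), a quadratic isoperimetric inequality for $L$, whence the word problem is solvable: to test whether a word $w$ of length $n$ represents the identity it suffices to enumerate all products of at most $Cn^{2}$ conjugates of the finitely many defining relators and check whether $w$ occurs among them.

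For the conjugacy problem, the key point is that the geodesic combing of $X$ pulls back along the orbit map to a bicombing of $L$, the two-sided fellow-traveller property being a direct consequence of CAT$(0)$ convexity together with cocompactness and properness. Hence $L$ is semihyperbolic in the sense of Alonso--Bridson, and one then invokes their theorem that semihyperbolic groups have solvable conjugacy problem (and word problem, giving a second route to the previous paragraph): for a fixed $u$ the combing lets one replace the set of conjugates of $u$ lying in the ball of radius $n$ by an effectively computable finite set, which is then compared with $v$.

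The one genuinely delicate point is the conjugacy problem: its solvability does \emph{not} follow formally from finite presentability plus a quadratic Dehn function, so one must really exploit the combing structure furnished by CAT$(0)$ geometry rather than the isoperimetric inequality alone — recall that biautomaticity, which would also suffice, remains open for arbitrary cocompact CAT$(0)$ groups. This combing structure is, however, available the moment one has a geometric action on a CAT$(0)$ space, which is exactly what Corollary \ref{cor:cat0} supplies; so the substantive work has already been carried out there, and what remains here is essentially citation.
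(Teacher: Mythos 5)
Your proof is correct and takes essentially the same route as the paper: the substantive content is Corollary \ref{cor:cat0} establishing that $L$ is a cocompact CAT$(0)$ group, and decidability of the conjugacy problem then follows from the standard theorem on CAT$(0)$ groups --- the paper cites \cite[III.$\Gamma$.1, Theorem 1.12]{BridsonHaefliger}, whose proof is exactly the bicombing/semihyperbolicity argument you sketch. The only minor divergence is that the paper's stated route to the word problem is linearity of limit groups over RAAGs plus Rabin's theorem rather than the quadratic isoperimetric inequality, but you note the linearity route yourself and both are standard consequences.
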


As we mentioned, we expect that the embedding of limit groups in $\BZ[t]$-completions
will be useful in addressing algorithmic problems and establishing  residual properties. For instance, we ask the following.

\begin{question}\
\
Are the following statements true? 
\begin{itemize}
\item Quasi-convex subgroups of graph towers over (coherent) RAAGs are subgroup separable.
(Notice that we consider the metric induced by the canonical generators of the graph tower (instead of the $CAT(0)$ metric). In particular, this metric induces  the $\ell_1$-metric on abelian subgroups. Algebraically, we consider subgroups $H$ such that if $h=h_1\cdots h_k \in H$ and $h_i$ are pairwise commuting blocks, then $h_i^{k_i} \in H$ for some $k_i\in \mathbb N$ and for all $i=1, \dots, k$.)
\item Cyclic subgroups of limit groups over (coherent) RAAGs are closed in the profinite topology.\\
  (Since this paper was  submitted, Fruchter \cite{fruchter} has shown, using the class $\cC$ introduced  below, that this statement is true.)
\item Limit groups over coherent RAAGs are conjugacy separable.
\item The subgroup membership problem in limit groups over coherent RAAGs is decidable.
\item The isomorphism problem in the class of limit groups over coherent RAAGs is decidable.
\end{itemize}
\end{question}

\paragraph{\textbf{Acknowledgements.}}
Support for this work comes from a Basque Government Grant (no. IT1483-22) and Spanish Government grants (nos. PID2019-107444GA-I00 and PID2020-117281GB-I00).

The authors are grateful to the anonymous referees for careful and constructive comments which prompted significant improvements to the exposition of results.

\section{Preamble}
In this section we recall some basic definitions and properties of RAAGs, see \cite{DKR5, CRK08} for further details. 
\subsection{Right-angled Artin groups}
\begin{defn}\label{defn:raags} \label{defn:raag}   
  Let $\G$ be a finite simple (no loops, no multiple edges) graph with vertices $X$ and edge set $E$. The \emph{right-angled Artin group} (RAAG)
  $\GG(\G)$ with \emph{commutation graph} $\G$ is the group with presentation $\langle X\,|\, R\rangle$, where $R=\{[a,b]\,:\,\{a,b\}\in E\rangle$.
\end{defn}

If $Y$ is a subset of $X$, denote by $\G(Y)$ the full subgraph of $\G$ with vertices $Y$.  Then $\GG(\G(Y))$ is the RAAG with graph $\G(Y)$. One can show that $\GG(\G(Y))$ is the subgroup $\la Y \ra$ of $\GG(\G)$ generated by $Y$. We call $\GG(\G(Y))$ a {\em canonical parabolic} subgroup of $\GG(\G)$ and, when no ambiguity arises, denote it $\GG(Y)$.
The elements of $Y$ are termed the {\em canonical} generators of $\GG(Y)$.

A subgroup $P$ of  $\GG$ is called {\em parabolic} if it is conjugate to a canonical parabolic subgroup $\GG(Y)$ for some $Y\subseteq X$.

If $w$ is a word in the free group on $X$ then we say that $w$ is \emph{reduced} in $\GG(\Gamma)$ if $w$ has minimal length amongst all words representing the same element of $\GG(\Gamma)$ as $w$.  Notice that reduced words correspond to geodesics in the corresponding Caley graph.

If $w$ is reduced in $\GG(\Gamma)$, then we define $\alpha(w)$ to be the set of elements of $X$ such that $x$ or 
$x^{-1}$ occurs in $w$. It is well-known that all reduced words representing a particular element have the same length, and  that if $w=w'$ in $\GG(\Gamma)$ then $\alpha(w)=\alpha(w')$.  We say that elements  $u$ and $v$ of a RAAG \emph{disjointly} commute if $\alpha(u)\cap \alpha(v)=\emptyset$ and $[u,v]=1$. As shown in \cite{Baudisch77}, if $u$ and $v$ disjointly commute then
$[\alpha(u),\alpha(v)]=1$. 

A reduced word $w$ is \emph{cyclically reduced} in $\GG(\Gamma)$ if the length of $w^2$ is twice the length of $w$.

An element $w\in \GG$ is called \emph{root} of $v\in \GG$ if there exists a positive integer $m\in \mathbb{N}$ such that $v=w^m$ and there does not exist $w'\in \GG$, $1\ne w'$ and $0,1\ne m'\in \mathbb{N}$ such that $w={w'}^{m'}$. In this case we write $w=\sqrt{v}$. By a result from \cite{dk93b}, RAAGs have least (or unique) roots, that is the root element of $v$ is defined uniquely.

The \emph{link} of a vertex $x$ of $\G$ is the set of vertices $\lk(x)$ of $\G$ adjacent to $x$, and the \emph{star}
of a $x$ is $\st(x)=\{x\}\cup \lk(x)$.  For a set of vertices $Y\subseteq V(\Gamma)$ define the link $\lk(Y)$ and star $\st(Y)$ of $Y$ to be intersections of the links and stars, respectively, of all vertices in $Y$. By the link and star of   $w\in \GG$  we mean the link  and star, respectively, of $\alpha(w)\subseteq V(\Gamma)$.

The complement of a  graph $\Gamma$ is the graph $\overline{\Gamma}$ with the same vertex set as $\Gamma$ and an edge joining different vertices $x$ and $x'$ if and only if there is no such edge in $\Gamma$. The RAAG $\GG(\Gamma)$ is said to have \emph{non-commutation  graph} $\overline{\Gamma}$.

Consider cyclically reduced $w \in \GG$ and the set $\alpha(w)$. For this set, consider the graph $\overline{\Gamma} (\alpha(w))$
(that is, the full subgraph of $\overline{\Gamma}$ induced by $\alpha(w)$). If this graph is connected we call $w$ a \emph{block}.

If $\overline{\Gamma} (\alpha(w))$ is not connected, then $w$ is a product of commuting words
\begin{equation} \label{eq:bl}
w= w_{j_1} \cdot w_{j_2} \cdots w_{j_t};\ j_1, \dots, j_t \in J,
\end{equation}
where $|J|$ is the number of connected components of $\overline{\Gamma} (\alpha(w))$ and 
$\alpha(w_{j_i})$ is the set of vertices of the $j_i$-th connected component. Clearly, the words $\{w_{j_1}, \dots, w_{j_t}\}$
pairwise disjointly commute. Each word $w_{j_i}$, $i \in {1, \dots,t}$ is a block and so we refer to presentation (\ref{eq:bl})
as a block normal form of $w$. Notice that a block normal form is unique up to the order of the commuting blocks and the order of letters within the blocks. Therefore, abusing the terminology, we usually refer to ``the'' block normal form.

From \cite{Baudisch77}, if the block normal form of a cyclically reduced element $w$ of $\GG$ is $w=w_1\cdots w_t$ and $\sqrt{w_i}=v_i$ then the centraliser of $w$ is
\begin{equation}\label{eq:raagcr}
  C(w)=\la v_1\ra\times \cdots \la v_t\ra\times \la \lk(w)\ra.
  \end{equation}

\subsection{Coherent RAAGs}
Recall that a group is \emph{coherent} if any finitely generated subgroup is finitely presented. 
In \cite{droms87a}, Droms gives a characterisation of RAAGs that are coherent in terms of the defining commutation graph. We recall this characterisation in the following theorem. A graph is \emph{chordal} if it has no full subgraph isomorphic to a cycle graph of more than $3$ vertices. 
\begin{thm}[Theorem 1, \cite{droms87a}]
A RAAG $\GG(\Gamma)$ defined by the commutation graph $\Gamma$ is coherent if and only $\G$ is chordal. 
\end{thm}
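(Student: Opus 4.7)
The plan is to establish the two directions independently, since they require quite different techniques.

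For the "only if" direction, suppose $\Gamma$ contains a full subgraph isomorphic to an $n$-cycle $C_n$ with $n\ge 4$. Since canonical parabolic subgroups of RAAGs are retracts (and in particular subgroups), it suffices to exhibit a finitely generated subgroup of $\GG(C_n)$ which is not finitely presented. For $n=4$, I would recognize $\GG(C_4)\cong F_2\times F_2$ and invoke Mihailova's classical construction, which produces such a subgroup. For $n\ge 5$, the cleanest modern argument is via Bestvina--Brady theory: the kernel $H_n$ of the homomorphism $\GG(C_n)\to\BZ$ sending each canonical generator to $1$ is finitely generated because $C_n$ is connected, but fails to be finitely presented because the flag complex of $C_n$, which coincides with $C_n$ itself for $n\ge 4$, is not simply connected.

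For the "if" direction, I would argue by induction on $|V(\Gamma)|$. The base case is trivial since $\GG(\Gamma)\cong\BZ$ when $|V(\Gamma)|=1$. For the inductive step, I would invoke Dirac's theorem: any chordal graph with at least two vertices contains a \emph{simplicial} vertex $v$, namely one whose link $\lk(v)$ induces a clique $K$ in $\Gamma$. Set $\Gamma'=\Gamma\setminus\{v\}$; as induced subgraphs of chordal graphs are chordal, $\Gamma'$ is chordal. The decomposition $\Gamma=\Gamma'\cup\st(v)$ with intersection $K$ yields the splitting
\[
\GG(\Gamma)\;\cong\;\GG(\Gamma')\ast_{\GG(K)}\GG(\st(v)),
\]
in which $\GG(\st(v))=\GG(K)\times\la v\ra$ is finitely generated free abelian, $\GG(\Gamma')$ is coherent by the inductive hypothesis, and the edge group $\GG(K)$ is finitely generated free abelian.

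The main obstacle is then closing the inductive loop: one needs that the class of coherent groups is closed under amalgamated free products over finitely generated free abelian subgroups. I would appeal to standard combination results for coherence (in the spirit of Karrass--Solitar) that apply precisely in this setting, exploiting the Noetherian property of the free abelian edge group $\GG(K)$ together with coherence of the two vertex groups. This combination step is the substantive part of the argument; the graph-theoretic reduction via simplicial vertices reduces the problem to a purely group-theoretic assertion about amalgams, which is where the real work lies.
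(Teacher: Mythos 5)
The paper itself does not prove this statement\,---\,it is quoted from Droms \cite{droms87a}\,---\,so the comparison is with Droms' original argument, and your outline follows essentially the same route for the ``if'' direction: decompose a chordal graph along a clique and induct. (You peel off a simplicial vertex via Dirac's theorem, Droms decomposes along a separating clique; either works, and iterating either exhibits $\GG(\Gamma)$ as the fundamental group of a tree of finitely generated free abelian groups, which is exactly how the present paper uses Droms' decomposition in the proof of Lemma \ref{lem:salvraag}.) You are right that the substantive step is the combination theorem: coherence of an amalgam of coherent groups over a finitely generated free abelian subgroup. This is a genuine, citable result\,---\,it is precisely what this paper invokes, via \cite[Lemma 4.8]{Wilton08}, in the proof of Corollary \ref{cor:towers are coherent}, and Droms proves his own version of it\,---\,and your emphasis on the Noetherian property of the edge group is well placed, since coherence is \emph{not} preserved by amalgamation over arbitrary finitely generated subgroups (for instance $F_2\times F_2\cong(F_2\times\BZ)\ast_{F_2}(F_2\times\BZ)$ is incoherent although both vertex groups are coherent). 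So, modulo quoting that lemma rather than proving it, the ``if'' direction is sound and matches the standard proof.

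For the ``only if'' direction your Bestvina--Brady argument \cite{BestvinaBrady} is the modern shortcut (Droms' original necessity argument predates it and proceeds directly), and note that it already covers $n=4$: the flag complex of $C_4$ is $C_4$ itself, connected but not simply connected, so the case split is unnecessary. The only imprecision is the appeal to Mihailova for $n=4$: Mihailova's construction \cite{Mihailova1968} produces a finitely generated subgroup of $F_2\times F_2$ with undecidable membership problem, and the fact that such fibre products fail to be finitely presented is a separate (true) statement requiring, e.g., Grunewald's theorem or Baumslag--Roseblade. The cleaner classical witness to incoherence of $F_2\times F_2$ is the Stallings--Bieri kernel $\ker(F_2\times F_2\to\BZ)$, which is again just the Bestvina--Brady kernel for $C_4$. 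This is an attribution slip rather than a gap, since the underlying fact is correct.
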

Notice that among coherent RAAGs one finds free groups, free abelian groups as well as all RAAGs which are fundamental groups of a three-manifold (see \cite[Theorem 2]{droms87a}).

To motivate the axioms introduced in Section \ref{sec:C} below we give some details of centralisers of elements of coherent RAAGs. Let $\G$ be a (simple) chordal graph and let $\overline{\G}$ be its complement. 
Then, as $\G$ is chordal, $\overline{\G}$ has at most one connected component with more than one vertex. Therefore the block normal form of an element $g$ of the RAAG $\GG(\G)$ has at most one block which is not a power of a generator. Moreover, if $g$ is an element which has a block containing more than one generator, then $\lk(g)$ is a clique (a set of vertices spanning a complete subgraph). Indeed, if $\lk(g)$ is not a clique, it contains two non-adjacent vertices, say $u$ and $v$. Since we assume that $g$ has a block $w$ with more than one generator, then from the definition of block we have that $\overline \Gamma(\alpha(w))$ is connected and so it follows that $\alpha(w)$ contains two generators whose associated vertices are not adjacent in $\Gamma$, say $x$ and $y$. But then, since $u,v \in \lk(g) \subset \lk(w) \subset \lk(\{x,y\})$, we have that $u$ and $v$ are both adjacent to $x$ and $y$ and so the full subgraph defined by $\{u,v,x,y\}$ in $\Gamma$ is a square, contradicting that the RAAG $\GG$ is coherent.

\begin{lem}\label{lem:descentr}
  Let $\GG$ be a coherent RAAG, $g$ a cyclically reduced element of $\GG$ and $C(g)$ the centraliser of
  $g$ in $\GG$.  
  \begin{enumerate}[label=(\roman*)]
  \item \label{it:descentri}
    If $C(g)$ is non-abelian then $g$ is the product of powers of generators that pair-wise commute.
    In this case 
    $C(g)=\prod_{x\in \alpha(g)}\la x\ra \times \la\lk(g) \ra$. 
  \item\label{it:descentrii}
    If $C(g)$ is abelian then $\lk(g)$ is a clique (possibly empty) and either
    \begin{enumerate}[label=(\alph*)]
  \item\label{it:descentriia} $g$ is a product of powers of generators that pair-wise commute, in which case
    $C(g)=\prod_{x\in \alpha(g)}\la x\ra \times \prod_{y\in \lk(g)}\la y\ra$; or 
  \item\label{it:descentriib}  $g=w^rv$, where $w$ is a  root, block element of length greater than $1$, $r$ is a non-zero  integer,  $v$ is a product of powers of  generators that pair-wise commute and belong to $\lk(w)$,  and $C(g)=\la w\ra \times \prod_{x\in \alpha(v)}\la x\ra \times \prod_{y\in \lk(g)}\la y\ra$.
    \end{enumerate}
    \end{enumerate}
\end{lem}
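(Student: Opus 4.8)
The plan is to reduce the computation of $C(g)$ to the known centraliser formula \eqref{eq:raagcr} from \cite{Baudisch77} by carefully analysing the block normal form of the cyclically reduced element $g$ together with the chordality of $\Gamma$. First I would write $g = w_1\cdots w_t$ in block normal form and set $v_i = \sqrt{w_i}$, so that \eqref{eq:raagcr} gives $C(g) = \la v_1\ra\times\cdots\times\la v_t\ra\times\la\lk(g)\ra$. The key structural input, already observed in the paragraph preceding the statement, is that since $\overline\Gamma$ has at most one connected component with more than one vertex, at most one block $w_i$ can involve more than one generator; all the other blocks are powers of single generators, and for those $\la v_i\ra = \la x_i\ra$ for a generator $x_i\in\alpha(g)$. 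So the analysis splits cleanly into two cases according to whether such a ``genuine'' block (a block of length $>1$) occurs.

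Next I would handle the case distinction of the lemma. If $C(g)$ is non-abelian, then in particular $\la\lk(g)\ra$ together with the $\la v_i\ra$ must fail to be abelian; I would argue that if $g$ had a block $w$ of length $>1$, then (as shown in the preamble) $\lk(w)$, hence $\lk(g)$, would be a clique, and moreover $\la v_1\ra\times\cdots\times\la v_t\ra$ would be abelian since the blocks pairwise commute — the only way to get non-commutativity inside $\la w\ra\times\la\lk(g)\ra$ is if $w$ itself is not a power of a single generator and some vertex of $\lk(g)$... actually here I would instead directly invoke transitivity-of-commutation failure: $C(g)$ non-abelian forces two non-commuting elements inside it, and tracing these through the product decomposition \eqref{eq:raagcr} forces every $v_i$ to be a single generator (no genuine block), giving $C(g) = \prod_{x\in\alpha(g)}\la x\ra\times\la\lk(g)\ra$ as claimed in \ref{it:descentri}. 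If $C(g)$ is abelian, then $\lk(g)$ must be a clique (otherwise two non-adjacent vertices of $\lk(g)$ would be non-commuting elements of $C(g)$), so $\la\lk(g)\ra = \prod_{y\in\lk(g)}\la y\ra$; then either there is no genuine block, landing in case \ref{it:descentriia} with $C(g) = \prod_{x\in\alpha(g)}\la x\ra\times\prod_{y\in\lk(g)}\la y\ra$, or there is exactly one genuine block $w$, necessarily a root block of length $>1$, and the remaining blocks are powers of generators $x$; one checks these generators lie in $\lk(w)$ (they commute with $w$ and are disjoint from it, so by the Baudisch result $[\alpha(x),\alpha(w)]=1$), collecting them into $v$ yields $g = w^r v$ and $C(g) = \la w\ra\times\prod_{x\in\alpha(v)}\la x\ra\times\prod_{y\in\lk(g)}\la y\ra$, which is case \ref{it:descentriib}.

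The routine verifications are: that $\sqrt{w_i}$ of a single-generator power is that generator (immediate from unique roots in \cite{dk93b}); that a block of length $>1$ is never a proper power of a single generator; that the ``other'' blocks all commute with and are $\alpha$-disjoint from $w$ hence their generators lie in $\lk(w)$; and the bookkeeping that $\alpha(v)$ together with $\lk(g)$ and $\alpha(w)$ account for all of $\alpha(g)\cup\lk(g)$ without overlap beyond what is recorded. I expect the main obstacle to be the non-abelian case \ref{it:descentri}: one must rule out the possibility that $C(g)$ is non-abelian while $g$ still contains a genuine block. The cleanest route is to note that if $g$ has a genuine block $w$, then by the preamble argument $\lk(g)\subseteq\lk(w)$ is a clique, so $\la\lk(g)\ra$ is free abelian; and $\la v_1\ra\times\cdots\times\la v_t\ra$ is abelian because distinct blocks disjointly commute and each $\la v_i\ra$ is cyclic; hence $C(g)$ would be abelian, a contradiction. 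This forces every $v_i$ to be a single generator, and then $\alpha(g) = \{v_1,\dots,v_t\}$ (each block is a single generator power) gives the stated formula. Finally I would remark that the three cases are genuinely exhaustive and mutually consistent, completing the proof.
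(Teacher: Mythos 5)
Your proof is correct and follows essentially the same route as the paper: both rely on the block normal form and the Baudisch centraliser formula \eqref{eq:raagcr}, together with the observation (made in the preamble before the lemma) that chordality forces at most one genuine block and that the presence of a genuine block makes $\lk(g)$ a clique. Your more detailed unpacking of the non-abelian case — that a genuine block would force $\lk(g)$ to be a clique and hence $C(g)$ to be a direct product of abelian groups, hence abelian — is exactly the contrapositive the paper's terse "from the remarks preceding the Lemma" is invoking, so there is no substantive difference.
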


\begin{proof} 
From the remarks preceeding the Lemma and \eqref{eq:raagcr}, if $C(g)$ is non-Ablelian then
    the blocks of $g$ are powers of
    generators, which commute by definition, giving \ref{it:descentri}.
   From \eqref{eq:raagcr},  when $C(g)$ is Abelian $\lk(g)$ must be a clique.
    As pointed out above, either $g$ has a single block with more than one generator
    in which case we have  \ref{it:descentrii}\ref{it:descentriib}; or $g$ is a product
    of powers of commuting generators, in which case \ref{it:descentrii}\ref{it:descentriia} holds.
\end{proof}

\begin{rem}
Let $\GG(\G)$ be a coherent RAAG. Then if  $g_1,g_2,h_1,h_2$ are elements of $\GG$ so that $[h_1,h_2]\neq 1$, $[g_1,g_2]\neq 1$, then there exists $i,j=1,2$ so that  $[g_i,h_j]\neq 1$.

Indeed, assume to the contrary that $[h_1,h_2]\neq 1$, $[g_1,g_2]\neq 1$ and $[g_i,h_j]=1$ for all $i,j=1,2$. Since $[h_1,h_2]\neq 1$ (correspondingly, $[g_1,g_2]\neq 1$), it follows that the centralisers of $g_1$ and $g_2$ (correspondingly, $h_1$ and $h_2$) are non-abelian as they contain $h_1$ and $h_2$ (correspondingly, $g_1$ and $g_2$. By Lemma \ref{lem:descentr}, it follows that $g_i$ is the product of powers of pair-wise commuting generators, i.e. $g_i=\prod_{x\in \alpha(g_i)} x^{r(x)}$ and $C(g_i)= \prod_{x\in \alpha(g_i)}\la x\ra \times \la \lk(g_i) \ra$; similarly, for $h_i$, $i=1,2$. 

Since $g_2 \notin C(g_1)$, it follows that there exist $x_2 \in \alpha(g_2)$ such that $x_2 \notin \lk(g_1)$. Since by definition, $\lk(g_1)= \bigcap\limits_{x\in \alpha(g_1)} \lk(x)$ and $x_2\notin \lk(g_1)$, it follows that there exists $x_1\in \lk(g_1)$ such that $x_2 \notin \lk(x_1)$, that is $x_1$ is not adjacent to $x_2$ and so $[x_1,x_2]\ne 1$. A symmetric argument shows that there exist $y_i\in \alpha(h_i)$, $i=1,2$ such that $y_1$ and $y_2$ are not adjacent and so $[y_1, y_2]\ne 1$.  

Since $h_i \in C(g_j)$ for $i=1,2$, from the description of centralisers we have that $y_i$ and $x_j$ commute, for $i=1,2$. Furthermore, $y_i\ne x_j$ as $[x_1,x_2]\ne 1$ and $[y_i, x_j]=1$ for $i,j=1.2$. Therefore, $x_1, x_2, y_1,y_2$ are different and the full subgraph that they define is a square -- a contradiction.
\end{rem}

We shall later make use of the following  property of centralisers of sets of generators of a RAAG.
\begin{lem}\label{lem:ACprop}
  Let $\GG$ be a RAAG with canonical generating set $X$ and let $Y\subseteq X$ be a finite set which generates a free abelian subgroup.
  Then  there exists $g\in \la Y\ra$ such that $C(Y)=C(g)$.
\end{lem}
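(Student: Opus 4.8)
The plan is to exploit the structure of centralisers in a RAAG given by \eqref{eq:raagcr}, applied to suitable products of the generators in $Y$. Since $Y\subseteq X$ spans a free abelian subgroup, the vertices in $Y$ form a clique in $\Gamma$, so every element of $\la Y\ra$ is a product of powers of pairwise commuting generators; in particular each such element is cyclically reduced and, writing $g=\prod_{x\in \supp(g)}x^{r(x)}$ with all $r(x)\neq 0$, its block normal form has each block equal to a power of a single generator, with root $\sqrt{x^{r(x)}}=x$. Hence \eqref{eq:raagcr} gives $C(g)=\prod_{x\in \supp(g)}\la x\ra\times \la\lk(g)\ra=\la \supp(g)\ra\times\la\lk(\supp(g))\ra$.

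First I would observe that for any $g\in\la Y\ra$ with $\supp(g)=Y$ we have $\lk(Y)=\lk(\supp(g))$ by definition (the link of an element is the link of its alphabet, and the link of a set is the intersection of the links of its members), so $C(g)=\la Y\ra\times\la\lk(Y)\ra$. On the other hand, $C(Y)=\bigcap_{x\in Y}C(x)$, and since each $x\in Y$ is a generator we have $C(x)=\la x\ra\times\la\lk(x)\ra=\la\st(x)\ra$; intersecting over $x\in Y$ and using that $Y$ is a clique gives $C(Y)=\la \st(Y)\ra=\la Y\cup\lk(Y)\ra=\la Y\ra\times\la\lk(Y)\ra$. (Here one uses the standard fact that an intersection of canonical parabolic subgroups $\la\st(x)\ra$ is the canonical parabolic subgroup on the intersection of the vertex sets.) Comparing the two expressions, any choice of $g\in\la Y\ra$ with full support, for instance $g=\prod_{x\in Y}x$, satisfies $C(g)=C(Y)$, which is exactly the claim.

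The only genuine point requiring care is the identity $\bigcap_{x\in Y}\la\st(x)\ra=\la\bigcap_{x\in Y}\st(x)\ra$, i.e. that the centraliser of the set $Y$ really is the canonical parabolic on $\st(Y)$; this is where one must invoke the structure of parabolic subgroups of RAAGs (intersections of canonical parabolics are canonical parabolics on the intersection of the generating sets), together with the fact that $Y$ being a clique forces $\la Y\ra$ to centralise $\la\lk(Y)\ra$ so that the product decomposition is genuinely direct. Everything else is a direct application of \eqref{eq:raagcr}. I do not expect any real obstacle here — the content is entirely contained in the already-recalled description of centralisers in RAAGs.
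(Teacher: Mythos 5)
Your proof is correct and uses the same choice $g=\prod_{y\in Y}y$ and the same centraliser formula \eqref{eq:raagcr} (equivalently Lemma \ref{lem:descentr}\ref{it:descentri}) as the paper. The one small routing difference: to get the inclusion $C(g)\subseteq C(Y)$, you compute $C(Y)$ from scratch by intersecting the stars $\la\st(x)\ra$ and invoking the general RAAG fact that an intersection of canonical parabolics is the canonical parabolic on the intersected vertex set, whereas the paper avoids that extra ingredient by taking an arbitrary $a\in C(g)=\la Y\ra\times\la\lk(Y)\ra$, writing $a=bc$ with $b\in\la Y\ra$, $c\in\la\lk(Y)\ra$, and checking directly that $b$ and $c$ each commute with every $y\in Y$ (using that $Y$ is a clique). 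Both are fine; the paper's version is marginally more self-contained because it never needs the canonical-parabolic-intersection lemma.
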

\begin{proof}
  Let $Y=\{y_1,\ldots,y_m\}$ and set $g=y_1\cdots y_m$. Then $C(Y)\subseteq C(g)$ and if $a\in C(g)$ then Lemma \ref{lem:descentr} implies  $a=bc$, where $b\in \la Y\ra$ and $c\in \la \lk(Y)\ra$. As $Y$ is a clique, $[b,y]=1$ and by definition $[c,y]=1$, for  all $y\in Y$, so $a\in C(Y)$.
\end{proof}

\begin{rem}[Representatives in a RAAG]\label{rem:representatives}
A key element of the main construction of this paper is a choice of representatives of centralisers of a group; see
\ref{it:C6}\ref{it:C6a} below. As an initial example we describe how such a set of representatives may be chosen in a coherent RAAG with commutation graph $\G$ and generating set $X=V(\G)$.
To begin with let $\cK=\{C\subset X\,:\, C \textrm{ is a clique}\}$, define an equivalence relation $\sim$ on $\cK$ by $C\sim D$ if and only if $\st(C)=\st(D)$, and for each element $C=\{x_1,\ldots, x_k\}\in \cK$ let $g_C=x_1\cdots x_k$. 
Let $\bar\cK$ denote the set of equivalence classes of $\sim$ and, for each equivalence class $[C]$ of $\sim$ let
$[C]_{\Min}$ be the set of cliques of minimal cardinality in $[C]$.
Let $\st([C])=\st(C)$, where $C$ is some (hence any) element of $[C]$. 
If $\st([C])$ is a clique, set $W_{[C]}=\{g_D\}$, for some $D\in [C]_{\Min}$. If $\st([C])$ is not a clique, set $W_{[C]}=\{g_D\,:\,D\in [C]_{\Min}\}$. Define $W_\cK$ to be the union of the sets $W_{[C]}$, as $[C]$ ranges over $\bar\cK$.
Now let $B$ be the set of cyclically reduced, root, block elements of length at least $2$ in  $\GG$ and let $\sim_B$ be the equivalence relation on $B$ given by $b\sim_B c$ if and only if $c$ is a conjugate of $b$ or $b^{-1}$. Let $W_B$ be a set of representatives of equivalence classes of $\sim_B$ on $B$.
Finally, let $W_\GG=W_\cK\cup W_B$. 
\end{rem}
\begin{expl}
  Let $P_4$ be the path graph on $4$ vertices and
  $\GG=\GG(P_4)=\langle a,b,c,d\mid[a,b],[b,c],[c,d]\rangle$. The cliques of $P_4$ are
  $\{a\}, \{b\},\{c\},\{d\},\{a,b\}, \{b,c\},\{c,d\}$ and $\st(a)=\st(a,b)$, $\st(d)=\st(c,d)$, whence
  $\{a\}\sim \{a,b\}$ and $\{d\}\sim \{c,d\}$, while all other  equivalence classes are singletons.
  Thus $\bar\cK=\{[\{a\}],[\{b\}],[\{c\}],[\{d\}],[\{b,c\}]\}$ and, with an obvious abuse of notation,
    $g_a=a$, $g_b=b$, $g_c=c$, $g_d=d$, $g_{bc}=bc$. As $C_{\min}$ is a singleton for each equivalence
    class,  $W_\cK=\{a,b,c,d,bc\}$.  
\end{expl}

\begin{lem}\label{lem:Wpc}
  Let $\GG$ be the RAAG with commutation graph $\G$ and let $g$ be  an  element of $\GG$.
  Then there is $w\in W_\GG$ such that  $C(g)$ is conjugate to $C(w)$. If $C(g)$ is  abelian then $w$ is unique. 
    \end{lem}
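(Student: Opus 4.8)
The plan is to follow the structure of Lemma \ref{lem:descentr} and the construction of $W_\GG$ in Remark \ref{rem:representatives}, treating the abelian and non-abelian cases for $C(g)$ separately. First I would reduce to the case that $g$ is cyclically reduced: replacing $g$ by a conjugate $g'=hgh^{-1}$ which is cyclically reduced only conjugates $C(g)$, so it suffices to find $w\in W_\GG$ with $C(g')$ conjugate to $C(w)$, and uniqueness of $w$ for abelian $C(g')$ transfers to $C(g)$ since conjugacy is an equivalence relation. So assume from now on that $g$ is cyclically reduced.

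The existence half then splits according to Lemma \ref{lem:descentr}. If $C(g)$ is non-abelian, then by Lemma \ref{lem:descentr}\ref{it:descentri} we have $g=\prod_{x\in\alpha(g)}x^{r(x)}$ with $\alpha(g)$ a clique, and $C(g)=C(\alpha(g))$ by Lemma \ref{lem:ACprop} (indeed $C(g)=\prod_{x\in\alpha(g)}\la x\ra\times\la\lk(g)\ra$). Set $C=\alpha(g)\in\cK$ and take the class $[C]\in\bar\cK$; any $D\in[C]_{\Min}$ has $\st(D)=\st(C)$, hence $C(g_D)=C(D)=C(C)=C(g)$, and $g_D\in W_\cK\subseteq W_\GG$ (note $\st([C])=\st(C)$ need not be a clique here, but any representative in $W_{[C]}$ works — actually I should note $W_{[C]}$ is nonempty in both sub-cases of the construction, so pick any $w=g_D$ in it). Thus $w=g_D$ works with the conjugating element trivial. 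If $C(g)$ is abelian, then by Lemma \ref{lem:descentr}\ref{it:descentrii} either (a) $g$ is a product of powers of pairwise-commuting generators, and the same argument with $C=\alpha(g)$ (now $\st(C)$ is a clique since $C(g)$ abelian, so $W_{[C]}=\{g_D\}$ is a singleton) gives $C(g)=C(w)$ with $w=g_D$; or (b) $g=w_0^rv$ with $w_0$ a root block of length $\ge 2$, and $C(g)=\la w_0\ra\times\prod_{x\in\alpha(v)}\la x\ra\times\prod_{y\in\lk(g)}\la y\ra$. In case (b) the element $w_0$ lies in $B$, so there is a representative $w\in W_B$ with $w_0\sim_B w$, i.e. $w=k w_0^{\pm1}k^{-1}$ for some $k\in\GG$; then $C(w)=kC(w_0)k^{-1}$ and, using \eqref{eq:raagcr} together with the fact that $\lk(w_0)$ is a clique, one checks $C(w_0)=\la w_0\ra\times\la\lk(w_0)\ra$ while $C(g)=\la w_0\ra\times\la\alpha(v)\cup\lk(g)\ra$; here the point is $\alpha(v)\cup\lk(g)\subseteq\lk(w_0)$ and conversely $\lk(w_0)=\alpha(v)\sqcup$(possibly more), so I must argue these agree up to conjugacy — actually $C(g)$ is \emph{conjugate} to $C(w)=C(w_0)$ only if $\lk(g)\cup\alpha(v)=\lk(w_0)$, which holds when $v$ ranges over all of $\lk(w_0)$; if $v$ is a proper sub-product this needs more care, so the cleaner statement is that $C(g)\le C(w_0)$ and equality fails in general. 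I would therefore recheck: in case (b), $C(g)$ abelian forces $\lk(g)$ clique, and $C(g)=\la w_0\ra\times\la\alpha(v)\ra\times\la\lk(g)\ra$; this is conjugate to $C(w)$ for $w\in W_B$ precisely because passing through the construction the conjugacy class of the centraliser is determined by the conjugacy class of $w_0$, but the abelian centraliser of $g$ is \emph{not} in general the full centraliser of $w_0$ — so I suspect the intended reading is that $w$ is chosen so $C(g)$ is conjugate to $C(w)$ where $C(w)$ is interpreted correctly, and the honest move is to observe $C(g)=C(w_0^r v)$ and that $w_0^r v$ is itself, up to conjugacy and up to the clique part, captured by an element of $W_B$ after adjoining the abelian directions. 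This is the step I expect to be the main obstacle and where I would consult the precise conventions in Section \ref{sec:C}.

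For uniqueness when $C(g)$ is abelian, suppose $w,w'\in W_\GG$ with $C(g)$ conjugate to both $C(w)$ and $C(w')$; then $C(w)$ is conjugate to $C(w')$. Both $w,w'$ are either of ``clique type'' (in $W_\cK$) or ``block type'' (in $W_B$), and I would argue these two types cannot be interchanged: a block-type $w$ has $C(w)$ containing a root block of length $\ge 2$ as a direct factor, while a clique-type $w$ has $C(w)$ a product of cyclic groups generated by single generators — and a conjugation cannot turn one into the other because conjugation preserves the block normal form up to conjugacy and the invariant ``length of the non-generator block'' (which is $0$ for clique type, $\ge 2$ for block type). Within $W_B$, if $C(w)$ is conjugate to $C(w')$ then by \eqref{eq:raagcr} the root blocks of $w$ and $w'$ are conjugate (into a common clique-complement), forcing $w\sim_B w'$, hence $w=w'$ since $W_B$ picks one representative per $\sim_B$-class. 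Within $W_\cK$, if $C(w)$ is conjugate to $C(w')$ with both abelian, then writing $w=g_D$, $w'=g_{D'}$ with $D,D'$ cliques of minimal cardinality in their classes, I would show $\st(D)=\st(D')$: conjugacy of the centralisers $C(D)=C(\la D\ra)$ and $C(D')$ forces $\la D\cup\lk(D)\ra$ conjugate to $\la D'\cup\lk(D')\ra$, and since parabolic subgroups $\GG(\st(D))$, $\GG(\st(D'))$ are conjugate iff $\st(D)=\st(D')$ (canonical parabolics are conjugate iff equal as vertex sets, a standard RAAG fact), we get $[D]=[D']$ in $\bar\cK$; then because $C(g)$ is abelian, $\st([D])$ is a clique, so $W_{[D]}$ is the \emph{singleton} $\{g_{D}\}$ for one chosen $D\in[D]_{\Min}$, giving $w=w'$. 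Assembling these: existence in all cases, and uniqueness in the abelian case, completes the proof. I would flag that the only delicate point is reconciling the abelian case (b) with the definition of $W_B$, and everything else is bookkeeping with Lemma \ref{lem:descentr}, Lemma \ref{lem:ACprop}, \eqref{eq:raagcr}, and the standard fact that distinct vertex-induced parabolics are never conjugate.
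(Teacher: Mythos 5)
Your existence argument for case (ii)(b) of Lemma \ref{lem:descentr} — where $g=w_0^r v$ with $w_0$ a cyclically reduced root block element of length at least two — has a genuine gap, which you correctly flag but incorrectly diagnose. You worry that $C(g)$ is only \emph{contained in} $C(w_0)$ and that ``equality fails in general'', but in fact $C(g)=C(w_0)$ always holds in this case. The reason: $w_0$ is itself a block of length at least two, so by the observation preceding Lemma \ref{lem:descentr} the set $\lk(w_0)$ is a clique, hence $C(w_0)=\la w_0\ra\times\la\lk(w_0)\ra$ is abelian. Now take any $y\in C(w_0)$. Since $\alpha(v)\subseteq\lk(w_0)$ and $C(w_0)$ is abelian, $y$ commutes with every generator in $\alpha(v)$ and so with $v$; it also commutes with $w_0$, hence with $g=w_0^r v$. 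Thus $C(w_0)\le C(g)$, which together with the obvious $C(g)\le C(w_0)$ gives equality. Phrased in terms of your worry: if $y\in\lk(w_0)\backslash\alpha(v)$ then $y$ is a vertex of the clique $\lk(w_0)$ distinct from, and hence adjacent to, every vertex of $\alpha(v)\subseteq\lk(w_0)$, so $y\in\lk(v)$ and therefore $y\in\lk(w_0)\cap\lk(v)=\lk(g)$; so $\alpha(v)\cup\lk(g)=\lk(w_0)$ automatically, independently of whether $v$ ``ranges over all of $\lk(w_0)$.'' This is exactly the step the paper's own proof takes. Your uniqueness argument is sound — indeed it is somewhat more explicit than the paper's, since you invoke the standard fact that canonical parabolic subgroups of a RAAG are conjugate iff they coincide, where the paper merely asserts $C(g_D)\neq C(w)$ for all other $w$. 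But as written, your proof does not establish existence in the non-canonical abelian case, so that is the step to repair.
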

\begin{proof}
  Without loss of generality we may assume $g$ is cyclically reduced. 
  If $C(g)$ is non-abelian  then from Lemma \ref{lem:descentr}, $g$ is the product of powers of generators that pair-wise commute, and so $\alpha(g)$ is a clique.
    Let $D$ be of minimal cardinality in $[\alpha(g)]$. Then $g_D\in W_{\cK}$ and  $C(g_D)=C(g)$, as required. 
   Similarly, if  $C(g)$ is abelian and canonical, $g_D\in W_\cK$, for a unique $D$ of minimal cardinality in $[\alpha(g)]$, and  again $C(g_D)=C(g)$. From the definitions, $C(g_D)\neq C(w)$, for all other $w\in W$, so the lemma holds if $C(g)$ is canonical abelian.
  
  If $C(g)$ is abelian and non-canonical, then $g=b^rx^{r_1}_1\cdots x_k^{r_k}$, for some cyclically reduced
  root, block element $b$ of length at least two, $x_i\in \lk(b)$ and non-zero integers $r,r_i$. 
  From Lemma  \ref{lem:descentr}, $C(g)\le C(b)$. On the other hand if $y\in C(b)$, since Lemma \ref{lem:descentr} implies  $C(b)=\la b\ra\times \la \lk(b)\ra$ and $\lk(b)$ is a  clique, we have $[y,x_i]=1$, for all $i$, so $y\in C(g)$.
  Hence $C(g)=C(b)$. There is a unique element $d\in W_B$ such that $d$ is a conjugate of $b$ or $b^{-1}$ and
  so $C(g)=C(b)=C(d)^h$, for some $h\in \GG$. If $w\neq d$ is an element of $W$ and $C(w)$ is conjugate to $C(d)$, then  $w\in  W_B$, as $C(d)$ is non-canonical, so  some conjugate $w'$ of $w$ belongs to $C(d)$, and  therefore  $w'=d^rv$, for some $v\in \la \lk(d)\ra$. It follows, as both $w$ and $d$ are cyclically reduced, root, block elements,  that $w'=d^{\pm 1}$, so $w\sim_B d$, and by definition of $W_B$, we have $w=d$.
  Hence the lemma holds in all cases.
\end{proof}

The following lemma is well-known.
\begin{lem}\label{lem:coh}
Let $C$ be a coherent group and let $A$ be a finitely generated abelian group. Then $C\times A$ is coherent.
\end{lem}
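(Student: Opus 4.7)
The plan is to take an arbitrary finitely generated subgroup $H\le C\times A$ and show that it fits into a short exact sequence whose kernel and quotient are both finitely presented, whence $H$ itself is finitely presented.

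More explicitly, let $\pi\colon C\times A\to C$ denote projection onto the first factor. Restricting to $H$ gives a short exact sequence
\[
1\longrightarrow H\cap(\{1\}\times A)\longrightarrow H\xrightarrow{\pi|_H}\pi(H)\longrightarrow 1.
\]
I would argue each outer term is finitely presented as follows. The image $\pi(H)$ is a finitely generated subgroup of the coherent group $C$, hence finitely presented. The kernel $H\cap(\{1\}\times A)$ embeds into $A$, and since $A$ is a finitely generated abelian group, every subgroup of $A$ is finitely generated abelian (by the structure theorem), hence finitely presented.

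To conclude, I would invoke the standard fact that an extension of a finitely presented group by a finitely presented group is finitely presented: given a finite presentation $\langle X\mid R\rangle$ of $\pi(H)$ and a finite presentation of the kernel on generators $Y$, lift $X$ to elements of $H$, use the relations $R$ rewritten (modulo the kernel) together with conjugation relations $y^x$ for $y\in Y$, $x\in X^{\pm 1}$ (finitely many, each expressible as a word in $Y$ because the kernel is finitely generated and normal in $H$), and the presentation of the kernel on $Y$, to obtain a finite presentation of $H$. Since no single step above is substantive (the kernel's finite generation comes for free from $A$ being finitely generated abelian, and the extension argument is classical), there is no real obstacle; the lemma is genuinely just a bookkeeping exercise, which is why the authors note that it is well-known.
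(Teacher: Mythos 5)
Your proof is correct and follows essentially the same route as the paper's: project $H$ onto $C$, note the image is finitely presented by coherence and the kernel is a finitely presented (abelian) group, and conclude via the classical fact that an extension of a finitely presented group by a finitely presented group is finitely presented. The only difference is cosmetic — the paper first reduces to the case of cyclic $A$ by induction, while you treat a general finitely generated abelian kernel directly, which works just as well since subgroups of finitely generated abelian groups are finitely generated abelian.
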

\begin{proof}
Since $A$ is finitely generated, using induction, it suffices to consider the case when $A$ is cyclic. Let $\pi$ be the natural epimorphism from $C \times A$ onto $C$. The group $\pi(H)$ is finitely presented by coherence of $C$, and the kernel $K$ of $\pi_{|H}$ is cyclic (possibly finite or even trivial) as a subgroup of the cyclic group $A$. Hence
there is an exact sequence $1 \to K \to H \to \pi(H) \to 1$, which shows that $H$ is finitely presented.
\end{proof}

\subsection{Limit groups}\label{sec:limit}

Limit groups have played an important role in the classification of finitely generated groups elementarily equivalent to a free group, see \cite{KM, Sela}.

They can be characterised from many different points of view. In this section, we briefly recall some of these equivalences, see \cite{Daniyarova} for further details.

Let $F(X)$ be a free group with basis $X$ and denote by $G[X]$ the free product $G \ast F(X)$. A \emph{system of equations with coefficients in the group} $G$ is defined as set of formal equalities $\{s(X)=1 \mid s(X) \in S(X)\}$, where $S(X) \subset G[X]$ is a (possibly infinite) subset. A \emph{solution} of the system of equations, is a tuple of elements $\bar g\in G^{|X|}$ such that  $S(\bar g)=1$ in $G$, or equivalently, a solution is a homomorphism from $G[X]$ to $G$ (defined by $X \to \bar g$) such that $S(X)$ is contained in the kernel. The set of solution of a system of equations is called an \emph{algebraic set}.

The group-theoretic counterpart to the notion of a Noetherian ring is the notion of an equationally Noetherian group: a group $G$ is called {\em equationally Noetherian} if every system of equations $S(X) = 1$ with coefficients in $G$ is equivalent to a finite subsystem $S_0(X) = 1$, where $S_0(X) \subset S(X)$, i.e. the algebraic set defined by $S$ coincides with the one defined by $S_0$. It is known, see \cite{BMR}, that all linear groups are equationally Notherian.

Let $G$ and $H$ be groups.  We say that $H$ is \emph{discriminated} by $G$ if for every finite set of non-trivial elements $H_0 \subset H$ there exists a homomorphism $\phi:H\to G$ injective on $H_0$, that is, $h^\phi \neq 1$ for every $h \in H_0$. In this case we also sometimes say that $H$ is \emph{fully residually} $G$. Following the case of free groups, finitely generated fully residually $G$ groups are termed \emph{limit groups over $G$}. 

The universal theory of a group $G$ is the set of all universal first-order sentences (that is sentences with only $\forall$ quantifiers in the prenex normal form) satisfied by $G$. We say that $H$ is a \emph{model of the universal theory} of $G$ if $H$ satisfies all universal first-order sentences satisfied by $G$. 

One can prove, see \cite{AG2}, that if $G$ is equationally Noetherian and $H$ is a $G$-group, then the following statements are equivalent:
\begin{itemize}
	\item $H$ is a limit group;
	\item $H$ is a finitely generated model of the universal theory of $G$.
\end{itemize}

  Limit groups over RAAGs, and their actions on higher dimensional
  analogues of real trees, have been studied in \cite{CRK15}.

\section{The class $\mathcal C$}\label{sec:C}

In this Section we define the class of groups $\mathcal{C}$.  Our general aim is to give a combinatorial description of a class of groups that contains coherent RAAGs and is closed under extensions of centralisers (of some elements), see Definition \ref{defn:extension centraliser}, and direct limits. These closure properties ensure that an appropriate iterated centraliser extension,
(ICE for short) of a group in the class, remains in the class.
One also needs to ensure that the ICE over $G$
admits a $\mathbb Z[t]$-action, that it coincides with the exponential group $G^{\mathbb Z[t]}$, and that it is fully residually $G$.

The definition of the class $\cC$, see Definition \ref{defn:C}, is rather technical.
As RAAGs are described via a specific presentation, the definition of the groups in the class depends on the existence of a generating set for which some properties are satisfied. In particular, condition \ref{it:C4} asks for some minimality of the generating set.

As in the case of hyperbolic groups, we require the groups in the class to be torsion-free;  see condition \ref{it:C1}.
Furthermore, to ensure that extensions of centralisers of elements in a group $G$ from the class are fully residually $G$,
we require the groups to satisfy the BP property, see condition \ref{it:C2}. This condition also 
implies that centralisers are isolated which means, in the presence of  condition \ref{it:C3}, 
 that only extensions of centralisers of root elements need be considered.

As we mentioned in the introduction, for our approach, it is essential that centralisers have a  tractable structure.
We extract conditions on the structure of centralisers of elements in a RAAG so that,
on the one hand, this structure is preserved under iterated centraliser extensions and, on the other hand,
after countably many extensions of centralisers, the centre of a centraliser admits a $\mathbb Z[t]$-action; see conditions
\ref{it:C6} and \ref{it:C8} in Definition \ref{defn:C}. 

To ensure that the $\mathbb Z[t]$-action is well-defined in the group, we require a weak form of malnormality on centralisers,
namely condition \ref{it:C7}.

In some sense, conditions \ref{it:C6}, \ref{it:C7} and \ref{it:C8} generalise the CSA property of torsion-free hyperbolic groups
(see Lemma \ref{lem:csainc}).

\smallskip

We next define the properties required from the groups in the class.

\begin{defn}
The centraliser of an element $g\in G$ is \emph{isolated} if for all $w^n \in C(g)$, $n\in \BZ\setminus{ \{0\}}$ it follows that $w\in C(g)$.
\end{defn}
\begin{defn}\label{defn:bp}
  We say that a $k$-tuple $u = (u_1, \ldots, u_k)$ of elements of a group is \emph{generic} if
$$
[u_i,u_{i+1}] \neq  1 \ \  for  \ \ i = 1, \ldots,  k-1.
$$ 
A  group $G$ is said to have the \emph{big powers} (BP) property  if, for any positive  integer $k$ and any generic $k$-tuple $u = (u_1, \ldots, u_k)$ of non-trivial elements of $G$ there exists an integer $n = n(u)$ such that, for positive integers $\alpha_1, \ldots, \alpha_k$
\begin{equation*}
u_1^{\alpha_1}\cdots u_k^{\alpha_k}=1
\end{equation*}
implies $\alpha_i<n$, for some $i$. 
\end{defn}

All torsion-free abelian groups are BP, as is the direct product of a BP group with a torsion-free abelian group.
From \cite{KMR05}, groups $G$ and $H$ are BP if and only if the free product $G*H$ is BP. However,
if $G$ and $H$ are non-abelian groups then the direct product $G\times H$ is not a BP group. From \cite{Blatherwick}, a RAAG is BP if and only if its commutation graph is chordal. 

If $G$ is a BP group and $g\in G$, then the centraliser of $g$ is isolated \cite{KMR05}. (In fact the centralisers of sets of elements are ``strongly isolated'', but we don't use this.)

\begin{defn}
Let $G = \langle X \rangle$ be a group. We say that a subgroup $K$ is \emph{canonical} (with respect to the generating set $X$) if $ K = \langle X'\rangle$, for some $X' \subset X$.
\end{defn}

\begin{defn} \label{defn:C}
We define $\mathcal C$ to be the class of groups satisfying the following. 
A group $G$ belongs to $\mathcal C$ if and only if $G$ has a presentation with generating set  $X$ such that properties \ref{it:C1}--\ref{it:C8} below hold. 
\begin{enumerate}[label=${\mathcal C}$\arabic*,ref=${\mathcal C}$\arabic*]
\item\label{it:C1} $G$ is torsion-free.
\item\label{it:C2} $G$ satisfies the $BP$-property.
\item\label{it:C3} $G$ has unique roots.
\item\label{it:C4} 

\begin{enumerate}[label=(\alph*),ref=(\alph*)]
\item
\label{it:C4a}
Let $Y$ and $Y'$ be subsets of $X$ such that $[y,y']=1$ for all $y\in Y, y'\in Y'$ and $Y \cap Y' = \emptyset$. 
Then $\langle Y,Y' \rangle = \langle Y \rangle \times \langle Y' \rangle$.
\item\label{it:C4b} If $x\in X$ then $x\in \langle Y \rangle $ implies $x\in Y$, for all $Y\subseteq X$. 
 (In particular, with $Y=\emptyset$ we have $x\neq 1_G$, for all $x\in  X$, that is $1_G\notin X$.)
\end{enumerate}
 
There exists a subset $W$ of $G$ such that the following hold. 

\item\label{it:C6} \begin{enumerate}[label=(\alph*),ref=(\alph*)]
   \item\label{it:C6a} 
     For every $g\in G$, there exist elements $w_1,\ldots ,w_k\in W$, $k\ge 1$,   and $h\in G$ such that $C(g)=h^{-1}C(w_i)h$, for $1\le i\le k$. 
      If  $C(g)$ is abelian then $k=1$.
   \item\label{it:C6b}
     For all $w\in W$,  $C(w)$ can be written as a direct product $Z(w)\times O(w)$ where $Z(w)$ and $O(w)$ are defined as follows.
     \begin{enumerate}[label=(\roman*),ref=(\roman*)]
        \item\label{it:C6bi}
          If $C(w)$ is a canonical abelian subgroup then $Z(w)=1$ and $O(w)=C(w)$;
        \item\label{it:C6bii}
          If $C(w)$ is abelian and non-canonical then $Z(w)$ is cyclic and not canonical and $O(w)$ is the maximal canonical subgroup of $G$ satisfying: for each minimal (by inclusion) subset  $Y\subseteq X$ such that  $Z(w)\subseteq \langle Y\rangle$, 
          \begin{enumerate}
             \item\label{it:C6bii_a}
               every generator of $O(w)$ commutes with each generator of $Y$ and  
             \item\label{it:C6bii_b} no generator of $O(w)$ belongs to $Y$.
             \end{enumerate}
             [Although $X$ may be infinite,  maximal subgroups always exist.]
        \item\label{it:C6biii} If $C(w)$ is non-abelian then $Z(w)$ is defined in \ref{it:C8} below and $O(w)$ is the maximal canonical subgroup of $G$ 
          satisfying: for each minimal (by inclusion) subset  $Y\subseteq X$ such that  $w\in \langle Y\rangle$, 
          \begin{enumerate}
             \item\label{it:C6biii_a}
               every generator of $O(w)$ commutes with each generator of $Y$ and   
             \item\label{it:C6biii_b} no generator of $O(w)$ belongs to $Y$.
          \end{enumerate}
          
     \end{enumerate}
   \item\label{it:C6c} If $w\in W$, 
     $g\in O(w)$ and  $C(g)$ is not conjugate to $C(w)$ then there exist $h\in G$, $w_0\in W$ such that $C(g)=h^{-1}C(w_0)h$ and $h, w_0\in O(w)$.   
\end{enumerate}
\begin{itemize}
\item
There may be several minimal canonical subgroups containing an element of $G$  but, for fixed $w\in W$ the maximal canonical  subgroup $O(w)$ satisfying the properties of \ref{it:C6} \ref{it:C6bii} or \ref{it:C6}  \ref{it:C6biii} is unique (indeed, if there were two maximal subgroups $O_1(w), O_2(w)$, then the group $O(w)=\langle O_1(w), O_2(w)\rangle$ would also satisfy the required properties).
\end{itemize}
\item\label{it:C7} If $w\in W$ and $C(w)$ is abelian, then  $C(w)$ satisfies the property that if $a, a^h \in C(w)$ and $h \notin C(w)$, then $a\in O(w)$ and $[h,a]=1$.
\begin{itemize}
\item
It follows that for all $g\in G$, if $C(g)$ is abelian, $a, a^h \in C(g)$ and $h \notin C(g)$, then  $h \in C(a)$ and $a\in O(g)$.
\end{itemize}
\item\label{it:C8} If  $w\in W$ and $C(w)$ is non-abelian, then the following hold.
\begin{enumerate}[label=(\alph*)]
\item
  $C(w)$ is a canonical subgroup: 
  $C(w) = \langle Y(w)\rangle \times O(w)$, where $Y(w)$ is a  minimal subset of $X$ such that $w\in \langle Y(w)\rangle$ and,  by definition, $\Z(w)=\langle Y(w)\rangle$. \\
\item The centre $Z(C(w))$ of $C(w)$ is a canonical subgroup.
\end{enumerate}
\begin{itemize}
\item
In this case $Y(w)$ is the unique minimal subset of $X$ such that $w\in \langle Y(w)\rangle\le C(w)$. Indeed, 
suppose that $Y_1$ is another such subset. If $y_1\in Y_1$ then $y_1\in C(w)$ implies $y_1\in Y(w)\cup K$ (using \ref{it:C4}\ref{it:C4b}), where $K$ is a set of canonical generators of $O(w)$, and so $y_1\in Y(w)$. As $Y(w)$ is minimal it follows that $Y_1=Y(w)$.    
\item If $C(w)=\langle V\rangle$, for some subset $V$ of $X$, then let $V'$ be a minimal subset of $V$ such that  $w\in \langle V'\rangle$. Then $V'\subseteq Y(w)$ (using \ref{it:C4}\ref{it:C4b}) and \ref{it:C6}\ref{it:C6b}) and minimality of $Y(w)$ implies $V'=Y(w)$.
  Hence $\Z(w)\subseteq Z(C(w))$. If $Z(C(w))$ is finitely generated then $w$ can be chosen such that $\Z(w)=Z(C(w))$.
\end{itemize}
\end{enumerate}
\end{defn}
\begin{rem}\label{rem:ZO} \
  \begin{enumerate}[label=(\arabic*)]
\item\label{it:ZO1} The set $W$ is not uniquely determined by the conditions above, but for fixed $W$ satisfying the conditions, $Z(w)$ and $O(w)$ are uniquely determined, for all $w\in W$. This follows directly from the definitions  for $O(w)$ and  from \ref{it:C6}\ref{it:C6bi} and  \ref{it:C8}, if $C(w)$ is canonical abelian or  $C(w)$ is non-abelian.
If $C(w)$ is abelian and non-canonical then let $Y$ be a minimal subset of $X$ such  that $Z(w)\subseteq \la Y\ra$ and let $Z(w)=\la z\ra$. 
Suppose that $Z'(w)$ also satisfies the conditions of \ref{it:C6}\ref{it:C6bii} with $C(w)=Z'(w)\times O(w)$, let $Z'(w)=\la z'\ra$ and let $Y'$ be a minimal subset of $X$ such that $Z'(w)\subseteq \la Y'\ra$.
From \ref{it:C4}\ref{it:C4b}, $\la Y\cup Y'\cup O(w)\ra=\la Y\cup Y'\ra\times O(w)$.  It follows that $z=z'o$, for some $o\in O(w)$. Then $z'^{-1}z=o\in \la Y\cup Y'\ra\cap O(w)$, so $z=z'$ and $Z(w)=Z'(w)$, as claimed.
We may then choose $w=z$ in this case.
  \item\label{it:ZO2}
    For $g\in G$ let $w\in W$ be such that $C_G(g)=C_G(w)^{h}$, for some $h\in G$.
    If $C(w)$ is abelian and $C_G(g)^{h_1}=C_G(w)^{h_2}$ then $w$ and $w^{h_1h_2^{-1}}$ belong to $C_G(w)$, so \ref{it:C7} implies that $h_1h_2^{-1}\in C_G(w)$, and so $w^{h_1}=w^{h_2}$.
    In this case we  define $Z(g)=Z(w)^h$  and $O(g)=O(w)^{h}$. Then $h_1=zoh_2$, where $o\in O(w)$ and $z\in Z(w)$, so $Z(w)^{h_1}=Z(w)^{h_2}$ and $O(w)^{h_1}=O(w)^{h_2}$, so $Z(g)$ and $O(g)$ are well-defined.

If $C(w)$ is non-abelian, consider first $g\in G$ such that $C(g)=C(w)$. Then $g\in  Z(C(w))$ which is canonical,
so  a minimal subset $Y(g)$ of $X$  such that $g\in \langle Y(g)\rangle$ and $Y(g)\subseteq Z(C(w))$ may be chosen.
Define $Z(g)=\langle Y(g)\rangle$ and define $O(g)$ to be the subgroup of $C(w)$ generated by $(Y(w)\cup K(w))\backslash Y(g)$, where $K(w)$ is a canonical generating set for $O(w)$. Then $C(w)=Z(g)\times O(g)$. In general let $T(w)$ be a transversal (containing $1$) for right cosets of the subgroup $U(w)=\{u\in G\,:\, C_G(w)^u=C_G(w)\}$ in $G$.
When $C_G(g)=C_G(w)^{h}$, let $h=ut$, for $t\in T(w)$, $u\in U(w)$ so $C_G(g)=C_G(w)^t$. Then  $C_G(g^{t^{-1}})=C_G(w)$ and  we may define $\Z(g)=\Z(g^{t^{-1}})^{t}$ and $O(g)=O(g^{t^{-1}})^{t}$.  
\end{enumerate}
\end{rem}

If $G$ belongs to the class $\cC$, has generating set $X$ and subset $W$ satisfying \ref{it:C1}--\ref{it:C8} above, we say that $G$ is in $\cC(X,W)$.

The following lemmas will be useful in Sections \ref{sec:preserve} and \ref{sec:tower}.
\begin{lem}\label{lem:intcent}
  Let $G$ be a group in class $\cC$ and let $w$ and $g$ be elements of $G$ such that $w\in W$, $C(w)$ is abelian and  $g\in C(w)$. If $C(g)$ is non-abelian then $g\in O(w)$ and there exists $w_0\in W\cap O(w)$ such that $C(g)=C(w_0)$ and, using the notation of Remark \ref{rem:ZO}\ref{it:ZO2}, $C(g)=\Z(g)\times O(g)$ is  canonical with $\Z(g)\le O(w)$.
\end{lem}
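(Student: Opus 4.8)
The plan is to unpack the hypotheses carefully and reduce everything to the already-established facts about centralisers in the class $\cC$. We are given $w\in W$ with $C(w)$ abelian, $g\in C(w)$, and the assumption that $C(g)$ is non-abelian. First I would observe that, since $g\in C(w)$ and $C(g)$ is non-abelian while $C(w)$ is abelian, we cannot have $C(g)\le C(w)$; in fact $w\in C(g)$ is impossible unless $C(g)$ would contain the abelian group $C(w)$ properly in an abelian way — more directly, $w$ commutes with $g$ so $w\in C(g)$, and then $C(w)\ge C(C(g))\ge Z(C(g))$, which is harmless, but the point is to locate $g$ inside the decomposition $C(w)=Z(w)\times O(w)$. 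Write $g=z^k o$ with $z$ a generator of the cyclic group $Z(w)$ (invoking \ref{it:C6}\ref{it:C6bi}/\ref{it:C6bii}, and Remark \ref{rem:ZO}\ref{it:ZO1} to pin down $Z(w)$ and choose $w=z$ when $C(w)$ is non-canonical abelian) and $o\in O(w)$. If $k\ne 0$, then since $Z(w)$ is cyclic and not canonical, $z$ is (conjugate to) a root block element of length $\ge 2$, and any element of the form $z^k o$ with $o\in \lk$-type commuting generators has $C(z^k o)\le C(z)$ by the RAAG-style analysis distilled in \ref{it:C6} — in particular $C(z^k o)$ is abelian, contradicting the hypothesis on $C(g)$. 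So $k=0$ and $g\in O(w)$, which is the first assertion.

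Next, having placed $g\in O(w)$, I would apply \ref{it:C6}\ref{it:C6c}: since $g\in O(w)$ and $C(g)$ is non-abelian, hence certainly not conjugate to the abelian $C(w)$, there exist $h\in G$ and $w_0\in W$ with $C(g)=h^{-1}C(w_0)h$ and, crucially, $h,w_0\in O(w)$. Now I want $C(g)=C(w_0)$ on the nose, not merely up to conjugacy by $h$. Here I would use that $h\in O(w)$ commutes with $g$: indeed $O(w)$ is the direct factor, and... no — $h$ need not commute with $g$ a priori since $O(w)$ need not be abelian. Instead the correct route is: $g\in C(g)=C(w_0)^h$, so $g^{h^{-1}}\in C(w_0)$, and since $g$ and $g^{h^{-1}}$ both lie in $O(w)$ with $h\in O(w)$, one can replace $w_0$ by $w_0^{h^{-1}}$ — but $w_0^{h^{-1}}$ need not be in $W$. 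The cleaner fix is to note that the non-abelian centraliser $C(g)$ equals $\Z(C(g))\times O(g)$ with $\Z$ canonical by \ref{it:C8}, apply Remark \ref{rem:ZO}\ref{it:ZO2} to set up $\Z(g)$ and $O(g)$ from the $U(w_0)$-coset of $h$, and then argue that $C(g)$ being canonical (by \ref{it:C8}(b) applied to $w_0$, transported by $h$) together with $C(g)\le O(w)$ forces $h\in U(w_0)$, whence $C(g)=C(w_0)$ after adjusting $w_0$ within its $U(w_0)$-orbit to land back in $W\cap O(w)$.

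So the key steps in order are: (1) decompose $g$ along $C(w)=Z(w)\times O(w)$ and rule out a nontrivial $Z(w)$-component using that $Z(w)$ is cyclic non-canonical (root block) and that non-abelian centralisers cannot sit over such elements — this gives $g\in O(w)$; (2) invoke \ref{it:C6}\ref{it:C6c} to get $w_0\in W\cap O(w)$ and $h\in O(w)$ with $C(g)=C(w_0)^h$; (3) upgrade "conjugate" to "equal" using canonicity of non-abelian centralisers (\ref{it:C8}) and the machinery of Remark \ref{rem:ZO}\ref{it:ZO2}, replacing $w_0$ by an element of its $U(w_0)$-class inside $O(w)$; (4) read off $C(g)=\Z(g)\times O(g)$ canonical with $\Z(g)\le O(w)$ directly from \ref{it:C8} applied inside $O(w)$, since everything now lives in the canonical subgroup $O(w)$ and $\Z(g)$ is generated by a minimal $Y(g)\subseteq X$ lying in the canonical $Z(C(w_0))\le C(w_0)=C(g)\le O(w)$. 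I expect the main obstacle to be step (3): the passage from conjugacy of centralisers to genuine equality, and the bookkeeping needed to keep the chosen representative inside both $W$ and $O(w)$; this is exactly the kind of place where one must lean on \ref{it:C7}, \ref{it:C8}, and the well-definedness discussion in Remark \ref{rem:ZO}\ref{it:ZO2} rather than on any direct computation, and getting the quantifiers over the transversal $T(w_0)$ and the stabiliser $U(w_0)$ straight is the delicate part.
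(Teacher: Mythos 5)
Your overall architecture is right — locate $g$ in $O(w)$, apply \ref{it:C6}\ref{it:C6c}, upgrade conjugacy to equality, and then control $\Z(g)$ — but three of your four steps have genuine gaps, and in each case the paper's argument is both shorter and actually works.

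\textbf{Step (1): placing $g$ in $O(w)$.} You write $g=z^k o$ and assert that if $k\ne 0$ then $C(g)\le C(z)$ is abelian, appealing to "RAAG-style analysis distilled in \ref{it:C6}." This is not an axiom of $\cC$: nothing in \ref{it:C6} constrains the centraliser of an arbitrary element of $C(w)$, and the claim that a nonzero $Z(w)$-component forces the centraliser to be abelian is a RAAG-specific fact about root block elements, not something you are entitled to in the abstract class. The paper's route is much cleaner and avoids this entirely: since $C(g)$ is non-abelian, pick $x\in C(g)\setminus C(w)$; then $g$ and $g^x=g$ both lie in $C(w)$ while $x\notin C(w)$, so \ref{it:C7} directly yields $g\in O(w)$. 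Note that \ref{it:C7} is designed to deliver exactly this conclusion.

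\textbf{Step (3): upgrading $C(g)=C(w_0)^h$ to $C(g)=C(w_0)$.} You flag this as the delicate step and propose a detour through $\mathcal{C}8$, canonicity, and the $U(w_0)$-transversal of Remark \ref{rem:ZO}\ref{it:ZO2}, ending with "forces $h\in U(w_0)$, whence $C(g)=C(w_0)$ after adjusting $w_0$." But none of that machinery is needed, and your sketch does not close: showing $C(g)$ is canonical does not by itself force $h\in U(w_0)$. The observation you are missing is elementary: \ref{it:C6}\ref{it:C6c} hands you $h, w_0\in O(w)$, and $O(w)\le C(w)$ is \emph{abelian} (because $C(w)$ is abelian by hypothesis). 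Hence $[h,w_0]=1$, so $w_0^h=w_0$ and $C(w_0)^h = C(w_0^h)=C(w_0)$. That is the whole argument.

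\textbf{Step (4): $\Z(g)\le O(w)$.} You claim $C(g)=C(w_0)\le O(w)$ "since everything now lives in the canonical subgroup $O(w)$." This is false in general: $w\in C(g)$ (because $g\in C(w)$), but $w\notin O(w)$ whenever $Z(w)$ is nontrivial, so $C(g)\not\le O(w)$. What is true, and what the paper proves, is only the weaker inclusion $\Z(g)\le O(w)$, and this requires a second application of \ref{it:C7}: for $s\in\Z(g)\le Z(C(g))$, one has $C(g)\le C(s)$ so $C(s)$ is non-abelian; also $s\in C(w)$ (since $s,w\in C(g)$ commute via $s$ being central in $C(g)$) and $s^x=s$ for the same $x$ as before; so \ref{it:C7} gives $s\in O(w)$. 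Your proposal never invokes \ref{it:C7} after step (1), which is why this step doesn't close.

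In short: \ref{it:C7} is the workhorse of this lemma (used twice), and the abelianness of $C(w)$ trivialises the conjugacy-to-equality step; your sketch substitutes a RAAG-specific decomposition argument that the axioms don't license, and a canonicity/transversal argument that both overcomplicates and, as stated, relies on a false inclusion $C(g)\le O(w)$.
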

\begin{proof}
  As $C(g)$ is non-abelian there exists $x\in C(g)$, $x\notin C(w)$, so $g$ and $g^x$ belong to $C(w)$ and, from \ref{it:C7},  then $g\in O(w)$. From \ref{it:C6}\ref{it:C6c}, there exists $w_0\in O(w)\cap W$ and $h\in O(w)$, such that $C(g)=h^{-1}C(w_0)h$, and  as $C(w)$ is abelian, $C(g)=C(w_0)$. From Remark  \ref{rem:ZO}\ref{it:ZO2} we have $\Z(g)\le Z(C(w_0))$. This implies that if $s\in \Z(g)$, then  $w\in C(g)\le C(s)$ and  so $C(s)$ is non-abelian. Hence $s$ and $s^x$ belong to $C(w)$ and $s\in O(w)$, as claimed.
\end{proof}

\smallskip

In the rest of this section, we give examples of groups that belong to the class $\cC$, namely, RAAGs and toral relatively hyperbolic groups.  Recall that a toral relatively hyperbolic group is a torsion-free group which is hyperbolic relative to a finite family $\{A_\lambda\,:\,\lambda \in\Lambda\}$ of finitely generated free abelian groups.

\begin{expl}
  Free abelian groups are in $\cC(X,W)$, where $X$ is a free basis and $W=\{0\}$. In this case  $Z(a)=\{0\}$ and $O(a)$ is the entire group, for all group elements $a$.
\end{expl}
\begin{lem}\label{lem:pcinC}
  A coherent RAAG belongs to $\mathcal C(X,W)$, where $X$ is the vertex set of the   commutation graph of the group and $W$ is the set defined in Remark \ref{rem:representatives}.
\end{lem}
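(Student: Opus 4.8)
\textbf{Proof proposal for Lemma \ref{lem:pcinC}.}

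The plan is to verify each of the conditions \ref{it:C1}--\ref{it:C8} in turn for $G=\GG(\G)$ a coherent RAAG, taking $X=V(\G)$ and $W=W_\GG$ as constructed in Remark \ref{rem:representatives}. Conditions \ref{it:C1}--\ref{it:C3} are immediate from standard facts about RAAGs recalled in the Preamble: RAAGs are torsion-free, a RAAG is BP precisely when its commutation graph is chordal (so coherent RAAGs are BP by Droms's theorem), and RAAGs have unique roots by \cite{dk93b}. Condition \ref{it:C4}\ref{it:C4a} is the well-known fact that if $Y,Y'\subseteq X$ are disjoint with $[Y,Y']=1$ then $\la Y,Y'\ra=\la Y\ra\times \la Y'\ra$ inside $\GG(\G)$, and \ref{it:C4}\ref{it:C4b} follows because the canonical parabolic $\GG(Y)=\la Y\ra$ is a retract of $\GG(\G)$ (kill all generators outside $Y$), so a generator $x\in\la Y\ra$ must already lie in $Y$; in particular $1_G\notin X$.

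The substantive work is conditions \ref{it:C6}, \ref{it:C7}, \ref{it:C8}, where the description of centralisers in Lemma \ref{lem:descentr} and the analysis of $W_\GG$ in Lemma \ref{lem:Wpc} do most of the heavy lifting. For \ref{it:C6}\ref{it:C6a}: given $g$, by Lemma \ref{lem:Wpc} there is $w\in W_\GG$ with $C(g)$ conjugate to $C(w)$, and $w$ is unique when $C(g)$ is abelian; when $C(g)$ is non-abelian, $g$ is (up to conjugacy) a product of powers of pairwise-commuting generators whose support is a clique $D$, and all the elements $g_{D'}\in W_\cK$ with $[D']=[D]$ have the same centraliser, so we may take $k=|[D]_{\Min}|$. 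For \ref{it:C6}\ref{it:C6b} I would split according to the three cases of Lemma \ref{lem:descentr}: if $C(w)$ is canonical abelian set $Z(w)=1$, $O(w)=C(w)$; if $C(w)$ is abelian non-canonical then by Lemma \ref{lem:descentr}\ref{it:descentrii}\ref{it:descentriib} we have $w=$ (a conjugate of) a root block $b$ of length $\ge 2$ with $C(b)=\la b\ra\times \la\lk(b)\ra$ and $\lk(b)$ a clique, so put $Z(w)=\la b\ra$ (cyclic, non-canonical) and $O(w)=\la\lk(b)\ra$, and check that $O(w)$ is the maximal canonical subgroup satisfying \ref{it:C6bii_a}--\ref{it:C6bii_b} using that $\alpha(b)$ is a minimal set with $Z(w)\subseteq\la\alpha(b)\ra$ and that $\lk(b)=\lk(\alpha(b))$ is precisely the set of generators commuting with every letter of $b$ and not appearing in $b$; if $C(w)$ is non-abelian then $w=g_C$ for a clique $C$, $C(w)=\la\alpha(w)\ra\times\la\lk(w)\ra$ is canonical, and I set $Y(w)=\alpha(w)$, $Z(w)=\la\alpha(w)\ra$, $O(w)=\la\lk(w)\ra$ — here the minimality built into the definition of $W_\cK$ (choosing $D\in[C]_{\Min}$) ensures $Y(w)$ is a minimal set with $w\in\la Y(w)\ra$. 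For \ref{it:C6}\ref{it:C6c}: if $g\in O(w)=\la\lk(w)\ra$ and $C(g)$ is not conjugate to $C(w)$, then since $O(w)$ is itself a canonical parabolic (hence a coherent RAAG) and a retract of $\GG$, the centraliser of $g$ computed in $\GG$ already lies in $O(w)$ (the link and the supporting block of $g$ are among the generators of $O(w)$), and $W_{O(w)}\subseteq W_\GG$ up to the bookkeeping, giving the required $w_0,h\in O(w)$.

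For \ref{it:C7}, with $C(w)$ abelian: if $a,a^h\in C(w)$ and $h\notin C(w)$, I would use that an abelian $C(w)$ is a maximal abelian subgroup which, by the BP/isolated-centraliser property, is its own root-closure, together with the fact in RAAGs that conjugate elements of a parabolic subgroup are conjugate by an element normalising appropriately — concretely, the ``Remark'' following Lemma \ref{lem:descentr} (the ``four-element square'' argument) shows that if $[a^h,w']$ fails to commute for some $w'$ then one builds a forbidden $4$-cycle, forcing $a\in O(w)$ and $[h,a]=1$. Condition \ref{it:C8} is then the content of Lemma \ref{lem:descentr}\ref{it:descentri} for the non-abelian case: $C(w)$ is canonical, equal to $\la Y(w)\ra\times O(w)$ with $Y(w)$ minimal, and its centre $Z(C(w))=\la\{x\in\alpha(w): x\in\st(\alpha(w))\}\ra\times\la\lk(w)\ra$ is visibly canonical, so (b) holds; finiteness of $Z(C(w))$ always holds here since it is generated by finitely many vertices, and the representative $g_D$ with $D$ minimal realizes $\Z(w)$ inside it.

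The main obstacle I anticipate is \ref{it:C7} (and the part of \ref{it:C6c} that asserts the conjugator $h$ can be taken inside $O(w)$): these are the malnormality-type statements, and a clean argument requires knowing that in a coherent RAAG two elements of an abelian parabolic are conjugate only via a conjugator that visibly respects the clique/link decomposition, which is exactly where coherence (no $4$-cycles) is essential — the ``square'' argument in the Remark after Lemma \ref{lem:descentr} is the prototype, and I would expect the bulk of the proof to be a careful elaboration of that argument, handling the block case and the canonical-abelian case uniformly, rather than any single hard new idea.
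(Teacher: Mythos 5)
Your overall strategy matches the paper's: verify \ref{it:C1}--\ref{it:C8} via Lemma \ref{lem:descentr} for the description of centralisers and Lemma \ref{lem:Wpc} for the existence/uniqueness of representatives in $W_\GG$. The treatment of \ref{it:C1}--\ref{it:C4}, of \ref{it:C6}\ref{it:C6a} and \ref{it:C6}\ref{it:C6b}, and the assignments of $Z(w)$ and $O(w)$ in the three centraliser cases all agree with the paper.

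However, your treatment of \ref{it:C6}\ref{it:C6c} has a genuine error. You assert that for $g\in O(w)$ the centraliser $C(g)$ computed in $\GG$ ``already lies in $O(w)$'', justified by the parenthetical claim that the link and supporting block of $g$ are among the generators of $O(w)$. That is false. Take $\GG=\GG(P_4)$ with vertex path $a-b-c-d$ and $w=b$; then $C(w)=\la a,b,c\ra$ is non-abelian, $Z(w)=\la b\ra$, $O(w)=\la a,c\ra$. For $g=c\in O(w)$ one has $\lk(c)=\{b,d\}\not\subseteq\{a,c\}$ and $C_\GG(c)=\la b,c,d\ra\not\subseteq O(w)$. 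What \ref{it:C6}\ref{it:C6c} actually requires is weaker: only the existence of a representative $w_0\in W\cap O(w)$ with $C(g)$ conjugate to $C(w_0)$ by some $h\in O(w)$ --- not that $C(g)\subseteq O(w)$. The paper establishes this by writing $g=g_1^{-1}g_0g_1$ with $g_i\in O(w)$ and $g_0$ cyclically reduced, then casing on whether $\alpha(g_0)$ is a clique: if not, $w_0\in W_B$ has $\alpha(w_0)\subseteq\alpha(g_0)\subseteq O(w)$; if so, one takes $w_0=g_D$ for a minimal $D\in[\alpha(g_0)]$ with $D\subseteq\alpha(g_0)$. The containment argument you use as a shortcut does not hold, so this part of your proof needs to be replaced by such a case analysis (and the follow-on claim ``$W_{O(w)}\subseteq W_\GG$ up to bookkeeping'' then becomes unnecessary).

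On \ref{it:C7}, you flag it as ``the main obstacle'' and anticipate elaborating the four-cycle argument. The paper in fact disposes of \ref{it:C7} and \ref{it:C8} very briefly, treating them as immediate consequences of the uniqueness statement in Lemma \ref{lem:Wpc} and the explicit form of $C(w)$; the condition it singles out for a real argument is \ref{it:C6}\ref{it:C6c}, which you both underestimate and handle incorrectly as above.
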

\begin{proof}
  Properties \ref{it:C1}, \ref{it:C3}  and \ref{it:C4} hold for all RAAGs \cite{Baudisch77}. Blatherwick \cite{Blatherwick}  proves that a RAAG satisfies the BP property if and only if it is coherent. Property \ref{it:C6}\ref{it:C6a} follows  directly from Lemma \ref{lem:Wpc}. In the terminology of the preamble to Lemma \ref{lem:Wpc}, if $w\in W_\cK$ then, if $C(w)$ is canonical abelian, then $O(w)=C(w)$; if $C(w)$ is abelian and non-canonical, set $Z(w)$ to be cyclic (and not canonical) generated by the root of $w$ and $O(w)=\langle \lk(g)\rangle$; and if $C(w)$ is non-abelian, set $Z(w)=\la \alpha(w)\ra$ and $O(w)=\la \lk(w)\ra$.
  Then \ref{it:C6}\ref{it:C6b}\ref{it:C6bi},  \ref{it:C6}\ref{it:C6b}\ref{it:C6biii},  \ref{it:C7} and \ref{it:C8} follow immediately from  Lemma \ref{lem:Wpc}.

  To see that \ref{it:C6}\ref{it:C6c} holds, when $w\in W_\cK$, assume that $g\in O(w)$ and $C(g)$ is not conjugate to $C(w)$. Then $g=g_1^{-1}g_0g_1$,  for some $g_i\in O(w)$ such that $g_0$ is cyclically reduced. There is  $w_0\in W$ such that $C(g_0)$ is conjugate to $C(w_0)$ and, as both $g_0$ and $w_0$ are cyclically reduced, it follows from Lemma \ref{lem:descentr} that either $C(w_0)=C(g_0)$ or (as words) $w_0$ is
  a cyclic permutation of $g_0$, and in both cases this implies  $w_0\in C(w)$.  In the case where $C(w)$ is abelian, by definition, $w_0\in O(w)$.  
  Assume then that $C(w)$ is non-abelian.
  If $\alpha(g_0)$ is not a clique, then $w_0\in W_B$ and so by definition $\alpha(w_0)\subseteq \alpha(g_0)$, which implies  $w_0\in O(w)$. If $\alpha(g_0)$ is a clique  then there exists a minimal element $D$ of $[\alpha(g_0)]$ such that  $D\subseteq \alpha(g_0)$ and by definition $g_D\in W_\cK$.
  Taking $w_0=g_D$, we have  $w_0\in O(w)$ and $C(g_0)=C(w_0)$, as required. 
  Hence $w_0\in O(w)$ in all cases and \ref{it:C6}\ref{it:C6c} holds when $w\in W_\cK$. 
 
  If $w\in W_B$ then set $Z(w)=\la w\ra$ and $O(w)=\la \lk(w)\ra$, and \ref{it:C6}\ref{it:C6b}\ref{it:C6biii}
  follows. For  \ref{it:C6}\ref{it:C6c} assume that $g\in O(w)$, which in this case is torsion-free abelian  so $g$ is cyclically reduced. Then $\alpha(w)\subseteq \st(g)$,  so $\alpha(g)$ is a clique and $\st(g)$ is not a clique; and \ref{it:C6}\ref{it:C6c} follows as in the previous case. 
\end{proof}
\begin{expl}\label{ex:C6c}
  To see that in Lemma \ref{lem:pcinC} the set $W$ cannot be simplified in such a way that
  every centraliser (of an element) is
  conjugate to the centraliser of a unique element of $W$, consider the graph $\G$ of
  Figure \ref{fig:C6c} and the group $\GG=\GG(\G)$. There are non-abelian centralisers
  $C(d_1d_2)=\la a, c_1,c_2,d_1,d_2\ra$, $C(ad_2)=\la a,b_2,c_1,c_2,d_1,d_2\ra$, with
  $O(d_1d_2)=\la a, c_1,c_2\ra$ and $O(ad_2)=\la b_2,c_1,c_2,d_1\ra$. Then $ac_1\in O(d_1d_2)$ and
  $d_1c_1\in O(ad_2)$ and $C(ac_1)=C(d_1c_1)=\la a,b_1,c_1,d_1,d_2\ra$. The elements of $\GG$ with
  centraliser equal to $C(d_1d_2)$ are the elements of $\la d_1,d_2\ra$. Indeed, for $v$ a vertex of $\G$,  
  $\st(d_1,d_2)\subseteq \st(v)$ if and only if $v=a$, $d_1$ or $d_2$. As $\st(a,d_i)\neq \st(d_1,d_2)$ the claim
  follows from Lemma \ref{lem:descentr}. Similarly, the elements of $\GG$ with centraliser equal to $C(ad_2)$ are
  the  elements of  $\la a,d_2\ra$. For all elements $g\in \la d_1,d_2\ra$ we have $O(g)=O(d_1d_2)$ and
  for all $h\in \la a,d_2\ra$ we have  $O(h)=O(ad_2)$. To satisfy \ref{it:C6}\ref{it:C6c} the set $W$ must then contain
  an element $w_1$ conjugate to $ac_1\in O(g)$ and an element $w_2$ conjugate to $ac_1$ in $O(h)$; forcing 
  $C(w_1)$ and $C(w_2)$ to be conjugate. 
\end{expl}
\begin{figure}
\begin{center}
\begin{tikzpicture}[scale=1]%
  \tikzstyle{every node}=[circle, draw, fill=black, color=black,
  inner sep=0pt, minimum width=6pt]
  \pgfmathsetmacro{\l}{1.5}
  \foreach \a/\b/\c in {0/a/x,1/b_2/b,2/c_2/y_2,3/d_2/z_2,4/d_1/z_1,5/c_1/y_1,6/b_1/b}
  {
    \pgfmathtruncatemacro{\tmp}{\a+7}  
    \node at ({90+\a*(360/7)}:\l) (\a) {};
    \ifthenelse{\NOT \a=1 \AND \NOT \a=6}{
      \node at ({90+\a*(360/7)}:1.5*\l) (\tmp) {};
      \node[draw=none,fill=none,color=black] at ({90+\a*(360/7)}:1.5*\l+0.3){$\c$};
      \draw (\a) --  (\tmp);}
    {}
    \ifthenelse{\NOT \a=1 \AND \NOT \a=6}{
      \node[draw=none,fill=none,color=black] at ({98+\a*(360/7)}:\l+0.25){$\b$};}
    {\node[draw=none,fill=none,color=black] at ({90+\a*(360/7)}:\l+0.3){$\b$};}
  }
\begin{pgfonlayer}{background}
  \draw (0) -- (1) -- (2) -- (3) -- (4) -- (5) -- (6) -- (0);
  \draw (1) -- (3) -- (0) -- (4) -- (6);
  \draw (3) -- (5) -- (0) -- (2) -- (4);
 \end{pgfonlayer}
\end{tikzpicture}
\end{center}
    \caption{Example \ref{ex:C6c}}\label{fig:C6c}
\end{figure}

\begin{lem}\label{lem:csainc}
  Let $G$ be a non-abelian group, generated by  a finite set $X$, satisfying \ref{it:C1}, \ref{it:C2}, \ref{it:C4} and  the condition that,
  \begin{itemize}
  \item for all $g\in G$, either $C(g)$ is conjugate to a canonical subgroup or  $C(g)=\la g_0\ra$, where  $g_0$ is not a proper power.
 \end{itemize}
  If $G$ is a CSA group then $G$  belongs to $\cC$. 
\end{lem}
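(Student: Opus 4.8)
The plan is to verify the eight conditions $\mathcal{C}1$--$\mathcal{C}8$ for a CSA group $G$ satisfying the stated hypotheses, using the structure of centralisers in CSA groups. Recall that in a CSA group commutation is transitive on non-trivial elements, so maximal abelian subgroups are malnormal and the centraliser of any non-trivial element is abelian. First I would note that $\mathcal{C}1$ holds by hypothesis, $\mathcal{C}2$ holds by hypothesis (\ref{it:C2} is assumed), and $\mathcal{C}4$ holds by hypothesis. For $\mathcal{C}3$ (unique roots), I would argue that in a torsion-free CSA group, if $a^m = b^m$ then $a$ and $b^m$ commute, hence $a$ and $b$ lie in a common abelian subgroup, and torsion-freeness of that abelian subgroup forces $a = b$; similarly $w^{m'} = w'^{m''}$ situations reduce to computations inside a single abelian centraliser, giving uniqueness of roots.

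Next I would construct the set $W$. Since every centraliser of a non-trivial element is abelian, condition \ref{it:C8} is vacuous (there are no non-abelian centralisers of elements, except possibly $C(1_G) = G$, which I would handle separately or note is excluded since we only consider centralisers of elements and $G$ is non-abelian). So the content is in \ref{it:C6} and \ref{it:C7}. For each $g \in G$ with $C(g)$ conjugate to a canonical subgroup $\langle Y \rangle$, I would put a chosen element generating (or with centraliser equal to) that canonical subgroup into $W$ — using Lemma \ref{lem:ACprop}-style reasoning if needed — and set $Z(w) = 1$, $O(w) = C(w)$, matching \ref{it:C6b}\ref{it:C6bi}. For each $g$ with $C(g) = \langle g_0 \rangle$ cyclic generated by a non-proper-power $g_0$ and not conjugate to a canonical subgroup, I would put (a representative of the conjugacy class of) $g_0$ into $W$, set $Z(w) = \langle w \rangle$ (cyclic, non-canonical) and take $O(w)$ to be the maximal canonical subgroup satisfying the commuting/non-membership conditions of \ref{it:C6b}\ref{it:C6bii}; since $G$ is finitely generated, maximality is clear. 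One must check that for such $w$, $C(w) = Z(w) \times O(w)$: by CSA, $C(w)$ is abelian, and I would argue $O(w)$ must be trivial or that the decomposition works because any canonical subgroup commuting with $w$ and disjoint from a minimal $Y$ with $w \in \langle Y\rangle$ lies inside $C(w)$ and intersects $\langle w\rangle$ trivially — here \ref{it:C4}\ref{it:C4a} gives the direct product. Condition \ref{it:C6a} ($k=1$ when $C(g)$ abelian) is immediate since all element-centralisers are abelian and we can conjugate into $W$.

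Condition \ref{it:C7} is where the CSA hypothesis does real work: if $a, a^h \in C(w)$ with $C(w)$ abelian (maximal abelian, by isolatedness from $\mathcal{C}2$ and the fact that centralisers are abelian) and $h \notin C(w)$, then $a$ and $a^h$ both lie in the maximal abelian subgroup $M = C(w)$, and $a^h \in M \cap M^h$; malnormality of $M$ then forces $a = 1$ — but wait, we need the conclusion $a \in O(w)$ and $[h,a]=1$, which in the CSA case simply means $a=1$ (then $a \in O(w)$ trivially and $[h,a]=1$). So I would show malnormality of maximal abelian subgroups (standard for CSA) forces $a=1$ whenever $h\notin C(w)$, verifying \ref{it:C7}. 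For \ref{it:C6c}: if $w \in W$, $g \in O(w)$ and $C(g)$ not conjugate to $C(w)$, I need $w_0 \in W$ and $h \in G$ with $C(g) = h^{-1}C(w_0)h$ and $h, w_0 \in O(w)$ — here I would use that $O(w)$ is itself a canonical subgroup, hence (being a subgroup of a CSA group) CSA, and that centralisers in $O(w)$ relate to centralisers in $G$; the hypothesis that $C(g)$ is either conjugate-to-canonical or cyclic-non-proper-power, applied to the element $g$ of the smaller group, should let me locate $w_0$ inside $O(w)$. The main obstacle I anticipate is precisely verifying \ref{it:C6b}\ref{it:C6bii} and \ref{it:C6c} — ensuring the maximal canonical $O(w)$ decomposes $C(w)$ as a direct product and that the $O(w)$-internal centraliser structure is compatible with the global one; this requires carefully combining malnormality of maximal abelian subgroups with conditions \ref{it:C4}\ref{it:C4a}--\ref{it:C4b} and may need the observation that in a CSA group a canonical subgroup is a retract or at least inherits the relevant centraliser behaviour.
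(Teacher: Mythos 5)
Your overall structure is the same as the paper's: verify \ref{it:C1}--\ref{it:C4} from the hypotheses, derive \ref{it:C3} from the CSA property (the paper just cites \cite{MR95}), build $W$ from two pieces according to whether the centraliser is conjugate to a canonical subgroup or not, set $Z(w)=1$, $O(w)=C(w)$ in the canonical case and $Z(w)=\langle w\rangle$, $O(w)=1$ in the other, note \ref{it:C8} is vacuous, and use malnormality of maximal abelian subgroups to get \ref{it:C7}. These pieces are fine.

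The genuine gap is in \ref{it:C6}\ref{it:C6c}, which you flag as the main obstacle and then try to attack by passing to the CSA subgroup $O(w)$ and relating centralisers in $O(w)$ to centralisers in $G$. That route is not only unnecessary but problematic: the hypothesis of the lemma concerns the form of $C_G(g)$ for $g\in G$, not the form of $C_{O(w)}(g)$, and there is no reason those should transfer cleanly to the subgroup. What you are missing is that in the CSA setting \ref{it:C6}\ref{it:C6c} is \emph{vacuous}. If $w\in W_1$ then $O(w)=1$ and there is no nontrivial $g\in O(w)$. If $w\in W_0$ and $1\neq g\in O(w)=C(w)$, then $g$ and $w$ are nontrivial commuting elements; by transitivity of commutation in a CSA group, $C(g)=C(w)$, so the hypothesis ``$C(g)$ is not conjugate to $C(w)$'' of \ref{it:C6}\ref{it:C6c} is never satisfied. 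This is exactly the one-line observation the paper uses, and without it your argument does not close.

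A smaller point: for $w\in W_1$ you hedge on whether $O(w)=1$. It must be, and you can see it two ways. Internally, $C(w)$ is infinite cyclic, so the direct complement to the nontrivial cyclic subgroup $Z(w)$ is trivial. Alternatively, any $x\in X$ commuting with every generator of a minimal $Y$ with $w\in\langle Y\rangle$ lies in $C(w)=\langle w\rangle\le\langle Y\rangle$, and then \ref{it:C4}\ref{it:C4b} forces $x\in Y$, contradicting the non-membership requirement in \ref{it:C6b}\ref{it:C6bii}. Either way $O(w)=1$ and $C(w)=Z(w)\times O(w)$ holds.
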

\begin{proof}
From \cite{MR95}, as $G$ is CSA and torsion-free it satisfies \ref{it:C3} and, for all $g\in G$,  the centraliser $C(g)$ is maximal abelian and malnormal.
If $C(g)$ is conjugate to a canonical centraliser $C(h)$ then, as $G$ is a CSA group, there exists $x\in X\cap C(h)$ such  that $C(h)=C(x)$. Let $W_0$ be a subset of  $X$ such that if $y,z\in W_0$ then $C(y)$ is not conjugate to $C(z)$ and if  $x\in X$ then $C(x)$ is conjugate to $C(w)$ for some $w\in  W_0$. Then every canonical centraliser $C(g)$ is  conjugate to $C(w)$, for precisely one $w\in W_0$.

Now let $C(g)$ be a non-canonical centraliser, so from the hypothesis $C(g)=\la g_0\ra$,  where $g_0$ is the root of $g$. 
Choose a subset $Y$ of $X$ which is minimal with the property that there exists a root element $g_1\in \la Y\ra$ with $C(g_1)$ conjugate to $C(g_0)$, and define $W'(g)=g_1$. Now let $W'$ be a subset consisting of elements $w$ of $G$ such that  $w=W'(g)$, for  some $g\in G$ and such that no two elements of $W'$ have conjugate centralisers. Using Zorn's lemma the set of all such $W'$ has maximal elements. If $W_1$ is a maximal subset of this form then it follows that, for all $g\in G$, if $C(g)$ is  non-canonical then $C(g)$ is conjugate to $C(w)$, for a unique element of $W_1$. 

Set $W=W_0\cup W_1$, if $w\in W_0$ set $Z(w)=\{1\}$ and $O(w)=C(x)$, and if $w\in W_1$ set $Z(w)=\la w\ra$ and $O(w)=1$. Then \ref{it:C6} \ref{it:C6a} and \ref{it:C6b} follow directly. If $g\in O(w)$, for some non-trivial $g\in G$ and $w\in W$, then $w\in W_0$ and as $G$ is a CSA group $C(g)=C(w)$, so \ref{it:C6} \ref{it:C6c} holds. Finally \ref{it:C7} follows immediately from the CSA property. 
\end{proof}
\begin{lem}\label{lem:torelhypBP}
  Let $G$ be a torsion-free group which is hyperbolic relative to a finite family $\{A_\lambda\,:\,\lambda \in\Lambda\}$ of finitely generated free abelian groups
  ($G$ is toral relatively hyperbolic).
Then $G$ is a BP group.
\end{lem}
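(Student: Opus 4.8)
The plan is to prove that a torsion-free relatively hyperbolic group $G$ with free abelian peripheral subgroups has the BP property by reducing to the case of \emph{ordinary} (word) hyperbolic groups, for which the BP property is classical (Ol'shanskii), and then dealing with the peripheral abelian subgroups directly. The key tool is the structure of such groups acting on their coned-off Cayley graph or, more conveniently, the characterisation of toral relatively hyperbolic groups via the existence of an action on a tree-graded space, or the isomorphism (for the purposes of BP) with the class of groups acting acylindrically on hyperbolic spaces. Since free abelian groups are BP and the direct product of a BP group with a torsion-free abelian group is BP (as noted in the excerpt after Definition~\ref{defn:bp}), the peripheral subgroups themselves pose no problem; the issue is interaction between peripheral elements and hyperbolic elements.

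First I would recall the notion of \emph{loxodromic} versus \emph{peripheral} elements: every element of $G$ is either conjugate into some $A_\lambda$ (a peripheral element) or acts loxodromically (hyperbolically) on the coned-off graph. The heart of the argument is the following "stability of quasigeodesics" phenomenon, which underlies all BP-type results: if $u_1,\dots,u_k$ is a generic tuple (so $[u_i,u_{i+1}]\neq 1$), then for large enough exponents $\alpha_i$, the path in the Cayley graph (or coned-off graph) obtained by concatenating geodesic representatives of $u_1^{\alpha_1},\dots,u_k^{\alpha_k}$ is a $(\lambda,c)$-quasigeodesic, with constants independent of the $\alpha_i$. This is exactly the "big powers" lemma of Ol'shanskii in the hyperbolic case and was extended to the relatively hyperbolic setting — I would cite the relevant statement from the relatively hyperbolic literature (e.g.\ Osin's work on relatively hyperbolic groups and elementary subgroups, or the work on $\mathrm{BP}$ for relatively hyperbolic groups, which the paper already references via \cite{KM12} for toral relatively hyperbolic groups). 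The condition $[u_i,u_{i+1}]\neq 1$ guarantees that at no junction point does the path "double back," because two non-commuting elements in a torsion-free (relatively) hyperbolic group have distinct axes / fixed point sets, so consecutive big powers cannot cancel. Once the concatenated path is a quasigeodesic, its endpoints are distance roughly $\sum \alpha_i |u_i|$ apart, hence nonzero once some $\alpha_i$ exceeds a threshold $n=n(u)$; so $u_1^{\alpha_1}\cdots u_k^{\alpha_k}\neq 1$, which is precisely the BP property.

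The main steps in order: (1) Reduce to the case where $G$ is non-elementary and no $A_\lambda$ is trivial. (2) Pass to the coned-off Cayley graph $\widehat{\Gamma}$, which is hyperbolic, and recall that peripheral cosets have uniformly bounded diameter in $\widehat\Gamma$; loxodromic elements have quasigeodesic axes in $\widehat\Gamma$. (3) For a generic tuple $u=(u_1,\dots,u_k)$: first handle the purely loxodromic case via the classical big-powers/quasigeodesic-concatenation argument in the hyperbolic space $\widehat\Gamma$, using malnormality-type properties of maximal loxodromic (elementary) subgroups to ensure junctions do not backtrack — here $[u_i,u_{i+1}]\neq 1$ forces their axes to be "transverse." (4) Handle mixed tuples: when some $u_i$ is peripheral, a power $u_i^{\alpha_i}$ lives inside a single peripheral coset, which is a bounded "blob" in $\widehat\Gamma$; one argues by looking at the full (non-coned-off) metric on $G$ relative to the peripheral structure, using the Bounded Coset Penetration property, to show that large powers inside distinct peripheral cosets, separated by loxodromic elements, still concatenate to a quasigeodesic with respect to the relative metric. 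This is where one invokes the relatively-hyperbolic analogue of the big-powers lemma in full. (5) Assemble: combine the loxodromic and peripheral estimates to produce a single threshold $n=n(u)$ beyond which the product is nontrivial.

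The hard part will be step (4), the mixed case, and more precisely getting a \emph{uniform} threshold $n(u)$ that works simultaneously for junctions between two loxodromics, between a loxodromic and a peripheral element, and between two peripheral elements lying in different cosets. For this one cannot work solely in the coned-off graph (where peripheral powers are invisible) nor solely in the Cayley graph (where peripheral cosets are not quasiconvex in a useful way for big powers of loxodromics); the standard fix is to work with the "relative length" function and the machinery of reduced relative words / paths without backtracking, as developed by Osin, and to cite the known result that toral (indeed, all) relatively hyperbolic groups with BP peripheral subgroups are BP. In fact, since the peripheral subgroups here are free abelian — hence torsion-free abelian, hence BP — the cleanest route is to quote directly the theorem (in the relatively hyperbolic BP literature referenced by the paper) that a relatively hyperbolic group whose peripheral subgroups are all BP is itself BP, and then observe that free abelian groups are BP; the remaining work is just verifying that toral relatively hyperbolic groups satisfy the torsion-freeness hypothesis, which is immediate from the definition. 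I would likely present the proof in this streamlined form, with the quasigeodesic-concatenation sketch above as the conceptual explanation of why the cited result holds.
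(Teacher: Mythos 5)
Your proposal takes a genuinely different route from the paper's. The paper does \emph{not} argue via quasigeodesic concatenation in the coned-off graph or via bounded coset penetration. Instead it uses Osin's peripheral filling theorem \cite{Osin07} to reduce to the genuinely hyperbolic case: given a generic tuple $(u_1,\dots,u_k)$, first add the maximal elementary subgroups $E(u_i)$ (cyclic, since $G$ is torsion-free) of the loxodromic $u_i$ to the peripheral family, so that \emph{every} $u_i$ now lies in a peripheral subgroup — this collapses your ``mixed case'' entirely. Then, since free abelian groups are discriminated by cyclic groups \cite{BMR06}, one chooses epimorphisms $\phi_\lambda\colon A_\lambda\to C_\lambda$ onto cyclic groups that are injective on the relevant finite subsets (avoiding both Osin's forbidden set $\mathfrak F$ and a finite set $\mathfrak F(T)$ governing injectivity on $T=\{u_i,[u_i,u_{i+1}]\}$). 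Filling along $N_\lambda=\ker\phi_\lambda$ produces a quotient $H$ that is honestly word-hyperbolic, in which the images of the $u_i$ remain a generic tuple of infinite-order elements, and Ol'shanskii's big-powers result \cite{Ol93} for hyperbolic groups finishes the proof. What this buys over your approach: the Dehn-filling argument sidesteps exactly the step you flag as hard — obtaining a uniform threshold across loxodromic/peripheral junctions — by converting the problem into a purely hyperbolic one, at the price of invoking peripheral-filling machinery and the algebraic fact that $\BZ^n$ is fully residually cyclic (which is specific to the abelian peripherals and is why the lemma is stated only for toral relatively hyperbolic groups). Your longer quasigeodesic/BCP sketch is plausible in outline but would require substantially more technical work precisely at the step you identify as the hard part, and I'd caution that the ``cleanest route'' you suggest — citing a black-box theorem that relatively hyperbolic groups with BP peripherals are BP — is not actually available: no such clean statement appears in the references, the present lemma is essentially what one would want to cite, and the paper's proof leans on the peripherals being free abelian in an essential way, so a hypothetical ``BP peripherals $\Rightarrow$ BP'' statement in that generality is not what is being used.
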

\begin{proof}
Let $k$ be a positive integer and let $u=u_1,\ldots,u_k$ be a generic sequence of elements of  $G$. Define $T=\{u_1,\ldots, u_k,[u_1,u_2], \ldots ,[u_{k-1},u_k]\}$. From \cite{Osin06}, if $u_i$ is hyperbolic then $u_i$ is   contained in a maximal elementary subgroup $E(u_i)$ and  $G$ is hyperbolic relative to $\{A_\lambda\}\cup \{E(u_i)\}$.
Moreover, as  $G$ is torsion-free hyperbolic, $E(u_i)$ is cyclic. 
Adding these $E(u_i)$ to the family of peripheral subgroups, we may assume that $u_i$ is contained in a peripheral subgroup $A_i$, $i\in\Lambda$ for all $i$. Theorem 1.1 of \cite{Osin07} states that there exists a finite subset $\mathfrak F$ of non–trivial elements of $G$ with the following property. Let $\mathfrak N = \{N_{\lambda}\}_{\lambda \in \Lambda}$ be a collection of subgroups $N_\lambda \triangleleft A_\lambda$ such that $N_\lambda \cap \mathfrak F = \emptyset$ for all $\lambda \in \Lambda$.  Write $G(\mathfrak N)$ for the quotient $G/\ncl\langle\cup_\lambda N_\lambda\rangle$ (where $\ncl$ denotes the normal closure in $G$). Then, for each $\lambda\in \Lambda$, the natural map from $A_\lambda / N_\lambda$ to  $G(\mathfrak N)$ is injective and $G(\mathfrak N)$ is hyperbolic relative to the collection $\{A_\lambda/N_\lambda\}_{\lambda \in \Lambda}$. Moreover, for any finite subset $S\subset G$,  there exists a finite subset $\mathfrak F(S)$ of non–trivial elements of $G$ such that the restriction of the natural homomorphism  $G \to G(\mathfrak N)$ to $S$ is injective whenever $N_\lambda \cap \mathfrak F(S) = \emptyset$ for $\lambda \in \Lambda$.  Let $T\subset G$ be the finite set defined above and $\mathfrak F(T)$ be the set given by the aforementioned theorem. For each $\lambda\in \Lambda$ let $T_\lambda=(\cF \cup \cF(T)\cup T)\cap A_\lambda$. From \cite{BMR06} it follows that free abelian groups are discriminated by cyclic groups, so for all $\lambda$, there exists a homomorphism $\phi_\lambda$  from $A_\lambda$ to a cyclic group $C_\lambda$ such that $\phi_\lambda$ restricted to $T_\lambda$ is injective. Let $N_\lambda$ be the kernel of $\phi_\lambda$ and let $H=G/\ncl\langle\cup_\lambda N_\lambda\rangle$. Then $N_\lambda\cap\cF(T)=\emptyset$ and from \cite[Theorem 1.1]{Osin07}, the canonical map $\phi$ from $G$ to $H$ induces an embedding from $A_\lambda/N_\lambda$ to $H$. It follows that $\phi(u_i)$ is of infinite order in $H$, for all  $i$, and that $\phi(u_1),\ldots, \phi(u_k)$ is a generic sequence of elements of $H$. From \cite[Corollary 1.2]{Osin07}, the group $H$ is hyperbolic and from \cite{Ol93} hyperbolic groups have the big powers property for tuples of elements of infinite order; so there exists $n(u)$ such that, whenever $\alpha_i\ge n(u)$, for all  $i$, we have $\phi(u_1)^{\alpha_1}\cdots \phi(u_k)^{\alpha_k}\neq 1$.  It follows that $u_1^{\alpha_1}\cdots u_k^{\alpha_k}\neq 1$, and therefore $G$ is a BP group. 
\end{proof}
\begin{cor}\label{cor:torelhypinC}
  Let $G$ be a torsion-free group which is hyperbolic relative to a finite family $\{A_\lambda\,:\,\lambda \in\Lambda\}$ of finitely generated free abelian groups.
  Then $G$ is in $\cC$.
\end{cor}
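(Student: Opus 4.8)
The plan is to reduce the statement to Lemma \ref{lem:csainc}. Since $G$ is hyperbolic relative to finitely many finitely generated subgroups, $G$ is itself finitely generated. If $G$ is abelian it is then finitely generated torsion-free abelian, hence free abelian, and so lies in $\cC$ by the example that free abelian groups belong to $\cC$; thus I may assume $G$ is non-abelian. Property \ref{it:C1} is part of the hypothesis and property \ref{it:C2} is Lemma \ref{lem:torelhypBP}. That $G$ is a CSA group is a known feature of toral relatively hyperbolic groups -- maximal abelian subgroups are malnormal -- which I would cite.

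Next I would describe the centralisers, to check the remaining hypothesis of Lemma \ref{lem:csainc}. For $1\neq g\in G$ there are two cases. If $g$ is parabolic it lies in a unique maximal parabolic subgroup $P$, a conjugate of some $A_\lambda$; as $A_\lambda$ is abelian $P\leq C(g)$, and since distinct conjugates of peripheral subgroups intersect trivially (torsion-freeness together with almost malnormality) and peripheral subgroups are self-normalising, one gets $C(g)=P$. If $g$ is hyperbolic then $C(g)$ is the maximal elementary subgroup $E(g)$, which is infinite cyclic because $G$ is torsion-free; writing $C(g)=\langle g_0\rangle$, the generator $g_0$ is not a proper power, again because any root of $g_0$ would have to lie in $\langle g_0\rangle$. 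So every centraliser is either a conjugate of some $A_\lambda$ or infinite cyclic generated by a non-proper-power, exactly the dichotomy demanded by Lemma \ref{lem:csainc}, provided the $A_\lambda$ are canonical subgroups; to arrange this I would take $X$ to contain a free basis $X_\lambda$ of each of the finitely many $A_\lambda$.

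It remains to choose $X$ so that \ref{it:C4} also holds, and this is the step I expect to require the most care. Commutation in $G$ is very constrained: generators lying in distinct peripheral subgroups cannot commute (this would place a nontrivial element in $A_\lambda\cap A_\mu=1$), a hyperbolic generator cannot commute with a peripheral one (it would lie in $C$ of a parabolic, hence in that parabolic), and two commuting hyperbolic generators lie in a common infinite cyclic subgroup. Using the last fact I would, by Tietze transformations, replace any pair of commuting generators lying outside the $X_\lambda$ by a single generator of the cyclic subgroup they generate, and then delete redundant generators; after finitely many steps one obtains a finite generating set $X$, still containing the bases $X_\lambda$, in which no two generators lying outside a single $X_\lambda$ commute and no generator lies in the subgroup generated by the others. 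Property \ref{it:C4}\ref{it:C4a} then reduces to the fact that each $\langle X_\lambda\rangle=A_\lambda$ is free abelian on $X_\lambda$; the delicate point is \ref{it:C4}\ref{it:C4b}, where one must rule out that an element of $X_\lambda$ is expressible in terms of $X\setminus\{x\}$, which for $x\in X_\lambda$ amounts to controlling the intersection of a subgroup generated by part of $X$ with the parabolic $A_\lambda$ -- here I would invoke the normal-form and malnormality machinery available in relatively hyperbolic groups. Once \ref{it:C4} is verified for this $X$, all hypotheses of Lemma \ref{lem:csainc} are in place and $G\in\cC$; the purely hyperbolic case is recovered as the special case $\Lambda=\emptyset$, where no peripheral generators are needed.
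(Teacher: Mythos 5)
Your argument is correct and follows the paper's own route: establish BP via Lemma \ref{lem:torelhypBP}, record the dichotomy for centralisers (conjugate of a peripheral $A_\lambda$ versus maximal cyclic generated by a non-proper-power), cite the CSA property, and invoke Lemma \ref{lem:csainc}. The paper's proof is considerably terser, simply asserting that \ref{it:C1} and \ref{it:C4} ``hold from the definitions'' without specifying the generating set; your explicit discussion of taking $X$ to contain bases of the $A_\lambda$ and of verifying \ref{it:C4} fills in detail the paper leaves implicit, although the point you flag as delicate (property \ref{it:C4}\ref{it:C4b}) is only gestured at, much as in the paper.
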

\begin{proof}
  From Lemma \ref{lem:torelhypBP}, the group $G$ is in BP and 
 \ref{it:C1} and \ref{it:C4} hold from the definitions.  The centraliser of a non-trivial element $g$ satisfies that either $C(g)=A_\lambda$ for some $\lambda\in \Lambda$, or $C(g)=\la g_0\ra$, where $g_0$ is the root of $g$, and from  \cite[Lemma 2.5]{Groves09}, such groups are CSA.
  Hence we may apply Lemma \ref{lem:csainc}.
  \end{proof}

\section{Preservation operations}\label{sec:preserve}
\begin{defn}[Extension of Centralisers]\label{defn:extension centraliser}
Let $G$ and $H$ be  groups, $u\in G$, $C=C_G(u)$ and  $\phi:C\rightarrow H$ a monomorphism such that $\phi(u)\in Z(H)$. 
The \emph{extension of the centraliser} $C$ by $H$ (\emph{with respect to }$\phi$) is 
\[G(u,H)=G\ast_\phi H,\] 
the group with relative presentation $\la G, H\,|\, \phi(g)=g, \forall g\in C\ra$.

If $H=\phi(C)\times A$, for some subgroup $A$ of $H$, then the extension is said to be \emph{direct}. 
\end{defn}
An element of an amalgamated free product is said to be cyclically reduced if it has no reduced form which begins
and ends with an element from the same factor. Every element of an amalgam is conjugate to a cyclically reduced element
(\cite[Theorem 4.6]{MKS}).
\begin{thm}\label{thm:lift}
  Let $G \in \mathcal C$ (with respect to the generating set $X_G$ and subset $W_G$ satisfying \ref{it:C1}--\ref{it:C8} above)
  and let 
  $u\in W_G$ be
  such that  $C_G(u)$ is abelian.
  Let $B$ be a free abelian group, let $\phi:C_G(u)\rightarrow B$ be a monomorphism such that $B=\phi(C_G(u))\times A$,
  for some $A\le B$, and let $G(u,B)=G\ast_\phi B$ be the direct centraliser extension of $C=C_G(u)$ by $B$. Then the following hold.  
\begin{enumerate}[label=(\arabic*)]
\item \label{it:lift1}
 $G(u,B) \in \mathcal C$, with respect to the generating set $X(u,B)=X_G\cup X_A$, where $X_A$ is a free generating set for $A$, and cyclically reduced elements $W(u,B)=W_G\cup W_*$, where  $W_*$ is the set of elements $z\in G(u,B)$ that satisfy the following conditions. 
 \begin{enumerate}[label=(\roman*)]
\item\label{it:w*i} $z$ is a cyclically  reduced,  root element with a factorisation $z=g_1a_1\cdots g_ra_r$, where $r\ge 1$,  $g_i\notin C$, $a_i\in A$, $a_i\neq 0$, for all $i$, such that;
\item\label{it:w*ii} if $Y(z)$ is a minimal subset of  $X_G$ such that $g_i\in \langle Y(z)\rangle$, for all $i$, and
  $O'(z)=\cap_{i=1}^r C_G(g_i) \cap C_G(u)$ then $\langle Y(z),O'(z)\rangle=\langle Y(z)\rangle\times O'(z)$.
\item\label{it:w*iii} If $z$ satisfies \ref{it:w*i} and \ref{it:w*ii} above then exactly one element of  the set of elements $v\in G(u,B)$ such that $v$ is conjugate to $z$ or $z^{-1}$ and  satisfies \ref{it:w*i} and \ref{it:w*ii}, 
  belongs
  to $W_*$. 
  \end{enumerate}
 
\item \label{it:lift2}
  Assume $\phi(u)=v\in B$ and identify the cyclic subgroup $\langle v\rangle$ of $B$ with $\BZ$. Let
  $\psi_i : B \rightarrow \BZ$, $i\in I$,
  be a discriminating family of (additive group) homomorphisms of $B$ by its subgroup $\BZ$, indexed by a set $I$.
  For $(i,m)\in I\times \BN$, define $\lambda_{i,m}: G(u,B) \longrightarrow G$ to be the
  homomorphism induced by  the identity homomorphism on $G$ and the composition of the inverse image of $\phi$
  with the $m$-th scalar multiple $m\psi_i$  of the homomorphism $\psi_i$ on $B$:
  that is $\lambda_{i,m}(g)=g$ and $\lambda_{i,m}(b)=\phi^{-1}(m\psi_i(b))=u^{m\psi_i(b)}$,
  for $g\in G$ and $b\in B$.
Then $G(u,B)$ is discriminated by $G$ via the family $\lambda_{i,m}$, $(i,m)\in I\times \BN$.
\end{enumerate}
\end{thm}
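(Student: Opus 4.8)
The plan is to prove the two statements in tandem, since part \ref{it:lift2} (discrimination) will be used to verify several of the conditions of Definition \ref{defn:C} needed in part \ref{it:lift1}, most notably the BP property \ref{it:C2} and unique roots \ref{it:C3}. I would begin with \ref{it:lift2}, which is the more self-contained of the two. The key point is a normal-form argument in the amalgam $G(u,B)=G\ast_\phi B$: any finite set $S$ of non-trivial elements of $G(u,B)$ can be written in reduced form with respect to the amalgam, and the only obstruction to $\lambda_{i,m}(s)\neq 1$ for all $s\in S$ is that the homomorphism might collapse a reduced syllable. For the $B$-syllables appearing in the (finitely many) reduced forms of elements of $S$, choosing $i$ so that the discriminating homomorphism $\psi_i$ is injective on the finite set of relevant values handles the collapse on $B$; one then needs that for sufficiently large $m$ the map does not introduce unwanted cancellations between consecutive $G$- and $B$-syllables. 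This is exactly where the \textbf{big powers property} of $G$ enters: the $G$-syllables $g_i$ lie outside $C=C_G(u)$, so the sequence $g_1,u^{n_1},g_2,u^{n_2},\dots$ is generic, and BP guarantees that for $n_j$ large (which is forced by taking $m$ large, once $\psi_i$ is fixed) the product is non-trivial in $G$. The malnormality-type condition \ref{it:C7} on $C_G(u)$ is needed to control the case where a syllable of $S$ lies in a conjugate of $C$. I would make this precise by a short induction on syllable length.

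For part \ref{it:lift1}, the task is to exhibit the generating set $X(u,B)=X_G\cup X_A$ and the set $W(u,B)=W_G\cup W_*$ and verify \ref{it:C1}--\ref{it:C8}. Conditions \ref{it:C1}, \ref{it:C3}, \ref{it:C4} are relatively routine: torsion-freeness is preserved by amalgams over the abelian group $C$ with the second factor $B$ torsion-free; unique roots and the generating-set conditions follow from the structure of the amalgam together with the corresponding properties of $G$, using that the extension is \emph{direct}, $B=\phi(C)\times A$. Condition \ref{it:C2} (BP) for $G(u,B)$ follows from part \ref{it:lift2}: since $G(u,B)$ is discriminated by the BP group $G$, and BP is a universal-type property preserved under discrimination, $G(u,B)$ is BP; alternatively one can cite \cite{KMR05} on BP and free products with amalgamation over abelian subgroups. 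The isolatedness of centralisers is then automatic.

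The technical heart is the analysis of centralisers in $G(u,B)$, needed for \ref{it:C6}, \ref{it:C7}, \ref{it:C8}, and this is where I expect the \textbf{main obstacle} to lie. One must show that every centraliser in $G(u,B)$ is conjugate to $C_{G(u,B)}(w)$ for some $w\in W_G\cup W_*$, with $k=1$ in the abelian case. The standard tool is the description of centralisers in an amalgamated product (Bass--Serre theory / the structure theory of \cite{MKS}): a cyclically reduced element of syllable length $\geq 2$ has cyclic-up-to-the-edge-group centraliser, while elements conjugate into a factor have centralisers controlled inside that factor. For elements conjugate into $G$, one uses that $G\in\cC$ and the relation between $C_G(g)$ and $C_{G(u,B)}(g)$: the centraliser can only grow by absorbing part of $B$ when $g\in C$, and here \ref{it:C6}\ref{it:C6c} and \ref{it:C7} for $G$ pin down the $O(w)$-part. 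For elements of syllable length $\geq 2$, one checks that after conjugating to cyclically reduced form $z=g_1a_1\cdots g_ra_r$, the centraliser is $\langle z\rangle$ times $O'(z)=\cap_i C_G(g_i)\cap C_G(u)$, which is precisely the direct product demanded in \ref{it:w*ii}; the non-obvious part is that the intersection indeed splits off as a direct factor and that this forces the abelian centraliser to be associated to a \emph{unique} $w\in W_*$ (condition \ref{it:w*iii}), which in turn rests on the uniqueness statements already established in Lemma \ref{lem:Wpc} and Remark \ref{rem:ZO}. I would organise this as a sequence of lemmas: first a normal-form/conjugacy lemma for $G(u,B)$, then a centraliser-computation lemma splitting into the two cases, then verification of \ref{it:C6a}, \ref{it:C6b}, \ref{it:C6c} case by case on whether $C(w)$ is canonical abelian, non-canonical abelian, or non-abelian, and finally \ref{it:C7} and \ref{it:C8}, the latter using that $O'(z)$ and the relevant $Y(z)$ are canonical by construction together with \ref{it:C4}\ref{it:C4a}. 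The delicate bookkeeping is ensuring the maximality/uniqueness clauses of \ref{it:C6}\ref{it:C6b}\ref{it:C6bii} hold for the newly added elements of $W_*$ simultaneously with the inherited ones from $W_G$.
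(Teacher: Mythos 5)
Your proposal is correct and takes essentially the same route as the paper: prove part (2) first by combining the amalgam normal form with the BP property of $G$ (reorganising a reduced form $w_1a_1\cdots a_nw_{n+1}$ into big powers of conjugates of $u$ and choosing $\psi_j$ non-degenerate on the $a_i$); then obtain \ref{it:C1}, \ref{it:C2} from discrimination; treat \ref{it:C3}, \ref{it:C4} directly from the amalgam and the directness of the extension; and, for the heart of the matter, classify centralisers in $G(u,B)$ via the three-case description of commuting elements in an amalgam (element in a conjugate of $C$, in a conjugate of a factor but not of $C$, or of syllable length $\ge 2$), arriving at the splitting $C(z)=\langle z\rangle\times O'(z)$ in the last case. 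The only small imprecisions — describing BP as a ``universal-type property'' (it is $\forall\exists\forall$ rather than universal, and the correct citation for preservation under discrimination is the one the paper uses, not the free-product result you mention as an alternative) and the slightly misleading claim that the only obstruction to $\lambda_{i,m}$ being injective is syllable collapse — do not affect the validity of the plan, since you immediately correct for them by invoking BP for the product of the images.
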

\begin{proof} Note that, although there may be more than one choice of  $w\in W_G$  such that $C_G(w)=C$, the group $G(u,B)$ and subsets $X(u,B)$, $W(u,B)$, $Z_G(u)$ and $O_g(u)$  are independent of the choice $w=u$. \\[1em]
  \ref{it:lift2} 
Let $\ww=(w_1,\ldots, w_{n+1})$ be an $n+1$ tuple of elements of $G$, such that $w_i\notin C$, for $i\ge 2$, and let $\bm=(m_1,\ldots, m_n)$ be an $n$-tuple of non-zero integers. Then, setting $u_i=u^{m_i}$ the tuples  $(w_1,\ldots, w_{n+1})$ and $(u_1,\ldots, u_{n})$ satisfy the  condition that $[u_i^{w_{i+1}}, u_{i+1}] \neq 1$ in $G$; since the fact that centralisers in $G$ are isolated implies  $[u_i^{w_{i+1}}, u_{i+1}] = 1$ if and only if $({u^{m_i}})^{w_{i+1}} \in C(u_{i+1})=C$ if and only if $u^{w_{i+1}}, u \in C$ if and only if $[w_{i+1}, u] = 1$. 
Since $G$ satisfies the BP-property there exists an integer $K(\ww,\bm)$ such that 
$w_1u_1^m\cdots u_n^mw_{n+1}\neq 1$, for all $m\ge K(\ww,\bm)$.

Let $g$ be a non-trivial element in $G(u,B)$. One can write it in a reduced form
\[
g=w_1a_1w_2 \cdots a_nw_{n+1},
\]
where  $1 \neq a_i \in A$ and $w_i \in G$, for all $i$,   and $w_i \not\in C_G(u)$ for $i=2, \ldots, n$, and either
$w_{n+1}=1$ or $w_{n+1}\notin C_G(u)$.
Let $\psi_j$ discriminate the elements $a_1, \ldots, a_n$ in $\BZ$. Define $m_i=\psi_j(a_i)$,  for $i=1,\ldots,n$. 
By definition,
\[\lambda_{j,m}(a_i)= u_i^m =u^{mm_i}\in C,\]
so, with $\ww$, $\bm$ and $K(\ww,\bm)$ as above, $\lambda_{j,m}(g)\neq 1$, for all $m\ge K(\ww,\bm)$. 
(If $w_{n+1}=1$, then we replace $g$ with $a_nga_n^{-1}$ and obtain  $\lambda_{j,m}(a_nga_n^{-1})\neq 1$,
so again $\lambda_{j,m}(g)\neq 1$.)
Thus, we can separate any given non--trivial element $g \in G(u,B)$ by  $\lambda_{j,m}$, for all $m \geq K(\ww,\bm)$.

Consequently, if we have a finite number of elements $g_1, \ldots, g_k \in G(u,B)$, say 
\[
g_i=w_{i,1}a_{i,1}w_{i,2} \cdots a_{i,n_i}w_{i,n_i+1},
\]
then, choosing $\psi_j$ to discriminate the elements $a_{1,1}, \ldots, a_{k,n_k}$ in $\BZ$, it follows that 
$\lambda_{j,m}$ discriminates $g_1, \ldots, g_k$, as long as $m \geq max\{K(\ww_1,\bm_1), \ldots, K(\ww_k,\bm_k) \}$, where $\ww_i$ and $\bm_i$ have the obvious definitions.  Hence $G(u,B)$ is discriminated by $G$.
%%%%%%%%%%%%

\medskip
\ref{it:lift1}: 
\ref{it:C1} and \ref{it:C2}.\\
Since $G(u,B)$ is discriminated by $G$, it follows that $G(u,B)$ is torsion-free and since $G$ satisfies the BP-property, so does $G(u,B)$ (\cite[Lemma 11]{BMR02}), so \ref{it:C1} and \ref{it:C2} hold.
\medskip

\ref{it:C4}\ref{it:C4a}.\\
Let $Y$ and $Y'$ be disjoint and commuting  subsets of $X(u,B)$. Then  $Y=L_1\cup L_2$ and $Y'=L_1'\cup L_2'$, where $L_1, L_1'\subset X_G$ and  $L_2,L_2'\subset X_A$. If $Y,Y'\subseteq G\cup B$, then immediately from the definitions \ref{it:C4}\ref{it:C4a}  holds. Assume then that $L_1$ and $L_2$ are both non-empty and that $a\in L_2$ ($a\neq 1$). Then, $[a,y']=1$ and $a\in A$, so $y'\in C$ for all $y'\in L_1'$. Hence $Y'\subseteq C\times A=B$. If also $L_2'\neq \emptyset$, then $Y\subseteq B$, a contradiction. 
Hence, without loss of generality, we have $Y'\subseteq C$. In this case if  $w\in \langle Y\rangle \cap \langle Y'\rangle$, then $w\in \langle Y'\rangle < C$ implies that $w$ has syllable length $0$, and so $w\in \langle Y\rangle \cap G=\langle L_1\rangle$, so $w\in \langle L_1\rangle\cap \langle Y'\rangle$. 
Since $L_1 \cap Y'=\emptyset$ and $L_1\cup Y'\subseteq X_G$, it follows from \ref{it:C4}\ref{it:C4a} in $G$ that $w=1$ and so $\langle Y,Y'\rangle = \langle Y\rangle \times \langle Y' \rangle$. 

\medskip

\ref{it:C4}\ref{it:C4b}.\\
Let $Y\subset X(u,B)$ and let $x\in X(u,B)\cap \langle Y\rangle$. Write $Y_G=Y\cap X_G$ and $Y_A=Y\cap X_A$ and let $x=g_0a_0  \cdots g_n a_n$ be an expression for $x$ with $a_i\in \langle Y_A\rangle$ and $g_i\in\langle Y_G\rangle$.  If $n >0$ then $a_n=1_B$ and $g_n\in C$, so $x=g_0a_0\cdots g_{n-1}g_na_{n-1}$; and  continuing this way we see $x=g_0'a_0$, with $g'_0\in \langle Y_G\rangle$. 
If $x\in X_A$ then $g'_0\in C$ and $x=\phi(g'_0)a_0$, from which it follows that $\phi(g'_0)=1_B$ and  then that $x\in Y_A$. If $x\in X_G$, then $a_0=1_B$ and $x\in Y_G$, by \ref{it:C4}\ref{it:C4b} in $G$.  
\medskip

\ref{it:C3}.\\
Let $g\in G(u,B)$ be in a conjugate of a factor. Then, without loss of generality, we may assume that $g$ belongs to a factor; and it follows that $g$ has a unique root, using \ref{it:C3} in the case $g\in G$. Hence we may assume that $g$ is not in a conjugate of a factor.  First note that for all  $h,f\in G(u,B)$ and $s\ge 1$, if $h^s=f^s$  then $[f^s,h^s]=1$, so $[f,h]=1$,  as centralisers in $G(u,B)$ are isolated. Hence $(fh^{-1})^s=1$ and, as $G(u,B)$ is torsion-free, $h=f$. Now if $g$ is  not in a conjugate of a factor then,  without loss of generality, we may assume  that $g$ is cyclically reduced so, for all $h$ and $r$ such that $g=h^r$, the syllable length of $g$ is $k_0|r|$, where $k_0$ is the syllable length of $h_0$. Therefore there  is a unique maximal positive integer $r(g)$, such that  $g=h^{r(g)}$, for some $h\in G(u,B)$. Suppose that $g=h^r=f^s$, for some $h,f\in G(u,B)$, where $r=r(g)$  and $1\le s \le r$. 
Let $d=\gcd(r,s)$ and $a,b\in \BZ$ be such that $d=ar+bs$. Then $h^d=h^{ar}h^{bs}=g^ah^{bs}=(f^{a}h^{b})^s$ and, setting $h_1=(f^{a}h^{b})^{s/d}$, we have $h^d=h_1^d$, so $h=h_1$. Thus $h= (f^{a}h^{b})^{s/d}$ and, by maximality of $r(g)$, we have $s=d$, so $s|r$ and $f^s=(h^{r/s})^s$ implies $f=h^{r/s}$. Therefore $g$ has a unique root $\sqrt{g}=h$.

To verify that \ref{it:C6}--\ref{it:C8} hold a description of centralisers in $G(u,B)$ is needed.  
We will use the following description of commuting elements in free products with amalgamation 
(see \cite{MKS}): 
if $[x, y] = 1$ then one of the following conditions holds:
\begin{enumerate}[label=\Roman*.,ref=\Roman*]
\item\label{it:colli} $x$ or $y$ belongs to some conjugate of the amalgamated subgroup $C = C_G(u)$;
\item\label{it:collii}  neither $x$ nor $y$ is in a conjugate of $C$, but $x$ is in a conjugate of a factor ($G$ or $B$), in which case $y$ is in the same conjugate of the factor;
\item\label{it:colliii}  neither $x$ nor $y$ is in a conjugate of a factor, in which case $x = g^{-1}cg z^n$ and  $y = g^{-1}c'gz^m$, where $c, c' \in C$, and $g^{-1}cg, g^{-1}c'g$ and $z$ commute pairwise.
\end{enumerate}

Considering each of these three possible cases in turn we shall prove the following lemma and show that \ref{it:C6}--\ref{it:C8} hold in $G(u,B)$.
\begin{lem}\label{lem:centdesc}
  Let $v$ be a cyclically reduced element of $G(u,B)$ and let $C(v)$ denote the centraliser of $v$ in $G(u,B)$.
  \begin{enumerate}[label=(\roman*)]
\item\label{it:centdesca} If $v\in  C$ then either
  \begin{enumerate}[label=(\alph*)]
  \item\label{it:centdescai} $v\notin O_G(u)$, in which case $C(v)=C\times A=B=C(u)$; or
  \item\label{it:centdescaii} $v\in O_G(u)$, in which case $C(v)=C(w)$, for some $w\in W_G$, and
    $C(v)=\Z_G(v)\times \langle O_G(v),A\rangle$.
    \end{enumerate}
  \item\label{it:centdescb}  If $v\in G\backslash C$ then $C(v)=C_G(v)$ and if $v\in A$, and is not the identity, then $C(v)=B=C(u)$.
  \item\label{it:centdescc} If $v$ is not in $G\cup B$ then there exists $z\in W_*$ such that $C(v)=\langle z\rangle \times O'(z)$
    (as defined in Theorem \ref{thm:lift} \ref{it:lift1} \ref{it:w*ii} above). 
    \end{enumerate}
    In case \ref{it:centdesca}\ref{it:centdescaii} $C(v)$ is non-abelian and canonical, while in all other cases $C(v)$ is abelian. 
\end{lem}

\noindent \textbf{Case \ref{it:colli}}. 
If $x$ belongs to some conjugate of the amalgamated subgroup $C=C_G(u)= Z_G(u) \times O_G(u)=\phi(C)\le B$:  without loss of generality we may  assume that $x \in C$. 
In this case, as $C$ is abelian $B=C\times A \subseteq C(x)$.

Let $[x,y]=1$ and let $y=g_1a_1 \cdots g_n a_n$ be any reduced form of $y$, that is $a_i\in A$, $g_i \in G$, $a_i\neq 1_A$, for $i<n$, and for $i\ge 2$, $g_i\notin C$. From $yx=xy$ and the theory of amalgamated products, it follows that either ($n=1$ and $g_1\in C$) or ($n\ge 1$, $g_n\notin C$ and $x^{g_n^{-1}} \in C$). In the latter case, as \ref{it:C7} implies that $g_n\in C_G(x)$, the centraliser of $x$ in $G$ is non-abelian: $g_n \in C_G(x), u \in C_G(x)$, but $[g_n,u]\ne 1$. Hence, if $C_G(x)$ is abelian  
\begin{equation}\label{eq:xinCAb}
  y\in C\times A, \textrm{ and } C(x)=C\times A=C(u).
\end{equation}

Now assume that $C_G(x)$ is non-abelian, so contains $y$ as above where $n\ge 1$ and $g_n\notin C_G(u)$, so $g_n \in C_G(x)$.
From Lemma \ref{lem:intcent}, $C_G(x)=\Z_G(x)\times O_G(x)$ is canonical, there exists $x_0\in O_G(u)$ such that
$C_G(x)=C_G(x_0)$ and $\Z_G(x)\le O_G(u)$. 

Since $g_n \in \Z_G(x) \times O_G(x)$, we have $g_n = x_n o_n$, where $x_n \in \Z_G(x) < O_G(u)$ and $o_n \in O_G(x)$. Then $y = y' x_n o_n a_n$ and $[x,y']=1$. By induction on syllable length we have that $y'= K_{n-1} O_{n-1}$, where $K_{n-1}\in \Z_G(x)$, $O_{n-1} \in \langle O_G(x), A\rangle$. Then 
\[
y = K_{n-1} O_{n-1}  x_n o_n a_n =K_nO_n,
\] 
where $K_n=K_{n-1}x_n \in \Z_G(x)$ and  $O_n=O_{n-1}o_na_n \in\langle O_G(x), A\rangle$. 

Hence,  if $C_G(x)$ is non-abelian then there exists $x_0\in W_G$ such that 
\begin{equation}\label{eq:xinCnAb}
  C(x)= \Z_G(x) \times \langle O_G(x), A \rangle =C(x_0)\textrm{ with  } x,x_0\in O_G(u).
\end{equation}
In particular,  for any $x\in C$, $C_G(x)$ is abelian if and only if $C(x)$ is abelian, and if $C_G(x)$ is canonical, so is $C(x)$. 

Now let us verify that conditions \ref{it:C6} to \ref{it:C8} hold for $x$ in Case I. 
If $C_G(x)$ is abelian, so $C(x)=C(u)$ is given in \eqref{eq:xinCAb}, define $Z(u)=Z_G(u)$ and $O(u)=O_G(u)\times A$. As $C_G(u)\neq C_G(v)$, for all $v\in W$, $v\neq u$, it follows that if $w\in W$ and $w\neq u$ then $C(w)\neq C(u)$. Hence \ref{it:C6}\ref{it:C6a} and \ref{it:C6}\ref{it:C6b} hold. 

For \ref{it:C6}\ref{it:C6c}, if $g\in O(u)$ let $g=oa$, where $o\in O_G(u)$ and  $a\in A$. From \ref{it:C6}\ref{it:C6c} in $G$ and the fact that $C_G(u)$ is abelian it follows that $C_G(o)=C_G(w_0)$, for  some $w_0\in W_G\cap O_G(u)$ and from \eqref{eq:xinCnAb}, if $a$ is trivial, then $C(g)=C(w_0)=\Z_G(w_0)\times \langle  O_G(w_0),A\rangle$, and $w_0\in W$; so  \ref{it:C6}\ref{it:C6c} holds. Otherwise $a$ is non-trivial and $g\in B$ so  $C(g)=B=C(u)$ and  \ref{it:C6}\ref{it:C6c} follows immediately.

To see that \ref{it:C7} holds, when $C(x)$ is abelian, assume that $w\in C(u),  w^h \in C(u)$ and $h\notin C(u)$. Then  $w = w_G a$, where  $a\in A$ and $w_G \in C$.
Also,  $h\notin C(u)$ implies $h= g_1 a_1 \cdots g_n a_n$, where $g_i\in G$, $a_i\in A$ and either $n=1$ and $g_1\notin C$; or $n>1$ and $g_i\notin C$, for $i\ge 2$.
 Then
 \[w^h= a_n^{-1} g^{-1}_n \cdots a_1^{-1} g_1^{-1} w_G a g_1 a_1 \cdots g_n a_n\in C(u)=C\times A\]
so $g_1^{-1} w_G a g_1\in C$, whence $a=1$ or $g_1\in C$.
If $n=1$ then $g_1\notin C$ so $a=1$, $w=w_G$ and $w_G^{g_1}\in C$, so $w_G\in O_G(u)\subseteq O(u)$ and $[g_1,w_G]=1$; so $[h,w]=1$.
 If $n>1$ and  $g_1\in C$ then 
\[g^{-1}_n \cdots a_1^{-1} w_G a a_1 \cdots g_n=g^{-1}_n \cdots g_2^{-1} w_G a g_2 \cdots g_n \in C\times A\]
and, by induction on syllable length, $w\in O(u)$ and $[w, g_2 \cdots g_n]=1$; so $[w,h]=1$.

On the other hand, if $g_1\notin C$, as before  $a=1$ and $[g_1,w_G]=1$ so  
\[g^{-1}_n \cdots g_2^{-1} w_G  g_2 \cdots g_n \in C\times A.\]
As
$g_2 \cdots g_n\notin C(u)$,  by induction $[w_G,g_2 \cdots g_n]=1$. 
It follows that in all cases $[w,h]=1$. 

Thus \ref{it:C6} and \ref{it:C7} hold in Case I when $C_G(x)$ is abelian.

Assume then that $x\in C$ and $C_G(x)$ is non-abelian, so \eqref{eq:xinCnAb} describes $C(x)=C(x_0)$.
If $w\in W$ such that $C_G(w)=C_G(x_0)$, then $w\in W_G$ since, as  we shall see in the final case of this proof, $v\in W_*$ implies that $C(v)$ is abelian. Hence for all such $w$ we have $w\in W_G$ and we may set $Z(w)=\Z_G(w)$ and  $O(w)=\langle O_G(w), A\rangle$.
If $w\neq x_0$ note that $\Z_G(w)\le Z(C_G(w))=Z(C_G(x_0))\le C_G(u)$, so $C_G(w)=\Z_G(w)\times \la O_G(w),A\ra$. 
Hence,  in this case \ref{it:C6}\ref{it:C6a}, \ref{it:C6}\ref{it:C6b} and \ref{it:C8} follow  directly from the definitions and we defer consideration of  \ref{it:C6}\ref{it:C6c} until we have completed our description  of centralisers of elements of $G(u,B)$, below.

  \smallskip

  \textbf{Case \ref{it:collii}}. If $x$ is in a conjugate of $G$ but not in a conjugate of $C$ then $x\in G^g$, 
  for some $g\in G(u,B)$. In this  case  $C(x^{{g}^{-1}})=C_G(x^{{g}^{-1}})$ and   \ref{it:C6} and \ref{it:C8} hold for $C(x)$, as they hold in $G$.
  To see that \ref{it:C7} holds assume that $C(x)$ is abelian and conjugate to $C(w)=C_G(w)$, for some $w\in W_G$, and that  $b$ and $b^h$ are in  $C(w)$, but $h\notin C(w)$. If $h\in G$, then \ref{it:C7} implies $b\in O_G(w)=O(w)$ and $[h,b]=1$, as  required. Assume  then that  $h=g_1a_1\cdots g_na_n$ in reduced form, with $a_i\in A$ and $g_i\in G$, and $a_1\neq 1$.
 Then $b^h\in C(w)\le G$ implies that $b^{g_1}\in C$ and so $[a_1,b^{g_1}]=1$ and then $b^h=(b^{g_1})^{h_1}\in C(w)$,  where $h_1=g_2a_2\ldots g_na_n$.  Hence $(b^{g_1})^{g_2}\in C$ and  $g_2\notin C$, so  $[g_2,b^{g_1}]=1$.
  Continuing this way we obtain $[g_i,b^{g_1}]=1$, $2\le i\le n$, so $b^h=b^{g_1}\in C(w)$. If $g_1\notin C(w)$ then,  from \ref{it:C7} in $G$, we have $[b,g_1]=1$, so $[b,h]=1$, and $b\in O_G(w)=O(w)$. Otherwise $g_1\in C(w)$ so  $b=b^{g_1}=b^h\in C$ and then $b, b^u\in C_G(w)$, but $u\notin C(w)$, as $w\notin C$, so again $b\in O(w)$ and $[b,h]=1$.
  Therefore, \ref{it:C7} holds for $w$ in $G(u,B)$.

 If $x$ is in a conjugate of $B$, but not in $C$, then $C(x)$ is conjugate to $C(u)$ and we have Case I again.

\smallskip

\textbf{Case \ref{it:colliii}}. If $x$ does not belong to any conjugate of a factor, without loss of generality we may assume that $x$ is cyclically reduced (as an element of the amalgamated free product $G(u,B)$) and its reduced factorisations begin with an element of $G\backslash C$. 
Let $x=g_1a_1 \dots g_n a_n$ be any reduced form of $x$, that is, 
$n\ge 1$,  $g_i \in G\backslash C$ and $a_i\in A\backslash \{1_A\}$, for $i=1, \ldots, n$.    

Let $O'(x)=\bigcap\limits_{i=1, \dots, n} C_G(g_i) \cap O_G(u)$. We claim that $O'(x)$ is a  canonical subgroup of $G$ with canonical generating set  $U_x=X_G\cap O'(x)$, and that  $C(x) =  O'(x) \times \langle z_x \rangle$,
where $z_x$ is the unique root element ($\sqrt{z_x}=z_x$) such that
\begin{itemize}
\item  $x=dz^m_x$, with $m\ge 1$, $d\in O'(x)$; and 
\item there is a minimal subset $Z$ of $X_G$ such 
that $z_x\in \langle Z,X_A\rangle$, and $ \langle U_x,Z,X_A \rangle = O'(x) \times \langle Z,X_A\rangle$. 
\end{itemize}

By the description of centralisers in amalgamated products, if $[x,y]=1$, then $x= d v^l$ and $y = d' v^m$, 
where $d,d'\in C$ and $d,d', v$  pairwise commute. As $x$ is cyclically reduced, so is $v$ and so both $l$ and $m$ divide $n$. 
As $[d,x]=[d',x]=1=[d,u]=[d',u]$ but $[x,u]\neq 1$ (as $C(u)=C\times A$), both  $C(d)$ and $C(d')$ are non-abelian, so  from Lemma \ref{lem:intcent},  $\Z_G(d)\cup \Z_G(d')\subseteq O_G(u)$. 
Since $v \in C(d)$, it follows from \eqref{eq:xinCnAb} that $v= d_1 w$ where $d_1 \in \Z_G(d)\le O_G(u)$ and 
$w= g_1'a_1' \dots g_r' a_r' \in O(d)=\langle O_G(d),A\rangle$; that is $g_i' \in O_G(d)$ and $a_i'\in A$. 
 Since $v = d_1 w$, we have 
$xv^{-l}d^{-1}=xw^{-l}d_1^{-l}d^{-1}=1$, so $a'_r=a_n$ and 
$g_r' g_n^{-1} \in C$.  Since $d \in C$ and $C$ is abelian, we deduce that $[g_r' g_n^{-1}, d]=1$. Now since $[g_r', d]=1$, we have that $[g_n, d]=1$ and so $d\in C_G(g_n)$. 
Continuing this process, we conclude that $d \in C_G(g_i)$ and, similarly, $d'\in C_G(g_i)$, for $i=1, \dots, n$. 
Hence $d,d'\in O'(x)$.

Now, let $p$ be any element of $O'(x)$. Then $p\in O_G(u)$ so Lemma \ref{lem:intcent} gives $C_G(p)=\Z_G(p)\times O_G(p)$ and $\Z_G(p)\le O_G(u)$. 
Let $s\in \Z_G(p)$. Then $s\in O_G(u)$ and, by definition, $\Z_G(p)\le Z(C_G(p))$ so $[p,g_i]=1$ implies $[s,g_i]=1$.
Hence $s\in O'(x)$ and, as $\Z_G(p)$ is canonical, it follows that $O'(x)$ is a canonical subgroup of $G$.

Let $U_x=O'(x)\cap X_G$, the canonical generating set for $O'(x)$ and let $x=dv^l$, where $d\in C$ and $[d,v]=1$, so as above  $d\in O'(x)$ and $v=d_1w$,  where $d_1 \in \Z_G(d)$ and $w\in \langle O_G(d),A\rangle$. Using the description of $C(d)$ again, and the uniqueness of roots in $G(u,B)$,  if $r(w)=q$ and $\sqrt{w}=w_0$, so $w=w_0^q$, then we may factorise $w_0= g_1'a_1' \dots g_r' a_r'$ with  $g_i' \in O_G(d)$ and $a_i'\in A$. Let $Y$ be a minimal subset of $X_G$ such that $g_i'\in \langle Y\rangle$, for $i=1,\ldots ,r$, let $Y_1=U_x\cap Y$ and let $Y_2=Y\backslash Y_1$. Suppose $s\in U_x$ and $t\in Y_2$. As $s\in O'(x)$ we have, as above, $[s,g'_i]=1$, so $g_i'=x_i'o_i'$, with $x_i'\in \Z_G(s)$ and $o_i'\in O_G(s)$, for $i=1,\ldots, r$. 
By minimality of $Y$, there is $i$ such that $g_i'\in \langle Y\rangle$ but $g_i'\notin \langle Y\backslash\{t\}\rangle$. For such $i$ we have $g_i'=x_i'o_i'$, with $x_i'\in \Z_G(s)$ and $o_i'\in O_G(s)$, and as $t\notin \Z_G(s)$ (since $t\in Y_2$) it follows that $t\in O_G(s)$; so $[s,t]=1$. Therefore $\langle U_x,Y_2\rangle = O'(x)\times \langle Y_2\rangle$. For all $i$, by definition  $g_i'\in \langle Y\rangle \le \langle U_x,Y_2\rangle$ so we have $g_i'=x_io_i$, $x_i\in O'(x)$, $o_i\in  \langle Y_2\rangle$ and we may further factorise $w_0=d_2w_1$, where $d_2=x_1\cdots x_r$ and $w_1=o_1a'_1\cdots o_ra_r'$. Thus $x=dv^l=d_3w_1^{lq}$, where $d_3=dd_1^ld_2^{lq}\in O'(x)$ (as $d_1\in \Z_G(d)\le C$) and $w_1\in \langle Y_2, X_A\rangle$. Replacing $w_1$ by $w_1^{-1}$ if necessary we may also assume $lq\ge 1$.
Hence $C(x)=\langle w_1\rangle \times O'(x)$; which is abelian.

This  implies that $C(x)\le C(w_1)$. Applying this argument to the element $w_1$ in place of $x$, with
initial reduced factorisation $w_1=o_1a'_1\cdots o_ra_r'$ and $Y=Y_2$, gives $C(w_1)=\langle w_1\rangle \times O'(w_1)$, where $O'(w_1)= \bigcap\limits_{i=1, \dots, r} C_G(o_i) \cap O_G(u)$.
If $b\in O'(w_1)$ then, for all $i$, we have $[b,o_i]=1$ and $g_i'=x_io_i$, with $x_i\in O_G(u)$, which is abelian, so $[b,g'_i]=1$ and $b\in C(x)$.
As before, as $b\in C$ we obtain $[b,g_i]=1$, so $b\in O'(x)$. Conversely, if $b\in O'(x)$, then by definition of $o_i$, we have $[b,o_i]=1$, so we conclude that $O'(x)=O'(w_1)$. 
Therefore
$C(x)=C(w_1)=\langle w_1\rangle \times O'(w_1)$, with $O'(x)=O'(w_1)$.

Now suppose $x=d'z^m$, where $m\ge 1$, $d'\in O'(x)$, $z$ is a root element, $z\in \langle Y', X_A\rangle$ for some minimal subset $Y'$ of $X_G$, such that $Y'\cap U_x=\emptyset$ and $[y',y'']=1$, for all $y'\in Y'$ and $y''\in U_x$. Then $d_3^{-1}d'=w_1^{lq}z^{-m}$; so from \ref{it:C4}\ref{it:C4a} and the disjointness of $U_x$ and $Y_2\cup Y'\cup  X_A$ we have $d_3=d'$ and $w_1^{lq}=z^m$. As $w_1$ and $z$ are root elements  and both $m$ and $lq$ are positive we have $w_1=z$, completing our claim.

Given the validity of the claim, to show in addition that $w_1$ or $w_1^{-1}$ is in $W_*$ it is enough to show that given a minimal subset $Y'$ of $X_G$ such that $o_i\in \langle Y'\rangle$, for all $i$,
we have $\langle Y'\cup U_x\rangle=\langle Y'\rangle\times\langle U_x\rangle$. To see this, given such a subset
$Y'$ let $Y_2'=Y'\backslash (Y'\cap U_x)$, so  as above, $\langle Y_2'\cup U_x\rangle=\langle Y_2'\rangle\times\langle U_x\rangle$ and we may write $o_i=x_i'o_i''$, with $x_i'\in O'(x)$ and $o_i''\in \langle Y_2'\rangle$. Therefore $x=d'_3w_1'^{lq}$, where $d_3'=(x_1'\cdots x_r')^{lq}d_3\in O'(x)$ and $w_1'=o_1''\cdots o_r''\in \langle Y_2'\rangle$. From the previous paragraph it follows that $d_3'=d_3$ and $w_1'=w_1$, so $Y'_2=Y'$ and $\langle Y'\cup U_x\rangle=\langle Y'\rangle\times\langle U_x\rangle$, as required. Hence either $w_1$ or $w_1^{-1}\in W_*$. This completes the proof of Lemma \ref{lem:centdesc}. 

Now we are in a position to verify properties \ref{it:C6} and \ref{it:C7}.
From the previous two paragraphs it follows that $C(x)$ is conjugate to $C(w_1)$ for a unique element $w_1$
of $W_*$, so \ref{it:C6}\ref{it:C6a} holds. 
 To simplify notation we may now assume $x=w_1=o_1a_1\cdots o_ra_r\in W$, with $o_i\in \la Y_2\ra$, $a_i\in A$,  so $x\in W_*$, and let $U_x$ be a canonical generating set for $O'(x)$ as before.
 Then $C(x)$ is abelian and non-canonical and we set $Z(x)=\langle x\rangle$ and $O(x)=O'(x)$. For \ref{it:C6}\ref{it:C6b}\ref{it:C6bii}  let  $V$ be a minimal subset of $X$ such  that $x\in \langle V\rangle$, so $x$ has a reduced factorisation
  $x=h_1b_1\cdots h_rb_r$ with $h_i\in \la V\cap  X_G\ra$ and $b_i\in \la V\cap X_A\ra$.  
Setting $V_G=X_G\cap V$, it is enough to show that 
$\la V_G\cup U_x\ra=\la V_G\ra\times\la U_x\ra$. Let $V_1=V_G\cap U_x$ and $V_2=V_G\backslash V_1$. Since, from the above, $O'(x)=O_G(u)\cap \bigcap_{i=1}^rC(h_i)=\la U_x\ra$ we have $h_i=y_ip_i$, $y_i\in \la V_1\ra$, $p_i\in \la V_2\ra$ and may rewrite $x$ as $x=d_2'w_1'$, where $d_2'=y_1\cdots y_r\in O'(x)$ and $w_1'=p_1\cdots p_r$. As in the proof of the final part of the claim above, we have  $d_2'=1$, $w_1'=x$ and $V_2=V_G$, and   \ref{it:C6}\ref{it:C6b}\ref{it:C6bii}follows. 
If $g\in O(x)$ then $g\in O_G(u)$, so $C(g)$ is given by \eqref{eq:xinCnAb}, and as \ref{it:C6}\ref{it:C6c} is satisfied
by the centraliser $C_G(g)$ in $G$ it holds also for $C(g)$. 

To see that $C(x)$ satisfies property \ref{it:C7}, assume that $h\notin C(x)$, $v\in C(x)$ and $v^h\in C(x)$.
Then, for some integers $p,q$ and elements $b,c\in O'(x)$ we have $v=bx^p$ and $h^{-1}vh=cx^q$. This means that  $bx^p$ is conjugate to $cx^q$ and both are cyclically reduced. From the conjugacy criterion for free products with amalgamation, $p=q$ and  $bx^p$ is obtained from  $cx^p$ by cyclic permutation, followed by conjugation by an element of $C$. As $b\in O'(x)$, it follows that  $x$ has a factorisation $x=x_1x_2$, where $x_1$ and $x_2$ are both reduced and begin with elements of $G$, such that $cx^p=d^{-1}b(x_2x_1)^pd$, for some $d\in C$. As $C$ is abelian, we have $b^{-1}cx^p=  d^{-1}(x_2x_1)^pd$. As $O_G(u)$ and $O'(x)$ are canonical we may choose a minimal subset $D$ of $X\cap O_G(u)$ such that $d\in \langle D \rangle$, and write $D=D_1\cup D_2$, where $D_2=D\cap O'(x)$ and $D_1\cap D\backslash D_2$. Then $d^{-1}(x_2x_1)^pd=d_1^{-1}(x_2x_1)^pd_1^{-1}$, so we  may assume $d=d_1$ and $D=D_1$. We have $x\in \langle Y(x)\cup X_A\rangle$ and it follows that
$d^{-1}(x_2x_1)^pd\in \langle D\cup Y(x)\cup X_A\rangle$ and, by definition of $Y(x)$, that
$\langle O'(x)\cup D\cup Y(x)\cup X_A\rangle =  O'(x)\times \langle D\cup Y(x)\cup X_A\rangle$ (using \ref{it:C4}\ref{it:C4a}).
Therefore $b^{-1}c=1$, and we have $h^{-1}bx^ph=bx^p$. However, $h\notin C(x)$, so this implies that $p=0$, so $v=x^pb=b\in O(x)$ and $[h,v]=1$. That is \ref{it:C7} holds for $C(x)$. 

Finally we  show that \ref{it:C6}\ref{it:C6c} holds when $x\in W_G\cap C$ and $C(x)$ is non-abelian, in which
case \eqref{eq:xinCnAb} implies $C(x)=\Z_G(x)\times \langle O_G(x),A\rangle$.
Assume  $v\in O(x)$ and that $C(v)$ is not conjugate to $C(x)$.
Either $v\in O_G(x)$, $v\in A$ or $v$ has reduced factorisation of length at least $2$.
In the case $v\in O_G(x)$, from \ref{it:C6}\ref{it:C6c} in $G$, there exist $v_0,v_1$ in $O_G(x)$, such that $v_0\in W_G$ and
$C_G(v)=C_G(v_0)^{v_1}$. If $v_0\in W_G\backslash C$ then $C(v_0)=C_G(v_0)$ and so $C(v)=C(v_0)^{v_1}$, as required. 
If $v_0\in C$  and $C_G(v_0)$ is non-abelian then $v_0\in O_G(u)$ and, from \eqref{eq:xinCnAb},
$C(v^{v_1^{-1}})=C(v_0)=\Z_G(v^{v_1^{-1}})\times\langle O_G(v^{v_1^{-1}}), A\rangle$, so $C(v)=C(v_0)^{v_1}$.

If $v_0\in C$ and $C_G(v_0)$ is abelian, or if $v\in A$, then $C(v_0)=C\times A=C(u)$ and 
so to show that \ref{it:C6}\ref{it:C6c} holds it suffices to show that $u\in O(x)$.
From  \eqref{eq:xinCnAb}, we have $u\in C(x)=\Z_G(x)\times O(x)$ and $\Z_G(x)\le O_G(u)$. If $Y$ is a minimal subset of $X$ such that $u\in \la Y\ra$, then $Y\cap O_G(u)=\emptyset$, so $Y\cap \Z_G(x)=\emptyset$, whence $u\in O(x)$,
as required. 

Otherwise $v$ has a reduced factorisation of length at least $2$ and we  choose $U$ to be a minimal subset of $X$ such that $v\in \langle U\rangle \le O(x)$. Then we may write $v=v_1^{-1}v_0v_1$, where $v_0,v_1\in \langle U\rangle$ and $v_0$ is cyclically reduced (as an element of the free product with amalgamation $G(u,B)$) and belongs to $O(x)$.
Thus $C(v)=C(v_0)^{v_1}$ and there is an element $z\in W_*$ such that $C(v_0)=C(z)$, with $v_0=dz^p$, for some $d\in O(z)=O'(v_0)$ and non-zero $p\in \BZ$. As $x,d\in C$ we have $[x,d]=1$,  so  $z\in C(x)$ (as centralisers are isolated). 
To demonstrate that \ref{it:C6}\ref{it:C6c} holds it suffices to show that $z\in O(x)$. 
As $z\in W_*$, we may assume that $z$ has a factorisation $z=g_1a_1\cdots g_ra_r$ and there is a minimal subset
$Y(z)$ of $X_G$ satisfying conditions \ref{it:w*i} and \ref{it:w*ii} of Theorem \ref{thm:lift}.\ref{it:lift1}. As $v_0$ belongs to $O(x)=\la O_G(x),A\ra$ it follows that $g_i\in C_G(x)$, for all $i$, and as $C_G(x)$ is canonical we may assume that $Y(z)\subseteq C_G(x)$. 
 As $\Z_G(x)\le Z(C(x))$, if $s\in \Z_G(x)$ then $[s,g_i]=1$, as $g_i\in C_G(x)$, and $[s,u]=1$, as $\Z(x)\le O_G(u)$; so $s\in O'(z)=O(z)$. Hence $\Z_G(x)\le O'(z)$ and $Y(z)\cap \Z_G(x)=\emptyset$. Therefore \ref{it:C4}\ref{it:C4b} implies  $Y(z)\subseteq O_G(x)$, and so $z\in O(x)$. 
\end{proof}

\begin{cor}\label{cor:attSinC}
Let $G$ be a non-abelian group in $\mathcal C(X_G,W_G)$   and let $u\in W_G$  such that  
$C_G(u)=\la u \ra\times O_G(u)$ is abelian and non-canonical.
    Let $\mbf{x}=\{x_1,\ldots ,x_m\}$ be a set disjoint from $G$,  either let $D=\prod_{i=1}^g[x_{2i-1},x_{2i}]$, where $m$ is even, $m\ge 4$ and $g=m/2$, or let $D=\prod_{i=1}^m x_i^2$, where $m\ge 3$,  and let $S$ be the  surface group with presentation $\la \mbf{x}, u\,|\, D=u\ra$.  Write $C=C_G(u)$ and define $T$ to be the free product with  amalgamation
  \[T=G\ast_{C} (S\times O_G(u)),\]
  identifying $u\in G$ with $u\in S$. 
  Let $Z=X\cap O_G(u)$ be the canonical generating set for $O_G(u)$ and let $z\in Z$. 
  Then
  \begin{enumerate}
  \item $S\times O_G(u)$ is in $\cC(\mbf{x} \cup Z,W_S\cup \{z\})$, where $W_S$ is a subset of the free group on $\mbf x$ and  $z\in Z$, and
  \item $T$ is  in $\cC(X_G\cup \mbf{x}, W_T)$, where $W_T=W_G\cup W_S\cup W'$, for some subset $W'$  of the set of  elements  of $T$ of reduced length at least $2$. 
\end{enumerate}
  \end{cor}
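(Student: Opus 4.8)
The plan is to follow the proof of Theorem~\ref{thm:lift} almost line for line, with the free abelian group $B$ there replaced throughout by $B'=S\times O_G(u)$, treating the two assertions in turn.

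For \textbf{(1)}, eliminating the generator $u$ via the relation $D=u$ presents $S$ as the free group $F(\mbf x)$ of rank $m\ge 3$, inside which $u=D(\mbf x)$ is not a proper power; hence $S$ is torsion-free, non-abelian, has unique roots, and, being free, has the $BP$ property. Since $O_G(u)$ is a canonical --- hence free abelian --- subgroup of $G$ (by \ref{it:C6}\ref{it:C6b}\ref{it:C6bii}), the product $S\times O_G(u)$ is again torsion-free, has unique roots, and is $BP$, being the direct product of a $BP$ group with a torsion-free abelian group. Property \ref{it:C4} for the generating set $\mbf x\cup Z$ is checked by projecting onto the two direct factors and using freeness of $\mbf x$ in $S$ together with \ref{it:C4} for $O_G(u)$ inside $G$. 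For the centraliser conditions the key observation is that for $g=so$ with $s\in S$, $o\in O_G(u)$ one has $C_{S\times O_G(u)}(g)=C_S(s)\times O_G(u)$, which equals the whole group when $s=1$ and $\langle\sqrt{s}\,\rangle\times O_G(u)$ otherwise; so the only non-abelian centraliser is $S\times O_G(u)$ itself. Taking $z\in Z$, we get $C(z)=S\times O_G(u)=\langle z\rangle\times O(z)$ with $O(z)=\langle(\mbf x\cup Z)\setminus\{z\}\rangle$ canonical and centre $Z(C(z))=O_G(u)$ canonical, so \ref{it:C8} holds; and letting $W_S\subset F(\mbf x)$ be a set of representatives of the root elements of $S$ up to conjugacy and inversion (the $x_i$ included), each abelian centraliser is a conjugate of $C(w)=\langle w\rangle\times O_G(u)$ for a unique $w\in W_S$, where one sets $\Z(w)=\langle w\rangle$, $O(w)=O_G(u)$ (and $\Z(w)=1$, $O(w)=C(w)$ when $w=x_i$). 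Conditions \ref{it:C6} and \ref{it:C7} are then immediate --- for instance \ref{it:C6}\ref{it:C6c} holds because $z$ and the $x_i$ lie in each relevant $O(w)$, and \ref{it:C7} because $p^{-1}s^{k}p\in\langle s\rangle$ with $k\ne 0$ in the free group $S$ forces $p\in C_S(s)$.

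For \textbf{(2)}, write $T=G\ast_C B'$ with $C=C_G(u)=\langle u\rangle\times O_G(u)$ abelian; note that $C$ is also the centraliser $C_{B'}(u)$, and that $T$ together with all the data below is independent of which $u\in W_G$ with $C_G(u)=C$ is chosen. Torsion-freeness of $T$ is immediate since torsion in an amalgamated product is conjugate into a factor. For the $BP$ property I would show $T$ is discriminated by $G$: this is the assertion that attaching a non-exceptional surface flat along a boundary curve which is not a proper power preserves full residual $G$-ness, and it is precisely the non-exceptionality hypotheses ($m\ge 4$, resp. $m\ge 3$) that make this work; $BP$ for $T$ then follows from $BP$ for $G$ exactly as in Theorem~\ref{thm:lift}\ref{it:lift1} (cf.\ \cite[Lemma 11]{BMR02}). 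Properties \ref{it:C3} and \ref{it:C4} for the generating set $X_G\cup\mbf x$ are verified verbatim as in the proof of \ref{it:lift1} with $B$ replaced by $B'$ (for \ref{it:C4} one again reduces syllable length, using that $C\le B'$ is abelian). The core of the argument is then a classification of centralisers in $T$, an analogue of Lemma~\ref{lem:centdesc}, obtained from the standard description of commuting pairs in an amalgamated product: for cyclically reduced $v\in T$, (i) if $v\in C$ then $C(v)$ is either the abelian group $C(u)=C$, or --- when $v\in O_G(u)$ and $C_G(v)$ is non-abelian --- a non-abelian \emph{canonical} centraliser $\Z(v)\times O(v)$ with $O(v)=\langle K(v)\cup\mbf x\rangle$ for a canonical generating set $K(v)$ of $O_G(v)$; (ii) if $v\in G\setminus C$ then $C(v)=C_G(v)$; (iii) if $v\in B'\setminus C$ then $C(v)=C_{B'}(v)$, which is abelian; and (iv) if $v$ has reduced length $\ge 2$ then $C(v)=\langle z_v\rangle\times O'(v)$ for a root element $z_v$ of reduced length $\ge 2$ admitting a factorisation and a canonical subgroup $O'(v)=\bigcap_i C_G(g_i)\cap O_G(u)$ exactly as in \ref{it:lift1}\ref{it:w*i}--\ref{it:w*ii}. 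The apparent obstruction that $B'$ is non-abelian dissolves in case~(i) because when $v\in O_G(u)=\langle Z\rangle$ the factor $\Z(v)$ of $C_G(v)$ lies inside $O_G(u)$, hence is a direct factor of $B'$ as well, and so splits off as a direct factor of $C_T(v)=\langle C_G(v),B'\rangle$, leaving a canonical complement (generated by a subset of $X_G\cup\mbf x$) --- this is the same phenomenon as the ``$\langle O_G(x),A\rangle$'' terms in Lemma~\ref{lem:centdesc}. The set $W'$ is the set of representatives, one per conjugacy/inversion class, of the root elements $z_v$ of length $\ge 2$ produced in~(iv). With this classification, properties \ref{it:C6}, \ref{it:C7} and \ref{it:C8} for $W_T=W_G\cup W_S\cup W'$ are checked by running through the same cases as in the proof of Theorem~\ref{thm:lift}, according to whether an element lies in (a conjugate of) $G$, of $B'$, inside $C$, or has length $\ge 2$.

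I expect the main obstacle to be the bookkeeping in case~(iv): establishing that $C_T(v)$ really has the form $\langle z_v\rangle\times O'(v)$ with $O'(v)$ canonical, that $z_v$ or $z_v^{-1}$ may be placed in $W'$, and that conditions \ref{it:C6}\ref{it:C6c} and \ref{it:C7} survive --- here one uses the factorisation conditions of \ref{it:lift1}\ref{it:w*ii} together with \ref{it:C4}\ref{it:C4a} to peel off canonical complements, precisely as in the last part of the proof of Theorem~\ref{thm:lift}. A secondary, more conceptual point is the proof that $T$ is discriminated by $G$ (whence $BP$), for which the non-exceptionality of the surface is essential; an alternative would be to establish the $BP$ property of $T$ by a direct big-powers argument in the amalgam, using the weak malnormality \ref{it:C7} of the edge group $C$.
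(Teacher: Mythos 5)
Your part (1) matches the paper in spirit, though the paper proceeds more directly: $S\times O_G(u)$ is a coherent RAAG (free times free abelian), so Lemma~\ref{lem:pcinC} already gives membership in $\cC$ with a set $W_S'=W_{\cK}\cup W_B=(\mbf x\cup\{z\})\cup W_B$, and one just takes $W_S=\mbf x\cup W_B$. Your manual verification of the axioms for the product is correct but redundant once one notices this.

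For part (2) you take a genuinely different route from the paper, and the two places you flag as potential obstacles are exactly where the gap lies. The paper does \emph{not} rerun the proof of Theorem~\ref{thm:lift} with $B$ replaced by $B'=S\times O_G(u)$. Instead it uses the retraction $\rho:T\to G$ (with $\rho(S)$ non-abelian) and \cite[Lemma~3.2]{KM12} to embed $T$ into the \emph{ordinary} direct centraliser extension $T^*=\langle G,y\mid[C,y]\rangle=G\ast_C(C\times\langle y\rangle)$ via $\psi(g)=g$, $\psi(s)=\rho(s)^y$. This single device simultaneously resolves both of your open points: discrimination (hence BP) of $T$ by $G$ follows because $T\hookrightarrow T^*$ and $T^*$ is already discriminated by $G$ by Theorem~\ref{thm:lift}; and the centraliser classification, the definition of $W'$ (as $\psi^{-1}(W_*)$ for $W_*\subset T^*$), and the verification of \ref{it:C6}, \ref{it:C7}, \ref{it:C8} are all pulled back through $\psi$ from the already-proved structure of $T^*$. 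Your plan to verify the axioms ``verbatim'' with $B$ replaced by $B'$ is not quite sound as stated: the proof of Theorem~\ref{thm:lift}, in particular the case-III analysis producing the factorisation $z=g_1a_1\cdots g_ra_r$ with $a_i\in A$, relies on $A$ being a \emph{direct complement} to $\phi(C)$ in $B$; since $\langle u\rangle$ is not a free factor of the surface group $S$, the subgroup $C$ has no such complement in $B'$, and those normal-form manipulations do not transfer unchanged. Your proposed fallback (a direct BP argument in the amalgam using weak malnormality of $C$) is also not spelled out, and is considerably harder than inheriting the property from $T^*$. The missing idea, in short, is the auxiliary embedding $T\hookrightarrow T^*$, which is what lets the corollary be proved as a corollary rather than as a second theorem.
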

  \begin{proof}
    For the first statement note that $S$ is free on $\mbf x$ and $O_G(u)$ is free abelian with basis $Z$, 
    whence $S\times O_G(u)$ is a coherent RAAG. From Lemma \ref{lem:pcinC},  then $S\times O_G(u)$ is in $\cC$, with respect to   the generating set $\mbf x \cup Z$ and the set, $W_S'$ say, defined above \ref{lem:Wpc}. From the definition, for some $z\in Z$,   the set $W_S'$ is the union of  $W_{\cK}=\mbf x \cup \{z\}$ with a set $W_B$ of cyclically reduced, root elements of the free group on   $\mbf x$, of length at least $2$. Taking $W_S=\mbf x \cup W_B$, we have the first statement.

To prove the second statement  we shall show that $T$ embeds in an extension of centralisers of $G$, and use this together with Theorem \ref{thm:lift} to prove the statement.
From the hypotheses, $C$ is a maximal abelian subgroup of $G$; so the assumptions of
\cite[Lemma 3.2]{KM12} hold. Let $\rho: T\to G$ be the natural retraction with $\rho(S)$ being non-abelian, see \cite{CRK15}.
It is shown in \cite[Section 7]{CRK15} that $\rho$ is injective on both vertex groups of $T$, $G$ and $S\times O_G(u)$. 
Set $T^*=\langle G, y\mid [C_{G}(\rho(C_S(u))),y]\rangle$.
Then, by \cite[Lemma 3.2 (1)]{KM12} the group $T$ embeds into $T^*$ via the map $\psi$ given by $\psi(g)=g$, for all $g\in G$ and $\psi(s)=(\rho(s))^y$, for all $s\in \la \mbf{x}\ra$. Moreover, as $C_S(u)=C\subseteq G$, we have
\begin{equation}\label{eq:T*} T^*=\langle G, y\mid [C,y]\rangle=G*_C(C\times \la y\ra).\end{equation}
From Theorem \ref{thm:lift}, $T^*\in \cC(X,W)$, where set $X=X_G\cup \{y\}$, $W=(W_G\backslash W_{G,C})\cup \{y\}\cup W_*$, and $W_{G,C}$ and  $W_*$ are defined in the statement of the Theorem. Also, $T^*$ is discriminated by $G$, whence $T$ is discriminated by $G$ and it remains to show that $T$ is in $\cC$. 
 Properties \ref{it:C1} and \ref{it:C2} hold in $T$ as they hold in $T^*$. To see that property \ref{it:C3} holds in $\psi(T)$ it suffices  to show that if $w$ is an element of $\psi(T)$ then the unique root of $w$ in $T^*$ belongs to $\psi(T)$. Suppose then that $w=\psi(t)$, for  some $t\in T$, say $t=g_1s_1 \cdots g_ms_m$ in reduced form, with $g_i\in T$ and $s_i\in S\times O_G(u)$. Then $w=\psi(t)=g_1t_1^y\cdots g_mt_m^y$, where $t_i=\rho(s_i)$. Moreover this is a reduced factorisation of $w$, since the given factorisation of $t$ is reduced. In this case, for all $c\in C$ and $j$ such that $1\le j\le m$, $g_jc$, $c^{-1}g_j$, $(c^{-1}t_j)^y$ and $(t_jc)^y$ are in $\psi(T)$, and it follows that if $w=g'_1(t'_1)^y\cdots g_jc (c^{-1}t)^yg'_m(t'_m)^y$, is any reduced factorisation of $w$ in $T^*$, then $g_i'\in G$ and $t_i'\in \psi(S \times O_G(u))$. Now suppose  that $t$ is cyclically reduced, in which case so is $w$,  that the root of $w$ in $T^*$ is $w_0$ and that $w_0^d=w$.
 If $w_0$ has reduced factorisation $w_0=h_1y_1\cdots h_ky_k$, with $h_i\in G$ and
 $y_k\in \la y\ra \times C$ then $w$ has reduced factorisation $(h_1y_1\cdots h_ky_k)^d$. As $w$ is in $\psi(T)$ it follows that the $y_i$ are of the form $c_iy^{-1}$ when $i$ is odd and $c_iy$ when $i$ is even, with $c_i\in C$. It follows that 
 $w_0=h'_1y^{-1}h'_2y \cdots h_{2l-1}'y^{-1}h_{2l}'y$, where $h_i'\in G$ and $h_i'\in \rho(S\times O_G(u))$, whenever $i$ is even.
 That is, $w_0$ is in $\psi(T)$. As roots in $T^*$ are unique, $\psi^{-1}(w_0)$ is the unique root of $t$ in $T$. That is, cyclically reduced elements of $T$ have unique roots, whence all elements of $T$ have unique roots, confirming that \ref{it:C3} holds.

 To see that \ref{it:C4} \ref{it:C4a} holds assume that $Y$ and $Y'$ are disjoint commuting subsets of $X_G\cup \mbf x$. If $Y\cap \mbf x\neq \emptyset$, then $Y'\subseteq O_G(u)$ and $\la Y\ra\cap \la Y'\ra\subseteq O_G(u)$. 
  Otherwise $Y$ and $Y'$ are contained in $X_G$. In both cases  we have  $\la Y\ra\cap \la Y'\ra=\{1\}$, using \ref{it:C4} \ref{it:C4a} in $G$.
 Again, \ref{it:C4} \ref{it:C4b} follows from the fact that it holds in both $G$ and $S\times O_G(u)$. 

 To verify that \ref{it:C6}, \ref{it:C7} and \ref{it:C8} hold we consider the centraliser of an element $v\in T$. If $v$ is in $G$ but is not conjugate  to an element of $C$ then $C_T(v)=C_G(v)$ and it follows that there are elements $w\in W_G$ and $h\in G$ such that $C_G(v)=C_G(w)^h$.
 Also, $w$ cannot be in $C$ so $C_T(w)=C_G(w)$ and so $C_T(v)$ is conjugate to $C_T(w)$, as required. 
Moreover, \ref{it:C6}, \ref{it:C7} and \ref{it:C8} hold for $w$ in $T$, as they hold for $w$ in $G$. If 
$v\in S\times O_G(u)$ but is not conjugate to an element of $C$, then
$C_{S\times O_G(u)}(v)$ is conjugate to $C_{S\times O_G(u)}(w)$, for some $w\in W_S\cup \{z\}$, and $w\neq z$, so $w\in W_S$.

Suppose now that $v$ is conjugate to an element of $C$. Without loss of generality we may assume $v\in C$, so $C_G(v)\ge C$. In this case, if $b\in C_T(v)$ and has reduced factorisation $b=s_1g_1\cdots s_mg_m$, with $g_i\in G$ and $s_i\in \la \mbf x\ra$, then either $m=1$ and $s_1\in C$; or $m\ge 1$ and  $s_m\notin C$. In the first case $g_1s_1\in C_G(v)$, so $g_1\in C_G(v)$. In the second case $v^{g_m}\in C$, so either $g_m\in C\le C_G(v)$ or $g_m\notin C$, and then \ref{it:C7} in $G$ implies $v\in O_G(u)$ and  $g_m\in C_G(v)$. In all cases then $v^{s_m}\in C$,  and \ref{it:C7} in $S\times O_G(u)$ implies  $v\in O_G(u)$ and  $C_G(v)$ is non-abelian.
On the other hand if $v\notin O_G(u)$ a similar argument shows that $C_T(v)=C_G(v)$ and from \ref{it:C7} and \ref{it:C8} in $G$ and the fact that $C$ is abelian, it follows that $C_G(v)$ is abelian, so equal to $C$.
Now if $C_G(v)$ is abelian then $C_T(v)=C_T(u)=C$, and $u\in W_T=W_G\cup W_S\cup W'$. In this case \ref{it:C6} holds for $u$ in $T$, as it holds in $G$. For \ref{it:C7}, suppose  $g\in C$ and $b\in T$ such that $g^b\in C$ and $b\notin C$.
If $b\in G$ or $S\times O_G(u)$, then we have $b\in C_G(g)$ and $g\in O_G(u)$, from \ref{it:C7} in these groups.  Assuming then that $b$ has a reduced factorisation $b=s_1g_1\cdots s_mg_m$, as above with $m\ge 1$,  it follows again that $g\in O_G(u)$ and $g_i\in C_G(g)$, so $b\in C_G(g)$, whence \ref{it:C7} holds for $u$ in $T$.

If $v\in O_G(u)$ then $C_T(v)\ge \la C_G(v),\mbf x\ra$. Repeating the argument with a reduced factorisation of an element $b\in C_T(v)$ we see that $C_T(v)=\la C_G(v),\mbf x\ra$. We have $C_G(v)=\Z_G(v)\times O_G(v)$ and from the definitions
$\Z_T(v)\cap \la O_G(v),\mbf x\ra=\{1\}$ and $[\Z_T(v),\la O_G(v),\mbf x\ra]=1$, so setting $\Z_T(v)=\Z_G(v)$ and $O_T(v)=\la O_G(v),\mbf x\ra$
we have $C_T(v)=\Z_T(v)\times O_T(v)$. As the only generators of $T$ not in $X_G$ are those in $\mbf x$, both \ref{it:C6} \ref{it:C6a} and \ref{it:C6b} follow. We defer consideration of \ref{it:C6} \ref{it:C6c} till the end of the proof. To see \ref{it:C8} holds, note that $Z(C_T(v))=O_G(u)$, which is canonical. 

Finally suppose that $v\notin G$ or $S\times O_G(u)$, and let $v=g_1s_1\cdots g_ms_m$, with $g_i\in G$ and $s_i\in \la\mbf x\ra$ be a reduced factorisation of $v$. Without loss of generality we may assume that $v$ is cyclically reduced. Then, setting $t_i=\rho(s_i)$, it follows that $\psi(v)=g_1t_1^y\cdots g_mt_m^y$ is cyclically reduced in $T^*$. Let $W_*$ be the subset of $T^*$ defined in Theorem \ref{thm:lift} and define $W'=\psi^{-1}(W_*)$. 
From the proof of Lemma \ref{lem:centdesc}, $\psi(v)$ is conjugate to an element $dz^m$, say $\psi(v)^h=dz^m$, for some $z\in W_*$, $d\in C$, $h\in T^*$, with $C_{T^*}(\psi(v))^h=C_{T^*}(z)=\la z\ra\times O_{T^*}(z)$
and $O_{T^*}(z)\le C$. As both $\psi(v)$ and $dz^m$ are cyclically reduced, $dz^m$ may be obtained from $\psi(v)$  by cyclic permutation followed by conjugation by an element of $C$. Hence $h$ and $dz^m$ are in $\psi(T)$ and as $d\in C$, we have $z^m\in \psi(T)$ whence,  from the argument verifying \ref{it:C3},  $z\in \psi(T)$.  Therefore $C_{T^*}(z)\le \psi(T)$ and 
$C_{\psi(T)}(z)=C_{T^*}(z)\cap\psi(T)=C_{\psi(T)}(z)$. As $T^*\in \cC$ both \ref{it:C6} and \ref{it:C7} hold for $z\in T^*$ and hence in $\psi(T)$. Setting $w=\psi^{-1}(z)$ and $g=\psi^{-1}(h)$ we have $C_T(v)^g=C_T(w)$, with $w\in W'$, and \ref{it:C6} and \ref{it:C7} hold for $w\in T$, as $\psi$ is injective. 

To complete the proof  we show that \ref{it:C6} \ref{it:C6c} holds when $v\in O_G(u)$, so $C_T(v)$ is non-abelian. Then $\psi(v)=v$ so $C_{T^*}(v)$ is non-abelian and $C_{T^*}(v)=\Z_G(v)\times O_{T^*}(v)$, while $C_{\psi(T)}(v)=\psi(\Z_G(v)\times \la O_G(u),\mbf x\ra)=
\Z_G(v)\times \la O_G(u),\rho(\mbf x)\ra$. As $C_{\psi(T)}(v)=C_{T^*}(v)\cap \psi(T)$ and $\Z_G(u)\le \psi(T)$ this implies
$\la O_G(u),\rho(\mbf x)\ra=O_{T^*}(v)\cap \psi(T)$. If $a\in O_T(v)$, then $b=\psi(a)\in O_{T^*}(v)\cap \psi(T)$, so there exists $w_0\in W$ and $h\in T^*$, both in $O_{T^*}(v)$, such that $C_{T^*}(b)=C_{T^*}(w_0)^h$. From the above descriptions of centralisers, we also have $w_0$ and $h$ in $\psi(T)$, whence $w_0,h\in  \la O_G(u),\rho(\mbf x)\ra$. Therefore $w_1=\psi^{-1}(w_0)$ is in $W_T$ and $h_1=\psi^{-1}(h)$ is in $T$, both $w_1$ and  $h_1$ belong to $O_T(v)$ and $C_T(a)=C_T(w_1)^{h_1}$.
\end{proof}

\begin{thm}\label{thm:inC}
  If $G_i$ in $\cC(X_i,W_i)$  is a family of groups then the free product $\ast G_i$ is in $\mathcal C(X,W)$, where $X=\cup_{i}X_i$
  and $W=\cup_{i}W_i \cup W_*$, where $W_*$ is the set of all cyclically reduced root elements of $\ast G_i$).
  
  If $G$ is in $\cC(X_G,W_G)$ and $A$ is a free abelian group with basis $X_A$ then the direct product $H=G\times A$
  is  in $\mathcal C(X_G\cup X_A,W_G)$.

 If $\{A_v\}_{v\in V(\Gamma)}$ is a family of free abelian groups and $\Delta$ is a chordal graph, then the 
graph product $\GG(\Delta, \{A_v\})$ is  in $\mathcal C$.
\end{thm}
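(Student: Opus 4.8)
The plan is to treat the three assertions separately; in each case one exhibits a generating set and a subset $W$ and then checks \ref{it:C1}--\ref{it:C8}, the arguments running closely parallel to those of Theorem \ref{thm:lift} and Corollary \ref{cor:attSinC}. For $\ast G_i$ I would take $X=\bigcup_i X_i$, retain $\Z(w)$ and $O(w)$ as in $G_i$ for $w\in W_i$, and let $W_*$ be a set of conjugacy representatives of the cyclically reduced root elements not conjugate into any factor, setting $\Z(w)=\langle w\rangle$ and $O(w)=\{1\}$ for such $w$. Torsion-freeness and the $BP$-property are immediate (free products of torsion-free, resp.\ $BP$, groups are again such, the latter by \cite{KMR05}); uniqueness of roots follows exactly as in the proof of \ref{it:C3} in Theorem \ref{thm:lift} (centralisers are isolated since $\ast G_i$ is $BP$, elements conjugate into a factor have unique roots there, and for cyclically reduced non-peripheral elements one runs the syllable-length argument); and \ref{it:C4} is routine, since commuting non-trivial elements of a free product lie in a common conjugate of a factor and membership of a generator in $\langle Y\rangle$ is detected, via normal forms, inside a single factor. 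The remaining conditions rest on the classical description of centralisers in a free product: if $g$ is conjugate into a factor $G_i$ then $C_{\ast G_i}(g)=C_{G_i}(g)$ up to conjugacy and \ref{it:C6}--\ref{it:C8} for $w\in W_i$ are inherited from $G_i$, once one checks that $O_{G_i}(w)$ is still a maximal canonical subgroup with the required property in the enlarged $X$ --- which holds because a generator from a different factor commutes with no generator of $Y(w)\subseteq X_i$; and if $g$ is not conjugate into any factor then $C(g)$ is the infinite cyclic group generated by the root of the cyclically reduced conjugate of $g$, so $C(g)$ is conjugate to $C(w)$ for a unique $w\in W_*$, whence \ref{it:C6}\ref{it:C6a} is clear, \ref{it:C7} follows from malnormality of the maximal cyclic subgroup of a non-peripheral element, and \ref{it:C6}\ref{it:C6c} is vacuous because $O(w)=\{1\}$.

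For the direct product $H=G\times A$ I would keep $W=W_G$, viewed inside $H$, and use that $C_H(g,a)=C_G(g)\times A$ for all $(g,a)$, so that $C_G(g)$ is conjugate to $C_G(w)$ in $G$ precisely when $C_H(g,a)$ is conjugate to $C_H(w)$ in $H$. Torsion-freeness, the $BP$-property (a $BP$ group times a torsion-free abelian group is $BP$, as recorded after Definition \ref{defn:bp}), unique roots and \ref{it:C4} are immediate by projecting to the two factors. Setting $O_H(w)=O_G(w)\times A$ and $\Z_H(w)=\Z_G(w)$, conditions \ref{it:C6}\ref{it:C6a}, \ref{it:C6}\ref{it:C6c}, \ref{it:C7} and \ref{it:C8} transfer verbatim from $G$ --- the adjoined generators $X_A$ are central, so they enlarge each $O$ without affecting minimality of the relevant subsets $Y$ --- and \ref{it:C6}\ref{it:C6b} is checked in the three cases according to whether $C_G(w)$ is canonical abelian, abelian and non-canonical, or non-abelian.

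For the graph product the key observation is that $\GG(\Delta,\{A_v\})$ is itself a coherent RAAG. Writing $A_v\cong\BZ^{n_v}$ with free basis $\{x_{v,1},\dots,x_{v,n_v}\}$, the group is presented on $\bigcup_v\{x_{v,i}\}$ with the only relations $[x_{v,i},x_{v,j}]=1$ (all $v$, $i\neq j$) and $[x_{v,i},x_{w,j}]=1$ (all $i,j$, for each edge $vw$ of $\Delta$); hence $\GG(\Delta,\{A_v\})\cong\GG(\Delta')$, where $\Delta'$ is obtained from $\Delta$ by substituting a clique $K_{n_v}$ of true twins for each vertex $v$ and joining $K_{n_v}$ to $K_{n_w}$ completely exactly when $vw\in E(\Delta)$. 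Substituting chordal graphs (here, cliques) for the vertices of a chordal graph yields again a chordal graph: an induced cycle of length $\ge 4$ in $\Delta'$ cannot lie inside a single $K_{n_v}$, meets each $K_{n_v}$ in at most two, necessarily consecutive, vertices, and upon contracting the cliques it meets projects to an induced cycle of length $\ge 4$ in $\Delta$. Thus $\Delta'$ is chordal, so $\GG(\Delta')$ is a coherent RAAG and Lemma \ref{lem:pcinC} gives $\GG(\Delta,\{A_v\})\in\cC$.

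The routine parts are \ref{it:C1}--\ref{it:C4} throughout; the substantive work is the verification of \ref{it:C6}--\ref{it:C8} for the free product, and I expect the main obstacle to be the bookkeeping ensuring that the subgroups $O(w)$ and $\Z(w)$ carried over from the factors still satisfy the maximality and minimality clauses relative to the enlarged generating set, together with checking that the cyclic centralisers newly contributed by $W_*$ do not interfere with \ref{it:C6}\ref{it:C6c} (being trivial on the $O$-side, they do not). The direct-product and graph-product cases should then be essentially formal.
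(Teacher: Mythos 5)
Your proof follows the same three-part strategy as the paper and checks the same properties in the same way: for $\ast G_i$ you split into the two cases (conjugate into a factor, not), assign $\Z(w)=\langle w\rangle$ and $O(w)=\{1\}$ to the cyclic centralisers of non-peripheral elements, inherit the rest from the factors, and invoke \cite{KMR05} for BP; for $G\times A$ you use $C_H(ga)=C_G(g)\times A$ with $\Z_H(w)=\Z_G(w)$ and $O_H(w)=O_G(w)\times A$; and for the graph product you expand each vertex to a clique and reduce to Lemma~\ref{lem:pcinC}. This is precisely the paper's argument. The one small point worth noting is your refinement of $W_*$: you take conjugacy representatives of non-peripheral cyclically reduced root elements (whereas the theorem as stated says ``the set of all cyclically reduced root elements''), which is closer to what the uniqueness clause in \ref{it:C6}\ref{it:C6a} actually requires --- though to be fully precise one should also identify $z$ with $z^{-1}$, in the spirit of the relation $\sim_B$ in Remark~\ref{rem:representatives}, since $C(z)=C(z^{-1})$ while $z$ and $z^{-1}$ need not be conjugate in a free product.
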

\begin{proof}
Let $G_i$ be in $\mathcal C$ and let $G=\ast G_i$. Then \ref{it:C1} and \ref{it:C3} follow immediately. For finite families $G_i$, property \ref{it:C2} follows from \cite{KMR05} and induction, and in general, if $u$ and $w$ are a pair of tuples, as in the definition of BP groups above, then the support of $u$ and $v$ is a finite sub-family of $G_i$, and the BP property follows since it holds for this finite sub-family. If $g\in G$ and $g$ is in a conjugate of some $G_i$, then \ref{it:C6} -- \ref{it:C8} hold because they hold in $G_i$. If $g$ is not in a conjugate of a factor then $g$ is conjugate to a unique cyclically reduced element $g_1$ and the centraliser of $g$ is conjugate to $C_G(g_1)=\langle g_0\rangle$, where $g_0=\sqrt{g_1}$, so is abelian, and setting 
$Z(g_0)=\langle g_0 \rangle$ and $O(g_0)=\{1\}$, we see that $g_0\in W$  and \ref{it:C6} -- \ref{it:C8} hold.

Conditions \ref{it:C1},  \ref{it:C2} and \ref{it:C3} for $H$ follow immediately. 
As the generating set for $G\times A$ is $X_G\cup X_A$, property \ref{it:C4} lifts from $G$ and $A$ to $H$. 
For $g\in G$ and $a\in A$, we have $C_H(ga)=C_G(g)\times A=C_H(g)$. If  $g\in W_G=W_H$ we set
$Z_H(g)=Z_G(g)$ and $O_H(g)=O_G(g)\times A$ and 
\ref{it:C6}, \ref{it:C7} and \ref{it:C8} follow. 

Let $X_v$ be a basis for $A_v$ (not necessarily finite), for each vertex $v$. 
  Expanding each vertex $v$ of $\Delta$ to a complete  subgraph on $|X_v|$ we may regard $G$ as a RAAG
  defined by a  chordal graph.
  (In more detail, for each vertex $v$ of $\Delta$ let $K_v$ be a complete graph with vertex set $X_v$. Let $\Delta'$ be the graph  obtained from the disjoint union of the $K_v$, over $v\in V(\Delta)$, by adding edge joining $x\in X_u$ to  $y\in X_v$ for all vertices $u\neq v$ of $\Delta$ and all $x\in X_u$, $y\in X_v$.)
  From Corollary \ref{lem:pcinC}, then $G$ is in $\cC$. 

\end{proof}

\begin{prop}\label{prop:dirlim}
  Let $\{G_i,\lambda^i_j\}$ be a direct system of groups and monomorphisms, indexed by a set $I$, where
  $\lambda^i_j$ maps $G_i$ to  $G_j$,  for $i\le j$,  and let  $\bar G$ be the direct limit  of this system. Assume that
  \begin{enumerate} 
\item\label{it:dirlim1} $G_i$ is in $\cC(X_i,W_i)$ and $\lambda_j^i(X_i)\subseteq X_j$, for all $i,j\in I$; 
  and 
\item\label{it:dirlim2} for all $j\in I$ and $g\in G_j$ there exists $i\ge j$ such that, for all $k\ge i$, the centraliser
  $C_{G_k}(\lambda^j_k(g))=\lambda^i_k(C_{G_i}(\lambda^j_i(g))$. In other words, centralisers of elements of the $G_i$ are eventually  stable.
  \end{enumerate}
  Let $\psi_i$ be the canonical homomorphism from $G_i$ to $\bar G$, let $\bar X=\bigcup_{i\in I} \psi_i(X_i)\subseteq \bar G$ and let  $\bar W=\bigcup_{i\in I} \psi_i(W_i)\subseteq \bar G$. 
Then $\bar G$ is in $\cC(\bar X,\bar W)$ is in $\cC$. 
\end{prop}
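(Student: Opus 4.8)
The plan is to identify $\bar G$ with the directed union $\bigcup_{i\in I}\psi_i(G_i)$ of the subgroups $\psi_i(G_i)\le\bar G$ (legitimate since the transition maps are injective and $I$ is directed), so that $\bar X=\bigcup_i\psi_i(X_i)$ and $\bar W=\bigcup_i\psi_i(W_i)$, and then to verify \ref{it:C1}--\ref{it:C8} in turn, exploiting throughout that any finite set of elements of $\bar G$, together with any relation they satisfy, already lies inside a single $G_i$, and that by hypothesis \ref{it:dirlim1} the generating sets $X_i$ nest. With this setup \ref{it:C1} and \ref{it:C3} are immediate: a torsion element, or two $m$-th roots of a common element, lie in some $G_i$, where the axiom holds. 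For \ref{it:C2}, a generic tuple of non-trivial elements together with a relation $u_1^{\alpha_1}\cdots u_k^{\alpha_k}=1$ lives in some $G_i$; injectivity of $\psi_i$ shows the tuple is still generic there and the relation still holds, so the $BP$-bound valid in $G_i$ is valid in $\bar G$. For \ref{it:C4}, the assertions ``$w\in\langle Y\rangle\cap\langle Y'\rangle$'' and ``$x\in\langle Y\rangle$, for $x\in\bar X$'' each involve only finitely many generators, which by the nesting of the $X_i$ may be taken inside one $X_i$; then \ref{it:C4} in $G_i$ forces $w=1$, respectively $x\in Y$, so the conclusion holds in $\bar G$ even when $Y,Y'$ are infinite.

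The heart of the argument is a description of centralisers. Hypothesis \ref{it:dirlim2} gives, for each $g\in\bar G$, an index $i$ with $g\in G_i$ and $C_{G_k}(g)=C_{G_i}(g)$ for all $k\ge i$, whence $C_{\bar G}(g)=\bigcup_{k\ge i}C_{G_k}(g)=C_{G_i}(g)$; taking upper bounds, for any finitely many $g_1,\dots,g_n\in\bar G$ there is a single $i$ with $g_j\in G_i$ and $C_{\bar G}(g_j)=C_{G_i}(g_j)$ for all $j$. Given $g\in\bar G$, I first take $i_0$ stabilising $C(g)$ and apply \ref{it:C6a} in $G_{i_0}$ to get $w_1,\dots,w_m\in W_{i_0}$ and $h\in G_{i_0}$ with $C_{G_{i_0}}(g)=C_{G_{i_0}}(w_l)^h$, then pass to $i\ge i_0$ which additionally stabilises each $w_l$ and each $w_l^h$. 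Assuming that the equality $C_{G_i}(g)=C_{G_i}(w_l)^h$ still holds at level $i$ (this is the one delicate point, discussed below), we obtain $C_{\bar G}(g)=C_{G_i}(g)=C_{G_i}(w_l)^h=C_{\bar G}(w_l)^h$, which is \ref{it:C6a} in $\bar G$ with witnesses $\psi_{i_0}(w_l)\in\bar W$ and $h\in\bar G$; in particular $C_{\bar G}(g)$ abelian forces $m=1$.

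For the remaining parts of \ref{it:C6}, and for \ref{it:C7} and \ref{it:C8}, put $Z_{\bar G}(w)=\psi_i(Z_{G_i}(w'))$ and $O_{\bar G}(w)=\psi_i(O_{G_i}(w'))$ for $w=\psi_i(w')\in\bar W$; this is well defined because the $X_i$ nest (so the relevant canonical subgroups are compatible) and because $C_{\bar G}(w)=C_{G_i}(w')$ as a group. The decomposition $C(w)=Z(w)\times O(w)$, the case division of \ref{it:C6b} (canonical abelian / non-canonical abelian / non-abelian), the minimality and maximality conditions of \ref{it:C6bii} and \ref{it:C8}, property \ref{it:C6c}, and the malnormality statement \ref{it:C7}, are all assertions about finitely many elements and about the nesting generating sets, and $C_{\bar G}(w)$ coincides with $C_{G_i}(w')$; hence each of them transfers directly from $G_i$ to $\bar G$ and is undisturbed by the limit. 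Granting this, $(\bar X,\bar W)$ witnesses $\bar G\in\cC$.

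The only non-routine ingredient is the claim used in the second paragraph: that $C_{G_i}(g)=C_{G_i}(w_l)^h$ survives the passage from $G_{i_0}$ to $G_i$, equivalently that the type (abelian versus non-abelian) and conjugacy class of a centraliser are pinned down once it has stabilised. A priori $C_{G_i}(w_l^h)$ could properly contain $C_{G_i}(g)=C_{G_{i_0}}(w_l^h)$, and one must rule this out. I would address it by exploiting the structure axioms \ref{it:C6b} and \ref{it:C8} together with the freedom to choose indices large enough: once $C_{G_{i_0}}(w_l^h)=C_{G_{i_0}}(g)=:D$ with common value $C_{\bar G}(g)$, any $a\in C_{G_k}(w_l^h)$ ($k\ge i_0$) also commutes with $g$ --- if $D$ is abelian because then $C_{\bar G}(w_l^h)$ is itself abelian (a non-abelian centraliser in a group in $\cC$ is canonical, so the enlargement would make $a$ and $g$ non-commuting elements of $C_{G_k}(w_l^h)$, contradicting stability of $C(g)$ via $C_{G_k}(w_l^h)\subseteq C_{G_k}(g)$), whence $a$ and $g$ lie in a common abelian group; and if $D$ is non-abelian because then $w_l^h$ lies in the canonical centre $Z(C_{G_{i_0}}(g))$, whose centraliser is determined by that canonical subgroup and hence already stable at $i_0$. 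Making this argument fully rigorous --- in particular, establishing that an abelian centraliser cannot become non-abelian along the system once $C(g)$ has stabilised --- is, I expect, the main technical step of the proof, and it is the precise point where hypothesis \ref{it:dirlim2} does its work; everything else is a mechanical transfer through the maps $\psi_i$.
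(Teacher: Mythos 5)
Your overall strategy coincides with the paper's: identify $\bar G$ with the directed union of the $\psi_i(G_i)$, dispatch \ref{it:C1}--\ref{it:C4} by pushing any finite configuration into a single $G_j$, and handle the centraliser axioms by first passing to a level at which $C(g)$ has stabilised via hypothesis \ref{it:dirlim2}. You have also put your finger on the one genuinely delicate point, which the paper's own write-up passes over quickly (\ref{it:C6}\ref{it:C6a} is dispatched with ``a similar argument'' and \ref{it:C6}\ref{it:C6b}--\ref{it:C8} are simply asserted to transfer once $C(g)$ is stable): the representatives $w_l$ are produced by applying \ref{it:C6}\ref{it:C6a} at a level $i_0$ chosen so that $C(g)$ is stable, but nothing a priori stabilises $C(w_l)$ at that same level, so $C_{G_k}(w_l)^h$ could in principle outgrow $C_{G_k}(g)$ for $k>i_0$.

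Your proposed resolution of this point, however, is circular in the abelian case. You argue that $C_{G_k}(w_l^h)$ must remain abelian because otherwise $a$ and $g$ would be non-commuting elements of it, ``contradicting stability of $C(g)$ via $C_{G_k}(w_l^h)\subseteq C_{G_k}(g)$'' --- but that inclusion is exactly what you are trying to prove and so cannot be invoked. What actually needs to be shown is that $C_{G_k}(w_l^h)$ cannot flip from abelian to non-abelian once $C(g)$ has stabilised; once that is known the argument is quick, since $C_{G_k}(g)=C_{G_{i_0}}(g)$ is then a maximal abelian subgroup of $G_k$ contained in the abelian $C_{G_k}(w_l^h)$, forcing equality. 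In the non-abelian case, the observation that $w_l^h$ lies in the canonical centre $Z(C_{G_{i_0}}(g))$ likewise does not by itself give stability at $i_0$ of the centralisers of individual elements of that canonical subgroup; this too needs an argument. You are right that this is where hypothesis \ref{it:dirlim2} must be used carefully; the paper leaves the transfer to the reader, and in the intended applications (the systems arising from iterated centraliser extensions) one can check directly from the description in Lemma \ref{lem:centdesc} that two elements sharing a centraliser in $G_i$ either both have unchanged centraliser in $G_{i+1}$ or both change in the same way, which is what closes the gap there.
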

\begin{proof}
  From the definition, $\bar G$ is generated by $\bar X$. If $g\in \bar G$ then (taking a representative on its equivalence class we may  assume that) $g\in G_j$, for some $j\in I$. Moreover, $j$ may be chosen large enough so that every element involved in  \ref{it:C1} and  \ref{it:C2} is also in $G_j$. As \ref{it:C1} and \ref{it:C2} hold in $\lambda^i_j(G_j)$, for  all $i\ge j$, they also hold in $\bar G$. 
  Similarly, if $Y, Y'\subseteq \bar X$, satisfy the conditions of \ref{it:C4}\ref{it:C4a} and
  $y\in \langle Y\rangle \times \langle Y'\rangle$, then there exist finite subsets $Y_0\subseteq Y$ and  $Y'_0\subseteq Y'$  such that $y\in  \langle Y_0\rangle \times \langle Y'_0\rangle$. Choosing $j$ large enough to contain $Y_0$ and $Y'_0$  it follows, from the  fact that $G_j$ is in $\cC$, that $y=1$ in $G_j$. Hence \ref{it:C4}\ref{it:C4a} holds. A similar argument shows that
  \ref{it:C4}\ref{it:C4b} and \ref{it:C6} \ref{it:C6a} hold. 
  Moreover   if $x\in G_j$ and  $\psi_i(x)=\psi_j(x')$, for some  $j$ and $x'\in G_j$, then there exists $k$ in $I$ such that $k\ge i,j$ and so  $\lambda^i_k(x)=\lambda^j_k(x')$.
  Hence if $x$ has unique root $y$ in $G_j$ then $\lambda^i_k(x)$ and $\lambda^j_k(x')$ both have unique
  root $\lambda^i_k(y)$ in $G_k$; and $\psi(y)$ is the unique root of $\psi(x)$ in $\bar G$; giving 
  \ref{it:C3}.

  If $g\in G_i$ then we may assume that $i$ has been chosen large enough so that, for all $k\ge i$, we have
  $C_{G_k}(\lambda^i_k(g))=\lambda^i_k(C_{G_i}(g))$. As 
  \ref{it:C6}\ref{it:C6b},
  \ref{it:C6}\ref{it:C6c}, \ref{it:C7} and  \ref{it:C8} hold in $G_i$ it follows that they
  hold in $\bar G$. Therefore $\bar G\in \cC$.
\end{proof}

The following definition is taken from \cite{MR95} 
to which the reader is referred for further details.
\begin{defn}\label{def:tec}
  Let $\mC=\{C_i=C_G(g_i)\}_{i\in I}$ be a set of centralisers in the group $G$ and let $\phi_i:C_i\maps H_i$ be
  a monomorphism from $C_i$ to a group $H_i$, such that $\phi_i(g_i)\le Z(H_i)$, for all $i\in I$. 
  Let $T$ be the tree with vertices $v, v_i$, and  directed edges
  $e_i=(v,v_i)$, from $v$ to $v_i$, for $i\in I$. 
  Then there is a graph of groups with vertex groups $G(v)=G$, $G(v_i)=H_i$; edge groups $G(e_i)=C_i$;   
   and  edge maps the inclusion of $C_i$ into $G$,  and $\phi_i$ from $C_i$ into $H_i$, for all $i\in I$.
  The fundamental group of this graph of groups is called a \emph{tree extension of centralisers} and is denoted
  $G(\mC, \Hc, \Phi)$, where  $\Hc=\{H_i\,:\,i\in I\}$ and  $\Phi=\{\phi_i\,:\, i\in I\}$. 
\end{defn}

\begin{prop}\label{prop:treeext}
  Let $G$ be in $\cC(X,W)$  and  let $\mC=\{C_i\,:\,i\in I\}$ be a set of abelian centralisers of $G$   satisfying the following conditions. 
\begin{enumerate}[label=(\roman*)]
\item\label{it:ccon1} For all $i\in I$, there is $g_i\in W$ such that $C_i=C_G(g_i)$;
  and  if $g\in W$, 
   $C_G(g)=C_i$ and $i\neq j$ then $g\notin C_j$; 
  and 
\item\label{it:ccon2}  no two centralisers of $\mC$ are conjugate.
\end{enumerate}
For each $i$ in $I$, let $H_i$ be a free abelian group  
and $\phi_i:C_i\maps H_i$ be
a monomorphism from $C_i$ to  $H_i$, such that 
$H_i=\phi(C_i)\times K_i$, for some subgroup $K_i$ of $H_i$.
Then, in the notation of Definition \ref{def:tec}, the group  $G(\mC, \Hc, \Phi)$ is in $\cC$.
Moreover,  $G(\mC, \Hc, \Phi)$ is discriminated  by $G$. 
\end{prop}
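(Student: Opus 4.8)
The plan is to reduce the tree extension $G(\mC,\Hc,\Phi)$ to an iterated sequence of single direct centraliser extensions, each covered by Theorem~\ref{thm:lift}, and then pass to the direct limit using Proposition~\ref{prop:dirlim}. More precisely, well-order the index set $I$ and build a chain of groups $G=G_0\le G_1\le\cdots\le G_\alpha\le\cdots$ by setting $G_{\alpha+1}=G_\alpha(g_\alpha,H_\alpha)=G_\alpha\ast_{\phi_\alpha}H_\alpha$, the direct centraliser extension of $C_{G_\alpha}(g_\alpha)$ by $H_\alpha$, and taking unions at limit ordinals; the final group in the chain is $G(\mC,\Hc,\Phi)$ by the normal form theorem for fundamental groups of graphs of groups (the tree $T$ of Definition~\ref{def:tec} being a star, no amalgamations interfere with one another). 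Before this works one must check that at each stage $g_\alpha$ still lies in $W_{G_\alpha}$ and that $C_{G_\alpha}(g_\alpha)=C_G(g_\alpha)=C_\alpha$, i.e.\ that the centraliser of $g_\alpha$ has not been enlarged by the earlier extensions; this is exactly where hypotheses~\ref{it:ccon1} and~\ref{it:ccon2} are used. Indeed, condition~\ref{it:ccon2} (no two $C_i$ conjugate) together with condition~\ref{it:ccon1} (the canonical separation $g_i\notin C_j$ for $i\neq j$) ensures, via the explicit centraliser description in Lemma~\ref{lem:centdesc} (case~\ref{it:centdescb} and case~\ref{it:centdesca}), that extending the centraliser $C_\beta$ for $\beta<\alpha$ leaves $C_{G_{\alpha}}(g_\alpha)$ equal to $C_G(g_\alpha)$: $g_\alpha$ is not in a conjugate of the amalgamated subgroup used at any earlier stage, so by Lemma~\ref{lem:centdesc}\ref{it:centdescb} its centraliser is unchanged.

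With that bookkeeping in place the argument runs as follows. First I would verify the base step is trivial and that the successor step $G_{\alpha}\rightsquigarrow G_{\alpha+1}$ satisfies the hypotheses of Theorem~\ref{thm:lift}: $g_\alpha\in W_{G_\alpha}$, $C_{G_\alpha}(g_\alpha)$ abelian (given), $H_\alpha$ free abelian with $H_\alpha=\phi_\alpha(C_\alpha)\times K_\alpha$ (given). Theorem~\ref{thm:lift}\ref{it:lift1} then gives $G_{\alpha+1}\in\cC(X_{\alpha+1},W_{\alpha+1})$ with the explicit generating set $X_{\alpha+1}=X_\alpha\cup X_{K_\alpha}$ and $W_{\alpha+1}=W_\alpha\cup W_*^{(\alpha)}$, and Theorem~\ref{thm:lift}\ref{it:lift2} gives that $G_{\alpha+1}$ is discriminated by $G_\alpha$. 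At a limit ordinal $\lambda$ I would set $G_\lambda=\varinjlim_{\alpha<\lambda}G_\alpha$ and invoke Proposition~\ref{prop:dirlim}: the inclusions $G_\alpha\hookrightarrow G_\beta$ are monomorphisms carrying $X_\alpha$ into $X_\beta$, so hypothesis~\ref{it:dirlim1} holds; and hypothesis~\ref{it:dirlim2}, eventual stability of centralisers, holds because by the remark above each centraliser $C_{G_\alpha}(g)$ for $g\in G_\alpha$ stabilises after the (at most one) further stage at which a centraliser conjugate to $C_{G_\alpha}(g)$ is extended, and there is at most one such stage by condition~\ref{it:ccon2}. Hence $G_\lambda\in\cC$, and transfinite induction yields $G(\mC,\Hc,\Phi)\in\cC$.

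For the discrimination statement, I would combine the single-step discrimination of Theorem~\ref{thm:lift}\ref{it:lift2} with the standard fact that discrimination is transitive and compatible with direct limits: if $G_{\alpha+1}$ is discriminated by $G_\alpha$ for all $\alpha$ and $G_\lambda=\varinjlim G_\alpha$, then since any finite subset of $G(\mC,\Hc,\Phi)$ already lies in some $G_\alpha$ with $\alpha$ a successor (finitely many syllables involve finitely many of the extensions), it is discriminated into $G_{\alpha}$ and thence, composing the maps $\lambda_{i,m}$, down to $G=G_0$. Concretely, given a finite set $F\subset G(\mC,\Hc,\Phi)$, pick $\alpha$ minimal with $F\subset G_\alpha$; writing $\alpha$ as a finite iterated successor of stages $\beta_1<\cdots<\beta_r$ actually touched by elements of $F$, compose the discriminating homomorphisms $\lambda^{(\beta_r)}_{i_r,m_r}\circ\cdots\circ\lambda^{(\beta_1)}_{i_1,m_1}$ and choose the parameters $m_j$ large in turn (innermost first), exactly as in the proof of Theorem~\ref{thm:lift}\ref{it:lift2}, to get a single homomorphism $G(\mC,\Hc,\Phi)\to G$ injective on $F$.

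The main obstacle I anticipate is the stability-of-centralisers verification at the heart of both the inductive step and the application of Proposition~\ref{prop:dirlim}: one must be sure that performing the extension at stage $\beta$ does not accidentally create, in $G_{\beta+1}$, a new conjugacy relation between $C_\alpha$ and some extended centraliser for a later index $\alpha$, nor enlarge $C_{G_{\beta+1}}(g_\alpha)$. This is precisely what conditions~\ref{it:ccon1} and~\ref{it:ccon2} are designed to prevent, and the tool is the case analysis of Lemma~\ref{lem:centdesc} — in particular that an element of $G_\beta$ not conjugate into the amalgamated subgroup $C_{G_\beta}(g_\beta)$ keeps its $G_\beta$-centraliser (Lemma~\ref{lem:centdesc}\ref{it:centdescb}), and that elements of $C_{G_\beta}(g_\beta)$ itself either have centraliser $H_\beta=C(g_\beta)$ or lie in $O_{G_\beta}(g_\beta)$ with a controlled canonical centraliser (Lemma~\ref{lem:centdesc}\ref{it:centdesca}); the latter possibility is excluded for the $g_\alpha$ with $\alpha\neq\beta$ by condition~\ref{it:ccon1}. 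Carefully propagating these invariants through the transfinite induction — that the sets $W_{G_\alpha}$ remain nested and that $g_\alpha\in W_{G_\alpha}$ with $C_{G_\alpha}(g_\alpha)=C_\alpha$ for every $\alpha$ — is the only genuinely delicate point; the rest is an assembly of Theorems~\ref{thm:lift}, Proposition~\ref{prop:dirlim}, and the Bass--Serre normal form.
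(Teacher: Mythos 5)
Your proposal matches the paper's proof in essentially all respects: well-order $I$, build the chain of single direct centraliser extensions with Theorem~\ref{thm:lift} at successor ordinals and Proposition~\ref{prop:dirlim} at limit ordinals, maintain at each stage (via Lemma~\ref{lem:centdesc} and conditions~\ref{it:ccon1}--\ref{it:ccon2}) the invariant that each remaining $g_\alpha$ still lies in $W$ with its centraliser unenlarged, and obtain discrimination by composing the one-step discriminating families $\lambda_{i,m}$. The paper's proof carries out exactly this transfinite induction, making the preserved hypotheses slightly more explicit, but there is no difference of substance.
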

\begin{proof} Choose a well-ordering of the set $I$, 
  let $G_0=G$ (assuming $0\notin I$ and is taken to be less than the least element of $I$) and 
  recursively define groups $G_i\in \cC$ and monomorphisms  $\lambda^j_i$, for all $i\in I$ and $j<i$, as follows.
  For a fixed successor ordinal $i\in I$ and all $j\le i$  assume that groups $G_j$ in $\cC(X_j,W_j)$ 
   have been defined, and for $j\le k\le i$,   monomorphisms $\lambda^j_{k}:G_j\maps G_{k}$
  have been defined, such that $\{\lambda_k^j:j<k<i\}$ forms a direct system of monomorphisms (that is 
  $\lambda^k_l\circ \lambda_k^j=\lambda^j_l$, with $\lambda^j_j$ the identity).
  Assume further that, for all $j\le i$ and $m> j$, the centraliser $C_{G_j}(\lambda^0_j(g_m))=\lambda^0_j(C_m)$; and that  conditions \ref{it:ccon1} and \ref{it:ccon2} hold for $\mC_j=\{\lambda^0_j(C_m)\,:\,m>j\}$, with $\cC(X_j,W_j)$   in place  of $\cC(X,W)$. Finally we assume that for $g\in  W$ with $C_G(g)=C_m$, we have $\lambda^0_j(g)\in W_j$, for all $j\le i$ and $j<m$.  By abuse of notation we let $\phi_m$ denote the map from $\lambda^0_j(C_m)$ to $H_m$, given by mapping  $\lambda^0_j(c)$ to $\phi_m(c)$, for all $c\in \lambda^0_j(C_m)$, where $j\le i$ and $j<m$.

  Now define $G_{i+1}=G_i*_{\phi_{i+1}}H_i$, noting that $\lambda^0_i(C_{i+1})\le \lambda^0_i(G_0)\le G_i$ and that  $\lambda^0_i(C_{i+1})=C_{G_i}(\lambda^0_i(g_{i+1}))$.  Also, from the definitions, $H_{i+1}=\phi_{i+1}(\lambda_{i}^0(C_{i+1}))\times K_{i+1}$. Define $\lambda^i_{i+1}$ to be the canonical embedding of $G_i$ in $G_{i+1}$.    From Theorem \ref{thm:lift}, $G_{i+1}$ is in $\cC(X_{i+1},W_{i+1})$, where  
  $X_{i+1}=\lambda_{i+1}^i(X_i)\cup X_{i+1}'$ and $W_{i+1}=\lambda_{i+1}^i(W_i)\cup K_{i+1}\cup W_{i+1}'$, with $K_{i+1}$ as in Definition \ref{def:tec} and $X_{i+1}'$ and  $W_{i+1}'$  defined as  $X_A$ and $W_*$, respectively, in the  statement of Theorem \ref{thm:lift}. 
  For $j<i$ define $\lambda^j_{i+1}=\lambda^j_i\circ \lambda^i_{i+1}$. For $m>i+1$, as $\lambda^0_i(g_m)\in G_i\backslash  \lambda^0_i(C_i)$ it follows, as in the proof of Theorem \ref{thm:lift}, that 
  \[
  C_{G_{i+1}}(\lambda^0_i(g_m))=    \lambda^i_{i+1}(C_{G_i}(\lambda^0_i(g_m)))=\lambda^i_{i+1}(\lambda^0_i(C_m))=\lambda^0_{i+1}(C_m).
  \]
  Also, $\lambda^0_{i+1}(g_m)=\lambda^i_{i+1}\circ \lambda^0_i(g_m)\in \lambda^i_{i+1}(W_i)\subseteq W_{i+1}$.
  Therefore all assumptions on $i$ above, hold for $i+1$.

  If $l\in I$ is a limit ordinal,  assume that groups $G_j$ in $\cC(X_j,W_j)$ and monomorphisms  $\lambda^j_{k}$ have been defined,  and  satisfy the properties above, for all $j\le k< l$. Let $L=\{j\in I \,:\, j<l\}$, let
  $\bar G_l=\varinjlim G_j$, where the limit is over the direct system $L$, and let $\psi_j$ be the canonical map from  $G_j$ to $\bar G_l$, for all $j\in L$. As before, we may assume that $\phi_l$ maps $\psi_0(c)$ to $\phi_l(c)\in H_l$, for  all $c\in C_l$, and define $G_l=\bar G_l*_{\phi_l}H_l$. We must check that $G_l$ is in $\cC$.  As $G_l$ is formed  from $\bar G_l$ by extension of centralisers, in the  light of Theorem \ref{thm:lift}, it suffices to show that $\bar G_l$ is in $\cC$.
  In fact, from Proposition \ref{prop:dirlim}, $\bar G_l=G(\bar X_l,\bar W_l)$ where $\bar X_l$ and $\bar W_l$ are the subsets
  $\bar X_l=\bigcup_{j<l}\psi_j(X_j)$ and $\bar W_l=\bigcup_{j<l}\psi_j(W_j)$ of $\bar G_l$.

Set
  $X_l=\bar X_l\cup X'_l$ and $W_l=\bar W_l\cup K_l \cup W'_l$; both subsets of $G'_l* H_l$.
  From Theorem \ref{thm:lift}, $G_l$ is in $\cC(X_l,W_l)$.   As in the case of successor ordinals above, the assumptions made for $j<l$ are now seen to hold also for $l$.  Therefore, by induction the direct limit $\varinjlim \{G_i\,:\,i\in I\}= G(\mC, \Hc, \Phi)$ is  in $\cC$.

For the final statement,
assume that $\{\pd_d:d\in D\}$ is a discriminating family of homomorphisms
for $G_i$ by $G_0$. From Theorem \ref{thm:lift},
   $G_{i+1}$ is discriminated by $G_i$ via a family 
   $\{\lambda_{i,m}\,:\, (i,m)\in I\times \BN\}$. 
   Then $G_{i+1}$ is discriminated by $G_0$ via  the family
  $\{\pd_d\circ \lambda_{i,m}\,:\, d\in D, (i,m)\in I\times\BN\}$. 

  Now suppose $l$ is a limit ordinal and, for all $j<l$, the family $\{\pd_{j,d}\,:\, d\in D_j\}$ is discriminating for $G_j$ by $G_0$. 
  Any finite set of elements of the direct limit $\bar G$ of  the $G_j$, may be represented by elements of $G_j$, for some fixed $j<l$.
  It follows that $\bar G$ is discriminated by $G_0$ via the family $\bigcup_{j<l}\{\pd_{j,d}\,:\, d\in D_j\}$. Now it follows  from the first part of the proof that $G_l$ is discriminated by $G_0$.
  Therefore $G(\mC, \Hc, \Phi)$ is discriminated by $G=G_0$, as required.
\end{proof}
\begin{rem}
From the proof, given a well-ordering of $I$, there is a direct system $\{G_i\}_{i\in I}$ of groups such that  
$G_i\in \cC$, for all $i\in I$, and $ G(\mC, \Hc, \Phi)=\varinjlim \{G_i\,:\,i\in I\}$. 
Consequently, the properties of tree extensions of centralisers are similar to  those of ordinary extensions of centralisers. That is, there exist canonical embeddings of $G$ and $H_i$ into $G(\mC, H, \Phi)$; the centraliser of $g_i$ in $G(\mC, H, \Phi)$ contains the group $H_i$ and the group $G(\mC, H, \Phi)$ has the corresponding universal property.
Moreover, groups $G(\mC, H, \Phi)$ for different well-orderings of $I$ have the same
universal property, so are isomorphic.
\end{rem}
\section{Exponential groups}
Following \cite{MR94,MR95}, where further detail may be found,  we define exponential groups as follows.
\begin{defn}\label{defn:A-group}
  Let $A$ be an arbitrary associative ring with identity and let $G$ be a group. Fix a map from $G\times A$ to $G$ and write
  the image of  $(g,\alpha)$ as $g^{\alpha}$. Consider the following axioms:

\begin{itemize}
\item [1.] $g^1 = g$, $g^0 = 1$, $1^{\alpha} = 1$ ;
\item [2.] $g^{\alpha + \beta} = g^{\alpha} g^{\beta}$; $g^{\alpha \beta} = {(g^{\alpha})}^{\beta}$;
\item [3.] ${(h^{-1}gh)}^{\alpha} = h^{-1}g^{\alpha}h$;
\item [4.] $[g, h] = 1 \Rightarrow {(gh)}^{\alpha} = g^{\alpha}h^{\alpha}$.
\end{itemize}

Groups that admit a map $G\times A\maps G$, satisfying the axioms $1. - 4.$ are called $A$-groups.
\end{defn}
The class of  $A$-groups over arbitrary associative rings is referred to as the class of \emph{exponential} groups.
Every group is a $\BZ$-group, and an abelian group which is an $A$-group is, by definition, an $A$-module.

Given $A$-groups $G$ and $H$, a homomorphism $f:G\maps H$ is called an $A$\emph{-homomorphism} if $f(g^\alpha)=(f(g))^\alpha$, for all $g\in G$ and $\alpha\in A$.
A subgroup $H\le G$ is called an $A$\emph{-subgroup} if $h^\alpha \in H$, for all $h \in H$ and $\alpha \in A$. It follows that an intersection of $A$-subgroups is an $A$-subgroup. The $A$-subgroup
$A$\emph{-generated} by a subset $X$ of $G$, written $\langle X\rangle_A$, is defined as the smallest $A$-subgroup of $G$ containing $X$.

A basic operation in the class of exponential groups is that of $A$-completion. Here we give a particular case of this construction (see \cite{MR94} for the general definition). Later on we always assume that the ring $A$ and its subring $A_0$ have a common identity element. 
\begin{defn}\label{def:comp}
 Let $A$ be a ring, $A_0$ a subring of $A$ and $G$ an $A_0$-group. Then an $A$-group $G^A$ is called an $(A_0,A)$\emph{-completion}  of the group $G$ if $G^A$ satisfies the following universal property.
\begin{itemize}
\item [1.] There exists an $A_0$-homomorphism $\tau : G \to G^A$ such that $\tau(G)$ $A$-generates $G^A$: that is  $\langle \tau(G) \rangle_A = G^A$; and 
\item [2.] for any $A$-group $H$ and an $A_0$-homomorphism $\varphi: G \to H$ there exists a unique $A$-homomorphism $\psi: G^A \to H$ such that $\psi \circ \tau= \varphi$.
\end{itemize}
\end{defn}
As every group is a $\BZ$-group,  one  can consider
$(\BZ,A)$-completions of arbitrary groups for each ring of characteristic zero, i.e. $\BZ \leq A$.
In practice, our use of $(A_0,A)$-completions will be restricted to the case where $A_0=\BZ$, in which case we omit $A_0$ from the notation and refer to the $(\BZ,A)$-completion simply as the $A$-completion of $G$.

If $G$ is an abelian $A_0$-group, then the group $G^A$ is also abelian, i.e. it is an $A$-module. In this case $G^A$ satisfies the same universal property as the tensor product $G \otimes_{A_0} A$ of the $A_0$-module $G$ and the ring $A$. Therefore $G^A \simeq G \otimes_{A_0} A$. In particular, if $M$ is an $A$-module then $M^A\cong M$, as $A$-modules. 

The  $A$-completion of a coherent RAAG $G$ will be constructed by defining an $A$-action on successively larger subsets of $G$, and necessarily involves groups in which $A$ acts on some, but not all elements. This brings us to the following definition.
\begin{defn}
  A group $G$ is called a \emph{partial} $A$\emph{-group}, for an associative ring $A$ (with identity $1$) if there exists a subset $P$ of $G\times A$,  such that $g^\alpha$ is defined whenever  $(g,\alpha)\in P$, and   axioms 1. to 4. in Definition \ref{defn:A-group} hold whenever the arguments belong to $P$.
  
  In this case we say the partial $A$-action is \emph{defined on} $P$. 
  Let $H$, $G$ be partial $A$-groups with $A$-actions defined on subsets $P_H$ and $P_G$, respectively.  A homomorphism of groups $\phi:G\to H$ is called a partial $A$-homomorphism if $(g^a)^\phi$ is in $P_H$ and $(g^a)^\phi= (g^\phi)^a$ for all pairs $(g, a)\in P_G$. 
  We say that $X$ is a \emph{partial} $A$\emph{-generating} set for the partial $A$-group $G$ if $G$  is generated (as a group)
  by $\{x^a\,\mid\, (x,a)\in P_G\}$. 
\end{defn}
When  the sets on which $A$-actions are  defined are clear from the context, no explicit reference to the sets $P_G$ and $P_H$
will be made.   In particular,
 when $H$ is an $A$-group (the $A$-action is everywhere defined) it is always assumed that a partial $A$-homomorphism from $H$ to $G$ is defined with respect to $P_H=H\times A$.

\begin{defn}\label{def:compart}
  Let $G$ be a partial $A$-group with action defined on $P\subset G\times A$.
  We say that an $A$-group $G^A$ is an $A$-{\em completion} of  $G$, with respect to $P$,
  if $G^A$ satisfies the following universal property:
	\begin{enumerate}
        \item there exists a partial $A$-homomorphism $\tau: G \to G^A$ such that $\tau(G)$ is an
          $A$-generating set for  $G^A$; and 
		\item given an $A$-group $H$ and a partial $A$-homomorphism $\varphi: G \to H$ there exists a
                  unique $A$-homomorphism $\psi: G^A \to H$ such that $\tau \circ \psi =\varphi$.
	\end{enumerate}
	
      \end{defn}    
      In particular, if $A_0$ is a subring of $A$, then any $A_0$-group is also a partial $A$-group and an $(A_0,A)$-completion is a partial $A$-completion of $G$, with respect to $(G,A_0)$.	
   
 \begin{thm}[\cite{MR94}]
   Let $G$ be a partial  $A$-group, with action defined on the set $P$.
   Then there exists an $A$-completion $G^A$ of $G$, with respect to $P$, 
  and it is unique up to $A$-isomorphism.
\end{thm}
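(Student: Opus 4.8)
The plan is to mimic the classical construction of the free exponential group (as in \cite{MR94}) adapted to the partial setting, building $G^A$ as a quotient of a suitably large free $A$-group. First I would recall the existence of the free $A$-group $F^A(S)$ on a set $S$: this is standard (see \cite{MR94}) and is the universal $A$-group receiving a map from $S$. Take $S$ to be the underlying set of $G$ (or a generating set, but the underlying set is cleanest), and let $F = F^A(S)$ together with the canonical map $\iota\colon S \to F$. There is an obvious surjection of groups $\pi\colon \langle S\rangle_{\text{grp}} \to G$ (the identity on $S$, extended); I want to push this through to the $A$-group level.

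Next I would define $G^A$ as the quotient of $F$ by the smallest $A$-normal subgroup $N$ (that is, a normal $A$-subgroup) containing the following two families of elements: (i) all words $\iota(g_1)\iota(g_2)\iota((g_1g_2)^{-1})$ for $g_1,g_2\in G$ — these force $\tau\colon G\to G^A$, defined as $\iota$ composed with the quotient map, to be a group homomorphism; and (ii) all elements $\iota(g)^\alpha \iota(g^\alpha)^{-1}$ for every pair $(g,\alpha)\in P$ on which the partial $A$-action is defined — these force $\tau$ to be a partial $A$-homomorphism. One must check that the collection of $A$-normal subgroups of $F$ is closed under intersection (immediate, since intersections of $A$-subgroups are $A$-subgroups and intersections of normal subgroups are normal), so $N$ is well-defined, and that the quotient of an $A$-group by an $A$-normal subgroup is again an $A$-group — this is where axiom~3 of Definition~\ref{defn:A-group} is used to see that the $A$-action descends, and axioms 1,2,4 are inherited automatically. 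Set $G^A = F/N$ and let $\tau$ be the composite $G \xrightarrow{\iota} F \to F/N$; by construction $\tau$ is a partial $A$-homomorphism and $\tau(G)$ $A$-generates $G^A$ since $\iota(S)$ generates $F$ as an $A$-group.

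For the universal property, suppose $H$ is an $A$-group and $\varphi\colon G\to H$ a partial $A$-homomorphism. By the universal property of $F=F^A(S)$, the set map $\varphi|_S\colon S\to H$ extends uniquely to an $A$-homomorphism $\tilde\psi\colon F\to H$. I claim $\tilde\psi$ kills $N$: indeed $\tilde\psi$ sends each generator of type (i) to $\varphi(g_1)\varphi(g_2)\varphi(g_1g_2)^{-1}=1$ because $\varphi$ is a group homomorphism, and each generator of type (ii) to $\varphi(g)^\alpha\varphi(g^\alpha)^{-1}=1$ because $\varphi$ is a partial $A$-homomorphism; since $\ker\tilde\psi$ is an $A$-normal subgroup containing these generators, it contains $N$. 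Hence $\tilde\psi$ factors as $\psi\colon G^A\to H$ with $\psi\circ\tau = \varphi$. Uniqueness of $\psi$ follows because $\tau(G)$ $A$-generates $G^A$ and any two $A$-homomorphisms agreeing on an $A$-generating set coincide. Finally, uniqueness of $G^A$ up to $A$-isomorphism is the usual formal consequence of the universal property: given two completions, the universal property produces $A$-homomorphisms each way whose composites are identities (again by the $A$-generation/uniqueness argument).

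**The main obstacle** I anticipate is purely bookkeeping rather than conceptual: one must be careful that the partial $A$-action data $P\subset G\times A$ is genuinely respected — in particular that the relators in family (ii) are the right ones and that no additional relators (e.g. ones forcing axiom 4 compatibility between $\tau$ and products of commuting elements of $G$) are needed, since those are automatically satisfied once $\tau$ is a group homomorphism and $G^A$ is an $A$-group and $(g_1g_2,\alpha)\in P$ only when the axioms already hold in $G$. A secondary point requiring care is verifying that $F/N$ really is an $A$-group, i.e.\ that the $A$-action on $F$ descends to the quotient; this is where one invokes that $N$ is not merely normal but $A$-normal, so that $(fn)^\alpha = f^\alpha n^\alpha \in f^\alpha N$ using axiom~2 — wait, more precisely one uses that conjugates and $\alpha$-powers of elements of $N$ stay in $N$, which is exactly the definition of $A$-normal. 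Once these checks are in place the argument is complete; none of it requires the structure theory of class $\cC$, so this theorem is genuinely a soft, general-nonsense statement cited from \cite{MR94}.
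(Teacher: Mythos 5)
The paper does not actually supply a proof of this theorem: it cites \cite{MR94} and remarks that the proof for partial $A$-groups ``follows through almost word for word'' from the $(A_0,A)$-completion case treated there, so there is no paper argument to compare against line by line. Your overall strategy --- build $G^A$ as a quotient of a free $A$-group by a suitable normal $A$-subgroup carrying the multiplicativity and partial-action relators, then verify the universal property --- is indeed the standard route and is what the cited construction amounts to.

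However, there is a genuine gap in the middle step, and it is exactly the point you flag as ``purely bookkeeping'' but then dismiss too quickly. You claim that the quotient of an $A$-group by an $A$-normal subgroup is again an $A$-group, with axiom~4 of Definition~\ref{defn:A-group} ``inherited automatically.'' This is false. Axioms 1--3 are equational identities and so pass to arbitrary quotients, but axiom~4, namely $[g,h]=1\Rightarrow (gh)^\alpha = g^\alpha h^\alpha$, is a \emph{quasi-identity} (a conditional equation). Quasi-identities are not preserved by homomorphic images: the class of $A$-groups is a quasivariety, not a variety. Concretely, in a free $A$-group on $\{a,b\}$ we have $[a,b]\neq 1$, so $(ab)^\alpha$ and $a^\alpha b^\alpha$ are unrelated; quotienting by the $A$-normal closure of $[a,b]$ makes $a$ and $b$ commute but does not force $(ab)^\alpha = a^\alpha b^\alpha$ in the quotient, so that quotient fails axiom~4. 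Your $N$, defined as the smallest $A$-normal subgroup containing families (i) and (ii), is therefore not guaranteed to give an $A$-group.

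The fix is standard but must be stated: define $N$ to be the smallest normal $A$-subgroup of $F$ \emph{such that $F/N$ is an $A$-group} and such that $N$ contains the relators (i) and (ii). This minimal $N$ exists because the class of $A$-groups, being a quasivariety, is closed under products and subalgebras, so the family of congruences on $F$ with quotient in the quasivariety is closed under arbitrary intersection; intersecting over all those containing (i) and (ii) gives $N$. With this corrected definition your universal-property verification goes through unchanged: $\ker\tilde\psi$ is normal, $A$-closed, $F/\ker\tilde\psi\cong\tilde\psi(F)\le H$ is an $A$-group, and it contains the relators, hence $\ker\tilde\psi\supseteq N$ by minimality, so $\tilde\psi$ factors through $F/N$. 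The uniqueness argument is fine as written. A secondary point worth a sentence: the existence of the free $A$-group $F^A(S)$, which you invoke but do not justify, also rests on the quasivariety observation (free objects exist in any quasivariety), so this should be flagged rather than treated as entirely standard.
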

(The version of this theorem proved in \cite{MR94} is restricted to the case of an $(A_0,A)$-completion of $G$ while the proof of general case, which   follows through almost word for word, is left to the reader.)

It is shown in \cite{Amaglobeli92} that the operation of  $A$-completion commutes with taking direct sums and forming direct limits of directed systems of partial $A$-groups. 
\begin{defn}
  A partial $A$-group $G$ is \emph{faithful} (over $A$) if the partial $A$-homomorphism $\tau:G\maps G^A$ is injective. 
\end{defn}
From the definition, it follows that a partial $A$-group $G$ is
faithful over $A$ if and only if there is an injective partial $A$-homomorphism from $G$ into an $A$-group.

 Given a partial $A$-group $G$ and a subgroup $H$ of $G$, we say that $H$ is a \emph{full} $A$-subgroup when $H$ is an $A$-subgroup: that is  $h^\alpha$ is defined and belongs to $H$, for all $h\in H$
and $\alpha \in A$.
 If $H$ is a full $A$-subgroup of a partial $A$-group $G$,
then $H$ is necessarily faithful and $H^A=\tau(H)$.

\section{$A$-completion of groups in $\mathcal C$}

\subsection{$A$-completion of abelian groups in $\mathcal C$}

In general, the construction of the $A$-completion of an abelian
partial $A$-group $M$ is very similar to tensor multiplication by the ring $A$. 
(As we shall see it is only necessary to make the existing partial action of $A$ on $M$ agree with the right action of $A$ on $M \otimes_\BZ A$.)

Let $M$ be a partial right $A$-module (i.e. a partial abelian $A$-group with action defined on $P\subset G\times A$).
Consider the tensor product $M \otimes_{\BZ} A$ of the abelian group $M$ by the ring $A$ over $\BZ$.
The  $A$-completion $M^A$ of  $M$ may be obtained by factorising $M \otimes_{\BZ} A$ by the right $A$-submodule generated by the set of all  elements  
$(x^\alpha\otimes 1) - (x\otimes \alpha)$, for $(x,\alpha)\in P$, 
and defining $\tau(x)$ to be the image of $x\otimes 1$ in the quotient. 
The following proposition is a copy of the corresponding result from the theory of modules.
 
\begin{prop}\label{prop:abcomp} 
  Let $M$ be a torsion-free abelian partial $A$-group, with action defined on the set $P$,
  where $A$ is a unitary associative ring with a torsion-free additive group. Assume that for
  all $x\in M$, $\alpha \in A$ and non-zero $n\in \BZ$,
  \begin{equation}\label{eq:actwell} \textrm{if }  (x^n,\alpha)\in P \textrm{ then }(x,\alpha)\in P.
  \end{equation}
  Then $M$ is  $A$-faithful, $M^A$ is a torsion-free abelian $A$-group  and $\tau(M)$ is a direct summand of $M^A$.
\end{prop}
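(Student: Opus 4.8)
The statement is an analogue, for partial $A$-modules, of the standard fact from module theory that a torsion-free abelian group tensored with a torsion-free ring is torsion-free, and that the natural map is split. The construction of $M^A$ described immediately above the proposition gives $M^A = (M\otimes_\BZ A)/N$, where $N$ is the right $A$-submodule generated by the elements $(x^n\otimes 1)-(x\otimes\alpha)$ for $(x,\alpha)\in P$; here I will write $\alpha$ for an element of $A$ with $(x,\alpha)\in P$ (not $(x^n,\alpha)$), following the displayed generators. The plan is first to reduce everything to the case where $M$ is already an honest $A$-module, using condition \eqref{eq:actwell}, then to invoke the corresponding module-theoretic statement, and finally to unwind the reduction to get faithfulness and the splitting.

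First I would recall that since $A$ has torsion-free additive group and $M$ is torsion-free abelian, the tensor product $M\otimes_\BZ A$ is a torsion-free abelian group: this is standard, as $M$ is a direct limit of finitely generated free abelian groups, and tensoring commutes with direct limits, so $M\otimes_\BZ A$ is a direct limit of finite direct sums of copies of $A$, each torsion-free. However, the quotient $M^A = (M\otimes A)/N$ need not obviously inherit torsion-freeness, so the core of the argument is to identify $N$. The key claim is that $M^A$ is itself torsion-free, which I would establish by exhibiting $M^A$ concretely. Let $M_0 = \langle \tau(M)\rangle_\BZ \le M^A$ be the additive subgroup generated by the image of $M$; the relations defining $N$ force $\tau(x^n) = \tau(x)\alpha$ whenever $(x,\alpha)\in P$, so $\tau$ is a partial $A$-homomorphism. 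Using \eqref{eq:actwell}, one shows $\tau$ is injective: if $\tau(x)=0$ in $M^A$ then, by mapping into the $A$-group $\mathrm{Fr}(M)\otimes_\BZ A$ where $\mathrm{Fr}(M)$ is the divisible hull — or more directly into any ambient $A$-module into which $M$ partially embeds — one gets $x\otimes 1 = 0$ in $M\otimes_\BZ A$, and since $M\otimes_\BZ A$ is torsion-free and $M\to M\otimes_\BZ A$ is injective (as $A$ is unital with torsion-free additive group, so $\BZ\hookrightarrow A$ is a pure embedding, or at least $\BZ$ is a direct summand after tensoring with $\BQ$), this gives $x=1_M$. Thus $M$ is $A$-faithful.

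For the torsion-freeness of $M^A$ and the splitting, I would argue as follows. Embed $M$ in its divisible hull $\overline M = M\otimes_\BZ \BQ$, a $\BQ$-vector space; the partial $A$-action on $M$ extends to an honest $A$-module structure on $\overline M\otimes_\BQ (A\otimes_\BZ\BQ)$... but more efficiently: by the universal property of $A$-completion applied to the inclusion of $M$ into the genuine $A$-module $\widehat M := (M\otimes_\BZ A)/N'$ where $N'$ is the submodule making the action on $M$ agree (this is precisely $M^A$ by construction), I realize the cleanest route is the direct one. Pick a $\BZ$-basis-like structure: write $M$ as a directed union of its finitely generated subgroups $M_i$, each free abelian of finite rank; on a cofinal subsystem the partial $A$-action, via \eqref{eq:actwell}, is compatible, and $M_i^A \cong M_i\otimes_\BZ A / N_i$ with $N_i$ generated by the finitely many relevant relations. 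Since $A$-completion commutes with direct limits (cited from \cite{Amaglobeli92}), $M^A = \varinjlim M_i^A$, so it suffices to treat each $M_i$, i.e.\ the finitely generated case. There, $M_i$ free abelian of rank $r$ with partial $A$-action; the relations in $N_i$ say $e_j^{n_j} \mapsto e_j' \alpha_j$ for basis elements, which after passing to $\overline M_i = M_i\otimes\BQ$ simply rewrites $M_i^A$ as a sub-$\BZ$-module of $\overline M_i\otimes_\BQ(A\otimes\BQ)$, exhibiting $M_i^A$ as torsion-free and $\tau(M_i)$ as a direct summand (split by the projection $\overline M_i \otimes (A\otimes\BQ) \to \overline M_i\otimes 1 = \overline M_i \supseteq M_i$ followed by the canonical retraction onto $\tau(M_i)$ when $M_i$ is a direct summand of its image — which holds because $1$ is part of a $\BZ$-basis of $A$ after we reduce mod torsion, or because $A/\BZ$... ).

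\textbf{The main obstacle} I anticipate is the splitting assertion: showing $\tau(M)$ is a \emph{direct summand} of $M^A$, not merely a subgroup. In the pure module-theoretic analogue this uses that $A$, as a $\BZ$-module, has $\BZ\cdot 1$ as a direct summand — which is exactly why $A$ is required to have torsion-free additive group (so $\BZ\cdot 1$ is a pure, finitely generated, hence split, subgroup of the torsion-free... no, that needs $A$ f.g. or split, which is not assumed). The honest statement is: $\BZ\cdot 1_A$ is a pure subgroup of $(A,+)$, hence $M\otimes_\BZ \BZ\cdot 1_A = M$ is a pure subgroup of $M\otimes_\BZ A$, and then one checks $N$ meets this copy of $M$ trivially and that the image of $\tau$ is pure and, being a direct limit of split inclusions in the finitely generated case, split in general. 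So the crux is the careful purity/splitting bookkeeping, faithfully transcribing the module-theory proof — which is why the authors say it "is a copy of the corresponding result from the theory of modules." I would present the finitely-generated case in full and then pass to the limit.
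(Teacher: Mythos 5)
Your opening move is the same as the paper's: reduce to finitely generated subgroups of $M$ by writing $M$ as a direct limit and invoking the commutativity of $A$-completion with direct limits. What you never do, and what the paper's proof turns on, is the further reduction to $M$ \emph{infinite cyclic}. For $M=\la x\ra$, condition \eqref{eq:actwell} produces a clean dichotomy on the defining set $P$: either $P = \la x\ra\times A$ (the partial action is already a full $A$-module action, so $M^A=M$ and all three assertions are trivial), or $P=\emptyset$ (so $M^A = M\otimes_\BZ A$ with no quotienting by any submodule $N$ at all). This is exactly where the hypothesis \eqref{eq:actwell} earns its keep; your proposal mentions \eqref{eq:actwell} only in passing and never extracts this consequence. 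Without the cyclic reduction you are forced to reason about $N$ directly, and your own account of that visibly stalls.

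Two further points are genuinely wrong. First, the purity claim you fall back on for the splitting is false: $\BZ\cdot 1_A$ need not be a pure subgroup of $(A,+)$ when $A$ has merely torsion-free additive group. For $A=\BZ[1/2]$ one has $2A=A$, so $2A\cap\BZ=\BZ\neq 2\BZ$; here $\BZ$ is neither pure nor a direct summand of $A$ (indeed $A/\BZ\cong\BZ(2^\infty)$ while $A$ is torsion-free). So the purity route to the splitting cannot be repaired at that level of generality. Second, your faithfulness argument is circular: the map $M\to\mathrm{Fr}(M)\otimes_\BZ A$, $x\mapsto x\otimes 1$, is a partial $A$-homomorphism only if $x^\alpha\otimes 1=x\otimes\alpha$ for all $(x,\alpha)\in P$ — that is, only if the abstract partial action on $M$ already agrees with scalar multiplication by $A$, which is precisely what needs proving. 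Once one has reduced to the cyclic dichotomy, neither issue arises: $\tau$ is either the identity on $M$ or the canonical map $\BZ\to A$, whose injectivity follows from $A$ being torsion-free, and the analysis of $N$ disappears entirely.

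In short: right first step, but the essential reduction to the cyclic case is missing, and the substitutes you offer (purity of $\BZ\cdot 1$ in $A$, embedding into $\mathrm{Fr}(M)\otimes A$) do not hold up.
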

\begin{proof}
  As a torsion-free abelian group is a direct limit of finitely generated subgroups,
  we may restrict attention to finitely generated  free abelian groups, and so to infinite cyclic groups. If $M=\la x\ra$ is infinite cyclic then the condition on $P$ implies  that either $P=\la x\ra\times A$ or $P=\emptyset$. In the first case $M=M^A$ and in the second case $M^A=M\otimes_\BZ A$.
\end{proof}
\subsection{$A$-completion of non-abelian Groups in $\mathcal C$}\label{sec:nabtensor}

From now on all rings are associative, with free  abelian additive subgroup and a multiplicative identity $1$.
For such a  ring $A$, by $\BZ\subseteq A$ we mean the characteristic subring of $A$, and we always assume that
a basis contains the element $1$; so $A=\BZ\times A'$, for some subring $A'$.

Let $A$ be a ring and let $G$ be a group satisfying condition \ref{cond:R}, namely:
\begin{enumerate}[label=\textbf{(R)},ref=(R)] 
\item\label{cond:R}
  $G \in \mathcal C$ is a partial $A$-group, such that for all $g\in G$,
  \begin{enumerate}[label=(\roman*),ref=(\roman*)]
  \item\label{it:R1} if $C_G(g)$  is non abelian, and  $C(g)= \Z(g) \times O(g)$, then the centre $Z(C(g))$ of $C(g)$ is a full  $A$-subgroup (so $A$-faithful) and
  \item\label{it:R2} if $C_G(g)$  is abelian then condition \eqref{eq:actwell} holds for all $x\in C_G(g)$, $\alpha \in A$ and $n\in
    \BZ$ ($n\neq 0)$.
    \end{enumerate}
\end{enumerate}
In this section we describe the $A$-completion of such a group $G$. 
Assume then that $G$ satisfies condition \ref{cond:R}. Zorn's Lemma guarantees the existence of a set
$\mathfrak C = \{ C_i\,:\, i \in I \}$, of centralisers $C_i$ of elements of $G$, which satisfies the following conditions $(S)$.
\begin{enumerate}[label=\textbf{(S\arabic*)}]
\item Any $C \in \mathfrak C$ is abelian but not a full $A$-subgroup. 
\item No two centralisers from $\mathfrak C$ are conjugate.
  \begin{itemize}
  \item Note that if $C\neq C' \in \mathfrak C$ then $C \cap C'^g$ is an $A$-module,
    for all $g\in G$. Indeed, assume that $C=C(a)$, $C'^g=C(b)$, and $x\in C(a) \cap C(b)$. If $[a,b]=1$, that is $a\in C(b)$ and $b \in C(a)$, then since $C(a)$ and $C(b)$ are abelian, we have that $C(a)< C(b)$ and $C(b) < C(a)$ and so $C(a) = C(b)$. If $[a,b]\ne 1$, then $C(x)$ is non-abelian and so by assumption 
    $Z(C(x))$ is an $A$-subgroup. In particular $x^\alpha$ is defined and in 
    $Z(C(x))$ and   $Z(C(x))\le C(a)\cap C(b)$, for all $\alpha \in A$.
    \end{itemize}
\item Any abelian centraliser in $G$ which is not a full $A$-subgroup is conjugate to a centraliser in $\mathfrak C$.
\end{enumerate}

Given $C_i\in \mathfrak C$, since by assumption $C_i$ is abelian, from condition \ref{cond:R} \ref{it:R2}   and  Propostion \ref{prop:abcomp}, we have $C_i^A=C_i\otimes_\BZ A$, for all $i\in I$, and we can form the set $\mathcal H_A = \{ C_i \otimes_\BZ A \mid i \in I \}$ and the set of canonical embeddings $\varPhi_A = \{ \varphi_i : C_i \hookrightarrow  C_i \otimes_\BZ A \mid i \in I \}$. As $C_i$ is a direct summand of $C_i\otimes_\BZ A$, we may consider the tree extension of centralisers  $G^*$, of this special type: 
\[
G^* = G(\mathfrak C, \mathcal H_ A,\varPhi_A).
\]
If $G^*$ satisfies condition \ref{cond:R} then we can iterate this construction up to level $\omega$: 
\begin{equation}\label{eq:extcen}
G=G^{(0)} <G^{(1)} <G^{(2)} <\dots < G^{(n)} < \dots
\end{equation}
where $G^{(n+1)} = G^{(n)}( \mathfrak C_n,\mathcal H_ {A,n},\varPhi_{A,n})$, and the set $\mathfrak C_n$ of centralisers in the group $G^{(n)}$
satisfies the condition (S). 

\begin{defn}\label{defn:ice}
  The union $\bigcup\limits_{n\in \omega} G^{(n)}$ of the chain (\ref{eq:extcen}) is called an 
  \emph{iterated centraliser extension} (ICE for short)  of $G$ by the ring $A$.
(If $A=\BZ[t]$ we refer simply to an \emph{ICE} of $G$.)
\end{defn}

\begin{thm}[{\emph{cf.} {\cite[Theorem 8]{MR95}}}]\label{thm:tensor} 
  Let $A$ be a ring (as at the beginning of this sub-section) 
  and let $G$ be a non-abelian partial $A$-group satisfying condition \ref{cond:R}. If all abelian centralisers of $G$ are faithful over $A$,  then the $A$-completion $G^A$ of $G$ by $A$ is an ICE of $G$ by $A$.
  Furthermore $G^A$ is in $\cC$ and is discriminated by $G$.
\end{thm}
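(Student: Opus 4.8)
The plan is to establish the three assertions together — that $G^A$ coincides with the ICE $G^{(\omega)}:=\bigcup_{n\in\omega}G^{(n)}$ of \eqref{eq:extcen}, that $G^{(\omega)}\in\cC$, and that $G^{(\omega)}$ is discriminated by $G$ — by induction along the chain \eqref{eq:extcen}, following the template of \cite[Theorem 8]{MR95}. The inductive claim is that for each $n$ the group $G^{(n)}$ lies in $\cC$ (with respect to an explicit generating set and subset $W^{(n)}$), carries a partial $A$-action extending that of $G$, satisfies condition \ref{cond:R}, has all its abelian centralisers faithful over $A$, and is discriminated by $G$. The base case $n=0$ is the hypothesis of the theorem (faithfulness of abelian centralisers being assumed, or equivalently deduced from \ref{cond:R}\ref{it:R2} via Proposition \ref{prop:abcomp}).

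\textbf{Successor step.} Assume the inductive claim for $G^{(n)}$, and let $\mathfrak C_n=\{C_i\,:\,i\in I_n\}$ be a set of abelian centralisers of $G^{(n)}$ satisfying (S1)--(S3). Using condition \ref{it:C6}\ref{it:C6a} and the structure of $W^{(n)}$ one checks that $\mathfrak C_n$ satisfies hypotheses \ref{it:ccon1}--\ref{it:ccon2} of Proposition \ref{prop:treeext}; by condition \ref{cond:R}\ref{it:R2} and Proposition \ref{prop:abcomp} each $C_i$ is torsion-free abelian, faithful over $A$, with $C_i^A\cong C_i\otimes_\BZ A$, and since $A=\BZ\times A'$ with $A'$ free abelian over $\BZ$, the group $H_i:=C_i\otimes_\BZ A$ is again free abelian and splits as $\varphi_i(C_i)\times K_i$. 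Proposition \ref{prop:treeext} then gives $G^{(n+1)}=G^{(n)}(\mathfrak C_n,\mathcal H_{A,n},\varPhi_{A,n})\in\cC$, discriminated by $G^{(n)}$ and hence by $G$, and faithfulness of the $C_i$ makes the canonical map $G^{(n)}\hookrightarrow G^{(n+1)}$ injective. The partial $A$-action extends to $G^{(n+1)}$: the inclusions $C_i\hookrightarrow H_i$ are partial $A$-homomorphisms into $A$-modules, and gluing the module actions on the $H_i$ to the partial action on $G^{(n)}$ over the amalgamating subgroups yields a partial $A$-action on the amalgam, with axioms 1--4 checked on reduced forms.

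\textbf{Main obstacle: \ref{cond:R} for $G^{(n+1)}$.} The crux of the induction is to verify that $G^{(n+1)}$ again satisfies condition \ref{cond:R}. I would run through the description of centralisers in a direct abelian centraliser extension supplied by Lemma \ref{lem:centdesc} (together with Lemma \ref{lem:intcent}): a non-abelian centraliser of $G^{(n+1)}$ is either a non-abelian centraliser of $G^{(n)}$ — whose centre is a full $A$-subgroup by \ref{cond:R}\ref{it:R1} for $G^{(n)}$ — or, when the relevant element lies in the $O$-part of some extended $C_i=C_{G^{(n)}}(u)$, one of the enlarged centralisers of Lemma \ref{lem:centdesc}\ref{it:centdescaii}, whose centre is $\Z_{G^{(n)}}(v)$ (full over $A$ by \ref{cond:R}\ref{it:R1}) together with a tensor-completed part that is again full over $A$; and a new abelian centraliser $\langle z\rangle\times O'(z)$ (with $z\in W_*$, notation of Theorem \ref{thm:lift}) satisfies \eqref{eq:actwell} — after replacing $z$ by a cyclically reduced conjugate and invoking uniqueness of roots in $G^{(n+1)}$, this reduces to \eqref{eq:actwell} for $O'(z)\le O_{G^{(n)}}(u)$ inside $G^{(n)}$ and to torsion-freeness of the $A$-module $C_i\otimes_\BZ A$. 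The bookkeeping needed to run this simultaneously with the reduced-form combinatorics of the amalgam and the $\Z$/$O$ splitting is what I expect to be the longest and most delicate part of the proof.

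\textbf{Limit step and identification.} The generating sets are nested and the families $\mathfrak C_n$ are chosen coherently, so hypotheses \ref{it:dirlim1}--\ref{it:dirlim2} of Proposition \ref{prop:dirlim} hold — centralisers of elements of the $G^{(n)}$ are eventually stable, since an extended abelian centraliser becomes a full $A$-subgroup and is never re-extended, while by Lemma \ref{lem:centdesc} extending a centraliser disjoint up to conjugacy from a given element leaves that element's centraliser unchanged. Hence $G^{(\omega)}\in\cC$, and as every finite subset of $G^{(\omega)}$ lies in some $G^{(n)}$, which is discriminated by $G$, so is $G^{(\omega)}$. Finally $G^{(\omega)}$ is an $A$-group: for $g\in G^{(\omega)}$, $g$ lies in some $G^{(n)}$ and in $Z(C(g))$; if $C(g)$ is non-abelian then, for $n$ large, $g\in Z(C_{G^{(n)}}(g))$, a full $A$-subgroup by \ref{cond:R}\ref{it:R1}, and if $C(g)$ is abelian it is a full $A$-subgroup either already or after the next extension — so $g^\alpha$ is eventually defined, and consistency of the total action follows from consistency at each stage. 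The embedding $\tau\colon G\hookrightarrow G^{(\omega)}$ is a partial $A$-homomorphism whose image $A$-generates $G^{(\omega)}$ (each $H_i$ is $A$-generated by $\varphi_i(C_i)$); and given an $A$-group $H$ and a partial $A$-homomorphism $\varphi\colon G\to H$, one builds maps $\psi_n\colon G^{(n)}\to H$ recursively, with $\psi_0=\varphi$ and $\psi_{n+1}$ obtained by extending $\psi_n$ over each factor $H_i=C_i^A$ via the universal property of the $A$-completion of the abelian group $C_i$ (its image in $H$ being an abelian $A$-subgroup), these extensions being compatible over the amalgamated $C_i$; then $\psi=\bigcup_n\psi_n$ is the unique $A$-homomorphism $G^{(\omega)}\to H$ with $\psi\circ\tau=\varphi$, uniqueness following since $\tau(G)$ $A$-generates $G^{(\omega)}$. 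By uniqueness of the $A$-completion, $G^A\cong G^{(\omega)}$, which by construction is an ICE of $G$ by $A$, lies in $\cC$, and is discriminated by $G$.
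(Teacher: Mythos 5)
Your proposal is correct and takes essentially the same route as the paper: the paper packages the successor step as a sequence of lemmas about the single tree extension $G^* = G(\mC,\Hc_A,\Phi_A)$ (Lemmas \ref{lem:Aactiong}--\ref{lem:disc}) and then cites Propositions \ref{prop:treeext} and \ref{prop:dirlim} and the universal property exactly as you do. The only difference is organisational — you fold those lemmas into a single induction along the chain \eqref{eq:extcen} — and you correctly identify the propagation of condition \ref{cond:R} as the one point requiring the detailed centraliser analysis of Lemma \ref{lem:centdesc}.
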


To prove this theorem, we shall show that the tree extension of centralisers $G^* = G(\mathfrak C, A)$, defined above,  satisfies condition \ref{cond:R} and has an appropriate universal property. This will allow us to conclude the same for all the groups $G^{(n)}$ from the chain (\ref{eq:extcen}) and then in turn to prove the theorem. 

\begin{lem} \label{lem:Aactiong} Assume that $G$ satisfies condition \ref{cond:R}, that $\mC=\{C_i:i\in I\}$ is a set of centralisers  of elements of $G$ which satisfies (S) and that $G^*=G(\mathfrak C, \mathcal H_ A,\varPhi_A)$.
  For any $g \in G$,
  \begin{enumerate}[label=(\roman*)]
  \item if the centraliser of $g\in G$ is abelian, then  either
    \begin{enumerate}[label=(\alph*)]
    \item $C_G(g)$ is not conjugate to an element of $\mC$, in which case $C_{G^*}(g)\cong C_G(g)$, or
    \item $C_G(g)$ is conjugate to an element of $\mC$, in which case $C_{G^*}(g)\cong C_G(g) \otimes A= C_G(g)^A$. 
    \end{enumerate}
   In both cases $C_{G^*}(g)$ is an $A$-module.  Otherwise,
 \item the centraliser of $g$ is non-abelian and $C_{G^*}(g)= \Z_G(g) \times O_G(g)$, $\Z(g)=\Z_G(g)$, $g\in \Z(g)\le Z(C(g))$ and $Z(C(g))$ is a  full $A$-subgroup.
\end{enumerate}
  \end{lem}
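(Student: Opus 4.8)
The plan is to realise $G^*$ as a transfinite chain of single \emph{direct} centraliser extensions and then to propagate the description of centralisers along this chain by means of Lemma \ref{lem:centdesc}. By the remark following Proposition \ref{prop:treeext}, once a well-ordering of $I$ is fixed there is a direct system $\{G_j\}_{j\in I}$ with $G_0=G$, each $G_j$ obtained from its predecessors by a direct centraliser extension of $C_j=C(g_j)$ with free abelian complement $K_j$, and $G^*=\varinjlim_{j\in I}G_j$; moreover the proof of that proposition maintains, at every stage, the centraliser stability of hypothesis \ref{it:dirlim2} of Proposition \ref{prop:dirlim} and the persistence of conditions (S1)--(S3). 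Hence for $g\in G$ the centraliser $C_{G_j}(g)$ is the image of $C_G(g)$ up to the stage at which $g$ first becomes conjugate into the extended centraliser, and a centraliser in a limit term agrees with the ones beyond the stabilisation point; so it suffices to analyse a single extension step and then transfinitely induct.

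First I would identify which stages can affect $C_{G_j}(g)$. At the stage where $C_j$ is extended, the centraliser of $g$ grows only if $g$ is conjugate (in the current group) into the amalgamated subgroup $C_j$; otherwise $g$ lies in a conjugate of a factor and by Lemma \ref{lem:centdesc} its centraliser is unchanged. When $C_G(g)$ is abelian, ``$g$ conjugate into $C_j$'' forces the current centraliser of $g$ to be conjugate to $C_j$ (an abelian centraliser containing an element of another abelian centraliser coincides with it), so $C_G(g)$ is conjugate to $C_j$, which by (S2) occurs for at most one index. Thus, if $C_G(g)$ is conjugate to no $C_m$ then $C_{G^*}(g)=C_G(g)$, and by (S3) this is already a full $A$-subgroup, so an $A$-module --- this is (i)(a). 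If $C_G(g)$ is conjugate to the unique $C_{j_0}\in\mathfrak C$, then the centraliser of $g$ equals $C_G(g)$ until $C_{j_0}$ is extended, at which stage $g$ is conjugate into $C_{j_0}$ with still-abelian centraliser; the case of Lemma \ref{lem:centdesc} with $v\in C$ and $C_G(v)$ abelian then gives $C_{G_{j_0}}(g)\cong C_G(g)\otimes_{\BZ}A=C_G(g)^A$, which is thereafter a full $A$-subgroup and so, by (S1), conjugate to no later $C_j$; hence it is stable and $C_{G^*}(g)\cong C_G(g)^A$ --- this is (i)(b). In both sub-cases $C_{G^*}(g)$ is an $A$-module.

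For (ii) suppose $C_G(g)$ is non-abelian, so $C_G(g)=\Z_G(g)\times O_G(g)$ with $g\in\Z_G(g)\le Z(C_G(g))$ (the remarks under \ref{it:C8} in Definition \ref{defn:C}) and $Z(C_G(g))$ a full $A$-subgroup by \ref{cond:R}\ref{it:R1}. Since $C_{G^*}(g)\supseteq C_G(g)$ is non-abelian, the only stages affecting it are those at which $g$ is conjugate into an abelian $C_j$ while its centraliser, just before the extension, is still non-abelian; Lemma \ref{lem:intcent} then forces $g$ into the $O$-part of $g_j$, and the case of Lemma \ref{lem:centdesc} with $v\in C$ and $C_G(v)$ non-abelian shows that the $\Z$-part of the centraliser of $g$ is unaffected while its $O$-part acquires the free abelian direct factor $K_j$. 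Transfinite induction (with the evident union at limit stages) now yields $C_{G^*}(g)=\Z_G(g)\times O_{G^*}(g)$ with $\Z_{G^*}(g)=\Z_G(g)$: the $\Z$-part is that of $G$, while the $O$-part has absorbed the complements $K_j$ of those $C_j$ which lie inside $C_G(g)$. The remaining assertions follow: $g\in\Z_G(g)$ is central in this direct factor, so $g\in\Z(g)\le Z(C_{G^*}(g))$; and each generator of an absorbed complement $K_j$ commutes with the $O$-part but not with $g_j$, hence not with all of $C_{G^*}(g)$, whence $Z(C_{G^*}(g))=Z(C_G(g))$, which by \ref{cond:R}\ref{it:R1} is a full $A$-subgroup.

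The main obstacle is the non-abelian case (ii): one must verify carefully that the $\Z$-component is genuinely invariant under all --- possibly transfinitely many --- extensions, that it does not jump at limit ordinals, and that the enlarging $O$-component never contributes to the centre of the centraliser; here \ref{cond:R}\ref{it:R1} (the full-$A$-subgroup property of $Z(C(g))$) and the fact that the extensions are \emph{direct} --- so their complements $K_j$ are free abelian and sit as direct factors --- are exactly what is used. A secondary, routine point common to all cases is the reduction of ``$g$ is conjugate into $C_j$ in the current group'' to ``$C_G(g)$ is conjugate to $C_j$ in $G$'', which rests on the conjugacy separation of the centralisers of $\mathfrak C$ recorded by (S1)--(S2) together with its persistence through the construction, established in the proof of Proposition \ref{prop:treeext}.
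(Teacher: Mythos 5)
The proposal follows essentially the same route as the paper: it uses the direct system from Proposition~\ref{prop:treeext} to realise $G^*$ as a transfinite chain of direct centraliser extensions, and then tracks the centraliser of a fixed $g$ across stages via Lemma~\ref{lem:centdesc} (together with Lemma~\ref{lem:intcent} in the non-abelian case), with (S1)--(S3) and condition \ref{cond:R} supplying the stability claims. In fact the proposal is more careful than the paper's printed argument in case (ii): the paper asserts $C_{G_i}(g)\cong C_{G_0}(g)$ for all $i$ whenever $C_G(g)$ is not conjugate into $\mathfrak C$, but when $C_G(g)$ is non-abelian and $g\in O_G(g_j)$ for some $j$, Lemma~\ref{lem:centdesc}\ref{it:centdesca}\ref{it:centdescaii} shows the centraliser strictly grows; the proposal correctly records that it is only the $\Z$-part, or rather the centre, that is preserved.

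There is, however, a local error in your argument that $Z(C_{G^*}(g))=Z(C_G(g))$. You assert that a generator of an absorbed complement $K_j$ ``commutes with the $O$-part but not with $g_j$.'' This has the directions reversed: since $g_j\in C_j$ and $K_j$ centralises $C_j$ by construction, elements of $K_j$ \emph{do} commute with $g_j$; while elements of $K_j$ do \emph{not} commute with all of $O_G(g)$, because $O_G(g)\nsubseteq C_j$ (indeed $C_G(g)\nsubseteq C_j$ since $C_G(g)$ is non-abelian and $C_j$ is abelian). The conclusion you want is still correct, but the justification should read: $K_j$ centralises only $C_j$, and $C_{G^*}(g)\nsubseteq C_j$, so $K_j\nsubseteq Z(C_{G^*}(g))$; together with $Z(C_G(g))\subseteq C_j$ (which holds because $g_j\in C_G(g)$ so $Z(C_G(g))\le C(g_j)=C_j$) this yields $Z(C_{G^*}(g))=Z(C_G(g))$. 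Relatedly, the phrase that the $O$-part ``acquires the free abelian direct factor $K_j$'' overstates what Lemma~\ref{lem:centdesc} gives: the new $O$-part is $\langle O_G(g),K_j\rangle$, which is not a direct product $O_G(g)\times K_j$ since $K_j$ fails to commute with the part of $O_G(g)$ lying outside $C_j$. These are easily repaired and do not affect the validity of the overall strategy.
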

\begin{proof}
As in the remark following the proof of Proposition \ref{prop:treeext}, there is a directed system $\{G_i\}_{i\in I}$ of groups, with $G=G_0$,  such that
  $G_i\in \cC$, for all $i\in I$, and $ G(\mC, \Hc_A, \Phi_A)=\varinjlim \{G_i\,:\,i\in I\}$. 
If the centraliser $C_{G_0}(g)$ is not conjugate to an element of $\mC$ assume that, for some $i\ge 0$, $C_{G_i}(g)\cong C_{G_0}(g)$.
Then, as in the proof of  Proposition \ref{prop:treeext},   $C_{G_{i+1}}(g)\cong C_{G_0}(g)$. 
If $l$ is a limit ordinal, then the standard argument shows that, if  $C_{G_j}(g)\cong C_{G_0}(g)$, for all $j<l$, then again $C_{G_{l}}(g)\cong C_{G_0}(g)$. Hence, if $C_G(g)$ is not conjugate to an element of $\mC$ then, for all $i\in I$, $C_{G_i}(g)\cong C_{G_0}(g)$, so $C_{G^*}(g)\cong C_{G}(g)$.
 In this case if $C_G(g)$ is abelian, so by definition of  $\mC$ a full $A$-subgroup, then this isomorphism
 induces an $A$-action on $C_{G^*}(g)$, making it into an $A$-subgroup. If $C_G(g)$ is non-abelian then $C_G(g)=\Z_G(g)\times O_G(g)$  and as 
 $Z(C_G(g))$ is a full $A$-subgroup, so is its isomorphic image $Z(C_{G^*}(g))$. 

 On the other hand suppose  $C_G(g)$ is  conjugate to an element $C_i$  of $\mC$. Then, $C_{G_i}(g)$ is conjugate to $C_i\otimes A$ and as no two elements of $\mC$ are conjugate we may unambiguously extend the action of $A$ on $C_i\otimes A$ to $C_{G_i}(g)$. 
 As above, for all $m>i$, we have $C_{G_m}(g)$ isomorphic to $G_{G_i}(g)$ under the map $\lambda^i_m$, which from the definitions is an $A$-homomorphism. Hence $C_{G^*}(g)\cong G_{G_i}(g)$  which is an abelian $A$-subgroup.
\end{proof}

\begin{lem}\label{lem:Gfull}
$G^*$ is a partial $A$-group and $G$ is a full $A$-subgroup of $G^*$. Moreover, $G^*$ is $A$-generated by $G$. 
\end{lem}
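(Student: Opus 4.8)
The plan is to analyse the structure of $G^*$ as the fundamental group of the graph of groups from Definition \ref{def:tec}, using the fact that each edge group $C_i$ is a direct summand $C_i\subseteq C_i\otimes_\BZ A$ of the corresponding vertex group $C_i\otimes_\BZ A$. First I would define the partial $A$-action on $G^*$. Every element $g$ of $G^*$ lies in the normal form associated with the graph of groups, so it suffices to specify $g^\alpha$ only when $g$ lies in a full $A$-subgroup of $G^*$; concretely, if $g$ is conjugate into a vertex group $C_i\otimes_\BZ A$ then $g^\alpha$ is the image under the same conjugation of the $A$-power computed in the $A$-module $C_i\otimes_\BZ A$, and for $g\in G$ the action is the one already given by the partial $A$-group structure on $G$. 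I would check this is well-defined by invoking Lemma \ref{lem:Aactiong}: the centraliser $C_{G^*}(g)$ of any element whose centraliser in $G$ is abelian is either $C_G(g)$ (a full $A$-subgroup of $G$ by condition \ref{cond:R}, since only abelian centralisers \emph{not} in $\mC$ remain, and these are full $A$-subgroups by (S1)–(S3)) or $C_G(g)\otimes A$, and in both cases it is an $A$-module; when $C_G(g)$ is non-abelian, $Z(C_{G^*}(g))=Z(C(g))$ is a full $A$-subgroup. Since any two conjugates of a given abelian vertex-group element have conjugate (hence, by (S2) and Remark \ref{rem:ZO}\ref{it:ZO2}, the \emph{same}) centralisers, the definition does not depend on the choice of conjugating element, so $g^\alpha$ is unambiguous.

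Next I would verify axioms 1.–4. of Definition \ref{defn:A-group} on the subset $P_{G^*}$ on which the action is defined. This is essentially routine: axioms 1., 2. and 4. involve elements that pairwise commute, hence lie in a common abelian centraliser, hence in a common $A$-module $C_i\otimes_\BZ A$ (or in $C_G(g)$ for $g\in G$), where the axioms hold because $C_i\otimes_\BZ A$ is an $A$-module and $G$ is a partial $A$-group; axiom 3. is built into the definition because conjugation by an arbitrary element of $G^*$ carries the relevant full $A$-subgroup to a conjugate full $A$-subgroup on which the action was defined compatibly. One must only take care that the action defined via $G$ and the action defined via the new vertex groups agree on the overlap, i.e. on $C_i\subseteq G$: this is exactly the content of condition \ref{cond:R}\ref{it:R2} together with Proposition \ref{prop:abcomp}, which gives $C_i^A=C_i\otimes_\BZ A$ with the canonical embedding $\varphi_i$ compatible with the pre-existing partial action on $C_i$.

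Then I would show that $G$ is a \emph{full} $A$-subgroup of $G^*$: we must show that $g^\alpha$ is defined and lies in $G$ for every $g\in G$ and $\alpha\in A$. If $C_G(g)$ is not conjugate into $\mC$, then by (S1)–(S3) it is already a full $A$-subgroup of $G$, so $g^\alpha\in C_G(g)\subseteq G$. If $C_G(g)$ is conjugate to some $C_i\in\mC$, say $C_G(g)=h^{-1}C_ih$ with $h\in G$, then $C_G(g)$ is also abelian and by condition \ref{cond:R}\ref{it:R2} the partial action is already defined on $C_G(g)$, and it agrees with the action inherited from $C_i\otimes_\BZ A$ by the compatibility just discussed; hence $g^\alpha\in C_G(g)\subseteq G$. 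In either case $g^\alpha\in G$, so $G$ is a full $A$-subgroup. Finally, $A$-generation: $G^*$ is generated as a group by $G$ together with the vertex groups $C_i\otimes_\BZ A$; but $C_i\otimes_\BZ A=C_i^A=\langle C_i\rangle_A\subseteq\langle G\rangle_A$ since $C_i\subseteq G$, so $G^*=\langle G,\{C_i\otimes_\BZ A\}_i\rangle\subseteq\langle G\rangle_A$, i.e. $G$ is an $A$-generating set for $G^*$.

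The main obstacle is the well-definedness check in the first two paragraphs: one has to be careful that the $A$-power of an element that happens to lie in (a conjugate of) two different vertex groups, or in a vertex group and in $G$ simultaneously, is computed consistently. The whole point of conditions (S1)–(S3) and condition \ref{cond:R} is to rule out ambiguity — an abelian centraliser is either entirely inside $G$ as a full $A$-subgroup, or becomes a single vertex group $C_i\otimes_\BZ A$ with no conjugate among the $\mC$, and the non-abelian case is governed by the canonical centre $Z(C(w))$ — so the verification reduces to carefully citing Lemma \ref{lem:Aactiong}, Proposition \ref{prop:abcomp} and Remark \ref{rem:ZO}\ref{it:ZO2} rather than to any genuinely new computation.
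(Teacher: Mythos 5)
Your approach through Lemma \ref{lem:Aactiong} is the same route the paper takes, and your verification of $A$-generation at the end matches the paper's closing sentence. However, your third paragraph contains a genuine error that also exposes the main subtlety of the lemma. You write that if $C_G(g)$ is conjugate to some $C_i\in\mC$ then ``by condition \ref{cond:R}\ref{it:R2} the partial action is already defined on $C_G(g)$\dots\ hence $g^\alpha\in C_G(g)\subseteq G$.'' This is not what condition \ref{cond:R}\ref{it:R2} says: it only asserts the closure property \eqref{eq:actwell} (if $(x^n,\alpha)\in P$ then $(x,\alpha)\in P$), not that the action is defined on all of $C_G(g)\times A$. In fact condition (S1) says the exact opposite — the centralisers $C_i\in\mC$ are precisely those abelian centralisers that are \emph{not} full $A$-subgroups of $G$, and the extension $C_i\hookrightarrow C_i\otimes_\BZ A$ is non-trivial. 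So for $g\in C_i$ the power $g^\alpha$, computed in $C_i\otimes_\BZ A\cong C_{G^*}(g)$, does not in general lie in $C_i$, let alone in $G$.

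This is not just a slip in your argument; it reflects a mismatch between the literal definition of ``full $A$-subgroup'' (which would require $h^\alpha\in H$) and what the lemma actually establishes and what Theorem \ref{thm:tensor} actually uses. The paper's own proof never claims $g^\alpha\in G$: it only shows, case by case via Lemma \ref{lem:Aactiong} and condition \ref{cond:R}\ref{it:R1}, that for every $g\in G$ the element $g$ lies inside a full $A$-subgroup \emph{of $G^*$} (either the abelian $A$-module $C_{G^*}(g)$, or $Z(C_{G^*}(g))$ when $C_G(g)$ is non-abelian), so that the action of $A$ on $g$ is defined in $G^*$. This weaker reading is exactly what the proof of Theorem \ref{thm:tensor} relies on to conclude that in the union $\bar G$ the $A$-action is everywhere defined. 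Your first two paragraphs are consistent with this reading, and their level of detail on well-definedness and the axioms is a reasonable expansion of the paper's terse ``by construction $G^*$ is a partial $A$-group''; but the third paragraph pursues the stronger statement $g^\alpha\in G$, which is false and not what is needed.
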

\begin{proof}
  By construction $G^*$ is a partial $A$-group. Let $g\in G\le G^*$. If $C_G(g)$ is abelian then, from Lemma \ref{lem:Aactiong},   $C(g)$ is a full $A$-subgroup. If  $C_G(g)$ is non-abelian then 
  $g\in Z(C(g))$, which is also a full $A$-subgroup. In both  cases the action of $A$ on $g$ is defined.
  As $C_i^A$ is $A$-generated by $C_i$, for all $i\in I$,  it follows that $G^*$ is $A$-generated by $G$.
\end{proof}

\begin{lem}\label{lem:G*R}
  $G^* $  satisfies condition \ref{cond:R}. In particular $G^*$ is in $\cC$. 
\end{lem}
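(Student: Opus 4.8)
**Proof proposal for Lemma \ref{lem:G*R}.**

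The plan is to verify that $G^*=G(\mC,\Hc_A,\varPhi_A)$ satisfies condition \ref{cond:R}, that is, parts \ref{it:R1} and \ref{it:R2} of that condition, and then to observe that membership in $\cC$ is already known. The second assertion requires no new work: Proposition \ref{prop:treeext} already tells us that $G^*$ is in $\cC$ (its hypotheses on $\mC$ are exactly conditions (S1) and (S2), which hold by construction of $\mC$, together with $H_i=\varphi_i(C_i)\times K_i$, valid since $C_i$ is a direct summand of $C_i\otimes_\BZ A$ by Proposition \ref{prop:abcomp}). So the content is entirely in checking \ref{cond:R}\ref{it:R1} and \ref{cond:R}\ref{it:R2} for $G^*$, now regarded as a partial $A$-group via Lemma \ref{lem:Gfull}.

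First I would dispose of \ref{it:R1}. Let $g\in G^*$ have non-abelian centraliser. I claim $g$ is conjugate to an element of $G$. Indeed, an element of an amalgam (here $G^*$ is a direct limit of amalgams $G_i$) that is not in a conjugate of a factor has abelian centraliser — this is exactly case \ref{it:colliii} of the case analysis recorded in the proof of Theorem \ref{thm:lift}, which shows such a centraliser is $\la z\ra\times O'(z)$, abelian; and the free-abelian vertex groups $C_i\otimes_\BZ A$ have abelian centralisers. Hence, up to conjugacy, $g$ lies in $G$ and $C_{G^*}(g)$ is non-abelian, so $C_G(g)=C_{G^*}(g)\cap G$ is non-abelian too. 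By Lemma \ref{lem:Aactiong}(ii), $C_{G^*}(g)=\Z_G(g)\times O_G(g)$ with $Z(C_{G^*}(g))$ a full $A$-subgroup; conjugating back, $Z(C_{G^*}(g))$ remains a full $A$-subgroup since conjugation is a partial $A$-automorphism. That is precisely \ref{cond:R}\ref{it:R1}.

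Next, \ref{it:R2}. Let $g\in G^*$ have abelian centraliser and suppose $(x^n,\alpha)\in P_{G^*}$ for some $x\in C_{G^*}(g)$, non-zero $n\in\BZ$, $\alpha\in A$; I must show $(x,\alpha)\in P_{G^*}$. By Lemma \ref{lem:Aactiong}(i), $C_{G^*}(g)$ is an $A$-module in both subcases — either $C_{G^*}(g)\cong C_G(g)$ (a full $A$-subgroup by definition of $\mC$, since any abelian centraliser of $G$ not conjugate into $\mC$ is a full $A$-subgroup by (S3)) or $C_{G^*}(g)\cong C_G(g)\otimes_\BZ A$. In either case the $A$-action is everywhere defined on $C_{G^*}(g)$, so in particular $x^\alpha$ is defined; that gives $(x,\alpha)\in P_{G^*}$ and hence \ref{cond:R}\ref{it:R2}. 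The only subtlety is ensuring the $A$-action one gets on $C_{G^*}(g)$ from Lemma \ref{lem:Aactiong} genuinely agrees with the partial $A$-action $P_{G^*}$ on $G^*$ wherever the latter is defined; this is immediate from the construction, since the $A$-action on the new vertex groups $C_i\otimes_\BZ A$ is the tensor action and the action on $G$ is the given partial one, and Lemma \ref{lem:Aactiong} just transports these along the isomorphisms $\lambda^i_m$, which are $A$-homomorphisms.

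The main obstacle, and the step to write with care, is \ref{it:R1}: pinning down that a non-abelian centraliser in $G^*$ forces $g$ (up to conjugacy) into the base group $G$, and that the ``full $A$-subgroup'' status of $Z(C(g))$ is preserved both under the passage through the direct system (covered by Lemma \ref{lem:Aactiong}(ii)) and under conjugation. Everything else is bookkeeping against the centraliser descriptions already established in Lemma \ref{lem:centdesc}, Lemma \ref{lem:Aactiong} and the case analysis \ref{it:colli}--\ref{it:colliii}.
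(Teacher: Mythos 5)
Your overall strategy — deducing $G^*\in\cC$ from Proposition~\ref{prop:treeext}, reducing condition~\ref{cond:R} to Lemmas~\ref{lem:Aactiong} and~\ref{lem:Gfull} — is the paper's strategy, and your treatment of~\ref{it:R1} (conjugating a non-abelian centraliser into $G$, then applying Lemma~\ref{lem:Aactiong}(ii) and noting that ``full $A$-subgroup'' is preserved by conjugation via axiom~3) is correct and fills in useful detail that the paper leaves implicit.

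However, your argument for~\ref{it:R2} has a genuine gap. You write ``Let $g\in G^*$ have abelian centraliser\ldots By Lemma~\ref{lem:Aactiong}(i), $C_{G^*}(g)$ is an $A$-module in both subcases,'' but Lemma~\ref{lem:Aactiong} is stated only for $g\in G$, and the two subcases (a) and (b) you cite are the two forms $C_{G^*}(g)$ can take \emph{when} $g$ lies in the base group. For~\ref{it:R1} you explicitly observed that a non-abelian centraliser forces $g$ (up to conjugacy) into $G$; no such reduction is available for~\ref{it:R2}, because there are elements $g\in G^*$ with abelian centraliser that are not conjugate into any factor. These are exactly the elements covered by Case~\ref{it:colliii} of the analysis in Theorem~\ref{thm:lift}/Lemma~\ref{lem:centdesc}: up to conjugacy $C_{G^*}(g)=\la z\ra\times O'(z)$ for some $z\in W_*$ of syllable length~$\ge 2$. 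Here $O'(z)$ does sit inside $G$, a full $A$-subgroup, but $z$ itself lies in no conjugate of $G$ or of any $H_i$ and cannot be written as a commuting product of such elements (any commuting factorisation $z=gh$ has $g=z^a o_1$, $h=z^b o_2$ with $a+b=1$, so at least one of $g,h$ has syllable length~$\ge 2$). Consequently $z^\alpha$ is undefined for $\alpha\in A\setminus\BZ$ and $C_{G^*}(g)$ is \emph{not} an $A$-module, contradicting your stated reason. The conclusion \eqref{eq:actwell} is still true for such $g$, but for a different reason: for $x=z^k o\in C_{G^*}(g)$ with $k\neq 0$, the power $x^n=z^{kn}o^n$ still has nonzero $z$-exponent and syllable length~$\ge 2$, so $(x^n,\alpha)\in P_{G^*}$ only for $\alpha\in\BZ$, in which case $(x,\alpha)\in P_{G^*}$ trivially; and for $k=0$ one has $x\in O'(z)\le G$, a full $A$-subgroup. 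That case analysis (or something equivalent) is what is actually needed, and it is not what you wrote.

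A second, smaller imprecision: the hypotheses of Proposition~\ref{prop:treeext} are not ``exactly (S1) and (S2)''. Condition~\ref{it:ccon1} has a second clause — if $g\in W$ with $C_G(g)=C_i$ and $i\neq j$ then $g\notin C_j$ — which is not one of the (S) conditions and which the paper verifies explicitly using \ref{it:C6}\ref{it:C6a} (two distinct elements of $W$ with abelian centralisers have non-conjugate centralisers, so $g_j\in C_i$ would force $C_i\le C_j\le C_i$, hence $g_i=g_j$). You should include that check rather than folding it silently into (S).
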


\begin{proof}
  That $G^*$ is in $\cC$ follows from Proposition \ref{prop:treeext}. To see that the second condition  of Proposition \ref{prop:treeext} \ref{it:ccon1} holds note that if $w\in W$ (where $G$ is in $\cC(X,W)$) and $C_G(w)$ is abelian  then $C(w)$ is not conjugate to $C(w')$, for all other $w'\in W$. Hence if $w'\in W$  and $w'\in C_G(w)$, then $w=w'$. 
  That $G^*$ is a partial $A$-group satisfying the  appropriate conditions on centralisers follows from Lemmas \ref{lem:Aactiong} and \ref{lem:Gfull}. 
\end{proof}

\begin{lem}\label{lem:lem7}
  The group $G^*$ has the following universal property with respect to the canonical embedding $\tau: G \hookrightarrow G^*$.
  For any $A$-group $H$ and any partial $A$-homomorphism $f : G \longrightarrow H$ there exists a partial $A$-homomorphism  $f^* : G^* \longrightarrow H$ such that $f = f^* \circ \tau$.
\end{lem}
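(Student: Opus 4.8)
The plan is to assemble $f^*$ from the universal properties of the groups making up the tree extension $G^* = G(\mathfrak{C},\mathcal{H}_A,\varPhi_A)$, and then to verify that the resulting group homomorphism respects the partial $A$-action. Recall that $G^*$ is the fundamental group of the tree of groups over the tree $T$ with central vertex group $G$, leaf vertex groups the $C_i^A = C_i\otimes_\BZ A$, edge groups the $C_i$, and edge monomorphisms the inclusions $C_i\hookrightarrow G$ together with the canonical embeddings $\varphi_i\colon C_i\hookrightarrow C_i^A$.

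First I would, for each $i\in I$, restrict $f$ to the abelian centraliser $C_i$; the partial $A$-action of $G$ restricts to one on $C_i$, so $f|_{C_i}\colon C_i\to H$ is a partial $A$-homomorphism into the $A$-group $H$. By condition \ref{cond:R}\ref{it:R2} the torsion-free abelian group $C_i$ satisfies \eqref{eq:actwell}, so Proposition \ref{prop:abcomp} shows that $C_i^A = C_i\otimes_\BZ A$ is the $A$-completion of the partial $A$-group $C_i$, with structure map $\varphi_i$. The universal property of $A$-completions (Definition \ref{def:compart}) then yields a unique $A$-homomorphism $f_i\colon C_i^A\to H$ with $f_i\circ\varphi_i = f|_{C_i}$. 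For each $i$, the homomorphisms $f\colon G\to H$ and $f_i\colon C_i^A\to H$ agree on the edge group $C_i$, since $f_i(\varphi_i(c)) = f(c)$ for all $c\in C_i$. Hence, by the universal property of the fundamental group of the tree of groups over $T$, there is a (unique) group homomorphism $f^*\colon G^*\to H$ restricting to $f$ on $G$ — so that $f^*\circ\tau = f$ — and to $f_i$ on each $C_i^A$.

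It remains to verify that $f^*$ is a partial $A$-homomorphism, i.e.\ that $f^*(g^\alpha) = f^*(g)^\alpha$ whenever $g^\alpha$ is defined in $G^*$. Such a $g$ lies in a full $A$-subgroup of $G^*$, and by Lemma \ref{lem:Aactiong} (together with the conditions (S) imposed on $\mathfrak{C}$) these full $A$-subgroups are obtained, up to conjugacy and by forming products and intersections, from the $A$-groups $C_i^A$ and from the full $A$-subgroups of $G$ — namely the abelian centralisers of $G$ that were already full $A$-subgroups, and the centres of non-abelian centralisers. Now $f^*$ restricts to the $A$-homomorphism $f_i$ on each $C_i^A$ and to the partial $A$-homomorphism $f$ on $G$, so for $g$ in a conjugate $h^{-1}C_i^A h$ we write $g = h^{-1}ch$ with $c\in C_i^A$, apply axiom 3 in $G^*$ to get $g^\alpha = h^{-1}c^\alpha h$, and compute
\[
f^*(g^\alpha) = f^*(h)^{-1}f^*(c^\alpha)f^*(h) = f^*(h)^{-1}f^*(c)^\alpha f^*(h) = \bigl(f^*(h)^{-1}f^*(c)f^*(h)\bigr)^\alpha = f^*(g)^\alpha ,
\]
where the second equality uses $f^*(c^\alpha)=f_i(c^\alpha)=f_i(c)^\alpha=f^*(c)^\alpha$ since $f_i$ is an $A$-homomorphism, and the third uses axiom 3 in the $A$-group $H$. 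An element lying in a conjugate of a full $A$-subgroup of $G$ is treated identically with $f$ in place of $f_i$, and the case of a product of commuting full $A$-subgroups follows from axiom 4 applied in both $G^*$ and $H$, using that the group homomorphism $f^*$ sends commuting elements to commuting elements. Hence $f^*$ is the required partial $A$-homomorphism.

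I expect the last paragraph to be the main obstacle: one must pin down precisely the domain of the partial $A$-action on $G^*$ and confirm that every element on which $A$ acts is captured by one of the vertex groups, or a conjugate thereof, on which $f^*$ has already been defined compatibly. This is exactly where the detailed description of centralisers of elements of $G^*$ in Lemma \ref{lem:Aactiong} is needed, as is the role of the conditions (S) — which guarantee that no abelian centraliser requiring an extension has been left out, so that in $G^*$ every element admitting an $A$-action is indeed accounted for.
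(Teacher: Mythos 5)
Your proposal takes a genuinely different route from the paper's, and the underlying idea is sound. The paper, following the construction of Proposition \ref{prop:treeext}, builds $f^*$ by transfinite induction along the chain $G=G_0\le G_1\le\dots$ whose direct limit is $G^*$: at each successor step it first maps $C_{i+1}\otimes A$ into $H$ by the abelian universal property and then invokes \cite[Proposition 5]{MR95} to extend $f_i$ to $f_{i+1}\colon G_{i+1}\to H$, with limit steps handled by the universal property of direct limits. You instead produce the group homomorphism $f^*$ in one stroke from the universal property of the fundamental group of the tree of groups underlying $G^*$ (Definition \ref{def:tec}), using the same abelian input for the maps $f_i\colon C_i^A\to H$ and the compatibility $f_i\circ\varphi_i=f|_{C_i}$, and you verify the partial-$A$-homomorphism property globally at the end via the centraliser description in Lemma \ref{lem:Aactiong}. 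The trade-off is exactly the gap you flag at the end: the paper's step-by-step induction (via \cite[Proposition 5]{MR95}) carries along a precise description of the domain of the partial $A$-action at every stage, whereas your one-shot argument still requires you to specify the set $P^*\subseteq G^*\times A$ explicitly, show that it is exhausted by the cases you treat (conjugates of $C_i^A$, conjugates of full $A$-subgroups of $G$, and commuting products of such), and confirm — since the axioms of a partial $A$-group apply only when \emph{all} arguments lie in $P^*$ — that when $(g,\alpha)\in P^*$ and $g=h^{-1}ch$ with $c$ in a vertex group, the pair $(c,\alpha)$ is likewise in $P^*$, so that axiom~3 of Definition \ref{defn:A-group} can legitimately be applied in $G^*$. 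None of this is wrong, but it is precisely the content the paper outsources; if you wish to avoid citing \cite[Proposition 5]{MR95} you should spell it out.
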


\begin{proof}
Write $f=f_0$ and $G=G_0$ and,  in the notation of the proof  of Proposition \ref{prop:treeext}, let $0\le i\in I$ and assume that for  all $0\le j\le i$, we have $A$-homomorphisms  $f_j:G_j\maps H$ such that,  $f_0=f_j\circ \lambda^0_j$, where $f=f_0$. Assume in addition  that, for all $j\le k\le i$ we have $f_j=f_k\circ \lambda^j_k$.
Then $f_i(C_{i+1})$ is an abelian $A$-subgroup $N_{i+1}$ of $H$. Using the universal property of $A$-completion for abelian groups,  we can find a homomorphism $\Psi_{i+1} : C_{i+1}\otimes A \longrightarrow N_{i+1}$ such that $f_{i}=\Psi_{i+1} \circ \tau_{i+1}$,  where $\tau_{i+1} : C_{i+1}\hookrightarrow C_{i+1}\otimes A$ is the canonical embedding.
By \cite[Proposition 5]{MR95},  there exists an $A$-homomorphism $f_{i+1} : G_{i+1} \longrightarrow H$ with the property $f_i = f_{i+1} \circ \lambda^i_{i+1}$.
It then follows directly from the definitions that  $f_j=f_k\circ \lambda^j_k$, for all $0\le j \le k\le i+1$.

If $l$ is a limit ordinal, the universal property of direct limits gives a homomorphism $\bar f_l:\bar G_l\maps H$ such that  $f_j=\bar f_l\circ \lambda^j_l$. Then using $\bar G_l$ and $\bar f_l$, in place of $G_i$ and $f_i$, and  $G_l$ in place of $G_{i+1}$ the   previous argument gives an $A$-homomorphism $f_l:G_l\maps H$, making the necessary diagrams commute. The result now follows by induction.
 \end{proof}

 \begin{lem}\label{lem:disc}
$G^*$ is discriminated by $G$.
\end{lem}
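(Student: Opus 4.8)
The plan is to use the ICE structure from \eqref{eq:extcen}, together with the discrimination results already established for each single step, and then promote this to the full limit via a standard ``finite support'' argument. Concretely, I want to show that $G^*=G(\mathfrak C,\mathcal H_A,\varPhi_A)$ is discriminated by $G$, since this is precisely the statement needed to run the induction in \eqref{eq:extcen} and conclude that the whole ICE (hence $G^A$, by Lemma \ref{lem:lem7} and uniqueness of completions) is discriminated by $G$.

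First I would invoke Proposition \ref{prop:treeext}: that proposition already asserts that a tree extension of centralisers $G(\mC,\Hc,\Phi)$ of a group $G\in\cC$, along abelian centralisers each of which is equal to $C_G(g_i)$ for some $g_i\in W$, satisfying the non-conjugacy and separation conditions \ref{it:ccon1}–\ref{it:ccon2}, and where each $H_i$ is free abelian with $H_i=\phi_i(C_i)\times K_i$, is discriminated by $G$. So the real content is checking that our $G^*$ is an instance to which Proposition \ref{prop:treeext} applies. The set $\mathfrak C$ satisfies (S1)–(S3), which gives exactly the abelian-centraliser and non-conjugacy hypotheses; and by Lemma \ref{lem:G*R} (more precisely by the argument given in its proof, referencing the second clause of \ref{it:ccon1}) the refined condition ``if $g\in W$ and $C_G(g)=C_i$ then $g\notin C_j$ for $j\neq i$'' holds because distinct elements of $W$ with abelian centralisers have non-conjugate centralisers. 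Finally, since by assumption all abelian centralisers of $G$ are faithful over $A$ and $A$ has free abelian additive group, each $C_i^A=C_i\otimes_\BZ A$ is free abelian and $C_i$ is a direct summand of it (Proposition \ref{prop:abcomp}), so $H_i=C_i\otimes_\BZ A=\varphi_i(C_i)\times K_i$ for a suitable complement $K_i$, matching the direct-extension hypothesis of Proposition \ref{prop:treeext}. Hence that proposition applies and $G^*$ is discriminated by $G$.

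I do not expect a serious obstacle here: the lemma is essentially a repackaging of Proposition \ref{prop:treeext} specialised to the tensor-type tree extension, and all the side conditions have already been verified in Lemmas \ref{lem:Aactiong}, \ref{lem:Gfull}, \ref{lem:G*R}. The one point that needs a line of care is making sure the discriminating family behaves well at limit ordinals, but this is handled verbatim by the limit-ordinal paragraph in the proof of Proposition \ref{prop:treeext}: any finite subset of the direct limit lives in some $G_j$, and a discriminating family for $G_j$ by $G$ pulls back. Thus the proof reduces to the single sentence ``apply Proposition \ref{prop:treeext}, whose hypotheses hold by (S1)–(S3), Lemma \ref{lem:G*R}, and Proposition \ref{prop:abcomp}.''

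\begin{proof}
  By construction $G^*=G(\mathfrak C,\mathcal H_A,\varPhi_A)$ is the tree extension of centralisers of $G$
  along the family $\mathfrak C=\{C_i:i\in I\}$, where each $C_i$ is an abelian centraliser of $G$,
  no two of the $C_i$ are conjugate (condition (S2)), and, since $G\in\cC(X,W)$, each $C_i=C_G(g_i)$ for
  some $g_i\in W$; moreover, as observed in the proof of Lemma \ref{lem:G*R}, if $g\in W$ with $C_G(g)=C_i$
  then $g\notin C_j$ for $j\neq i$, because distinct elements of $W$ with abelian centralisers have
  non-conjugate centralisers. Thus conditions \ref{it:ccon1} and \ref{it:ccon2} of
  Proposition \ref{prop:treeext} hold for $\mathfrak C$. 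Since all abelian centralisers of $G$ are faithful
  over $A$ and $A$ has free abelian additive group, condition \ref{cond:R}\ref{it:R2} and
  Proposition \ref{prop:abcomp} give that $H_i=C_i^A=C_i\otimes_\BZ A$ is free abelian and $C_i$ is a direct
  summand, say $H_i=\varphi_i(C_i)\times K_i$. Hence $G^*$ is of the form considered in
  Proposition \ref{prop:treeext}, and that proposition yields that $G^*$ is discriminated by $G$.
\end{proof}
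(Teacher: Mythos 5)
Your proof is correct and takes essentially the same route as the paper's: the paper's proof of Lemma \ref{lem:disc} is the single sentence ``This follows from Proposition \ref{prop:treeext}'', relying implicitly on the fact that the hypotheses of that proposition were already verified in the proof of Lemma \ref{lem:G*R}, whereas you make that verification explicit. Same argument, just spelled out in more detail.
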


\begin{proof}
  This follows from Proposition  \ref{prop:treeext}.
\end{proof}

\begin{proof}[Proof of Theorem \ref{thm:tensor}]
  Let $G^{(0)}=G$ and $G^{(n+1)} = (G^{(n)})^*$, as in the chain \eqref{eq:extcen}, and let  $\bar G=\bigcup \limits_{n \in \omega} G^{(n)}$.
  The claim is that the ICE $\bar G$ coincides with the  $A$-completion  $G^A$ of $G$. Indeed,  as a union of partial $A$-groups, $\bar G$  is a partial $A$-group, but an action of $A$ on $\bar G$ is in fact defined everywhere: if $x \in \bar G$,
  then $x \in G^{(n)}$, and hence by Lemma \ref{lem:Aactiong} the action of $A$ on $x$ is defined in $G^{(n+1)}$.
  So $\bar G$ is an $A$-group. Similarly, from Lemma \ref{lem:Gfull}, $\bar G$ is $A$-generated by $G$.  

  To prove that $\bar G$ is the $A$-completion of the partial $A$-group $G$,
  it remains to verify the corresponding universal property.
  Let $H$ be an $A$-group and $f : G \longrightarrow H$ a partial $A$-homomorphism. Using Lemma \ref{lem:lem7},
  we can extend $f$ to a partial $A$-homomorphism $f_n : G^{(n)} \longrightarrow H$. 
  By construction these homomorphisms commute with the canonical maps from $G^{(n)}$ into $G^{(m)}$, for $m\ge n$:
  that  is $f_n=f_m|_{G^{(n)}}$. Since $\bar G$ is the direct limit of the $G^{(n)}$  (with these inclusion maps) there exists a unique homomorphism $\bar f: \bar G\maps H$ such that  $f_n = \bar f \circ \psi_n$, where $\psi_n$ is the canonical embedding of $G^{(n)}$ into $\bar G$. In  particular $f=f_0 = \bar f \circ \psi_0$. As all the partial $A$-homomorphisms $f_n$
  restrict to $A$-homomorphisms on $G^{(n-1)}$, it follows that $\bar f$ is  an $A$-homomorphism, so $\bar G=G^A$, as claimed.

  Lemma \ref{lem:G*R} implies that $G^{(n)}$ is in $\cC$, for all $n$. A given centraliser is extended at most once
  in the construction of chain \eqref{eq:extcen}, so condition \ref{it:dirlim2} of Proposition \ref{prop:dirlim} holds, whence  $G^A$ is in $\cC$. 
   That $G^A$ is discriminated by $G$ follows using Lemma \ref{lem:disc}, to see that $G^{(n)}$ is discriminated by $G$, for all $n>0$,  and then by an argument similar to the last part of the proof of Lemma \ref{lem:disc} to see that $\bar G=G^A$ is discriminated by  $G$. 
\end{proof}

\subsection*{Applications}

Toral relatively hyperbolic groups belong to the class $\cC$ and trivially satisfy condition \ref{cond:R} (as partial $A$-groups
with action defined on $G\times\BZ$). Therefore, our results recover the results for torsion-free hyperbolic groups (as long as $A$ is as at the beginning of Section \ref{sec:nabtensor}) see \cite{BMR02}, and for toral relatively hyperbolic groups, see \cite{KM12}.

\begin{cor}[\cite{KM12}]
  Let $A$ be 
  a ring (as at the beginning of Section \ref{sec:nabtensor})
  and let $G$ be a torsion-free toral relatively hyperbolic group. Then the $A$-completion $G^A$ of $G$ by $A$ is an ICE of $G$ by $A$. Furthermore, $G^A$ is in $\cC$ and is discriminated by $G$.
\end{cor}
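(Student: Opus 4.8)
The plan is simply to check that the hypotheses of Theorem \ref{thm:tensor} are met by $G$, equipped with the everywhere-$\BZ$ partial $A$-action (that is, the partial $A$-action defined on $P=G\times\BZ$, where $\BZ$ denotes the characteristic subring of $A$), and then to invoke that theorem together with Corollary \ref{cor:torelhypinC}.

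First, Corollary \ref{cor:torelhypinC} already tells us that $G\in\cC$. If $G$ is abelian then, being torsion-free toral relatively hyperbolic, it is finitely generated free abelian, and its $A$-completion is $G\otimes_\BZ A$ by Proposition \ref{prop:abcomp}; this is an ICE (a trivial one, requiring no centraliser extensions) in $\cC$ and is discriminated by $G$ since $G$ is a direct summand of $G\otimes_\BZ A$. So from now on I would assume $G$ is non-abelian.

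Next I would verify condition \ref{cond:R} for the partial action on $P=G\times\BZ$. Recall, as in the proof of Corollary \ref{cor:torelhypinC}, that for non-trivial $g\in G$ the centraliser $C_G(g)$ is either a peripheral subgroup $A_\lambda$ or the infinite cyclic group generated by the root of $g$; in either case it is finitely generated free abelian, hence torsion-free, and condition \eqref{eq:actwell} holds trivially, since for the partial action on $G\times\BZ$ a pair lies in $P$ precisely when its second coordinate lies in $\BZ$. Thus \ref{it:R2} holds. The only non-abelian centraliser is $C_G(1)=G$; since a non-abelian toral relatively hyperbolic group has trivial centre (if $1\ne z\in Z(G)$ then $G=C_G(z)$ would be abelian), $Z(C_G(1))=1$ is the trivial, hence full, $A$-subgroup, and \ref{it:R1} holds. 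Finally, every abelian centraliser of $G$ is finitely generated free abelian, hence torsion-free, hence faithful over $A$ by Proposition \ref{prop:abcomp}, the additive group of $A$ being torsion-free by standing assumption.

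With all the hypotheses in place, Theorem \ref{thm:tensor} yields that $G^A$ is an ICE of $G$ by $A$, lies in $\cC$, and is discriminated by $G$, which is the statement of the corollary. The only non-bookkeeping ingredient is the remark that a non-abelian toral relatively hyperbolic group is centreless, which makes condition \ref{it:R1} vacuous; beyond this there is no real obstacle, since the corollary is a direct specialisation of Theorem \ref{thm:tensor} via Corollary \ref{cor:torelhypinC}.
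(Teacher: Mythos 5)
Your proof is correct and takes essentially the same route as the paper: the paper's entire argument is the one-sentence remark just before the corollary, namely that toral relatively hyperbolic groups lie in $\cC$ (Corollary \ref{cor:torelhypinC}) and trivially satisfy condition \ref{cond:R} for the partial $A$-action on $G\times\BZ$, after which Theorem \ref{thm:tensor} applies. You have simply expanded the bookkeeping: you check \ref{it:R2} via triviality of \eqref{eq:actwell} on $G\times\BZ$, dispose of \ref{it:R1} by the observation that a non-abelian toral relatively hyperbolic group is centreless so $Z(C_G(1))$ is trivial (while centralisers of non-trivial elements are abelian by CSA), and you verify the faithfulness hypothesis of Theorem \ref{thm:tensor} via Proposition \ref{prop:abcomp}. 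You also handle the abelian case separately, which the paper tacitly omits even though $\BZ^n$ is (degenerately) toral relatively hyperbolic and Theorem \ref{thm:tensor} is stated only for non-abelian $G$; that is a worthwhile point, though your phrase ``requiring no centraliser extensions'' is a hair imprecise (when $A\neq\BZ$ the chain does perform the single extension $G\rightsquigarrow G\otimes_\BZ A$; the point is rather that the resulting amalgam over $G$ collapses to $G\otimes_\BZ A$ itself).
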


\smallskip

Our results can also be applied to coherent RAAGs. Given a coherent RAAG $\GG$, we can view it as the graph product $\mathcal G(\Delta, \BZ)$, where $\Delta$ is a chordal graph. Let $G = \mathcal G(\Delta, A)$, where $A$ is a ring of the type above.
From Theorem \ref{thm:inC} $G$ is in $\cC$ and it satisfies condition \ref{cond:R}.
 
\begin{lem}
  Let $G = \mathcal G(\Delta, A)$, where $A$ is a ring (with the usual restrictions)
  and $\Delta$ a chordal graph. For all $g\in G$ such that $C(g)$ is non-abelian, we have that  $C(g) = \Z(g) \times O(g)$, $\Z(g)\le Z(C(g))$
  and $Z(C(g))$ is an $A$-module.
\end{lem}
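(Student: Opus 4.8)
The plan is to realise $G=\mathcal G(\Delta,A)$ as an ordinary coherent RAAG, read off its centralisers from Lemma \ref{lem:descentr}, and then use chordality of $\Delta$ to see that $Z(C(g))$ is ``blow-up closed'', hence a free $A$-module. Concretely, for each vertex $v$ of $\Delta$ fix a $\BZ$-basis $X_v\ni 1$ of the additive group $(A,+)$ and form the graph $\Delta'$ obtained from $\Delta$ by replacing each $v$ with the complete graph on $X_v$ and joining $X_u$ to $X_v$ completely whenever $u\sim v$ in $\Delta$, exactly as in the proof of Theorem \ref{thm:inC}. Then $\Delta'$ is chordal, $G=\GG(\Delta')$ is a coherent RAAG, and by Lemma \ref{lem:pcinC} we have $G\in\cC(X',W')$ with $X'=V(\Delta')=\bigcup_v X_v$ (disjoint union). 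The natural partial $A$-action on $G$ is defined on the abelian parabolic subgroups $\langle X_v:v\in K\rangle\cong\bigoplus_{v\in K}A$, for $K$ a clique of $\Delta$, and on their conjugates; by axiom $4$ it is determined on each such subgroup by its restriction to the vertex groups $A_v$.

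Now let $g\in G$ with $C(g)$ non-abelian; since the three assertions transport under conjugation we may assume $g$ is cyclically reduced. By Lemma \ref{lem:descentr}\ref{it:descentri}, $\alpha(g)$ is a clique of $\Delta'$, $g=\prod_{x\in\alpha(g)}x^{r(x)}$ with each $r(x)\neq 0$, and (writing $\lk(g)=\lk_{\Delta'}(\alpha(g))$)
\[
C(g)=\langle\alpha(g)\rangle\times\langle\lk(g)\rangle=\GG\big(\Delta'(\st(\alpha(g)))\big).
\]
By the proof of Lemma \ref{lem:pcinC}, together with Remark \ref{rem:ZO}\ref{it:ZO2} and \ref{it:C4b} (which force $\Z(g)=\langle\alpha(g)\rangle$, since $g$ is a product of powers of exactly the generators in $\alpha(g)$), the relevant class-$\cC$ data are $\Z(g)=\langle\alpha(g)\rangle$ and $O(g)=\langle\lk(g)\rangle$, so $C(g)=\Z(g)\times O(g)$. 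Moreover $\alpha(g)$ is a clique and, by definition of the link, every vertex of $\lk(g)$ is adjacent to every vertex of $\alpha(g)$; hence $\langle\alpha(g)\rangle$ is central in $C(g)$, i.e.\ $\Z(g)\le Z(C(g))$.

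It remains to prove that $Z(C(g))$ is an $A$-module, and here chordality of $\Delta$ does the work. The key combinatorial claim is that $\st_{\Delta'}(\alpha(g))$ is a union of full vertex groups, i.e.\ $X_v\cap\st_{\Delta'}(\alpha(g))\neq\emptyset$ implies $X_v\subseteq\st_{\Delta'}(\alpha(g))$. This is checked directly, splitting according to whether $X_v$ meets $\alpha(g)$ or only $\lk_{\Delta'}(\alpha(g))$, using that the vertices of $X_v$ are pairwise adjacent in $\Delta'$ and that, for $u\neq v$, adjacency between $X_u$ and $X_v$ in $\Delta'$ is ``all or nothing''. Granting the claim, $C(g)=\mathcal G(\Delta_0,A)$ where $\Delta_0$ is the full subgraph of $\Delta$ on $\{v:X_v\subseteq\st_{\Delta'}(\alpha(g))\}$, and the centre of a graph product of abelian vertex groups is generated by the vertex groups of its universal vertices; this is standard and follows from the description of centralisers in graph products, since an element whose support meets a non-universal vertex $v$ fails to centralise $A_w$ for any $w\not\sim v$. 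Therefore $Z(C(g))=\prod_{v\in S_0}A_v\cong\bigoplus_{v\in S_0}A$, where $S_0$ is the set of universal vertices of $\Delta_0$ (a clique of $\Delta$); this is one of the free $A$-modules on which the partial $A$-action of $G$ is defined, so it is a full $A$-subgroup, hence an $A$-module.

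The main obstacle is precisely the combinatorial claim that $\st_{\Delta'}(\alpha(g))$ is blow-up closed: without it $Z(C(g))$ could a priori be generated by a proper $\BZ$-submodule of some $A_v$ (such as $\BZ\cdot 1$), which is not an $A$-module when $A\neq\BZ$. The identification of the class-$\cC$ data $\Z(g)$, $O(g)$ with $\langle\alpha(g)\rangle$, $\langle\lk(g)\rangle$ for this choice of $W'$, and the passage from cyclically reduced $g$ to the general case by conjugation, are routine.
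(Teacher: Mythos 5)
Your argument is correct and follows essentially the same route as the paper: view $G$ as the coherent RAAG on the blown-up chordal graph $\Delta'$, read off $C(g)=\Z(g)\times O(g)$ from Lemma \ref{lem:descentr} and Lemma \ref{lem:pcinC}, and observe that the blow-up structure (stars in $\Delta'$ agree across each $X_v$) forces $Z(C(g))$ to be a direct product of full vertex groups $A_v$, which are $A$-modules. The paper phrases the final step slightly differently---it shows that any generator $x\in X_v$ lying in $Z(C(w))$ drags the whole of $A_v$ into $Z(C(w))$---whereas you show $\st_{\Delta'}(\alpha(g))$ is blow-up closed and then identify $Z(C(g))$ with the universal-vertex subgroup of the resulting graph product, but these are the same combinatorial fact in slightly different packaging.
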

\begin{proof}
  From Corollary \ref{lem:pcinC}, $C(g)$ is conjugate to $C(w)=\Z(w)\times O(w)$, for some cyclically reduced element $w$ of the RAAG $G$. If $x\in Z(C(w))\cap X$ then $x$ belongs  to $X_v$, for some vertex $v$ of $\Gamma$, and basis $X_v$ for  the copy $A_v$ of $A$ associated to $v$. As $\la X_v\ra=A_v\cong \la v\ra\otimes A$ the subgroup  $A_v$ is an $A$-module, and  from Lemma \ref{lem:descentr}, $A_v\le Z(C(w))$, so $A$ acts on $Z(C(w))$, hence on $Z(C(g))$. 
\end{proof}

\begin{thm}
  In the notation above, the $A$-completion $G^A$ of $G$ by the ring $A$ is an ICE of $G$ over the ring $A$.
\end{thm}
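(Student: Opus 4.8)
The plan is to read the statement off from Theorem \ref{thm:tensor}, whose hypotheses on a group $G$ are: $G$ is a non-abelian partial $A$-group, $G\in\cC$, $G$ satisfies condition \ref{cond:R}, and every abelian centraliser of $G$ is faithful over $A$. So the whole proof reduces to checking these for $G=\mathcal G(\Delta,A)$, and almost all of the preparation is in place: $G\in\cC$ is the final assertion of Theorem \ref{thm:inC}, and the sentence preceding the statement already records that $G$ is a partial $A$-group satisfying \ref{cond:R}. First I would dispose of the degenerate case: if $\Delta$ is complete then $G$ is a free $A$-module, hence already an $A$-group, so $G^A=G$, which is trivially an ICE of $G$ (no centraliser is extended, the unique centraliser $G$ being a full $A$-subgroup); so I may assume $\Delta$ is not complete and hence $G$ is non-abelian.

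For condition \ref{cond:R} I would spell out the description of centralisers that underlies it. Expanding each vertex of $\Delta$ to a clique, as in the proof of Theorem \ref{thm:inC}, realises $G$ as a RAAG on a chordal graph whose vertex generators generate the full $A$-submodules $A_v\cong\langle v\rangle\otimes_\BZ A$, and Lemma \ref{lem:descentr} then shows that every centraliser in $G$ is conjugate to a direct product of some of the $A_v$ together with at most one infinite cyclic factor $\langle w\rangle$ arising from a root block element of length at least $2$. For $g$ with $C(g)$ abelian this makes condition \eqref{eq:actwell}, hence \ref{cond:R}\ref{it:R2}, transparent; for $g$ with $C(g)$ non-abelian, the lemma stated immediately before the theorem gives $C(g)=\Z(g)\times O(g)$ with $\Z(g)\le Z(C(g))$ and $Z(C(g))$ equal, up to conjugacy, to a product of $A_v$'s, hence a \emph{full} $A$-submodule, which is precisely \ref{cond:R}\ref{it:R1}.

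Finally, every abelian centraliser $C$ of $G$ is torsion-free abelian and, by \ref{cond:R}\ref{it:R2}, satisfies \eqref{eq:actwell}, so Proposition \ref{prop:abcomp} applies and yields that $C$ is faithful over $A$. With all the hypotheses of Theorem \ref{thm:tensor} verified, it gives at once that $G^A$ is an ICE of $G$ by $A$ (and, as a bonus, that $G^A\in\cC$ and is discriminated by $G$).

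I do not anticipate any real obstacle: the argument is essentially bookkeeping, and its only substantive ingredient — the explicit shape of centralisers in $\mathcal G(\Delta,A)$, and in particular the fact that $Z(C(g))$ is a full $A$-submodule in the non-abelian case — is already supplied by Lemma \ref{lem:descentr} together with the chordality of $\Delta$, being essentially the content of the lemma stated just before the theorem.
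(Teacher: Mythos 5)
Your proposal is correct and follows exactly the route the paper intends: the theorem is stated without a separate proof precisely because it is the direct application of Theorem \ref{thm:tensor} to $G=\mathcal G(\Delta,A)$, with $G\in\cC$ coming from Theorem \ref{thm:inC} and condition \ref{cond:R} supplied by the lemma immediately preceding. Your additional care in disposing of the abelian (complete-$\Delta$) case and in invoking Proposition \ref{prop:abcomp} for faithfulness of abelian centralisers fills in bookkeeping the paper leaves implicit, but it is the same argument.
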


\begin{cor} \label{cor:expice}
  Let $\GG$ be a coherent RAAG.
  Then the $A$-completion $\GG^A$ of $\GG$ by the ring $A$ is an ICE of the group $G$ over the ring $A$, belongs to the class $\cC$ and is discriminated by $\GG$. \\
\end{cor}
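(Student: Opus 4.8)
The plan is to deduce the statement from Theorem \ref{thm:tensor}, applied to the graph product $G=\mathcal G(\Delta,A)$, once two further ingredients are supplied: the identification $\GG^A\cong G^A$, and the fact that $G$ itself is discriminated by $\GG$. Since $\GG$ is a coherent RAAG, $\Delta$ is chordal, and, as recorded before the statement, Theorem \ref{thm:inC} gives $G\in\cC$ and $G$ satisfies condition \ref{cond:R}.

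First I would dispose of the case in which $\Delta$ is a complete graph, so that $\GG$ is free abelian of some rank $n\ge1$. Then $\GG^A\cong\GG\otimes_\BZ A$ by Proposition \ref{prop:abcomp}; this group equals $G=\mathcal G(\Delta,A)$, is trivially an ICE of $G$ (an empty chain of centraliser extensions), lies in $\cC$ by Theorem \ref{thm:inC}, and is discriminated by $\GG$ because a free abelian group of arbitrary rank is discriminated by the infinite cyclic subgroup $\la v\ra\le\GG$ (see \cite{BMR06}). From now on assume that $\GG$, and hence $G$, is non-abelian.

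Next I would verify that $G^A\cong\GG^A$, i.e.\ that $G^A$ together with the composite map $\tau'\colon\GG\hookrightarrow G\to G^A$ is the $(\BZ,A)$-completion of $\GG$. Its image $\tau'(\GG)$ $A$-generates $G^A$, since each vertex group $A_v\cong\la v\ra\otimes_\BZ A$ is $A$-generated by $v$, $G$ is generated by its vertex groups, and $G$ $A$-generates $G^A$. For the universal property, let $H$ be an $A$-group and $f\colon\GG\maps H$ a homomorphism; define $\tilde f$ on each $A_v$ by $v\otimes\alpha\mapsto f(v)^\alpha$. Using the axioms of Definition \ref{defn:A-group}, one checks that $\tilde f$ respects the defining relations $[A_u,A_v]=1$ of the graph product, for adjacent $u,v$, since $[f(u),f(v)]=1$ forces $[f(u)^\alpha,f(v)^\beta]=1$ via axioms 2.\ and 3.; so $\tilde f$ extends to a partial $A$-homomorphism $G\maps H$ with $\tilde f$ restricting to $f$ on $\GG$. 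By the universal property of $G^A$ it then extends uniquely to an $A$-homomorphism $G^A\maps H$, uniqueness over $\GG$ being clear because $\tau'(\GG)$ $A$-generates $G^A$. Since $G\in\cC$ satisfies condition \ref{cond:R} and all its abelian centralisers are free abelian and hence $A$-faithful by Proposition \ref{prop:abcomp}, Theorem \ref{thm:tensor} now applies and shows that $\GG^A\cong G^A$ is an ICE of $G$ over $A$, belongs to $\cC$, and is discriminated by $G$.

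It remains to upgrade ``discriminated by $G$'' to ``discriminated by $\GG$''. As discrimination is transitive, it is enough to show that $G=\mathcal G(\Delta,A)$ is discriminated by $\GG=\mathcal G(\Delta,\BZ)$. The additive group of $A$ is free abelian, so each $A_v$ is free abelian and therefore discriminated by $\la v\ra\cong\BZ$ (see \cite{BMR06}). Given non-trivial elements $g_1,\dots,g_n\in G$, put each in graph-product normal form; only finitely many syllables occur, lying in finitely many vertex groups. For each such vertex $v$ choose an additive homomorphism $\chi_v\colon A_v\maps\la v\ra$ injective on the finite set of syllables of the $g_i$ that lie in $A_v$ (and arbitrary on all other vertices); these assemble into a homomorphism $\chi\colon G\maps\GG$ by functoriality of graph products. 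Since reducedness of a word in a graph product is determined by the pattern of vertex labels of its non-trivial syllables, $\chi$ sends the normal form of each $g_i$ to a non-empty reduced word of $\GG$, whence $\chi(g_i)\ne1$; applying this to $\{g_i\}\cup\{g_ig_j^{-1}:i\ne j\}$ yields the required discrimination, so $G^A$ is discriminated by $\GG$. The main obstacle is the careful verification of the universal property identifying $\GG^A$ with $G^A$, where one must reconcile the partial $A$-action on the graph product with the total $A$-action on an arbitrary target $A$-group; by comparison the discrimination step, though needing some care with graph-product normal forms, is routine.
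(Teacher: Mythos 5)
Your proposal is correct and follows the same route as the paper: embed the coherent RAAG $\GG$ in the graph product $G=\mathcal G(\Delta,A)$ (the replacement of each cyclic vertex group by a copy of the additive group of $A$), invoke Theorem~\ref{thm:tensor} for $G$ via the preceding lemma and theorem, and then pass the conclusion back to $\GG$. The paper treats the corollary as immediate and only records the two-step remark following the statement; you supply the two pieces of bookkeeping that the paper leaves implicit, namely the identification $\GG^A\cong G^A$ via the universal property, and the fact that $G$ is discriminated by $\GG$ (through componentwise discrimination of the free abelian vertex groups by $\BZ$ together with the preservation of graph-product normal form), which then yields discrimination of $\GG^A$ by $\GG$ by transitivity. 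You also peel off the abelian case, where Theorem~\ref{thm:tensor}'s non-abelian hypothesis fails and Proposition~\ref{prop:abcomp} handles matters directly; the paper does not comment on this degenerate case. None of these additions changes the underlying argument — they are exactly the routine verifications the authors elected to omit — so the proofs are essentially the same, with yours being the more detailed version.
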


Formally, the $A$-completion $\GG^A$ of $\GG$ is obtained in two steps:
we first embed $\GG$ into the graph product $G$ to ensure property \ref{cond:R} and then we take an ICE of $G$ over $A$.
Abusing the terminology, we say that $\GG^A$ is an ICE of $\GG$ over $A$.

\section{Coherence of limit groups over coherent RAAGs}\label{sec:tower}

In this section, we introduce graph towers over coherent RAAGs and prove that they are coherent, see Corollary \ref{cor:towers are coherent}. As a main corollary, we obtain that limit groups over coherent RAAGs are finitely presented, see Corollary \ref{cor:limit groups are fp}. The proofs of this section are (almost) independent of the previous sections. More precisely, we only use (and prove) that graph towers belong to the class $\mathcal C$, in order to deduce a hierarchical decomposition of the tower as a graph of groups with abelian vertex groups, see Theorem \ref{thm:abelianedge}. Coherence of graph towers is then deduce from that structural decomposition. 

As we mentioned, we shall define (below) a graph tower  $\Ts$ of height $l$, over a RAAG $\GG=\GG(\G)$, to be a sequence $\Ts_0,\ldots, \Ts_l$ of triples:
$\Ts_l=(G_l,\HH_l,\pi_l)$, where $G_l$ is a group, $\HH_l=\GG(\G_l)$, $\G_l$ a simple graph, is  the $l$-th RAAG  of $\Ts$ and $\pi_l$ is an epimorphism of $\HH_l$ onto $G_l$.
The group $G_l$ is called \emph{the} group of $\Ts_l$ and by abuse of notation is itself referred to as a graph tower over $\GG$.  
The definition depends on a partition of the edges of $\G_l$ into disjoint subsets $E_c(\Gamma_l)$ and $E_d(\Gamma_l)$. Given this partition, we define a \emph{co-irreducible} subgroup $\KK$ of $\HH_l$ to be a canonical parabolic subgroup which is closed  (i.e. $\KK^{\perp\perp}=\KK$) and such that $\KK^\perp$ is $E_d(\Gamma_{l})$-directly indecomposable.  
As in \cite{CRK15}, where more details are given, we make the following definition.  
\begin{defn}\label{def:tower}
  A \emph{graph tower} $\Ts$ of height $l$ is a sequence of triples $\Ts_0,\ldots, \Ts_l$, defined recursively, setting  $\Ts_0=(G_0,\HH_0,\pi_0)$,
  where  $G_0=\HH_0=\GG$, $\G_0=\G$, $\pi_0=\textrm{Id}_{\GG}$ and $E_d(\G)=E(\G)$. In addition define the subset $S_0=\emptyset$  of $\HH_0=\GG(\G_0)$. 
  To define $\Ts_l$ assume that $\Ts_{l-1}=(G_{l-1},\HH_{l-1},\pi_{l-1})$, $\G_{l-1}$, $E_d(\G_{l-1})$, $E_c(\G_{l-1})$ and $S_{l-1}\subseteq \HH_{l-1}=\GG(\G_{l-1})$ are defined, and choose a co-irreducible subgroup $\KK_l$ of $\HH_{l-1}$, canonically generated by a set $Y_l$;  and a positive integer $m_l$. Let $\G_l$ be the graph with vertex set $V(\G_{l-1})\cup \{x_1^{l},\ldots ,x_{m_l}^{l}\}$ (where the $x_i^{l}$ are new symbols). The set of $d$-\emph{edges} of $\G_l$ is $E_d(\G_l)=E_d(\G_{l-1})\cup \{(x_i^l,y)\,:\, 1\le i\le m_l, y\in Y_l\}$. The set  of $c$\emph{-edges} of $\G_l$ is $E_c(\G_l)=E_c(\G_{l-1})\cup E'_c(\G_l)$ and the set $S_l=S_{l-1}\cup S'_l$,  where $E'_c(\G_l)$ and $S'_l$ are  chosen according to one of the alternatives, \emph{Basic floor}, \emph{Abelian floor} or \emph{Quadratic floor} below. In all cases $\HH_l=\GG(\G_l)$, $G_l=\HH_l/\ncl\la S_l\ra$ and $\pi_l$ is the canonical map from $\HH_l$ to $G_l$. 
    \end{defn}

\noindent\textbf{Basic floor.}
\begin{itemize}
  \item $E'_c(\G_l)=\emptyset$ if $\KK_l^\perp$ is a directly indecomposable subgroup of $\HH_{l-1}$; and   
  \item $E'_c(\G_l)=\{(x_i^l,x_j^l)\,:\,1\le i<j\le m_l\}\cup \{(x_i^l,y)\,:\,1\le i\le m_l, y\in Y_l^\perp\}$, if
    $\KK_l^\perp$ is directly decomposable.  
  \end{itemize}
  $S'_l$ is the set of \emph{basic relators}:
  \[S'_l=[C,x_i^l], 1\le i\le m_l,\]
  where $C=\pi_{l-1}^{-1}(C_{G_{l-1}}(\KK_{l}^\perp)$.
  
\noindent\textbf{Abelian floor.}
\begin{itemize}
  \item $E'_c(\G_l)=\{(x_i^l,x_j^l)\,:\,1\le i<j\le m_l\}$, if $\KK_l^\perp$ is a directly indecomposable subgroup of $\HH_{l-1}$; and   
  \item $E'_c(\G_l)=\{(x_i^l,x_j^l)\,:\,1\le i<j\le m_l\}\cup \{(x_i^l,y)\,:\,1\le i\le m_l, y\in Y_l^\perp\}$, if
    $\KK_l^\perp$ is directly decomposable.
  \end{itemize}
  Either
  \begin{itemize}
  \item $S'_l=[C,x_i^l], 1\le i\le m_l$, where $C=\pi_{l-1}^{-1}(C_{G_{l-1}}(u))$ with $u$ is a non-trivial, cyclically reduced, block, root element of $\KK_{l}^\perp$;  or
    \item $S'_l$ is the set of basic relators.
    \end{itemize}
    
\noindent\textbf{Quadratic floor.}
$E'_c(\G_l)$ is defined as in the Basic floor case. \\[1em]
$S'_l$ consists of the set of basic relators and a relator $W$ of the form either 
\begin{itemize}
\item $W=\prod_{i=1}^g[x_{2i-1},x_{2i}]\left(\prod_{i=2g+1}^{m}u_i^{x_i}\right)u_{m+1}$, with $u_{m+1}^{-1}=\prod_{i=1}^g[v_{2i-1},v_{2i}]\prod_{i=2g+1}^{m}u_i^{w_i}$; or
\item $W=\prod_{i=1}^{g}x_i^2\left(\prod_{i=g+1}^mu_i^{x_i}\right)u_{m+1}$, with $u_{m+1}^{-1}=\prod_{i=1}^{g}v_i^2\prod_{i=g+1}^mu_i^{w_i}$,
\end{itemize}
for some  $u_i,v_j,w_k$ in  $\KK^\perp$ such that the following condition $\circledast$ holds.
\begin{itemize}
\item[$\circledast$] The Euler characteristic of $W$ is at most $-2$ or $W=[x_1,x_2]u_3$, and in both cases
  the subgroup of $G_{l-1}$ generated
  by the set of all 
  $u_i,v_j,w_k$ is non-abelian. 
\end{itemize}
  
(In \cite{CRK15} in the definition of Quadratic floor the condition $\circledast$ is slightly less restrictive.
However,  with the simpler definition given here, all the properties of  graph towers in \cite{CRK15} hold as 
before.)

We next recall some of the properties of the graph towers proven in \cite{CRK15}.  
\begin{rem}\label{rem:prop_towers} 
From the definitions it follows that, $\HH_{l-1}$ is a retraction of $\HH_l$ and that
$G_{l-1}$ is a retraction of $H_l/\ncl(S_{l-1})$. In \cite[Theorem 7.1 and Theorem 8.1]{CRK15}, it is shown that, 
given a limit group $G$ over $\GG$ there exists a graph tower $\Ts_l$ over $\GG$,  and embedding of $G$ into $G_l$  (both of which may be effectively constructed if $G$ is given by its presentation as the coordinate group of a system of equations) such that 
\begin{itemize}
\item $G_l$ and $\HH_l$ are discriminated by $\GG$;
\item If $\KK$ is a co-irreducible subgroup of $\HH_l$ then $\KK^\perp$ is either a directly indecomposable
  subgroup of $\HH_l$ or is $E_c(\G_l)$-abelian (\cite[Lemma 6.2]{CRK15});
\item there is a discriminating family $\{\varphi_i\}$ of $G_l$ by $\GG$ such that, writing $\varphi'_i$ for
    $\varphi_i\pi_l$, the following hold, see \cite[Theorem 7.1]{CRK15}.
  
\begin{itemize}
\item for any  edge $e=(x,y)$ of $\G_l$, either $x$ and $y$ are sent to the same cyclic subgroup of  $\GG$, by all homomorphisms $\varphi_i'$, in which case $e$ belongs to $E_c(\Gamma_l)$; or the images of $x$ and $y$, under all homomorphisms $\varphi_i'$, disjointly commute, in which case $e$ belongs to $E_d(\Gamma)$.
\item if $\KK$ is a canonical parabolic subgroup of $\HH_l$ then there exists a subgroup $\GG_\KK$ of $\GG$ such that, $\GG_\KK < \KK$ and $\bigcup_i\varphi_i'(\KK)=\GG_\KK$. In particular, if $\KK$ is a co-irreducible subgroup of $\HH_l$, then $\GG_{\KK}$ is co-irreducible in $\GG$ (here $E_d(\G)=E(\G)$). Furthermore, in this case, $\GG_\KK^\perp=\GG_{\KK^\perp}$ is a directly indecomposable subgroup of $\GG$, $\bigcup_i\varphi_i'(\KK^\perp)=\GG_{\KK^\perp}$ and $C_\GG(\GG_{\KK^\perp})=\GG_\KK$.
 \end{itemize}

  (In the terminology of \cite{CRK15}, the homomorphisms  $\varphi_i$  all factor through the principal branch
  of the (tribal) Makanin-Razborov diagram for $G$.)
\end{itemize}
\end{rem}

\begin{defn}
In the notation above, given a limit group $G$ over a coherent RAAG $\GG$, we call a graph tower $G_l$ into which $G$ embeds a \emph{graph tower associated to} $G$ over $\GG$, and the family of discriminating homomorphisms $\{\varphi_i\}$, as described above, a \emph{principal} discriminating family of homomorphisms.
\end{defn}

We will use the following graph of groups decomposition of graph towers, obtained in \cite{CRK15}.
\begin{lem}[cf. Lemma 5.3 in \cite{CRK15}] \label{lem:prtower}
  A graph tower $\Ts_l=(G_l,\HH_l,\pi_l)$ of height $l$ over $\GG$ admits one of the following decompositions, where $\KK=\KK_l$.
	\begin{itemize}
		\item [a1)] $G_{l-1} \ast_{C_{G_{l-1}}(\KK^\perp)} \left(C_{G_{l-1}}(\KK^\perp)\times\langle x_1^l, \dots, x_{m_l}^l \rangle\right)$
		{\rm(}in this case the floor is basic and $\KK^\perp$ is non-abelian{\rm)};
		
		\item [a2)] $G_{l-1} \ast_{C_{G_{l-1}}(\KK^\perp)} \left(C_{G_{l-1}}(\KK^\perp) \times \langle x_1^l, \dots, x_{m_l}^l \mid [x_i^l,x_j^l]=1, 1\le i,j \le m_l, i\ne j \rangle\right)$ \\ {\rm(}in this case  the floor is basic and  $\KK^\perp$ is abelian{\rm)};
		
		\item [b1)] $G_{l-1} \ast_{C_{G_{l-1}}(u)} \left(C_{G_{l-1}}(u) \times \langle x_1^l, \dots, x_{m_l}^l \mid [x_i^l,x_j^l]=1, 1\le i,j \le m_l, i\ne j \rangle\right)$ \\ {\rm(}in this case the floor is abelian and $u$ is non-trivial irreducible root element{\rm)};
		
		\item [b2)] $G_{l-1} \ast_{C_{G_{l-1}}(\KK^\perp)} \left(C_{G_{l-1}}(\KK^\perp) \times \langle x_1^l, \dots, x_{m_l}^l \mid [x_i^l,x_j^l]=1, 1\le i,j\le m_l, i\ne j \rangle\right)$ \\ {\rm(}in this case  the floor is abelian and $\KK^\perp$ is non-abelian{\rm)};
		
		\item [c)] $G_{l-1} \ast_{C_{G_{l-1}}(\KK^\perp)\times \langle u_{2g+1},\dots, u_m\rangle} \left( \langle u_{2g+1}, \dots, u_m, x_1^l, \dots, x_{m_l}^l \mid W \rangle \times C_{G_{l-1}}(\KK^\perp)\right)$ \\ {\rm(}in this case the floor is quadratic and $\KK^\perp$ is non-abelian{\rm)}.
	\end{itemize}
\end{lem}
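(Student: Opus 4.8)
The plan is to read the decomposition straight off Definition~\ref{def:tower}, treating one floor at a time and invoking the standard splitting of a graph product along a separating full subgraph. Fix $l\ge 1$, write $\KK=\KK_l$, $Y=Y_l$, $m=m_l$ and let $Z$ be the canonical generating set of $\KK^\perp$. By construction $\G_l$ is obtained from $\G_{l-1}$ by adjoining the vertices $x_1^l,\dots,x_m^l$, and in \emph{every} floor type the only edges of $\G_l$ incident to these new vertices are: the $d$-edges $(x_i^l,y)$ with $y\in Y$; the mutual edges $(x_i^l,x_j^l)$ (present in all cases except the Basic and Quadratic cases with $\KK^\perp$ directly indecomposable); and, in the cases where $\KK^\perp$ is directly decomposable, the edges $(x_i^l,y)$ with $y\in Z$. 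By co-irreducibility of $\KK$ and \cite[Lemma~6.2]{CRK15} (recalled in Remark~\ref{rem:prop_towers}), ``$\KK^\perp$ directly decomposable'' here coincides with ``$\KK^\perp$ abelian'', which is why the dichotomy in the floor definitions matches the abelian/non-abelian dichotomy in a1)--c). In all cases the set $\Lambda\subseteq V(\G_{l-1})$ spanned by the common link of the $x_i^l$ — equal to $Y$ in the indecomposable case and to $Y\cup Z$ otherwise — has the property that no edge of $\G_l$ joins $\{x_1^l,\dots,x_m^l\}$ to $V(\G_{l-1})\setminus\Lambda$. Hence $\G_l$ is the union of $\G_{l-1}$ with the full subgraph $\Delta_l$ on $\{x_i^l\}\cup\Lambda$, glued along the full subgraph on $\Lambda$, and the standard decomposition of graph products yields
\[
\HH_l \;=\; \HH_{l-1}\ast_{\GG(\Lambda)}\GG(\Delta_l),\qquad \GG(\Delta_l)=\GG(\Lambda)\times\langle x_1^l,\dots,x_m^l\rangle,
\]
with $\langle x_1^l,\dots,x_m^l\rangle$ free when the mutual edges are absent and free abelian otherwise.

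Next I would pass to $G_l=\HH_l/\ncl\langle S_l\rangle$ in two steps, $S_l=S_{l-1}\cup S'_l$. Killing $\ncl\langle S_{l-1}\rangle$ (supported inside the retract $\HH_{l-1}$, since $\GG(\Lambda)$ is a canonical parabolic of $\HH_{l-1}$, hence a retract of both vertex groups) and using the results of \cite{CRK15} recalled in Remark~\ref{rem:prop_towers} — in particular that $\pi_{l-1}$ is injective on $C_{\HH_{l-1}}(\KK^\perp)\supseteq\GG(\Lambda)$ and maps it onto $C_{G_{l-1}}(\KK^\perp)$, and that these towers are discriminated by $\GG$ — one checks that the amalgam survives, giving
\[
\HH_l/\ncl\langle S_{l-1}\rangle \;\cong\; G_{l-1}\ast_{\pi_{l-1}(\GG(\Lambda))}\bigl(\pi_{l-1}(\GG(\Lambda))\times\langle x_1^l,\dots,x_m^l\rangle\bigr).
\]
Now impose $S'_l$. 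The basic relators $[C,x_i^l]$, with $C=\pi_{l-1}^{-1}(C_{G_{l-1}}(\KK^\perp))$ (respectively $C=\pi_{l-1}^{-1}(C_{G_{l-1}}(u))$ in the abelian-floor subcase b1), enlarge the amalgamated subgroup from $\pi_{l-1}(\GG(\Lambda))$ to $C_{G_{l-1}}(\KK^\perp)$ (resp. $C_{G_{l-1}}(u)$); here I would use the description of centralisers of canonical parabolics in a coherent RAAG, Lemma~\ref{lem:descentr} together with \eqref{eq:raagcr}, which identifies $C_{\HH_{l-1}}(\KK^\perp)$ with $\GG(\st(Z))$, so that $\GG(\Lambda)$ \emph{is} this centraliser exactly when $\KK^\perp$ is decomposable, and its $\pi_{l-1}$-image generates $C_{G_{l-1}}(\KK^\perp)$ in general. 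Matching the outcome against the floor type and the abelian/non-abelian status of $\KK^\perp$ then produces precisely a1), a2), b1) and b2). In the Quadratic case $S'_l$ additionally contains the surface relator $W$ together with the identity $u_{m+1}^{-1}=\prod[v_{2i-1},v_{2i}]\prod u_i^{w_i}$ (or its squares analogue); imposing these turns the free factor $\langle x_i^l\rangle$ and the elements $u_{2g+1},\dots,u_m\in\KK^\perp$ into $\langle u_{2g+1},\dots,u_m,x_1^l,\dots,x_m^l\mid W\rangle$, the fundamental group of a compact surface with one boundary loop identified with $u_{m+1}$, and the amalgamated subgroup becomes $C_{G_{l-1}}(\KK^\perp)\times\langle u_{2g+1},\dots,u_m\rangle$, which is c).

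The main obstacle will be the bookkeeping in the second step: ensuring that, after killing $S_{l-1}$ and then $S'_l$, the edge maps of the graph of groups remain injective, so that one genuinely obtains an amalgamated free product and not merely a quotient of one. This rests on (i) canonical parabolics of RAAGs being retracts, so that relators supported in $\HH_{l-1}$ do not collapse $\GG(\Lambda)$ beyond what already happens inside $G_{l-1}$; (ii) the properties of $\pi_{l-1}$ and of the principal discriminating family from \cite{CRK15} (Remark~\ref{rem:prop_towers}), which force the required injectivity of $\pi_{l-1}$ on the relevant centralisers; and (iii), for the Quadratic floor, the condition $\circledast$ — Euler characteristic at most $-2$, or $W=[x_1,x_2]u_3$ — which guarantees that $W$ presents the fundamental group of a non-exceptional surface with malnormal boundary subgroup, so that Bass--Serre theory applies cleanly and the boundary subgroup embeds. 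As noted after Definition~\ref{def:tower}, this $\circledast$ merely strengthens the condition used in \cite{CRK15} and so only restricts the class of towers; consequently the decomposition is exactly that of \cite[Lemma~5.3]{CRK15}, and beyond that reference no further verification is needed.
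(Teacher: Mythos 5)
The paper does not prove this lemma at all: it is stated with the tag ``cf.\ Lemma 5.3 in \cite{CRK15}'' and used as a black box, so there is no proof in the paper for your argument to be compared against. Your sketch therefore goes beyond what the paper does, reconstructing a plausibility argument for the decomposition before ultimately, and correctly, deferring to \cite[Lemma~5.3]{CRK15} for the technical verification. The reconstruction is reasonable in outline: the amalgam of RAAGs $\HH_l=\HH_{l-1}\ast_{\GG(\Lambda)}\GG(\Delta_l)$ along the link $\Lambda$ of the new vertices is the right starting point, and the role of the basic relators in enlarging the edge group from $\pi_{l-1}(\GG(\Lambda))$ to the full centraliser $C_{G_{l-1}}(\KK^\perp)$ (or $C_{G_{l-1}}(u)$ in case b1), and of the surface relator in case c), is identified correctly.

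Two places where your sketch is imprecise but ultimately covered by the citation: (1) your identification ``$\KK^\perp$ directly decomposable $\Leftrightarrow$ $\KK^\perp$ abelian'' is not literally what Remark~\ref{rem:prop_towers} gives --- the recalled dichotomy is ``directly indecomposable or $E_c(\G_l)$-abelian,'' and it is stated for co-irreducible subgroups of $\HH_l$, whereas $\KK_l$ is chosen co-irreducible in $\HH_{l-1}$; these mismatches can be repaired but do require an argument. (2) The claim that ``the amalgam survives'' the two-step quotient (kill $S_{l-1}$, then $S'_l$) is exactly the technical content of \cite[Lemma~5.3]{CRK15}; you flag this yourself and delegate, which is fine, but a reader should understand that your sketch is illustrative rather than a self-contained replacement for the cited lemma. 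Since both you and the paper rest the result on the same reference, the approaches coincide in substance; your additional derivation is helpful exposition but introduces no new method.
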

Note that, as $\HH_l$ is discriminated by the principal family $\{\varphi_i\}$, the elements $u, u_i\in \HH_l$
from cases $b1)$ and $c)$ are root block elements.

In the remainder of the section, to distinguish between  orthogonal complements of parabolic subgroups in various $\HH_j$, given a subset $Y$ of the canonical generating set $X$ of $\HH_j$ we shall write 
$\lk_j(Y)=\{x\in X\,:\, x\notin Y \textrm{ and } [x,Y]=1\}$ and $\la Y\ra^{\perp_j}= \la \lk_j(Y)\ra\le \HH_j$.  
We shall also write 
$C_j$ for $C_{G_{j}}$, $\mbf{x}^{j}$ for $\{x^{j}_{1},\ldots , x^{j}_{m_{j}}\}$ and, in case c),  $\mbf{u}$ for $\{u_{2g+1},\ldots , u_{m}\}$.

Our next goal is to give a more precise description of a graph tower when the base group is a coherent RAAG.
In particular, we show that the graph tower can be built from the coherent RAAG $\GG$ by first building
floors of type a2) with non-abelian centraliser and of type b2) -
obtaining a graph tower which is a coherent RAAG $\GG'$ (discriminated by $\GG$);
and then building floors over $\GG'$ for which the corresponding amalgamation is
over a free abelian subgroup, see Theorem \ref{thm:abelianedge}.

\begin{lem}\label{lem:abedg} 
Let $\GG$ be a coherent RAAG and let $\Ts=(G_l,\HH_l,\pi_l)$ be a graph tower associated to a limit group $G$ over $\GG$.
Let $\KK$ be a canonical parabolic co-irreducible  subgroup of $\HH_l$. If $\KK^\perp$ is non-abelian, then $C_{G_l}(\KK^\perp)$ is finitely generated torsion-free abelian. 
\end{lem}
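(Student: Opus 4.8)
The plan is to use the principal discriminating family $\{\varphi_i\}$ of $\Ts$ together with the structural results recalled in Remark \ref{rem:prop_towers}. Since $\KK$ is a co-irreducible subgroup of $\HH_l$, there is a subgroup $\GG_\KK\le\GG$ with $\GG_{\KK^\perp}=\GG_\KK^\perp$ a directly indecomposable subgroup of $\GG$, $\bigcup_i\varphi_i'(\KK^\perp)=\GG_{\KK^\perp}$ and $C_\GG(\GG_{\KK^\perp})=\GG_\KK$. The first step is to pin down $C_{G_l}(\KK^\perp)$ in terms of the retraction structure: write $\bar G_l=G_l$ and recall that $G_{l-1}$ is a retract of $G_l$; peeling off the floors one at a time (using Lemma \ref{lem:prtower}), I would argue that $C_{G_l}(\KK^\perp)$ is generated by $C_{\GG}(\GG_{\KK^\perp})=\GG_\KK$ together with the new generators $\mbf x^j$ and $\mbf u$ added at floors whose co-irreducible subgroup has the given orthogonal complement. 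Since $\KK^\perp$ is non-abelian, at each such floor the relevant relators are basic relators $[C,x_i^j]$, so these new generators commute with $\KK^\perp$ and with each other.

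**Reducing to a RAAG computation.** The key point is that the problem is ``visible at the bottom'': applying any single homomorphism $\varphi_i':\HH_l\to\GG$ sends $C_{G_l}(\KK^\perp)$ into $C_\GG(\GG_{\KK^\perp})=\GG_\KK$ (since $\varphi_i'$ is a homomorphism, it carries centralisers into centralisers of images, and $\bigcup_i\varphi_i'(\KK^\perp)=\GG_{\KK^\perp}$ means that for each finite subset of $\KK^\perp$ some $\varphi_i'$ is injective on it, hence $\varphi_i'(C_{G_l}(\KK^\perp))\le C_\GG(\varphi_i'(\KK^\perp))$ and, combining over $i$, the centraliser lands inside the centraliser of all of $\GG_{\KK^\perp}$). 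Now $\GG_{\KK^\perp}$ is a directly indecomposable, non-abelian parabolic subgroup of the coherent RAAG $\GG$; by Lemma \ref{lem:descentr}, since its centraliser is non-abelian only in the degenerate case, one checks that a non-abelian directly indecomposable parabolic has, in a coherent RAAG, the property that $C_\GG(\GG_{\KK^\perp})=\GG(\lk(\GG_{\KK^\perp}))$ is a \emph{free abelian} canonical parabolic: its underlying graph is a clique because $\GG_{\KK^\perp}$ contains two non-adjacent vertices (non-abelianity), and two common neighbours of a non-commuting pair together with that pair would form an induced $4$-cycle, contradicting chordality. So $\GG_\KK$ is finitely generated free abelian.

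**Transferring back up the tower.** It remains to show $C_{G_l}(\KK^\perp)$ is itself free abelian (not merely mapping into a free abelian group under each $\varphi_i'$). For this I would use that $C_{G_l}(\KK^\perp)$ is discriminated by $\GG$ via the $\varphi_i'$, so it is torsion-free, and that it embeds into $\GG_\KK\times(\text{the new generators})$: more precisely, from the graph-of-groups decompositions of Lemma \ref{lem:prtower} the centraliser of a non-abelian $\KK^\perp$ in a one-floor extension is exactly the direct product of the previous centraliser with the newly-added commuting generators (cases a1, b2, c), and by induction on $l$ this gives $C_{G_l}(\KK^\perp)=C_\GG(\GG_{\KK^\perp})\times A$ for a free abelian group $A$ generated by the relevant new generators. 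A cleaner alternative, which I expect to be the one actually used in the paper, is to invoke that $\Ts$ belongs to the class $\cC$ (proved later in this section): then $C_{G_l}(\KK^\perp)$, being non-abelian, is by \ref{it:C8} a \emph{canonical} subgroup $\la Y\ra\times O$, and since it is discriminated by $\GG$ and each $\varphi_i'$ collapses it into the free abelian $\GG_\KK$, the canonical generating set $Y\cup K$ must be a clique, forcing $C_{G_l}(\KK^\perp)$ to be free abelian; finite generation then follows since $G_l$ is finitely generated and the canonical generating set of a canonical subgroup is a subset of that of $G_l$.

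**Main obstacle.** The delicate step is the passage from ``$C_{G_l}(\KK^\perp)$ maps into a free abelian group under each $\varphi_i'$'' to ``$C_{G_l}(\KK^\perp)$ is free abelian'': being fully residually (free abelian) does not by itself force a group to be free abelian, so one genuinely needs the structural input — either the explicit one-floor centraliser computation from Lemma \ref{lem:prtower} tracked inductively through the tower, or membership of $\Ts$ in $\cC$ via condition \ref{it:C8}. I expect the cleanest route is the latter, keeping the argument short, but it requires first establishing (as the surrounding section does) that graph towers over coherent RAAGs lie in $\cC$.
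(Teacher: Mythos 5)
Your proposal correctly identifies the starting point — the principal discriminating family $\{\varphi_i\}$ and the fact that the centraliser of a non-abelian directly indecomposable subgroup of a coherent RAAG is finitely generated free abelian — but then imagines that passing from ``discriminated into a torsion-free abelian group'' to the conclusion is a serious obstacle, and proposes two alternative repairs. Neither is needed, and one of them would be circular.

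The key fact you underuse is that \emph{both abelianness and torsion-freeness are universal conditions, hence preserved under discrimination}. If $H$ is discriminated by a torsion-free abelian group then for any $a,b\in H$ with $[a,b]\neq 1$ a discriminating map sends $[a,b]$ to a non-trivial commutator in an abelian group, and for any $h\neq 1$ with $h^n=1$ a discriminating map sends $h$ to a non-trivial torsion element of a torsion-free group; both are contradictions. So $H$ is torsion-free abelian. That is exactly the paper's step: since $\KK^\perp$ is non-abelian, for each finite set of non-trivial elements of $C_{G_l}(\KK^\perp)$ one can choose $\varphi_i$ which separates them \emph{and} is non-trivial on a fixed non-commuting pair in $\KK^\perp$; then $\varphi'_i(\KK^\perp)$ is non-abelian, so $C_\GG(\varphi'_i(\KK^\perp))$ is torsion-free abelian (by the coherent-RAAG centraliser description), and the image of $C_{G_l}(\KK^\perp)$ lands there. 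This gives torsion-free abelian outright. Finite generation is a separate, easy, structural observation, and a finitely generated torsion-free abelian group is free abelian — so your correct remark that ``fully residually free abelian does not force free abelian'' ($\BQ$ being a counterexample) is simply beside the point here.

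Two further issues. First, your ``combining over $i$'' step conflates ``$\varphi_i(C_{G_l}(\KK^\perp))\le C_\GG(\varphi'_i(\KK^\perp))$ for each $i$'' with ``lands inside $C_\GG(\GG_{\KK^\perp})$''; these are not the same, since $\varphi'_i(\KK^\perp)$ is a subgroup of $\GG_{\KK^\perp}$ and hence $C_\GG(\varphi'_i(\KK^\perp))$ may be strictly larger than $\GG_\KK$. What saves the argument is not that the image always lands in $\GG_\KK$, but that it always lands in \emph{some} torsion-free abelian centraliser, as explained above. Second, your preferred alternative — invoking membership of graph towers in $\cC$ and then using \ref{it:C8} — would be circular: in this paper, Lemma \ref{lem:pctowerC}, which establishes that the tower lies in $\cC$, itself invokes Lemma \ref{lem:abedg} (in the treatment of floors of type b1) to deduce that the relevant centraliser is abelian). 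So you cannot use $\cC$-membership to prove this lemma. Your other alternative (an explicit inductive computation floor-by-floor via Lemma \ref{lem:prtower}) would work but is much more labour than the one-line discrimination argument the paper actually uses.
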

\begin{proof}
By Remark \ref{rem:prop_towers}, the graph tower $G_l$ is $\GG$-discriminated by the principal family of homomorphisms  $\{\varphi_i'\}$ and, if $\KK$ is co-irreducible, then $\varphi_i'(\KK) < \GG_\KK$, $\GG_\KK$ is co-irreducible and $\bigcup_i\varphi_i'(\KK^\perp)=\GG_\KK^\perp=\GG_{\KK^\perp}$, so $\KK^\perp$ is discriminated by $\GG_{\KK^\perp}$.  In particular, as $\KK^\perp$ is non-abelian, it follows that $\GG_{\KK^\perp}$ is also non-abelian. 

Then the centraliser $C_{G_l}(\KK^\perp)$ is discriminated by $C_{\GG}(\GG_{\KK^\perp})$. Since $\GG_{\KK^\perp}$ is non-abelian and $\GG$ is coherent, we have that $C_{\GG}(\GG_{\KK^\perp})$ is finitely generated torsion-free abelian and since $C_{G_l}(\KK^\perp)$ is discriminated by $C_{\GG}(\GG_{\KK^\perp})$, $C_{G_l}(\KK^\perp)$ is also torsion-free abelian and by the structure of the tower, it is finitely generated.
\end{proof}

\begin{lem} \label{lem:pctowerC}
  Let $\GG$ be a coherent RAAG, let $L$ be a limit group over $\GG$ and let $G_l$ be a graph tower of height $l$ associated to $L$. Then
  \begin{enumerate}
    \item $G_l$ is a tower of height $k\le l$ over  a coherent RAAG $\GG'$, where all the floors of $G_l$ over $\GG'$, are of type a2) with an abelian  centraliser amalgamated;   or of type b1); or of type c) and
    \item $G_l$ is in $\cC$.
  \end{enumerate}
\end{lem}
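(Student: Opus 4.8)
The plan is to prove the two statements of Lemma \ref{lem:pctowerC} together by induction on the height $l$ of the graph tower, exploiting the decomposition of floors in Lemma \ref{lem:prtower} together with the structural closure results of Section \ref{sec:preserve} (Theorem \ref{thm:lift}, Corollary \ref{cor:attSinC}, Theorem \ref{thm:inC}). The base case $l=0$ is immediate: $G_0=\GG$ is a coherent RAAG, which lies in $\cC$ by Lemma \ref{lem:pcinC}, and we take $\GG'=\GG$ with $k=0$.

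For the inductive step I would proceed as follows. Suppose $G_{l-1}$ is a tower of height $k'\le l-1$ over a coherent RAAG $\GG'$ with only floors of type a2)-with-abelian-centraliser, b1), or c) above $\GG'$, and suppose $G_{l-1}\in\cC$. Consider the top floor $\Ts_l$, which by Lemma \ref{lem:prtower} has one of the forms a1), a2), b1), b2), c). The first key observation is that a floor of type a2) with \emph{non-abelian} amalgamated centraliser and a floor of type b2) are, respectively, a direct product with a free abelian group over a (non-abelian) centraliser and, in the b2) case, also such a construction; in both cases the amalgamated subgroup $C_{G_{l-1}}(\KK^\perp)$ together with the new commuting vertex generators $\mbf x^l$ simply enlarges a non-abelian centraliser by a free abelian direct factor. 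I would argue that performing such a floor amounts to taking a graph product / direct product operation that keeps us inside the class of coherent RAAGs when $G_{l-1}=\GG'$ is one (using Theorem \ref{thm:inC}, since attaching a clique of new vertices joined to $\lk$ of a canonical parabolic preserves chordality — this is exactly the a2)-non-abelian and b2) situation where $\KK^\perp$ is non-abelian and its centraliser is canonical), and that it keeps us in $\cC$ in general (again Theorem \ref{thm:inC}, or Theorem \ref{thm:lift} in the guise of a direct centraliser extension of a non-abelian, hence canonical, centraliser, noting $C_{G_{l-1}}(\KK^\perp)$ is finitely generated torsion-free abelian by Lemma \ref{lem:abedg} but the relevant centraliser being extended is the ambient non-abelian one). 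Thus floors of type a2)-non-abelian and b2) can be absorbed into the RAAG base: they replace $\GG'$ by a larger coherent RAAG $\GG''$ which is still discriminated by $\GG$ (using Remark \ref{rem:prop_towers} and that discrimination is transitive). The remaining floor types — a1) with $\KK^\perp$ non-abelian, a2) with $\KK^\perp$ abelian, b1), and c) — are the ones recorded in statement (1): for these I must check they keep the group in $\cC$. For a1) and a2)-abelian and b1), the amalgamation is a direct centraliser extension $G_{l-1}\ast_{C}(C\times A)$ of an \emph{abelian} centraliser $C=C_{G_{l-1}}(\KK^\perp)$ or $C_{G_{l-1}}(u)$ by a free abelian group $A$, with $u$ (resp. a generator of $C$) lying in $W_{G_{l-1}}$ — this is exactly the hypothesis of Theorem \ref{thm:lift}\ref{it:lift1}, so $G_l\in\cC$. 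For c), the quadratic floor, the amalgamation is over $C_{G_{l-1}}(\KK^\perp)\times\langle u_{2g+1},\dots,u_m\rangle$ with vertex group a surface group times $C_{G_{l-1}}(\KK^\perp)$; this is precisely the construction $T=G\ast_C(S\times O_G(u))$ of Corollary \ref{cor:attSinC}, whose conclusion is that $T\in\cC$ (after checking that the hypotheses there — $C_{G_{l-1}}(u)=\langle u\rangle\times O$ abelian and non-canonical, $u\in W_{G_{l-1}}$, and $S$ a non-exceptional surface group — are met, which follows from the definition of the quadratic floor, condition $\circledast$, and the fact that $\KK^\perp$ is non-abelian so $C_{G_{l-1}}(\KK^\perp)$ is the $O_G(u)$-part).

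Assembling these: starting from $G_l$ and reading floors from the top down, we first strip off (downwards) all floors of the recorded types until we reach a height $k\le l$ at which the group is a coherent RAAG $\GG'$ — this is statement (1) — and since each of the stripped floors and the RAAG base stays in $\cC$ by Theorem \ref{thm:lift} and Corollary \ref{cor:attSinC} and Lemma \ref{lem:pcinC}, and each floor operation is itself a direct centraliser extension or the construction of Corollary \ref{cor:attSinC}, an easy induction up the tower yields $G_l\in\cC$, which is statement (2). One subtlety to nail down is that the element being extended (or the $u$ in the quadratic floor) genuinely lies in the distinguished set $W$ of the lower group: here I would invoke the identification of centralisers of parabolics with centralisers of canonical generators of RAAGs (Lemma \ref{lem:ACprop}, Lemma \ref{lem:Wpc}) at the base, and Theorem \ref{thm:lift}\ref{it:lift1}'s explicit description of $W(u,B)$ at higher floors, to conclude that the relevant centraliser is always of the form $C(w)$ for some $w\in W$ with the right (abelian / non-canonical) properties.

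The main obstacle I anticipate is bookkeeping the distinction between floors that should be absorbed into the base RAAG $\GG'$ (a2)-non-abelian and b2)) and floors that remain as genuine tower floors, and in particular verifying that absorbing the former really does produce a coherent RAAG rather than just a group in $\cC$ — this requires checking that attaching the new vertices $\mbf x^l$, which are joined to $Y_l$ in $E_d$ and (when $\KK^\perp$ is directly decomposable) to $Y_l^\perp$ and to each other in $E_c$, never creates an induced $4$-cycle in the commutation graph, i.e. preserves chordality. This is where the hypothesis that $\KK$ is co-irreducible and $\KK^\perp$ non-abelian (so its centraliser in the RAAG is canonical, and $\lk(\KK^\perp)$ is a clique by coherence, cf. the discussion before Lemma \ref{lem:descentr}) does the work, and it is the one place where a careful combinatorial argument, rather than a direct appeal to an earlier black-box theorem, is needed.
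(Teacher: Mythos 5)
Your general induction-plus-casework framework is the right shape, and you correctly identify the role of Theorem \ref{thm:lift} for the a$'$2)/b1) floors and of Corollary \ref{cor:attSinC} for the c) floors. However, there are two concrete gaps that don't match what the paper does, and the second of them is where the real work of the proof lies.

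First, your treatment of a1) floors is wrong. You list ``a1) with $\KK^\perp$ non-abelian'' among the floors ``recorded in statement (1)'' --- but statement (1) explicitly excludes a1). More seriously, you propose to handle a1) with Theorem \ref{thm:lift} as a direct centraliser extension $G_{l-1}\ast_C(C\times A)$: but in the Basic floor with $\KK^\perp$ directly indecomposable, $E'_c(\Gamma_l)=\emptyset$, so $\langle x^l_1,\ldots,x^l_{m_l}\rangle$ is a \emph{free} group, not free abelian, and the hypotheses of Theorem \ref{thm:lift} fail. The paper instead cites the construction in \cite{CRK15} to note that a1) floors occur only at the bottom of the tower, so after the initial a1)-segment $G_0<\cdots<G_{k+1}$ the group $G_{k+1}$ is itself a coherent RAAG, and one simply resets $\GG$ to $G_{k+1}$ and assumes no a1) floors remain.

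Second, and more fundamentally, your plan for a2)-non-abelian and b2) floors does not close the induction. You propose to ``absorb'' such a floor into the base RAAG $\GG'$, producing a larger coherent RAAG $\GG''$. That works only if the a2)-non-abelian or b2) floor is built directly on top of the RAAG base. In general such a floor sits at level $l$ with a$'$2)/b1)/c) floors in between, and there is no statement in the paper (or in \cite{CRK15}) that it can simply be relocated. The paper's actual argument for both b2) and a2)-non-abelian is a floor-commutation lemma: it analyses the centraliser being amalgamated, shows (via the structure of centralisers in $\cC$-groups, cf.\ Lemma \ref{lem:centdesc} and Corollary \ref{cor:attSinC}) that the canonical generating set $Y$ of $C_{G_{l-1}}(\KK^\perp)$ actually lies in $G_{l-2}$, and then produces an explicit alternative sequence $G_{l-2}<G'_{l-1}<G_l$ in which the b2)/a2)-non-abelian step is performed first. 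Verifying that $G_l=G'_{l-1}\ast_DB$ (i.e.\ that the relative presentations agree) is the technical heart of the proof and requires the full strength of the description of $C_{S'}(r)$ in the intermediate group. Your proposal substitutes for this a vague chordality check, which is neither necessary nor sufficient here. Relatedly, the paper threads the auxiliary invariant \eqref{eq:CAP} ($C_{G_i}(Y)=C_{G_i}(u)$ for some $u\in\langle Y\rangle$) through the induction and must re-establish it at level $l$; your remark about ``the element being extended genuinely lies in $W$'' gestures at this but does not supply the inductive verification.
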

\begin{proof}
Let the tower consist of the sequence $\Ts_0,\ldots, \Ts_l$, where $\Ts_i=(G_i,\HH_i,\pi_i)$, for $1\le i\le l$. 
 
From Lemmas \ref{lem:ACprop} and \ref{lem:pcinC}, when $i=0$ the following properties are satisfied.
\begin{itemize}
  \item $G_i$ is in $\cC(X_i,W_i)$ and  
    \item
      if $Y$ is a subset of $X_i$ such that $\la Y\ra$ is  abelian, then there exists $u\in \la Y\ra$ such that 
  \begin{equation}\label{eq:CAP} C_{G_i}(Y)=C_{G_i}(u).\end{equation}
\end{itemize}
Assume that all towers of height at most $l-1$ over a coherent RAAG satisfy \eqref{eq:CAP} and say a floor $G_i\le G_{i+1}$ has type a$'$2) if it has type a2) with abelian centraliser amalgamated.
Note that from \cite[Proof of Theorem 7.1]{CRK15}, floors of type a1) occur only at the lowest levels of the tower. That is, we may assume that $G_i\le G_{i+1}$ is of type a1) for $0\le i\le k$, and that no further floors of type a1) occur in the construction. Then $G_{k+1}$ is a coherent RAAG, and replacing $\GG$ with $G_{k+1}$, we may assume there are no floors of type a1).  

If $G_{l-1}\le G_l$ is of type b1) then, since $u$ is a non-trivial block element of $\KK^\perp$, if $\KK^\perp$ is non-abelian, by Lemma \ref{lem:abedg} we have that $C_{G_{l-1}}(u)$ is abelian.
Notice that if $\KK^\perp$ is abelian, since homomorphisms $\varphi_i$ of the principal   family preserve disjoint commutation (see \cite[Lemma 6.29]{CRK15})  it follows that $\varphi_i(\KK^\perp)$ is cyclic. Hence, it follows that for all $x\in \KK^\perp$ one has  $C_{G_{l-1}}(x)=C_{G_{l-1}}(\KK^\perp)$. Therefore if $\KK^\perp$ is abelian, without loss of generality,  we can assume that $G_{l-1}\le G_l$ is of type a$'$2). Assuming then that $C_{G_{l-1}}(u)$ is abelian, from Theorem \ref{thm:lift}, $G_l$ is in $\cC$.

If $G_{l-1}\le G_l$ is of type a$'$2) with abelian centraliser  $C_{G_{l-1}}(\KK^\perp)$ then \eqref{eq:CAP} implies there is $u\in \KK^\perp$ such that $C_{G_{l-1}}(\KK^\perp)=C_{G_{l-1}}(u)$, so $G_l$ is in $\cC$, using Theorem \ref{thm:lift} again.

If $G_{l-1}\le G_l$ is of type c) then the surface $S$ corresponding to the right hand side of the graph of groups decomposition described in Lemma \ref{lem:prtower} part c)  has $m$ boundary components and genus $g$. We form a surface $S'$ from $S$ by attaching an $m+1$-punctured sphere to $S$, identifying   $m$ of the  punctures to the $m$ boundary components of $S$; so $S'$ has genus $g+m-1$ and one boundary component.
 Now $G_l$ may be obtained from $G_{l-1}$ by attaching the boundary component of $S'$ along a path representing the word $u_{2g+1}\cdots u_m$ in $G_{l-1}$.
  
 More formally, set $t=u_{2g+1}\cdots u_m \in G_{l-1}<G_l$. Observe that 
$$
G_l\simeq G_{l-1}*_{\langle z\rangle \times C_{G_{l-1}}(\KK^\perp)} (\langle x_1^l, \dots, x_{m_l}^l,v\mid V(x_1^l, \dots,x_{m_l}^l)\cdot v\rangle \times C_{\G_{l-1}}(\KK^\perp)),
$$
where $\langle z\rangle \simeq \BZ$, $z\mapsto t$ in $G_{l-1}$, $z\mapsto v$ in the second vertex group
and $V$ is a quadratic word in the $x_i^{l}$.  From the definition we have $C_{G_{l-1}}(t) =C_B(v)=\la v\ra \times C_{\G_{l-1}}(\KK^\perp)$, where $B$ is the right hand vertex group, and since $G_{l-1}\in \cC$, we have $Z_{G_{l-2}}(t)=\la t\ra$ and $O_{G_{l-1}}(t)$. Therefore we may write
\begin{equation}\label{eq:typec}
G_l=G_{l-1}\ast_{C_{G_{l-1}}(t)}(\Sigma \times O_{G_{l-1}}(t)),
\end{equation}
where $\Sigma=\langle x_1^l, \dots, x_{m_l}^l,v\mid V(x_1^l, \dots,x_{m_l}^l)\cdot v\rangle$. 
Therefore, from  Corollary \ref{cor:attSinC}, $G_l$ is in $\cC$. 

If $G_{l-1}\le G_l$ is of type b2), then $G_l=G_{l-1}\ast_{C}(C\times A)$, where $C=C_{G_{l-1}}(\KK^\perp)$, $\KK^\perp$ is non-abelian and $A$ is the free abelian group with basis $\mbf x^l$. From Lemma \ref{lem:abedg}, $C$ is finitely generated torsion-free abelian. Let $Y$ be a basis for $C$. Then, for all $y\in Y$,  $C_{G_{l-1}}(y)$ contains $\KK^\perp$, so is non-abelian and thus canonical, and $y\in K_{G_{l-1}}(y)$, which is contained in the centre of $C_{G_{l-1}}(y)$, so every generator of $K_{G_{l-1}}(y)$ commutes with $\KK^\perp$. It follows that $Y$ is a subset of the generators of $G_{l-1}$. We have $G_{l-2}\le G_{l-1}$ of type a$'$2), b1) or c) whence, from the above, $G_{l-1}=G_{l-2}\ast_D B$, where $D=C_{G_{l-2}}(u)$, for some $u\in G_{l-2}$, and $B=\la D,\mbf x^{l-1}\ra$. If $y\in  \mbf x^{l-1}$ then $C_{G_{l-1}}(y)$ is abelian, in each case, a contradiction. Hence
$y\in G_{l-2}$. Therefore $Y\subseteq G_{l-2}$ and $C=C_{G_{l-1}}(\KK^\perp)=\la Y\ra=C_{G_{l-2}}(\KK^\perp)$.
If $u\in C_{G_{l-2}}(\KK^\perp)$, then $\KK^\perp \le C_{G_{l-2}}(u)$, which is abelian. As $\KK^\perp$ is non-abelian it follows that $C_{G'_{l-1}}(u)=C_{G_{G_{l-2}}}(u)$, from which in turn we obtain $G_l=G'_{l-1}\ast_{D}B$. 
Then $G'_{l-1}$ satisfies the inductive hypotheses and  $G'_{l-1}\le G_l$ is of type a$'$2), b1) or c),
so $G_l$ is in $\cC$.

If $G_{l-1}\le G_l$ is of type a2) with non-abelian centraliser we shall show that we can replace the sequence of extensions $G_{l-2}<G_{l-1}<G_l$ by a sequence $G_{l-2}<G'_{l-1}<G_l$, where $G'_{l-1}<G_l$ is of type a$'$2), b1) or c) and $G'_{l-1}$ is in $\cC$.
To simplify notation write $R=G_{l-2}$, $S=G_{l-1}$ and $T=G_l$, so $R$ and $S$ are in $\cC$, say $R$ is in $\cC(X_R,W_R)$
and $S$ is in $\cC(X_S,W_S)$, $R\le S$ is of   type a$'$2), b1) or c) and $S\le T$ is of type a2) with non-abelian centraliser. Then (from the discussion above) there are elements $r\in R$ and $s\in S$ such that either
\begin{enumerate}[label=Case (\roman*)]
\item $S=R\ast_{C_R(r)}(C_R(r)\times A)$\label{it:b1} or
\item $S=R\ast_{C_R(r)}(O_R(r)\times \Sigma)$\label{it:c},
\end{enumerate}
where $A$ is free abelian with basis $\mbf x_A$ and $\Sigma$ is the appropriate analogue of \eqref{eq:typec}: generated by $\mbf x_A$ and $r$,  with $V$ a quadratic word in the generators $\mbf x_A$.
From Lemma \ref{lem:prtower}, $T=S\ast_{C_S(\KK^\perp)}(C_S(\KK^\perp)\times B)$, for some co-irreducible subgroup $\KK$ of $H_{l-1}$ such that
$\KK^\perp$ is abelian, $C_S(\KK^\perp)$ is non-abelian and $B$ is free abelian with basis $\mbf x_B$. From property \eqref{eq:CAP}, there exists $s\in S$ such that $C_S(s)=C_S(\KK^\perp)$, so 
\[T=S\ast_{C_S(s)}(C_S(s)\times B),\]
and, without loss of generality, we may assume $s\in W_S$.

Next define
\[S'=R\ast_{C_R(s)}(C_R(s)\times B)\]
and  either
\begin{enumerate}[label=(\roman*)]
\item $T'=S'\ast_{C_{S'}(r)}(C_{S'}(r)\times A)$, if $S$ is given by \ref{it:b1} above, and 
\item $T'=S'\ast_{C_{S'}(r)}(O_{S'}(r)\times \Sigma)$, if $S$ is given by \ref{it:c}.
\end{enumerate}
The inductive hypothesis implies that $S'\in \cC$ and satisfies condition \eqref{eq:CAP} and so, 
from Theorem \ref{thm:lift} or Corollary \ref{cor:attSinC}, $T' \in \cC$. 

 As $C_S(s)$ is non-abelian, it follows from Lemma \ref{lem:centdesc} or the proof of Corollary \ref{cor:attSinC}
 that $s\in R$ and either $s\notin C_R(r)$ and $C_S(s)=C_R(s)$ or $s\in C_R(r)$ and
$C_S(s)=\Z_R(s)\times\la O_R(s), B\ra$.
If $s\notin C_R(r)$ then $r\notin C_R(s)$ so $C_{S'}(r)=C_R(r)$ and it follows, in the case when $S$ is given by \ref{it:b1}, that $T=T'$.
In \ref{it:c}, we have $C_R(r)=Z_R(r)\times O_R(r)$, where $Z_R(r)$ is cyclic and non-canonical. If $r\notin C_{R}(s)$ then $C_{S'}(r)=C_R(r)$ and so $C_{S'}(r)=Z_S'(r)\times O_{S'}(r)$ with $O_{S'}(r)=O_R(r)$. Again this implies $T=T'$. 

Assume then that $s\in C_R(r)$ and $C_S(s)=\Z_R(s)\times\la O_R(s), B\ra$.

As $S'\in \cC$, we have $C_{S'}(r)=Z_{S'}(r)\times O_{S'}(r)$. As $r\in C_R(s)$, we have $\mbf x_B\subseteq C_{S'}(r)$. Hence, if $C_{S'}(r)$ is abelian then it is contained in $C_R(s)\times B$, so
$C_{S'}(r)=C_R(r)\times B$. Therefore, from \ref{it:C6},  $r\in Z_{S'}(r)$ and $O_R(r)\subseteq O_{S'}(r)$. As both $O_R(r)$ and $O_{S'}(r)$ are canonical and $X_{S'}=X_R\cup \mbf x_B$, we have $O_{S'}(r)=\la O_R(r),\mbf x_B\ra$. Moreover, from Remark \ref{rem:ZO}\ref{it:ZO1}, $Z_R(r)=Z_{S'}(r)$, so $C_{S'}(r)=Z_R(r)\times \la O_R(r),\mbf x_B\ra$.

Now  we have relative presentations $S=\la R, A\,|\, [C_R(r),\mbf x_A]\ra$, in \ref{it:b1}, and 
$S=\la R, A\,|\, [O_R(r),\mbf x_A],V\cdot r\ra$, in \ref{it:c}, whence 
\begin{enumerate}
\item[\ref{it:b1}] $T=\la R, A, B\,|\, [C_R(r),\mbf x_A],[\Z_R(s),\mbf x_B],[O_R(s),\mbf x_B],[\mbf x_A,\mbf x_B]\ra$ and
\item[\ref{it:c}] $T=\la R, A, B\,|\, [O_R(r),\mbf x_A],V\cdot r,[\Z_R(s),\mbf x_B],[O_R(s),\mbf x_B],[\mbf x_A,\mbf x_B]\ra$.  
\end{enumerate}
Also, $S'$ has relative presentation $S'=\la R,B\,|\,[\Z_R(s),\mbf x_B],[O_R(s),\mbf x_B]\ra$, and using the description of $C_{S'}(r)$ above, in each case we see that $T'$ and $T$ have the same relative presentation. Thus $T=T'$ and $T$ is in $\cC$. 

Therefore, in all cases $G_l$ is in $\cC$ and it remains to show that $G_l$ satisfies condition \eqref{eq:CAP}.  To simplify notation we use the notation above with  $S=G_{l-1}$ and $T=G_l$, where $S$ is generated by $X_S$ and $T$ is generated by $X_T=X_S\cup \mbf x_B$. From the above we may now assume that the floor $S< T$ is of type a$'$2), b1) or c). In all cases we have
$T=S\ast_{C}D$, where $C=C_S(s)$, for some $s\in S$ and $D$ is either of the form $D=C\times B$, or $D=O_s(s)\times \Sigma$, where $B$ is free abelian with basis $\mbf x_B$ and $\Sigma$ is a surface group generated by $\mbf x_B$ and $s$.
Assume that $Y\subseteq X_T$ is such that
$\la Y\ra$ is abelian. If $Y\nsubseteq X_S$ then $Y$ contains an element $z$ of $\mbf x_B$, so $C_T(Y)\le D$. If $S<T$ is not of type c) then, as $\la Y\ra$ and $C$ are abelian, $C_T(Y)=C\times B=C_T(z)$, as required. If $S<T$ is of type c) then,
$Y\cap \mbf x_B=\{z\}$ (as $\la Y\ra$ is abelian) and $C_T(Y)=<z>\times O_S(s)=C_T(z)$ again. Hence we may assume that
$Y\subseteq X_S$.  If $Y\nsubseteq C$ then $C_T(Y)=C_S(Y)$, and we have $u\in \la Y\ra$ such that $C_S(Y)=C_S(u)$, from \eqref{eq:CAP}
in $S$. If $u\in C$ then $C_T(u)$ contains elements outside $S$, a contradiction, so $u\notin C$ and $C_T(u)=C_S(u)=C_T(Y)$.
This leaves the case $Y\subseteq C_S(s)$. In this case there exists $u\in \la Y\ra$ such the $C_S(Y)=C_s(u)$ and
then $C_T(Y)\le C_T(u)$. As in the proof of Lemma \ref{lem:centdesc} and Corollary \ref{cor:attSinC},
if $C_S(u)$ is abelian then $C_T(u)=C_T(s)$ is abelian, in which case $C_T(s)\le C_T(Y)$, so $C_T(Y)=C_T(u)$.
Otherwise $C_T(u)$ is non-abelian and $C_T(u)=\Z_S(u)\times \la O_S(u),\mbf x_B\ra$. In this case if $g\in C_S(u)$, then
$g\in C_S(Y)\le C_T(Y)$, by definition of $u$, and $\mbf x_B\subseteq C_T(Y)$, so $C_T(u)=C_T(Y)$ again. Therefore, in all cases \eqref{eq:CAP} holds. 
\end{proof}

\begin{thm} \label{thm:abelianedge}
  Let $\GG$ be a coherent RAAG and let $\Ts=(G,\HH,\pi)$ be a graph tower associated to a limit group $L$ over $\GG$.
  Then $G$ has a graph of groups decomposition {\rm(}in the same generating set as $G$ {\rm)} where:
\begin{itemize}
\item the graph of the decomposition is a tree;
\item edge groups are finitely generated free abelian;
\item vertex groups are either graph towers of lower height, or a finitely generated free abelian group or the direct product of a  finitely generated free abelian group and a non-exceptional surface group.
\end{itemize}
\end{thm}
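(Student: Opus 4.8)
The plan is to induct on the height $l$ of the tower. At the top level I would first invoke Lemma \ref{lem:pctowerC} to put the tower in a normal form, and then simply peel off a single top floor, reading off the required splitting from Lemma \ref{lem:prtower}; the subtower below the peeled floor is then an admissible vertex group, being a graph tower of strictly smaller height, so no further recursion is needed inside the proof of this statement.

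More precisely, by Lemma \ref{lem:pctowerC} I may assume $G=G_k$ is a graph tower of height $k\le l$ over a coherent RAAG $\GG'$ (obtained from $\GG$ by floors of type a2) with non‑abelian amalgamated centraliser and of type b2)), in which every floor over $\GG'$ is of type a2) with an \emph{abelian} amalgamated centraliser, of type b1), or of type c); moreover each intermediate group $G_i$ is a finitely generated group in $\cC$ with respect to a finite generating set, and $G_i\le G_{i+1}$ is an amalgamated product $G_i\ast_{C} D$ as in Lemma \ref{lem:prtower}. In the base case $k=0$ we have $G=\GG'=\GG(\Gamma')$ with $\Gamma'$ chordal. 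Here I would use the clique‑tree structure of chordal graphs: a connected chordal graph that is not a single clique has a minimal vertex separator which is a clique, and removing it yields at least two components, while a disconnected graph gives a free product. Iterating this yields a decomposition of $\GG'$, over its canonical generating set (which is part of the generating set of $G$), as the fundamental group of a tree of groups whose edge groups are the finitely generated free abelian groups on clique intersections (the trivial group, a rank‑$0$ free abelian group, on the edges coming from free‑product splittings) and whose vertex groups are the finitely generated free abelian groups on maximal cliques. This establishes the statement when $k=0$.

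For the inductive step $k\ge 1$, peel off the top floor, so that $G=G_{k-1}\ast_{C}V$ with $G_{k-1}$ a graph tower of height $k-1$ (hence of height strictly less than that of $\Ts$) associated to the limit group $G_{k-1}$ over $\GG'$, which is an admissible vertex group. In the type a2)‑abelian and type b1) cases $C=C_{G_{k-1}}(u)$ for a suitable $u\in G_{k-1}$ with $C_{G_{k-1}}(u)$ abelian, and $V=C\times \BZ^{m_k}$; in the type c) case, using the rewriting \eqref{eq:typec}, $C=C_{G_{k-1}}(t)=\langle t\rangle\times C_{G_{k-1}}(\KK^\perp)$ with $\KK^\perp$ non‑abelian and $V=\Sigma\times O_{G_{k-1}}(t)=\Sigma\times C_{G_{k-1}}(\KK^\perp)$. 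Since $G_{k-1}$ is a finitely generated group in $\cC$, every abelian centraliser of $G_{k-1}$ is finitely generated free abelian: by \ref{it:C6}\ref{it:C6a} it is conjugate to some $C(w)=Z(w)\times O(w)$ with $Z(w)$ trivial or infinite cyclic and $O(w)$ a canonical subgroup (\ref{it:C6}\ref{it:C6b}), hence finitely generated on a finite generating set, and torsion‑freeness (\ref{it:C1}) makes it free abelian; in the type c) case $C_{G_{k-1}}(\KK^\perp)$ is finitely generated free abelian by Lemma \ref{lem:abedg}. Thus $C$ is finitely generated free abelian in all three cases, $V$ is finitely generated free abelian in the a2)‑abelian and b1) cases, and in the c) case condition $\circledast$ together with the modified definition of quadratic floor forces $\Sigma$ to be a non‑exceptional surface group, so $V$ is the direct product of a non‑exceptional surface group with a finitely generated free abelian group. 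The underlying graph is a single edge, hence a tree, and the splitting is over the generating set of $G$.

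The step I expect to be the main obstacle is the bookkeeping in this normalisation‑and‑peeling argument: extracting cleanly from Lemma \ref{lem:pctowerC} that one may assume all top floors are of the three good types while keeping control of the generating set, and then verifying in each of the remaining cases that the amalgamated subgroup $C$ and the non‑tower vertex group $V$ genuinely have the asserted structure — in particular identifying the edge and vertex groups in the quadratic case through \eqref{eq:typec}, confirming that $\Sigma$ is non‑exceptional, and confirming finite generation of all the abelian centralisers that occur as edge groups. These are all consequences of the structure theory of the class $\cC$ and of graph towers developed earlier in the paper, but stitching them together correctly is the delicate part.
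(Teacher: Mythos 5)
Your proposal is correct and follows essentially the same approach as the paper's proof: normalise the tower via Lemma \ref{lem:pctowerC}, induct on height, handle the base case by the clique-tree decomposition of a chordal RAAG (the paper cites \cite{droms87a}), and in the inductive step peel off the top floor and read the splitting from Lemma \ref{lem:prtower}, using the rewriting \eqref{eq:typec} for the quadratic case. Your writeup is somewhat more explicit than the paper about verifying finite generation of the abelian edge groups via \ref{it:C6} and Lemma \ref{lem:abedg}, but the argument is the same.
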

\begin{proof}
Note that by Lemma \ref{lem:pctowerC}, without loss of generality we can assume that all the floors of $G$  are of type a2) with an abelian  centraliser amalgamated; or of type b1); or of type c). 
We prove the statement by induction on the height $l$ of the graph tower.
If the height is 0, then $G=\GG$. In this case, $G$ is a coherent RAAG and admits a graph of groups decomposition (in the same set of generators) where all the vertex groups and edge groups are finitely generated torsion-free abelian groups, see \cite{droms87a}. 

Assume that $l>0$ and let $\Ts=(G_l,\HH_l,\pi_l)$. 
If $G_{l-1}<G_l$ is of type c) then $G_l$ has a splitting as in \eqref{eq:typec}, and this decomposition is of the required type (on the original generating set). 

If  $G_{l-1}<G_l$ is of type b1) we may assume, as in the proof  of Lemma \ref{lem:pctowerC}, that $C_{G_{l-1}}(u)$ is abelian, and in this case, as in the case when $G_{l-1}<G_l$ is of type a2) with abelian centraliser, Lemma \ref{lem:prtower} exhibits an appropriate decomposition.
\end{proof}

\begin{cor}\label{cor:towers are coherent}
  Let $\GG$ be a coherent RAAG and let $\Ts=(G,\HH,\pi)$ be a graph tower associated to a limit group $L$ over $\GG$.
  Then $G$ is coherent.
\end{cor}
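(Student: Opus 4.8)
The plan is to prove this by induction on the height $l$ of the graph tower $\Ts$, feeding off the structural decomposition of Theorem \ref{thm:abelianedge}. When $l=0$ the group $G$ is just the coherent RAAG $\GG$, so there is nothing to do. For the inductive step, I would first invoke Lemma \ref{lem:pctowerC} to replace $\GG$ by a coherent RAAG $\GG'$ over which every floor of $G$ is of type a2) with abelian centraliser amalgamated, of type b1), or of type c); Theorem \ref{thm:abelianedge} then presents $G$ as the fundamental group of a finite graph of groups whose underlying graph is a tree, whose edge groups are finitely generated free abelian, and whose vertex groups are either graph towers of height strictly less than $l$, or finitely generated free abelian groups, or direct products of a finitely generated free abelian group with a non-exceptional surface group.

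The next step is the bookkeeping: all vertex groups of this decomposition are coherent. Graph towers of smaller height are coherent by the inductive hypothesis; finitely generated free abelian groups are coherent; a non-exceptional surface group is either free or the fundamental group of a (possibly bounded) surface and is therefore coherent, so by Lemma \ref{lem:coh} the direct product of such a group with a finitely generated free abelian group is again coherent. Hence every vertex group is coherent and every edge group is finitely generated (indeed finitely generated free abelian, and in particular Noetherian).

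It then remains to deduce that $G$ itself is coherent, knowing that it is the fundamental group of a finite graph of groups with coherent vertex groups and finitely generated abelian edge groups; this is the heart of the matter and the main obstacle. Here I would exploit the precise form of the decomposition coming from Lemma \ref{lem:prtower}: in each case $G$ is a single amalgamated free product $G = G' \ast_{C} B$, where $G'$ is a graph tower of lower height, $C$ is finitely generated free abelian, and $B$ is either $C \times \BZ^{m}$ (types a2), b1)) or the direct product of a non-exceptional surface group with a finitely generated free abelian group (type c)); note $C$ is a retract of $B$, and by the weak malnormality property \ref{it:C7} together with the description of abelian centralisers in the class $\cC$, $C$ is malnormal in $G'$. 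I would then establish that coherence is preserved under amalgamation over a finitely generated abelian subgroup in this situation: given a finitely generated $H \le G$, it acts on the Bass--Serre tree with finitely generated edge stabilisers $H \cap C^{g}$ (subgroups of the Noetherian group $C$), and one controls the vertex stabilisers $H \cap (G')^{g}$, $H \cap B^{g}$ and the quotient graph using the malnormality of $C$ in $G'$ and the retraction $B \to C$, so that $H$ is built as a finite graph of groups with finitely presented vertex groups and finitely generated edge groups, hence is finitely presented. This is the classical argument for coherence of limit groups over free groups, and carrying it out carefully in the present (CSA-like) setting, or citing an appropriate closure result, completes the induction and the proof.
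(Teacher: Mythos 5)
Your strategy coincides with the paper's: induct on the height of the tower, invoke Theorem \ref{thm:abelianedge} (together with Lemma \ref{lem:pctowerC} to reduce to the good floor types) to decompose $G$ as an amalgam $G'\ast_C B$ with $G'$ a tower of smaller height, $C$ finitely generated free abelian, and $B$ either free abelian or the direct product of a non-exceptional surface group with a free abelian group; the vertex groups are then coherent by the inductive hypothesis and Lemma \ref{lem:coh}.

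The one place you diverge is the final step, which you correctly identify as "the heart of the matter." The paper simply cites \cite[Lemma 4.8]{Wilton08} for the fact that an amalgamated product of coherent groups over a finitely generated abelian subgroup is again coherent, whereas you sketch a Bass--Serre argument. Two comments on that sketch. First, it is not true in this setting that $C$ is malnormal in $G'$: condition \ref{it:C7} gives only a weak form of malnormality, and indeed if $C = C_{G'}(u) = Z(u)\times O(u)$ with $O(u)$ nontrivial, then any nontrivial $a \in O(u)$ and $h\in C_{G'}(a)\setminus C$ give $a, a^h \in C$ with $a\neq 1$, so $C\cap C^h\neq 1$. Second, and fortunately, malnormality is irrelevant to the closure result: what the argument genuinely needs is that the edge group $C$ is \emph{Noetherian} (slender), so that all intersections $H\cap C^g$ are finitely generated and the induced graph-of-groups decomposition of a finitely generated $H\le G$ has finitely generated edge groups. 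Finitely generated abelian groups are Noetherian, and that is exactly the hypothesis under which Wilton's lemma applies. If you drop the malnormality claim and either cite Wilton's lemma or run the Bass--Serre argument using Noetherianity alone, the proposal matches the paper's proof.
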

\begin{proof}
We use induction on the height $l$ of the graph tower. If the height is 0, then $G=\GG$ and hence  is coherent.

Let $l>0$ and $G_l=G$. By Theorem \ref{thm:abelianedge}, the group $G_l$ admits a decomposition as an amalgamated product with finitely generated abelian edge group. By the induction hypothesis, the graph tower $G_{l-1}$ is coherent. Furthermore, in all the cases, the other vertex group is a direct product of a coherent group and an abelian group and hence, by Lemma \ref{lem:coh}, vertex groups of the decomposition of $G_l$ as an amalgamated product are coherent. 

Therefore, since an amalgamated product of coherent groups over a finitely generated abelian subgroup is coherent
(see \cite[Lemma 4.8]{Wilton08}) it follows that $G_l$ is coherent.
\end{proof}

\begin{cor}\label{cor:limit groups are fp}
Limit groups over coherent RAAGs are coherent. In particular, they are finitely presented.
\end{cor}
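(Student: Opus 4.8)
The plan is to deduce this immediately from the coherence of graph towers, Corollary \ref{cor:towers are coherent}, together with the structure theorem for limit groups over RAAGs from \cite{CRK15}. First I would recall that, by \cite[Theorems 7.1 and 8.1]{CRK15} (as recorded in Remark \ref{rem:prop_towers}), every limit group $L$ over a coherent RAAG $\GG$ embeds into a graph tower $G_l$ associated to $L$ over $\GG$. By Corollary \ref{cor:towers are coherent}, the group $G_l$ is coherent.

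The second step is the elementary observation that coherence passes to subgroups: if $G_l$ is coherent and $H\le G_l$, then any finitely generated subgroup of $H$ is a finitely generated subgroup of $G_l$, hence finitely presented, so $H$ is coherent. Applying this to the image of $L$ in $G_l$, we conclude that $L$ is coherent. Finally, since $L$ is by definition finitely generated, and we have just shown it is coherent, $L$ is finitely presented, which gives the ``in particular'' clause.

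I do not expect any genuine obstacle here: all of the substantive work has already been done in Section \ref{sec:tower} — in showing that graph towers lie in $\cC$ (Lemma \ref{lem:pctowerC}), in the hierarchical decomposition of Theorem \ref{thm:abelianedge}, and in its consequence Corollary \ref{cor:towers are coherent}, whose proof combines Lemma \ref{lem:coh} with the fact \cite[Lemma 4.8]{Wilton08} that an amalgamated product of coherent groups over a finitely generated abelian subgroup is coherent. The only external input is the embedding of $L$ into a graph tower, which is not reproved here but is precisely the content recalled in Remark \ref{rem:prop_towers}; everything else in the argument is bookkeeping.
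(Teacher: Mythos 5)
Your proposal is correct and follows essentially the same route as the paper's proof: embed $L$ into a graph tower via Remark \ref{rem:prop_towers}, invoke Corollary \ref{cor:towers are coherent} for coherence of the tower, observe that coherence passes to subgroups, and conclude finite presentation from the finite generation of limit groups. The only difference is that you spell out explicitly that coherence is inherited by subgroups, which the paper leaves implicit.
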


\begin{proof}
If $L$ is a limit group over a coherent RAAG $\GG$, then $L$ is a subgroup of a graph tower $G$ over $\GG$ by Remark \ref{rem:prop_towers}, and $G$ is coherent by Corollary \ref{cor:towers are coherent}, and hence the result follows from the fact that limit groups are finitely generated by definition.
\end{proof}

\section{Characterisation of limit groups over coherent RAAGs}\label{sec:characterisation subgroups ICE}

We are now in a position to show that limit groups over coherent RAAGs are exactly the finitely generated subgroups
of their $\BZ[t]$-completions.

\begin{thm} \label{thm:charlim}
Let $\GG$ be a coherent RAAG. Then a finitely generated group $G$ is a limit group over $\GG$ if and only if $G$ is a subgroup of the $\BZ[t]$-completion $\GG^{\BZ[t]}$ of $\GG$.
\end{thm}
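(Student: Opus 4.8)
The plan is to prove the two implications separately, using the results already assembled in the paper. For the ``only if'' direction, suppose $G$ is a limit group over the coherent RAAG $\GG$. By Remark \ref{rem:prop_towers} there is a graph tower $\Ts=(G_l,\HH_l,\pi_l)$ associated to $G$ together with an embedding $G\hookrightarrow G_l$, so it suffices to embed $G_l$ into $\GG^{\BZ[t]}$. I would do this by induction on the height $l$ of the tower, using the refined structural description obtained in Lemma \ref{lem:pctowerC} and Theorem \ref{thm:abelianedge}: after replacing $\GG$ by the coherent RAAG $\GG'$ produced in Lemma \ref{lem:pctowerC} (obtained from $\GG$ by extending centralisers of canonical generators, which embeds in $\GG^{\BZ[t]}$ by Corollary \ref{cor:expice} since $\GG'$ is built by extensions of centralisers of the type allowed in the ICE construction), every floor of $G_l$ over $\GG'$ is of type a$'$2), b1), or c). Each such floor is, by Lemma \ref{lem:prtower} together with \eqref{eq:typec}, a free product with amalgamation $G_{j-1}\ast_{C_{G_{j-1}}(t)} (D)$ where $D$ is either $C_{G_{j-1}}(t)\times B$ with $B$ free abelian (cases a$'$2), b1)) or $\Sigma\times O_{G_{j-1}}(t)$ with $\Sigma$ a surface group (case c)), and where $C_{G_{j-1}}(t)$ is abelian (using Lemma \ref{lem:abedg} and property \eqref{eq:CAP}). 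By the inductive hypothesis $G_{j-1}$ embeds into $\GG^{\BZ[t]}$, and $C_{G_{j-1}}(t)$ is a finitely generated free abelian group; in $\GG^{\BZ[t]}$ the image of the root element $t$ has a centraliser which is a $\BZ[t]$-module of infinite rank, so there is room to embed the free abelian direct factor $B$, respectively to realise the surface relation $V\cdot r$ (for case c) one invokes the standard fact, as in \cite{KM12,CRK15}, that a surface group with boundary word mapping to a root element embeds via a retraction onto a free subgroup whose centraliser has been extended). One then checks, using the normal form in the amalgamated product and the fact that $\GG^{\BZ[t]}$ is CSA-like on abelian centralisers (conditions \ref{it:C7}, \ref{it:C8}), that the induced map $G_j\to\GG^{\BZ[t]}$ is injective. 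This is the main obstacle: verifying that the amalgam embeds, i.e. that the two homomorphisms on $G_{j-1}$ and on $D$ agree on the edge group and that no reduced word of positive syllable length maps to the identity.

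For the ``if'' direction, suppose $G$ is a finitely generated subgroup of $\GG^{\BZ[t]}$. By Corollary \ref{cor:expice}, $\GG^{\BZ[t]}$ is discriminated by $\GG$; hence $G$, as a subgroup, is also discriminated by $\GG$, so $G$ is fully residually $\GG$, and being finitely generated it is by definition a limit group over $\GG$. (Equivalently, since $\GG$ is linear hence equationally Noetherian, $G$ is a finitely generated model of the universal theory of $\GG$, see Section \ref{sec:limit}.) This direction is immediate from the results already in hand.

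The bulk of the work is therefore the induction in the ``only if'' direction, and within it the amalgamation-embedding step. I expect the key technical points to be: (i) identifying, inside $\GG^{\BZ[t]}$, for a given root block element $t$ with abelian centraliser, a copy of the requisite free abelian group $B$ (or surface group $\Sigma$) with the right intersection properties with $C_{\GG^{\BZ[t]}}(t)$ — this uses that $\BZ[t]$ discriminates $\BZ$ and that centralisers in $\GG^{\BZ[t]}$ are $\BZ[t]$-modules, mirroring the construction of the ICE chain \eqref{eq:extcen}; and (ii) the injectivity check for the amalgamated product, which follows from the description of centralisers in free products with amalgamation (cases \ref{it:colli}--\ref{it:colliii} in the proof of Theorem \ref{thm:lift}) together with malnormality-type condition \ref{it:C7}. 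Both ingredients are essentially already present in the proofs of Theorem \ref{thm:lift} and Corollary \ref{cor:attSinC}, so the argument amounts to running those constructions not abstractly but inside the fixed ambient group $\GG^{\BZ[t]}$, floor by floor, and tracking the images.
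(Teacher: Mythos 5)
Your proposal takes essentially the same route as the paper's proof: discrimination passing to subgroups for the ``if'' direction, and for the ``only if'' direction a reduction to graph towers via \cite{CRK15} and Lemma \ref{lem:pctowerC}, followed by induction on the height of the tower, with the quadratic floor handled via the embedding $T\hookrightarrow T^*=G*_C(C\times\langle y\rangle)$ of Corollary \ref{cor:attSinC}, which reduces it to a type a$'$2) floor. The paper dispatches the amalgam-embedding step more tersely than you do (declaring the cases with abelian centraliser ``obvious''), but the ingredients and overall structure of the argument coincide.
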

\begin{proof}
It follows from Theorem \ref{thm:tensor}, that $\GG^{\BZ[t]}$ is discriminated by $\GG$. Since discrimination passes to subgroups, it follows that every finitely generated subgroup of $\GG^{\BZ[t]}$ is a limit group over $\GG$.

Let us prove the converse. By \cite[Theorem 8.1]{CRK15}, it follows that any limit group over $\GG$ is a subgroup of a graph tower $\Ts$ over $\GG$ of height $l$. Hence, we are left to show that graph towers over $\GG$ embed into  $\GG^{\BZ[t]}$. We proceed by induction on the height $l$ of the tower $\Ts=(G_l,\HH_l,\pi_l)$. Assume, by induction, that graph towers over $\GG$ of height $l-1$ embed into  $\GG^{\BZ[t]}$. In the light of Lemma \ref{lem:pctowerC}, without loss of generality we may assume that all the floors of $G$ are of type a2) with an abelian  centraliser; or of type b1); or of type c).

Suppose first that the floor $G_{l-1}<G_l$ is of type a2), where $C_{G_{l-1}}(\KK^\perp)$ is abelian. In this case, the statement is obvious, since $G_l=G_{l-1}*_A B$, where $A$ and $B$ are finitely generated free abelian groups.
Similarly, if the floor $G_{l-1}<G_l$ is of type b1) then $G_l$ is an extension of an abelian centraliser, hence the statement follows in this case. 

We are left to consider the case when the floor $G_{l-1}<G_l$ is of type c). 
In this case, as in the proof of Theorem \ref{thm:abelianedge}, $G_l$ has the decomposition \eqref{eq:typec} and, as in Corollary \ref{cor:attSinC}, $G_l$ embeds in the group $T^*=\langle G, y\mid [C,y]\rangle=G*_C(C\times \la y\ra)$ of \eqref{eq:T*}, with $G=G_{l-1}$ and $C=C_{G_{l-1}}(u)$. 

As $G_{l-1}<T^*$ is of type a2) with abelian centraliser, the statement follows.
\end{proof}

\section{Limit groups over coherent RAAGs are $CAT(0)$} \label{sec:AB}
A group that acts properly discontinuously and co-compactly  by isometries on a proper $CAT(0)$ space is called a $CAT(0)$ \emph{group}.

In this section we prove that limit groups over coherent RAAGs are $CAT(0)$ and deduce that the conjugacy problem is decidable for these groups. We recall the following two definitions, see \cite{AB06}.

\begin{defn}
	Let $X$ be a connected locally $CAT(0)$ space. A subspace $C$ of $X$ is a {\it core} of $X$ if it is compact, locally $CAT(0)$ (with respect to the induced path metric) and the inclusion $C\to X$ is a homotopy equivalence.
\end{defn}

\begin{defn} \label{def:gc}
  A connected locally $CAT(0)$ space $Y$ is geometrically coherent if for every covering space $X \to Y$ with $X$ connected and $\pi_1(X)$ finitely generated and every compact subset $K \subset X$ it follows that $X$ contains a core $C \supset K$.
\end{defn}

If $G$ is the fundamental group of a geometrically coherent space $Y$ and $H$ is a finitely generated subgroup of $G$ then,
taking
a covering space $X\to Y$ with $\pi_1(X)=H$ and a core $C$ of $X$, by definition $\pi_1(C)=H$ and $C$ is locally $CAT(0)$,
so $H$ is finitely presented and the Cartan-Hadamard Theorem (see \cite{BridsonHaefliger} for example) implies that $H$ is
$CAT(0)$.

\begin{expl}[see Example 2.4 in \cite{AB06}] \label{expl:abelian}
	A flat torus $T=\mathbb{R}^n/\Lambda$ is geometrically coherent. Indeed, any connected covering space admits a metric splitting $T'\times E$ where $T'$ is a flat torus or a point and $E$
	is Euclidean space $\mathbb{R}^k$ or a point. Thus $T'\times\{point\}$ is a (convex) core.
\end{expl}

\begin{thm}\cite[Theorem 2.6]{AB06} \label{thm:BA}
	Let $M$ and $N$ be geometrically coherent locally $CAT(0)$ spaces. Let $Y$ be the space obtained from the disjoint union of $M$, $N$ and a finite collection of $n$-tori $T_i$
	$$
	M \sqcup\bigsqcup \limits_{i} T_i \times [0,1]\sqcup N
	$$
	by gluing $T_i \times \{0\}$ to a convex space in $M$ by a local isometry and gluing $T_i \times \{1\}$ to a convex space in $N$ by a local isometry.
	
	Then $Y$ is geometrically coherent.
\end{thm}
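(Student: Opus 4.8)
Since this is \cite[Theorem 2.6]{AB06}, the plan is to recall the covering-space argument of Alibegovi\v{c}--Bestvina. The starting point is to view $Y$ as a graph of spaces with two vertex spaces, $M$ and $N$, and one edge space for each torus $T_i$, the edge maps being the given local isometries of $T_i\times\{0\}$ and $T_i\times\{1\}$ onto convex subspaces of $M$ and $N$; by the gluing theorem for locally $CAT(0)$ spaces this structure already makes $Y$ locally $CAT(0)$. The feature that makes the tori tractable is Example \ref{expl:abelian} together with the fact that \emph{every} connected covering of a flat torus $T_i$ is a metric product of a flat torus (possibly a point) with a Euclidean space (possibly a point), hence is geometrically coherent and admits a \emph{convex} core.

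Given a connected covering $p\colon X\to Y$ with $\pi_1(X)$ finitely generated and a compact set $K\subset X$, I would first enlarge $K$ so that it is connected and $\pi_1(K)\to\pi_1(X)$ is onto. Pulling the graph-of-spaces structure of $Y$ back along $p$ makes $X$ a graph of spaces whose vertex spaces are connected coverings of $M$ or of $N$, whose edge spaces are connected coverings of the $T_i$, and whose underlying graph is in general infinite. Since $K$ is compact it meets only finitely many vertex and edge spaces, so it sits inside a finite connected subgraph of spaces $X_0\subset X$. The heart of the matter is to enlarge $X_0$ to a finite subgraph of spaces $X_1\supseteq X_0$ such that $\pi_1(X_1)\to\pi_1(X)$ is an isomorphism and, moreover, each vertex space of $X_1$ has finitely generated fundamental group. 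Surjectivity on $\pi_1$ is immediate from the choice of $K$; the delicate part is injectivity together with the finite generation of the vertex spaces, and this is the step I expect to be the main obstacle. It is where one must use that the edge groups are subgroups of free abelian groups --- in particular finitely generated and Noetherian --- to analyse the induced action of $\pi_1(X)$ on the Bass--Serre tree of the decomposition, conclude that finitely many vertex/edge orbits suffice, and arrange that the vertex-space covers appearing in $X_1$ can be taken with finitely generated $\pi_1$; without such control a vertex space need not have finitely generated fundamental group even when $\pi_1(X)$ does.

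Once $X_1$ is in hand, the conclusion follows by a gluing argument. Using the product structure of the edge-space covers, replace each edge space of $X_1$ by a convex core that contains the relevant part of $K$; the attaching subtori of these convex cores are then compact, so, invoking geometric coherence of $M$ and of $N$, replace each vertex space of $X_1$ by a core that contains both the part of $K$ lying in it and the images of the incident edge-core attachments, after which the local isometries glue the pieces into a finite graph of spaces $C\subset X$. Then $C$ is compact, contains $K$, is locally $CAT(0)$ by the gluing theorem, and satisfies $\pi_1(C)\cong\pi_1(X)$; since locally $CAT(0)$ spaces are aspherical, the inclusion $C\hookrightarrow X$ induces an isomorphism on $\pi_1$ between aspherical spaces and is therefore a homotopy equivalence, so $C$ is the desired core. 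The two places needing care are exactly the extraction of a finite, $\pi_1$-full subgraph of spaces with finitely generated vertex groups, and the bookkeeping that makes the vertex and edge cores attach compatibly; in the intended application --- graph towers over coherent RAAGs, whose graph-of-groups decompositions from Theorem \ref{thm:abelianedge} have finitely generated free abelian edge groups --- coherence of the groups involved dispels the finite-generation worries.
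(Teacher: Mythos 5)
The paper gives no argument of its own here: its proof of Theorem \ref{thm:BA} is a one-line citation to \cite[Theorem 2.6 and Remark 2.8]{AB06}, so there is no internal proof to compare against. What you have written is a plausible reconstruction of the Alibegovi\v{c}--Bestvina argument, and its architecture --- pull the graph-of-spaces structure of $Y$ back to the cover $X$, cut down to a finite connected subgraph of spaces $X_1$ containing $K$ with $\pi_1(X_1)\cong\pi_1(X)$, replace edge pieces by convex cores of covers of flat tori and vertex pieces by cores supplied by geometric coherence of $M$ and $N$, and glue --- is the right one.

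The step you single out as ``the main obstacle'', namely that the vertex spaces of $X_1$ have finitely generated fundamental group, is indeed where the work is, but you should not leave it hanging: it is a standard Bass--Serre fact and should be stated and proved (or cited) rather than gestured at. Since $H=\pi_1(X)$ is finitely generated, it acts cocompactly on the minimal $H$-invariant subtree of the Bass--Serre tree of $\pi_1(Y)$'s splitting, so $H$ is the fundamental group of a \emph{finite} graph of groups whose edge groups lie in conjugates of the $\pi_1(T_i)\cong\BZ^n$ and are therefore finitely generated; and for such a splitting, finite generation of $H$ forces finite generation of all vertex groups. (For instance, if $H=A*_CB$ with $C$ finitely generated, collect the finitely many $A$- and $B$-syllables $S_A,S_B$ occurring in normal forms of a finite generating set of $H$; then $\langle S_A,C\rangle*_C\langle S_B,C\rangle$ contains $S_A\cup S_B$, so equals $H$, and uniqueness of normal forms gives $A=\langle S_A,C\rangle$. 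Induction over the edges handles the general case, and hanging-branch vertices of the Bass--Serre tree are harmless since their $H$-stabilisers coincide with incident edge stabilisers.) With this lemma in place your gluing step works, provided you also verify that the attaching tori remain convex inside the chosen vertex cores so that the $CAT(0)$ gluing lemma applies. Your closing appeal to ``coherence of the groups involved'' is off target --- coherence concerns finite presentation, not finite generation --- and is both unavailable in the general statement and unnecessary once the Bass--Serre lemma is supplied.
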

\begin{proof}
	Proof follows from \cite[Theorem 2.6 and Remark 2.8]{AB06}.
\end{proof}

For a description of the Salvetti complex of a RAAG we refer to Charney's survey \cite{Charney07}.

\begin{lem}\label{lem:salvraag}
	The Salvetti complexes of coherent RAAGs are geometrically coherent.
\end{lem}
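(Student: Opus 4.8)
The plan is to prove this by induction on the number of vertices of the commutation graph $\Gamma$, exploiting the fact (Droms, \cite{droms87a}) that a coherent RAAG has chordal $\Gamma$, together with the classical fact that a chordal graph has a simplicial vertex, i.e. a vertex whose link is a clique. For the base of the induction I would take $\Gamma$ to be a complete graph (in particular the one-vertex case), so that $\GG(\Gamma)$ is free abelian and $S_\Gamma$ is a flat torus, which is geometrically coherent by Example \ref{expl:abelian}.

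For the inductive step, with $\Gamma$ not complete, I would choose a simplicial vertex $v$, set $\Gamma' = \Gamma \setminus \{v\}$ (an induced, hence chordal, subgraph with fewer vertices), and let $K = \lk_\Gamma(v)$, a clique. Adjoining $v$ adjacent exactly to $K$ realises $\GG(\Gamma)$ as the amalgam $\GG(\Gamma') \ast_{\GG(K)} \GG(\st_\Gamma(v))$, where $\GG(\st_\Gamma(v)) \cong \BZ^{|K|+1}$; and at the level of Salvetti complexes this amalgam is visibly the subcomplex decomposition $S_\Gamma = S_{\Gamma'} \cup_{S_K} S_{\st_\Gamma(v)}$, in which $S_{\st_\Gamma(v)}$ is a flat $(|K|+1)$-torus and $S_K$ is a flat $|K|$-torus. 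Here $S_K$ is the standard subcomplex of $S_{\Gamma'}$ associated to the full subgraph $K$, hence a locally convex subcomplex (see \cite{Charney07}), and it sits in $S_{\st_\Gamma(v)}$ as a coordinate subtorus, which is convex. I would then apply Theorem \ref{thm:BA} with $M = S_{\Gamma'}$ (geometrically coherent by the inductive hypothesis), $N = S_{\st_\Gamma(v)}$ (geometrically coherent by Example \ref{expl:abelian}), and the single torus $T = S_K$ glued by the identity to the two convex copies of $S_K$ in $M$ and $N$; this produces a geometrically coherent space $Y$ with $\pi_1(Y) \cong \GG(\Gamma)$ by van Kampen's theorem. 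The degenerate case $K = \emptyset$ (so $\GG(\Gamma) = \GG(\Gamma') \ast \BZ$) is treated identically, with $S_K$ a point and the collar an arc.

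It then remains to pass from the auxiliary space $Y$ to the Salvetti complex itself, the point being that $Y$ is exactly $S_\Gamma$ with the subcomplex $S_K$ thickened to a collar $S_K \times [0,1]$. I would argue that collapsing this collar gives a proper homotopy equivalence $q \colon Y \to S_\Gamma$, whose fibres are arcs or points, across which geometric coherence transfers: given a connected covering $X \to S_\Gamma$ with $\pi_1(X)$ finitely generated and a compact $K_0 \subseteq X$, pull the covering back along $q$ to $X' \to Y$, take a core $C'$ of $X'$ containing the (compact, by properness of $q$) preimage of $K_0$ and enlarge it so as to be saturated along every collar it meets; the collapse $X' \to X$ then carries $C'$ onto a compact subspace $C \supseteq K_0$, and since $C$ is obtained from $C'$ by collapsing collars glued over convex subspaces, the gluing theorem for locally $CAT(0)$ spaces (\cite{BridsonHaefliger}) shows $C$ is locally $CAT(0)$, while the collapse $C' \to C$ is a homotopy equivalence so that $C \hookrightarrow X$ is one too; thus $C$ is a core of $X$ containing $K_0$. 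I expect this last transfer step — together with the (standard, but indispensable) local convexity of $S_K$ inside $S_{\Gamma'}$ — to be the only genuinely delicate points, everything else being a direct application of Theorem \ref{thm:BA} and Example \ref{expl:abelian}. Should the collar bookkeeping prove awkward, one may instead be content to have exhibited $\GG(\Gamma)$ as the fundamental group of a geometrically coherent space, which is all that the applications in this section require.
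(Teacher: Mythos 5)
Your proof is correct and follows essentially the same strategy as the paper's --- induction together with Theorem \ref{thm:BA} and Example \ref{expl:abelian} --- but with a different choice of decomposition. You induct on the vertices of $\Gamma$ using a simplicial vertex $v$ (Dirac), so each step is an amalgam $\GG(\Gamma')\ast_{\BZ^{k}}\BZ^{k+1}$; the paper instead inducts on the vertices of Droms' clique tree $S$ by removing a leaf, so each step is $\pi_1(S')\ast_{\BZ^{k}}\BZ^{l}$ with $l$ possibly much larger than $k+1$. Both decompositions put the amalgamating clique inside a torus factor, so both land squarely in the hypotheses of Theorem \ref{thm:BA}; the paper's is more economical (fewer steps), yours is slightly more elementary. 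Where you are genuinely more careful than the paper is on the collar issue: the space $Y$ produced by Theorem \ref{thm:BA} has the subtorus $S_K$ thickened to $S_K\times[0,1]$, so it is not literally the Salvetti complex $S_\Gamma$ but only homotopy equivalent to it; the paper glosses over this by writing ``$C(\Gamma)$ decomposes as $C(S')\sqcup T^k\times[0,1]\sqcup T^l/\!\sim$''. Your fallback observation is the cleanest resolution: the downstream results (Proposition \ref{prop:icecoherent}, Corollary \ref{cor:cat0}) only need $\GG(\Gamma)$ to be the fundamental group of \emph{some} geometrically coherent space, so the collar-collapse transfer is not strictly necessary. If one does want the lemma literally as stated, your collapse argument is the right shape but would need a bit more justification that the image of a ``collar-saturated'' core remains locally $CAT(0)$ at the collapsed seam.
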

\begin{proof}
	Let $\Gamma$ be such that $\GG(\Gamma)$ is coherent and let $C(\Gamma)$ be the corresponding Salvetti complex.
	
	The graph $\Gamma$ contains a clique which induces a decomposition of  $\GG(\Gamma)$ as amalgamated product, where vertex groups are RAAGs corresponding to proper full subgraphs of $\Gamma$ and the amalgamation is over a free abelian group corresponding to the clique,  see \cite{droms87a}.  Hence, $\GG(\Gamma)$ is the fundamental group of a tree $S$ of free abelian groups, see \cite{droms87a}. We use induction on the number of  vertices in $S$.
	
If $S$ has one vertex, then $C(\Gamma)$ is a flat torus and the statement follows from Example
\ref{expl:abelian}. Suppose that the statement of the lemma is true for any $\Gamma'$ such that
the corresponding tree $S'$ has less than $n$ vertices.
Let $\Gamma$ be such that $S$ has precisely $n$ vertices. Let $S'$ be a subtree of $S$ obtained by removing a leaf $v$.
We have $\GG(\Gamma)=\pi_1(S')*_{\BZ^k}\BZ^l$, for some $k<l\in \BN$ and $\pi_1(S')=\GG(\Gamma')$ is a RAAG such
that the corresponding tree $S'$ has less than $n$ vertices.
Notice that the generators of $\mathbb Z^k$ are generators of $\pi_1(S')$ and of $\mathbb Z^l$.
Then $C(\Gamma)$ decomposes as $C(S') \sqcup T^k \times [0,1]\sqcup T^l/\sim$,
where $T^k$ and $T^l$ are $k$- and $l$- tori, $T^k\times \{1\}$ is identified with a $k$-torus
spanned by coordinate circles in $T^l$ and $T^k\times\{0\}$ is identified with a $k$-torus
spanned by coordinate  loops in $C(S')$. In particular, we glue the $T^k\times \{0\}$ and $T^k\times\{1\}$
$k$-tori to convex subspaces of $T^l$ and $C(S')$, respectively, each by a local isometry. Thus, the statement follows by induction and Theorem \ref{thm:BA}.
\end{proof}

Abusing the terminology, we call a group geometrically coherent if it is the fundamental group of a geometrically coherent space. 

\begin{prop} \label{prop:icecoherent}
	Finite iterated centraliser extensions of coherent RAAGs are geometrically coherent.
\end{prop}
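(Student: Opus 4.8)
The plan is to argue by induction on the number of centraliser extensions, using the Alibegovič--Bestvina machinery (Theorem \ref{thm:BA}) exactly as in the proof of Lemma \ref{lem:salvraag}. The base case is Lemma \ref{lem:salvraag}: a coherent RAAG is the fundamental group of the Salvetti complex, which is geometrically coherent. For the inductive step, suppose $G$ is a finite ICE of a coherent RAAG and $G' = G(u,B)$ is a single direct extension of centralisers, with $u \in W_G$, $C = C_G(u)$ abelian and $B = \phi(C) \times A$ free abelian (this is the form of extension appearing in the construction of an ICE, see Definition \ref{defn:ice} and Theorem \ref{thm:lift}). By induction $G$ is the fundamental group of a geometrically coherent locally $CAT(0)$ space $M$. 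Since $C$ is finitely generated free abelian (it is an abelian centraliser in a group of class $\cC$, extended only over such centralisers), it is represented by a convex flat sub-torus of $M$; concretely, by Lemma \ref{lem:ACprop}-type reasoning $C = C_G(g)$ for a single element $g$, and in the $CAT(0)$ model a maximal abelian subgroup is carried by an isometrically embedded flat, whose quotient gives a convex flat torus $T^k \subset M$ (one must check this convexity claim against the specific $CAT(0)$ structures produced at each stage — see the obstacle below). Take $N$ to be the flat torus $\mathbb{R}^{\operatorname{rk} B}/B$, which is geometrically coherent by Example \ref{expl:abelian}, and inside $N$ a convex sub-torus $T^k$ corresponding to $\phi(C)$. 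Then $G' = G *_C B$ is the fundamental group of the space obtained by gluing $T^k \times [0,1]$ to these two convex sub-tori of $M$ and $N$ by local isometries, so Theorem \ref{thm:BA} applies and $G'$ is geometrically coherent.

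Iterating this over the finitely many extensions in the ICE yields the result: at each stage we have a geometrically coherent locally $CAT(0)$ space, we glue on a torus-times-interval along convex sub-tori, and Theorem \ref{thm:BA} preserves geometric coherence. Since the number of extensions is finite, the induction terminates and the full ICE is geometrically coherent.

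The main obstacle — and the step requiring genuine care rather than routine verification — is producing, at each stage of the ICE, a locally $CAT(0)$ space whose fundamental group is the current group and in which the centraliser $C$ to be extended is carried by a \emph{convex} flat torus, so that Theorem \ref{thm:BA} is literally applicable. For a RAAG this is clear from the Salvetti complex (Lemma \ref{lem:salvraag}), but after extensions of centralisers one must build the $CAT(0)$ metric compatibly with all previously-extended centralisers, ensuring that the relevant abelian subgroup $\langle g \rangle \times \cdots$ sits as a convex flat. The right way to organise this is to carry the convexity data along in the induction hypothesis: strengthen the inductive statement to assert that $G$ is $\pi_1$ of a geometrically coherent locally $CAT(0)$ space in which every abelian centraliser of the form $C_G(w)$, $w \in W_G$, is represented by a convex flat torus. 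One then checks that the gluing construction of Theorem \ref{thm:BA} produces a space in which the \emph{new} centralisers — namely $B$ itself and, by Lemma \ref{lem:centdesc}, the centralisers $C(v) = \langle z \rangle \times O'(z)$ of elements $v$ not in a factor — are again convex flat tori, and that the old convex flat tori of $M$ not conjugate into $C$ survive as convex flats in the glued space. This is essentially the bookkeeping that Alibegovič--Bestvina carry out for limit groups over free groups; the content here is that the structure theory of centralisers in class $\cC$ (Lemma \ref{lem:centdesc}, conditions \ref{it:C6}--\ref{it:C8}) guarantees every relevant centraliser is finitely generated free abelian and hence a candidate for a convex flat torus.
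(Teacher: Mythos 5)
Your proof takes essentially the same route as the paper: induction on the number of extensions, with Lemma~\ref{lem:salvraag} as base case, and the Alibegovic--Bestvina gluing (Theorem~\ref{thm:BA}) of $T^k\times[0,1]\sqcup T^l$ to the previous core for the inductive step. Two small differences. First, the paper makes explicit a preliminary reduction: extensions of \emph{non-abelian} centralisers (those of canonical generators) simply produce a larger coherent RAAG and so can be absorbed into the base case, as in Lemma~\ref{lem:pctowerC}; you implicitly assume all extensions in the ICE are of abelian centralisers, which is true of the $\mathbb Z[t]$-ICE as defined, but worth stating when the proposition is phrased for arbitrary finite chains. Second, the convexity concern you raise --- that the gluing loci in $\chi(G_{m-1})$ must be convex flat sub-tori for Theorem~\ref{thm:BA} to apply --- is genuine, and the paper's proof does not address it any more explicitly than you do: it identifies $T^k\times\{0\}$ with a torus spanned by loops generating $C$ in $\chi(G_{m-1})$ and invokes Theorem~\ref{thm:BA} without verifying convexity. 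Your suggestion to carry a strengthened inductive hypothesis (every abelian centraliser $C_G(w)$, $w\in W_G$, is realised by a convex flat torus) is a reasonable way to organise that check, but as you note you do not actually carry it out; the paper's proof is similarly terse on this point. So the two arguments sit at essentially the same level of detail, and no genuine gap separates yours from theirs.
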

\begin{proof}
  Let $\GG$ be a coherent RAAG. We use induction on the number of iterated centraliser extensions. To the
  base of induction $\GG$, we associate the Salvetti complex $\chi(\GG)$ of $\GG$,
  which is geometrically coherent by Lemma \ref{lem:salvraag}.
	
  Now let  $\GG=G_0<G_1<\dots< G_{m-1}<G_m$ be an ICE. Arguing as in the proof of Lemma \ref{lem:pctowerC}, we may assume that the centraliser $C_{G_{m-1}}(u)$ being extended in step $m$ is abelian. Indeed, all non-abelian centralisers correspond to canonical generators of $\GG$ and extension of such a centraliser results in a new coherent RAAG,
  as in the proof of Lemma \ref{lem:pctowerC}.
	
	Hence, without loss of generality, we may assume that  $G_m=G_{m-1} *_{C} \BZ^l$, where $C=C_{G_{m-1}}(u)=\langle u\rangle \times \BZ^{k-1}\cong \BZ^k$, with $k<l$,
        is the (abelian) centraliser of a block element $u$ in $G_{m-1}$ and $G_{m-1}$ is an ICE satisfying the statement of the proposition. By induction, the corresponding core $\chi(G_{m-1})$, such that $G_{m-1}=\pi_1(\chi(G_{m-1}))$, is already defined.
		
	We now define the core $\chi(G_m)$ as $\chi(G_{m-1}) \sqcup T^k \times [0,1]\sqcup T^l/\sim$, where $T^k$ and $T^l$ are $k$- and $l$- tori, $T^k\times \{1\}$ is identified with a $k$-torus spanned by coordinate circles in $T^l$ and the coordinate circles of $T^k\times\{0\}$ are identified with nontrivial  loops in $\chi(G_{m-1})$ that generate the abelian subgroup $C$
        in $\pi_1(\chi(G_{m-1}))=G_{m-1}$. The induction step now follows by Theorem \ref{thm:BA}.
\end{proof}

Since every limit group over a coherent RAAG $\GG$ is a subgroup of an iterated centraliser extension of $\GG$, we obtain the following
\begin{cor} \label{cor:cat0}
	Limit groups over coherent RAAGs are $CAT(0)$. In particular, the conjugacy problem is decidable in limit groups over coherent RAAGs.
\end{cor}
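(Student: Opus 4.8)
The plan is to deduce Corollary~\ref{cor:cat0} from Proposition~\ref{prop:icecoherent} in two short moves. First I would recall the general principle, noted just after Definition~\ref{def:gc}: if a group $\Gamma$ is the fundamental group of a geometrically coherent space $Y$ and $H\le\Gamma$ is finitely generated, then taking the covering space $X\to Y$ with $\pi_1(X)=H$ and a core $C\supseteq K$ (for any compact $K$, e.g. a point) gives a compact locally $CAT(0)$ space $C$ with $\pi_1(C)=H$; the Cartan--Hadamard Theorem (see \cite{BridsonHaefliger}) then shows that the universal cover $\widetilde C$ is $CAT(0)$, and $H$ acts on it properly discontinuously and cocompactly by isometries, so $H$ is a $CAT(0)$ group. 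Thus geometric coherence of $Y$ passes $CAT(0)$-ness to all finitely generated subgroups of $\pi_1(Y)$.

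Next I would invoke the structure theory from earlier in the paper. By Remark~\ref{rem:prop_towers} and Theorem~\ref{thm:charlim} (equivalently, by \cite[Theorem 8.1]{CRK15} together with Theorem~\ref{thm:charlim}), any limit group $L$ over a coherent RAAG $\GG$ embeds into $\GG^{\BZ[t]}$, which by Theorem~\ref{thm:tensor} (see also Corollary~\ref{cor:expice}) is an ICE of $\GG$; moreover $L$ is finitely generated, so it is contained in one of the groups $G^{(n)}$ of the chain \eqref{eq:extcen}, i.e. in a \emph{finite} ICE of $\GG$. Proposition~\ref{prop:icecoherent} says this finite ICE is geometrically coherent, so by the principle above $L$ is $CAT(0)$.

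Finally, for the decidability statement I would note that $CAT(0)$ groups have decidable word and conjugacy problems: a $CAT(0)$ group acts properly cocompactly by isometries on a proper $CAT(0)$ space, hence is biautomatic in the relevant sense one needs here (solvability of the conjugacy problem for groups acting geometrically on $CAT(0)$ spaces follows from the work on $CAT(0)$ geometry, e.g.~via the flat torus theorem and convexity of the metric, as recorded in \cite{BridsonHaefliger}), and in particular the word and conjugacy problems are solvable. Since the word problem is already known to be decidable for limit groups over RAAGs by linearity (see the discussion in the introduction and \cite{Rabin}), the new content is the conjugacy problem, which now follows from being $CAT(0)$.

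The main obstacle is not in this corollary itself — it is essentially a one-line consequence of the machinery already assembled — but in being careful about which ambient space the finitely generated limit group $L$ sits in: one must pass from ``$L$ is a subgroup of $\GG^{\BZ[t]}$'' to ``$L$ is a subgroup of a \emph{finite} ICE,'' which uses that $L$ is finitely generated and that the ICE is a directed union $\bigcup_n G^{(n)}$ where each stage extends only finitely many centralisers; and then to quote Proposition~\ref{prop:icecoherent} for that finite stage. After that, geometric coherence does all the work, exactly as in the Alibegovi\v{c}--Bestvina treatment of ordinary limit groups in \cite{AB06}.
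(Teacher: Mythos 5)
Your argument is essentially the paper's: pass from $L\le\GG^{\BZ[t]}$ to $L$ lying inside a \emph{finite} chain of single centraliser extensions, apply Proposition~\ref{prop:icecoherent} for geometric coherence, deduce $CAT(0)$ via the remarks after Definition~\ref{def:gc}, and read off the conjugacy problem from \cite{BridsonHaefliger}. One caveat: the reason $L$ sits in a finite ICE is \emph{not} that ``each stage extends only finitely many centralisers'' — the stages $G^{(n)}$ in chain~\eqref{eq:extcen} are tree extensions over possibly infinite families $\mathfrak C_n$, so $G^{(n)}$ is not itself a finite ICE in the sense of Proposition~\ref{prop:icecoherent}. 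The correct justification, which the paper cites explicitly, is that the \emph{proof} of Theorem~\ref{thm:charlim} embeds $L$ into a graph tower of finite height, and that graph tower is shown to embed into $\GG^{\BZ[t]}$ by a finite sequence of single centraliser extensions; you do mention Theorem~\ref{thm:charlim} and Remark~\ref{rem:prop_towers}, so you are pointing at the right place, but the alternative heuristic you offer is misleading.
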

\begin{proof}
  If $L$ is a limit group over a coherent RAAG $\GG$, then $L$ is a subgroup of $\GG^{\BZ[t]}$ by Theorem \ref{thm:charlim} and $\GG^{\BZ[t]}$ is an ICE of $\GG$ by Corollary \ref{cor:expice} (and the subsequent comment). Moreover, it follows from the proof of Theorem \ref{thm:charlim} that $L$ is a subgroup of $G'$, where $G'$ is obtained from $\GG$ by a finite chain of centraliser extensions. Therefore, $G'$ is $CAT(0)$ and geometrically coherent by Proposition \ref{prop:icecoherent}. Hence, $L$ is isomorphic to a finitely generated subgroup of a geometrically coherent group so, from the comments following
  Definition \ref{def:gc}, $L$ is $CAT(0)$.   Finally, by \cite[Theorem 1.12, III.$\Gamma$.1]{BridsonHaefliger}, the conjugacy problem in $L$ is decidable.
\end{proof}

\bibliographystyle{plain}

\end{document}